\DeclareMathOperator{\tr}{tr}
\DeclareMathOperator{\Ind}{Ind}
\DeclareMathOperator{\id}{id}
\DeclareMathOperator{\Ad}{Ad}
\DeclareMathOperator{\pr}{pr}
\DeclareMathOperator{\ev}{ev}
\DeclareMathOperator{\Mfld}{Mfld}
\DeclareMathOperator{\Act}{act}
\DeclareMathOperator{\String}{String}
\DeclareMathOperator{\Spin}{Spin}
\DeclareMathOperator{\Tracered}{Tr}
\theoremstyle{plain}
\newtheorem{theorem}{Theorem}[section]
\newtheorem{corollary}[theorem]{Corollary}
\newtheorem{lemma}[theorem]{Lemma}
\newtheorem{proposition}[theorem]{Proposition}
\theoremstyle{definition}
\newtheorem{definition}[theorem]{Definition}
\newtheorem{construction}[theorem]{Construction}
\theoremstyle{remark}
\newtheorem{example}[theorem]{Example}
\newtheorem{remark}[theorem]{Remark}
\numberwithin{equation}{section}
\newcommand{\cH}{{\mathcal H}}
\newcommand{\cG}{\mathcal{G}}
\newcommand{\cE}{{\mathcal E}}
\newcommand{\cF}{{\mathcal F}}
\newcommand{\ZZ}{{\mathbb Z}}
\renewcommand{\a}{\alpha}
\renewcommand{\b}{\beta}
\renewcommand{\d}{\delta}
\newcommand{\g}{\mathfrak g}
\newcommand{\fg}{\mathfrak g}
\newcommand{\mult}{\mathrm{mult}}
\DeclareMathOperator{\curv}{curv}
\newcommand\rightthreearrow{%
        \mathrel{\vcenter{\mathsurround0pt
                \ialign{##\crcr
                        \noalign{\nointerlineskip}$\rightarrow$\crcr
                        \noalign{\nointerlineskip}$\rightarrow$\crcr
                        \noalign{\nointerlineskip}$\rightarrow$\crcr
                }%
        }}%
}
\def\into {\hookrightarrow}
\newcounter{anote}
\newcommand{\intSone}{\int_{0}^{2\pi}\mathrm{d}\theta\,}
\newcommand{\Omegahat}[1]{\widehat{\Omega #1}}
\newcommand{\Thetahat}{\widehat{\Theta}}
\newcommand{\phihat}{\widehat{\phi}}
\newcommand{\lMC}[1]{#1^{-1}d#1}
\newcommand{\rMC}[1]{d#1\,#1^{-1}}
\newcommand{\phifun}[1]{#1^{-1}\partial #1}
\newcommand{\phihatfun}[1]{\partial #1\,#1^{-1}}
\newcommand{\Adhat}{\widehat{\Ad}}
\DeclareMathOperator{\inv}{inv}
\providecommand{\leftsquigarrow}{%
  \mathrel{\mathpalette\reflect@squig\relax}%
}
\newcommand{\reflect@squig}[2]{%
  \reflectbox{$\m@th#1\rightsquigarrow$}%
}
\begin{document}



\title[Rigid 2-gerbes I]{Rigid models for 2-gerbes I: Chern--Simons geometry}
   
  \author[D.\ M.\ Roberts]{David Michael Roberts}
  \address[David Michael Roberts]
{Sirus-beta Labs\quad ||\quad 
 School of Computer and Mathematical Sciences\\
The University of Adelaide\\
  Adelaide, SA 5005 \\
  Australia
  }
  \email{droberts.65537@gmail.com}
  
  \author[R.\ F.\ Vozzo]{Raymond F.\ Vozzo}
\address[Raymond Vozzo]
{School of Computer and Mathematical Sciences\\
The University of Adelaide\\
  Adelaide, SA 5005 \\
  Australia}
\email{raymond.vozzo@adelaide.edu.au}

\thanks{The authors acknowledge the support of the Australian
Research Council, grants DP120100106, DP130102578 and DP180100383. 
Diagrams were set with \Xy-pic\ \xyversion.}

\subjclass[2020]{53C08 (Primary); 57R19, 57R20, 81T99}

\begin{abstract}
Motivated by the problem of constructing explicit geometric string structures, we give a rigid model for bundle 2-gerbes, and define connective structures thereon. 
This model is designed to make explicit calculations easier in applications to physics. 
To compare to the existing definition, we give a functorial construction of a bundle 2-gerbe as in the literature from our rigid model, including with connections.
As an example we prove that the Chern--Simons bundle 2-gerbe from the literature, with its connective structure, can be rigidified---it arises, up to \emph{isomorphism} in the strongest possible sense, from a rigid bundle 2-gerbe with connective structure via this construction.
Further, our rigid version of 2-gerbe trivialisation (with connections) gives rise to trivialisations (with connections) of bundle 2-gerbes in the usual sense, and as such can be used to describe geometric string structures.

The preprint of this article is available as \href{https://arxiv.org/abs/2209.05521}{arXiv:2209.05521}.
\end{abstract}

\maketitle

\tableofcontents


\section*{Introduction}

Our goal with this paper---the first in a series of three---is to provide a new calculational framework for higher geometry and higher gauge theory. 
In particular we introduce here the novel concept of {\em rigid bundle $2$-gerbe} and illustrate by examples the way in which
rigidity simplifies calculations.
This will permit the level of explicit detail appearing in, for example, the classic paper \cite{BPST}, and provide a general framework that will recover results such as the explicit coordinate-level construction in \cite[\S 5]{RSW_22}.
In the forthcoming \cite{rb2g_III} we will use the framework of the present paper to give explicit global formulas for geometric string structures \cite{Wal} on infinite families of reductive homogeneous spaces.%
\footnote{To the best of our knowledge, the only known explicit formula in the literature for geometric string structures is given in \cite[\S 5.3]{RSW_22} for $S^4$. 
Note also that in \cite[\S 6b]{Redden}, one piece of the data arising from a geometric nontrivial string structure for $S^3$ is calculated.}

One model for understanding string structures on a $G$-bundle $Q$ and their associated geometry uses the objects known as \emph{2-gerbes}, originally introduced in a general context by \cite{Breen94} over an arbitrary Grothendieck site (as opposed to the site of a fixed smooth manifold), and then applied by \cite{BM} to a more geometric setting. 
In the differential-geometric category, \emph{bundle 2-gerbes} were initially sketched in \cite{Carey-Murray-Wang_97}, then expounded more fully in \cite{StevPhD,Ste}.
The details of general bundle 2-gerbes are not needed here (and a peek at Appendix \ref{app:weak_2-gerbes} will show how complex they are), but it suffices for now to know two things about them.
First, just as $U(1)$-bundles on a manifold $X$ are classified by $H^2(X,\ZZ)$ and bundle gerbes are classified by $H^3(X,\ZZ)$ \cite{Mur}, bundle 2-gerbes are classified by $H^4(X,\ZZ)$ \cite{Ste}.
Secondly, while the existence of a $\Spin^c$-structure is obstructed by a certain bundle gerbe classified by the characteristic class $W_3$---a chosen trivialisation of which \emph{is} a $\Spin^c$-structure---the existence of a string structure is obstructed by a certain bundle 2-gerbe classified by the characteristic class $\frac{1}{2} p_1$; a chosen trivialisation of this bundle 2-gerbe \emph{is} a string structure \cite{JohnsonPhD,CJMSW}.
This bundle 2-gerbe is called the \emph{Chern--Simons bundle 2-gerbe} in \emph{loc.\ cit.}, due to its holonomy over closed 3-manifolds giving the action for the Chern--Simons sigma-model attached to the original principal bundle.

It is important to note that one can either trivialise a bundle 2-gerbe purely (differential) topologically (see e.g.\ \cite{BM,Stolz-Teichner}) or trivialise it with geometric structure encoded by certain differential form data; the latter is what we are interested in here, and such \emph{geometric string structures} were introduced in \cite{Wal} (see also \cite{FiorenzaSatiSchreiber_14}).
This general approach has the advantage that it utilises the geometry of bundle (1-)gerbes, which is well understood and has been studied from multiple points of view. 
Chern--Simons bundle 2-gerbes have also appeared in many applications in physics and geometry (e.g.\ \cite{CJMSW,CareyWang_08,Sati_10,Wal,FiorenzaSatiSchreiber_14,Mickler_18}); results such as the Green--Schwarz anomaly cancellation in heterotic string theory can be encoded using a Chern--Simons bundle 2-gerbe \cite{Sati-Schreiber-Stasheff_12,Fiorenza-Sati-Schreiber_15}. 
The recent \emph{Hypothesis H} \cite{Fiorenza-Sati-Schreiber_20,Fiorenza-Sati-Schreiber_21b} gives rise to a variant of a Chern--Simons 2-gerbe encoding the full $C$-field data on M5-branes \cite{Fiorenza-Sati-Schreiber_21a}.

The purpose of this paper is to introduce new methods for studying the geometry and topology of bundle 2-gerbes. 
These methods are amenable to performing concrete global and local calculations, and thus widely applicable to problems in differential geometry and mathematical physics. 
Our definition of \emph{rigid} bundle 2-gerbes simplifies significantly the data involved, and we explain the differential geometry of these objects, as well as their trivialisations, together with \emph{their} geometry. 
It is this latter construction that encodes geometric string structures.
Much as bundle gerbes are a kind of categorification of abelian principal bundles, trivialisations of a Chern--Simons 2-gerbe correspond to categorified non-abelian bundles, that is \emph{non-abelian principal 2-bundles} \cite{Bartels} (see also \cite{Nikolaus-Waldorf_13}).
Therefore, methods for understanding the geometry of trivialisation of Chern--Simons 2-gerbes and for being able to perform related computations will also carry over to the geometry of principal 2-bundles, which at present has many proposed formalisms: \cite{B_M}, \cite{ACJ}, \cite{BS04}, \cite{Schreiber-Waldorf_13}, \cite{Waldorf_18}, \cite{Kim-Saemann_20,RSW_22}, etc. 
With concrete examples to hand, we will be able to give empirical answers to questions like: Is the ``fake flatness'' condition of \cite{BS04} something that arises in practice? Or can we get natural non-fake-flat examples? 
Can we clarify the role of the ``adjustment'' of \cite{Kim-Saemann_20}, which has been seen in at least one example, namely in \cite{RSW_22}? 
In the best case scenario, definitions should be driven by a natural class of examples; this was the case for bundle gerbe connections/curvings from \cite{Mur}, defined so as to make the topological term in the Wess--Zumino--Witten action to be the ``holonomy'' of some kind of geometric object, as considered in \cite{Carey-Murray_86} (Murray, personal communication).

\medskip

The rigid model that we introduce here is much simpler than the definition of bundle 2-gerbe in \cite{StevPhD}. 
For instance, we note that to prove the existence of appropriate connective structures (called \emph{rigid connective structures}) in Proposition \ref{prop:conn_exist} is far more straightforward than the corresponding proof for weak bundle 2-gerbes in \cite[Proposition 10.1]{StevPhD}. Similarly, the use of plain 2-gerbes (as by e.g.\ \cite{BM}) often involves a step of 2-stackification, which is rather nontrivial and leads to inexplicit results.
This is a theme throughout this paper: the relative ease with which one is able to establish the theory is in stark contrast to existing methods.

We also posit that rigid and explicit constructions are useful for applications to physics, where weak and inexplicit constructions do not give \emph{coordinate-level} descriptions of \emph{global} objects. 
Many existing constructions in higher gauge theory rely on universal properties in an $\infty$-topos, which only specifies data up to a contractible space of choices.
On the other hand, the concrete calculations in for example \cite{FiorenzaSchreiberStasheff_12}, leading to constructions such as the `brane bouquet' \cite{Fiorenza-Sati-Schreiber_15b}, are linearised and purely local in nature.

For an example of how the philosophy of seeking concrete constructions can be fruitfully applied, see the recent paper \cite{RSW_22} of Rist, Saemann and Wolf, developed in parallel with this one, and building partly on early work for this paper presented by the first author \cite{Roberts_14}.
Another result following from our work (to appear in \cite{rb2g_III}) is that we can enhance a theorem of Back \cite{Back_81} stating $p_1$ of $T(G/H)$ is torsion under certain mild conditions on $G$, $H$ and the Dynkin index $\Ind^D_{H\into G}$---proved using Chern--Weil representatives---to include the full 2-gerbe connective structure.
Our methods are constructive in that we explicitly build a trivialisation of the 2-gerbe classified by $\Ind^D_{H\into G}\cdot \frac12 p_1$.

Independently from and in parallel with the present work (which was long in gestation!), Kottke and Melrose \cite{KM19}\footnote{Please note that the original published version of this paper contains an error in the custom version of \v{C}ech cohomology introduced to classify bigerbes, fixed in v2 on the arXiv.} introduced a theory of ``bigerbes''.
The definition of bigerbe is similar to, but not identical with, our structure here, and at present only the topological version has been covered; connections were left for future work. 
The stated motivation in \cite{KM19} is analytic in nature, with an eye toward fusion structures and a Dirac operator on loop space, and does not consider geometry in the style of what is known for bundle gerbes.
It turns out that every bigerbe gives rise to a canonical rigid bundle 2-gerbe, and also a trivialised bigerbe gives rise to a canonical rigidly trivialised rigid bundle 2-gerbe; more details are given in Appendix~\ref{app:bigerbes}.

We also point out that our construction in Appendix~\ref{app:weak_2-gerbes} specifies a functor from rigid bundle 2-gerbes to bundle 2-gerbes as defined in the literature (eg \cite{Ste,Wal}). 
Having such a functor is stronger than relying on a common classification by degree-4 integral cohomology as mentioned earlier in the case of 2-gerbes, which is the approach in \cite{KM19}.
In \cite{rb2g_II} we will show that every 4-class is represented by a rigid bundle 2-gerbe, via constructing a universal smooth rigid bundle 2-gerbe.

\smallskip

\noindent \textbf{Outline of paper.}\quad  In Section 1 we review the necessary background on bundle gerbes. 
We also provide in this section new formulae for a connection and curving on the basic gerbe that are similar to, but simpler than, those given in \cite{MS03}.

In Section 2 we introduce the main objects that we study in this paper, rigid bundle 2-gerbes. 
We give the definition of these and describe the difference between our rigid model and the existing models in the literature via the notion of associated weak bundle 2-gerbes. 
There are quite a few technical details here that are relegated to Appendix~\ref{app:weak_2-gerbes} for readability purposes, but the basic ideas relating rigid 2-gerbes and weak 2-gerbes are described in this section at a high level. 
We also explain the rigid version of the concepts of triviality and of connective structure, both of which are necessary ingredients for the main examples and applications that motivate the constructions in this paper.

Section 3 contains the main results of the paper in the sense that we give a rigid model for the Chern--Simons bundle 2-gerbe and show that this is indeed a rigidification of the existing models in the literature. 
This section includes detailed, explicit computations of the geometry of this rigid 2-gerbe, which highlights the utility of our approach here. 

In addition to Appendix~\ref{app:weak_2-gerbes} on associated weak bundle 2-gerbes, containing the technical details of the correspondence between our model and the accepted model, we also include the short Appendix~\ref{app:bigerbes}, which explains the relationship between the bigerbes of \cite{KM19} and our model for 2-gerbes.\footnote{We thank the anonymous referee for encouraging us to pin this down.}
Finally, we include a table of data in Appendix \ref{app:data}, listing all of the specific differential forms and functions used throughout the paper for ease of reference.

\noindent \textbf{Convention.}\quad In this paper we use the conventions of Kobayahi and Nomizu for evaluating differential forms, which involve factors of $\frac12$ when evaluating 2-forms of the types $\alpha\wedge \beta$ \cite[page 35]{Kobayashi-Nomizu} and $d\alpha$ \cite[Proposition 3.11]{Kobayashi-Nomizu}, where $\alpha,\beta$ are 1-forms, on a given pair of tangent vectors $X,Y$.\footnote{That is:
\begin{align*}
    \alpha\wedge\beta(X,Y) &= \tfrac12(\alpha(X)\beta(Y) - \alpha(Y)\beta(X)),\\ 
    d\alpha(X,Y) & = \tfrac12(X\cdot \alpha(Y) - Y\cdot \alpha(X) - \alpha([\tilde X,\tilde Y]))
\end{align*}
for $\tilde V$ denotes any vector field extending a tangent vector $V$.}

\smallskip

\noindent \textbf{Acknowledgements.}\quad  Christian Saemann asked the question---about string structures on $S^5$---that sparked the line of inquiry leading ultimately to this paper, during a visit to Adelaide. 
CS also financially supported the first author during a visit to Heriot-Watt University in 2014 to speak \cite{Roberts_14} on early work related to this project (expanded in a forthcoming paper \cite{rb2g_III} in the series) at the Workshop on Higher Gauge Theory and Higher Quantization.
This work was also presented in 2015 at the Eduard \v{C}ech Institute for Algebra, Geometry and Physics, while visiting Urs Schreiber, who also provided support at the time.
General thanks to Michael Murray for helpful discussions on many occasions, including some of the prehistory of bundle gerbes, and support all-round; Ben Webster for discussion of Lie algebras; Dominik Rist, Christian Saemann and Martin Wolf for discussions around the definition of the central extension of the loop group and the connection thereon (based on the first arXiv version of \cite{RSW_22}); John Baez for gracious discussion around the sign errors in \cite{BSCS} and issuing a correction to that paper; and Kevin van Helden for sharing his lengthy private calculations (for the paper \cite{vanHelden_21}) around sign conventions for 2-term $L_\infty$-algebras and their weak maps, helping to confirm the correct sign choice for Definition~\ref{def:string_xmod}.
Lastly we thank the referee for their helpful suggestions, including establishing the construction of a (trivialised) rigid bundle 2-gerbe from a (trivialised) bigerbe.


\section{Background on bundle gerbes}

\subsection{Bundle gerbes: definitions}

In this section we will briefly review the various facts about bundle gerbes; more details can be found in \cite{Mur,MurSte,Murray_09}. The reader familiar with bundle gerbes can safely skip this subsection.

Let $X$ be a manifold and $Y \xrightarrow{\pi} X$ a surjective submersion. 
We will work in this paper with smooth Fr\'echet manifolds\footnote{Manifolds locally modelled on complete metrisable locally convex topological vector spaces.} in general, and differential forms are defined as in \cite[Appendix E]{Schmeding_book}. 
Examples include mapping spaces $C^\infty(K,N)$ for $K$ a compact smooth manifold (possibly with boundary) and $N$ smooth and finite-dimensional.
We denote by $Y^{[k]}$ the $k$-fold fibre product of $Y$ with itself, that is $Y^{[k]} = Y \times_X Y \times_X \cdots \times_X Y$.
This is a simplicial space whose face maps are given by the projections $d_i \colon Y^{[k]} \to Y^{[k-1]}$ which omit the $(i+1)^{th}$ factor.\footnote{In the case of $Y^{[2]}\to Y$, we may also write $\pr_i$, $i=1,2$ for projection \emph{onto} the $i^{th}$ factor.} 
We will assume familiarity with standard simplicial object definitions without further comment.
A \emph{bundle gerbe} $(E, Y)$ (or simply $E$ when $Y$ is understood) over $X$ is defined by a principal $U(1)$-bundle $E \to Y^{[2]}$ together with a bundle gerbe multiplication given by an isomorphism of bundles $d_0^*E \otimes d_2^*E \xrightarrow{\simeq} d_1^*E$ over $Y^{[3]}$, which is associative over $Y^{[4]}$. 
On fibres the multiplication looks like $E_{(y_2, y_3)} \otimes E_{(y_1, y_2)} \xrightarrow{\simeq}  E_{(y_1, y_3)}$ for $(y_1, y_2, y_3) \in Y^{[3]}$.

Another useful way of describing the bundle gerbe multiplication is to say that the $U(1)$-bundle $\delta E := d_0^*E \otimes d_1^*(E^*) \otimes d_2^*E \to Y^{[3]}$ has a section $m$. 
The bundle $\delta^2 E \to Y^{[4]}$ is canonically trivial and the associativity of the multiplication is equivalent to the induced section $\delta(m)$  of $\delta^2 E \to Y^{[4]}$ being equal to the canonical section.

The \emph{dual} of $(E, Y)$ is the bundle gerbe $(E^*, Y)$, where by
$E^*$ we mean the $U(1)$-bundle which is $E$ with the action of $U(1)$
given by $p \cdot z = p \, \bar z = p \, z^{-1}$ for $(p, z) \in E \times U(1)$; that is, as a space $E^*
= E$, but with the conjugate $U(1)$-action.

A bundle gerbe $(E, Y)$ over $X$ defines a class in $H^3(X, \ZZ)$, called the \emph{Dixmier--Douady class} of $E$.
Conversely, any class $c\in H^3(X,\ZZ)$ is the Dixmier--Douady class of a bundle gerbe over $X$.

We say that a bundle gerbe $(E, Y)$ is \emph{trivial} if there exists a $U(1)$-bundle $L \to Y$ such that $P$ is isomorphic to $ \d L := d_0^*L \otimes d_1^*(L^*)$ with the canonical multiplication given by the canonical section of $\d^2 L$. 
A choice of $L$ and an isomorphism $E \simeq \d L$ is called a \emph{trivialisation}. We call a trivialisation $L \to Y$ a \emph{strong trivialisation} if $L$ is the trivial bundle. We will see this case coming up frequently in later sections. 
The Dixmier--Douady class of $E$ is precisely the obstruction to $E$ being trivial.

An \emph{isomorphism} between two bundle gerbes, $(E, Y)\xrightarrow{\simeq} (F, Z)$,
over $X$ is a pair of maps $(\varphi,  f)$ where $f \colon Y \to Z$ is
an isomorphism that covers the identity on $X$, and $\varphi\colon E\simeq (f^{[2]})^* F$ is an isomorphism of $U(1)$-bundles respecting the bundle gerbe product.  
Isomorphism is in fact far too strong to be the right notion of equivalence for bundle gerbes, since there are many non-isomorphic bundle gerbes with the same Dixmier--Douady class. The correct notion of equivalence is stable isomorphism \cite{MurSte}, which has the property that two bundle gerbes are stably isomorphic if and only if they have the same Dixmier--Douady class. However, part of the advantage of our approach here is that we only ever \emph{need} to consider isomorphisms in the strongest sense as above.

Various constructions are possible with bundle gerbes. For example, we can pull back bundle gerbes in the following way: given a map $f \colon N \to X$ and a bundle gerbe $(E, Y)$ over $X$, we can
pull back the surjective submersion $Y\to X$ to a surjective submersion
$f^*Y \to N$ and the bundle gerbe $(E,Y)$ to a bundle gerbe $f^*(E,
Y) := \left( \left(f^{[2]}\right)^* E, f^*Y \right)$ over $N$, where
$f^{[2]} \colon f^*\big(Y^{[2]} \big) \to Y^{[2]}$ is the map induced by $\tilde{f} \colon f^*Y \to Y$. 

\begin{example}\label{ex:lifting}
Let $K$ be a Lie group and suppose $ U(1) \to \widehat{K} \to K$ is a central extension. 

If $P \to X$ is a $K$-bundle then there is a bundle gerbe $E$ over $X$ associated to the bundle $P$ and the extension $\widehat{K}$ called the \emph{lifting bundle gerbe}, whose Dixmier--Douady class is precisely the obstruction to the existence of a lift of $P$ to a $\widehat{K}$ bundle.
To construct this bundle gerbe, consider the simplicial manifold $NK_\bullet$, given by $NK_p = K^p$ whose face maps are given by 
\[
d_i(k_0, \ldots, k_p) = \begin{cases}
(k_1, \ldots, k_p), 				& i = 0,\\
(k_0, \ldots, k_{i-1} k_{i}, \ldots, k_p),	& 1 \leq i \leq p-1,\\
(k_0, \ldots, k_{p-1}),				&i = p.
\end{cases}
\]
As explained in \cite{BM} the central extension can be viewed as a $U(1)$-bundle $\widehat{K} \to K$ together with a section $s$ of $\delta \widehat{K} := d_0^*\widehat{K} \otimes d_1^* \widehat{K}^* \otimes d_2^* \widehat{K}$, such that the induced section $\d (s)$ of $\delta^2 \widehat{K}$ is equal to the canonical section. The surjective submersion of $E$ is given by $P \to X$. Notice that we have $P^{[k]} \simeq P \times K^{k-1}$ and so there is a map of simplicial manifolds $P^{[\bullet]} \to NK_\bullet$. We use this map to pull back the $U(1)$- bundle $\widehat{K}$ and the section of the pullback of $\d \widehat{K}$ over $P^{[3]}$ defines the bundle gerbe multiplication.

\end{example}

Bundle gerbes have rich geometry associated with them. A \emph{bundle gerbe connection} on $(E,Y)$ is a connection $\nabla$ on $E$ that respects the bundle gerbe product in the sense that if $m \colon Y^{[3]} \to \d E$ is the section defining the multiplication then $m^* \d \nabla = 0$, where $\d \nabla$ is the induced connection on $\d E$. This condition ensures that if $F_\nabla$ is the curvature of $\nabla$ then $ \d F_{\nabla} = 0$. 

The following Lemma is key to the geometry of bundle gerbes:

\begin{lemma}[\cite{Mur}]\label{lemma:key}
\label{lemma:fund_lemma}
Let $\pi\colon Y\to X$ be a surjective submersion. Then for all $p\geq 0$ the complex
\[
  0 \to \Omega^p(X)\xrightarrow{\pi^*} \Omega^p(Y) \xrightarrow{\delta} \Omega^p(Y^{[2]}) \xrightarrow{\delta} \Omega^p(Y^{[3]}) \xrightarrow{\delta} \cdots
\]
is exact, where $\delta\colon \Omega^p(Y^{[n]}) \to \Omega^p(Y^{[n+1]})$ is the alternating sum of pullbacks: $\xi\mapsto \sum_{i=0}^n (-1)^i d_i^*\xi$.
\end{lemma}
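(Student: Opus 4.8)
The plan is to prove exactness of the augmented complex $0 \to \Omega^p(M) \to \Omega^p(Y) \to \Omega^p(Y^{[2]}) \to \cdots$ by constructing an explicit contracting homotopy, following the standard \v{C}ech-type argument but adapted to the fact that $Y \to M$ need not admit a global section. First I would reduce the problem to the local situation: since $\pi \colon Y \to M$ is a surjective submersion, it admits local sections, so choose an open cover $\{U_\alpha\}$ of $M$ together with sections $s_\alpha \colon U_\alpha \to Y$. Exactness is a local statement on $M$ (the complex is a complex of sheaves on $M$ once we observe $\Omega^p(Y^{[n]})$ restricts to $\Omega^p(\pi^{-1}(U)^{[n]})$), so it suffices to prove exactness after restricting to each $U_\alpha$, where we do have a genuine section $s := s_\alpha$ of $Y|_{U_\alpha} \to U_\alpha$.

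With a section $s$ in hand, the key step is to write down the contracting homotopy $h \colon \Omega^p(Y^{[n]}) \to \Omega^p(Y^{[n-1]})$. The natural choice is $h(\xi) = (s\pi, \mathrm{id}, \dots, \mathrm{id})^* \xi$, i.e.\ pull back along the map $Y^{[n-1]} \to Y^{[n]}$, $(y_1, \dots, y_{n-1}) \mapsto (s(\pi(y_1)), y_1, \dots, y_{n-1})$, which inserts a new first factor using the section. One then checks the homotopy identity $\delta h + h \delta = \mathrm{id}$ on $\Omega^p(Y^{[n]})$ for $n \geq 1$ (and that for $n = 0$, $h\delta = \mathrm{id} - \pi^* \circ (\text{evaluation along } s)$, establishing that $\ker \delta$ on $\Omega^p(Y)$ is exactly $\im \pi^*$, while injectivity of $\pi^*$ is immediate since $\pi$ is a surjective submersion). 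This is a routine but slightly fiddly index computation: expanding $\delta$ as the alternating sum $\sum (-1)^i d_i^*$ and using that $d_i \circ (\text{insert } s(\pi(-)) \text{ in front}) = (\text{insert } s(\pi(-)) \text{ in front}) \circ d_{i-1}$ for $i \geq 1$, while $d_0 \circ (\text{insert in front}) = \mathrm{id}$, makes the alternating sum telescope.

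The main obstacle — more bookkeeping than genuine difficulty — is handling the passage from the local statement back to the global one cleanly. Since the complex in question is the complex of global sections of an exact complex of sheaves on $M$ (the sheafification being over $M$, with $\mathcal{F}^n(U) := \Omega^p(\pi^{-1}(U)^{[n]})$), and these sheaves are fine (they admit partitions of unity subordinate to any cover of $M$, pulled back from $M$), the global sections functor is exact on this complex and global exactness follows. Alternatively, and perhaps more in the spirit of keeping things elementary, one can avoid sheaf language entirely: given a global form $\xi \in \Omega^p(Y^{[n]})$ with $\delta \xi = 0$, use a partition of unity $\{\rho_\alpha\}$ on $M$ subordinate to $\{U_\alpha\}$, set $\eta = \sum_\alpha (\pi^*\rho_\alpha) \cdot h_\alpha(\xi)$ where $h_\alpha$ is the homotopy built from $s_\alpha$ (with appropriate care that $\pi^*\rho_\alpha$ is pulled back to the correct fibre power so the product and the sum are well-defined global forms), and verify directly that $\delta \eta = \xi$, the partition-of-unity terms that obstruct exactness of each individual $h_\alpha$ summing to zero because $\sum_\alpha \rho_\alpha = 1$. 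I would present the section-based homotopy computation in full and then invoke the partition of unity to globalise, flagging that this is the one place where surjectivity of $\pi$ (not just the submersion property) is used.
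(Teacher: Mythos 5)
Your proposal is correct and is essentially the standard argument from the cited source \cite{Mur} (the paper itself gives no proof, only the citation): a contracting homotopy built from local sections of the submersion, glued with a partition of unity pulled back from $M$. The homotopy identity $\delta h + h\delta = \id$ and the $n=0$ case $h\delta = \id - \pi^*s^*$ check out exactly as you describe, so there is nothing to add beyond carrying out the bookkeeping you have already outlined.
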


Lemma \ref{lemma:key} implies that there exists a 2-form $\beta \in \Omega^2(Y)$ such that $\d \beta = F_{\nabla}$. 
A choice of such a 2-form is called a \emph{curving} for $E$. 
Since $\d (d \beta) = d(\d \b) = dF_{\nabla} = 0$, Lemma \ref{lemma:key} again implies that $d \b$ descends to a closed 3-form $\omega$ on $X$. 
This 3-form is called the \emph{3-curvature} of $E$. 
The class of $\omega$ in de Rham cohomology represents the image in real cohomology of the Dixmier--Douady class of $E$.

When paired with the condition in the definition of a bundle gerbe connection, Lemma \ref{lemma:key} implies that given two bundle gerbe connections $\nabla$ and $\nabla'$ on a given bundle gerbe $(E,Y)$, there is a 1-form $a$ on $Y$ such that $\nabla = \nabla' + \pi^*\delta(a)$. 
Such a 1-form is not unique: any two choices differ by a 1-form pulled up to $Y$ from the base.

In all of the constructions above, the bundle gerbes can be enriched with connections and curvings. For example, if $E$ has bundle gerbe connection $\nabla_E$ then the dual $E^*$ has an induced connection $\nabla_{E^*}$ given by $-\nabla_E$. Similarly, if $f \colon N \to X$ and $E$ has a connection $\nabla_E$ and curving $\beta$, with 3-curvature $\omega$, then the pullback has an induced connection and curving, denoted $f^*\nabla$ and $f^*\beta $, respectively, and the 3-curvature of this connection and curving is $f^* \omega$.

We can also talk of trivialisations of bundle gerbes \emph{with connection}. In fact, the case we will use most often is that of a \emph{strong trivialisation with connection}: if $E$ has connection $\nabla_E$ and curving $\beta$, a strong trivialisation with connection is a strong trivialisation $L$ of $E$ together with a 1-form $a$ on $Y$ such that $\delta a = \ell^*\nabla_E$, and $\beta=da$. Here $\ell$ is the section of $E$ induced by the isomorphism $E \simeq \d L\simeq Y^{[2]}\times U(1)$.
Another viewpoint is that a strong trivialisation with connection is an isomorphism of bundle gerbes with connection (namely an isomorphism of bundle gerbes in the strongest sense, preserving connection and curving), to the trivial bundle gerbe $(Y^{[2]}\times U(1),Y)$ equipped with the trivial multiplication, bundle gerbe connection $\delta(a)+\theta^{-1}d\theta$, and curving $da$.

\subsection{Example: a new connection and curving for the basic gerbe}\label{sec:new_example}

We take this opportunity to work through an example calculation of a bundle gerbe connection and curving, which appears to be new, but which is particularly suited to the construction, in section \ref{subsec:conn_structure} below, of a connective structure on a rigid bundle \emph{2-gerbe}.

\begin{definition}\label{def:Killing_normalised}
On a semisimple Lie algebra $\g$ we define the normalised Cartan--Killing form $\langle-,-\rangle$ following \cite{BSCS}, namely the `basic inner product' of \cite[\S 4.4]{PS} on $\g$ divided by $4\pi$.
The basic inner product itself is defined to be such that a coroot in $\g$ corresponding to\footnote{Here a coroot is defined to be the image of a root under the isomorphism $\g^*\simeq \g$ induced by the Cartan--Killing form, following \cite{PS}, as opposed to a rescaled root.} a long root (in the dual of some Cartan subalgebra) has length squared equal to 2.
\end{definition}

\begin{example}\label{eg:normalised_Killing_forms}
 Here are the normalised Cartan--Killing forms for the classical matrix Lie algebras, adapted from \cite[p~583]{Wang-Ziller_85}:
\begin{itemize}
\item $\mathfrak{su}(n)$: $\langle X,Y\rangle = -\tr(XY)/4\pi$.
\item $\mathfrak{so}(n)$, $n\geq 4$: $\langle X,Y\rangle = -\tr(XY)/8\pi$.\footnote{The case of $\mathfrak{so(4)}$ is not included in the citation, not being simple, but one can calculate the length of the coroots using the Killing form and then normalise appropriately. 
Notice that the basic inner product on $\mathfrak{so}(5)$ restricts to the basic inner product on $\mathfrak{so}(4)$.}
\item $\mathfrak{sp}(n)$: $\langle X,Y\rangle = -\Tracered(XY)/4\pi = -\Re \tr_{\mathbb{H}}(XY)/2\pi$. 
\end{itemize}
For definiteness, all the Lie algebras here are the compact real forms, using antisymmetric matrices for the orthogonal algebras, the anti-Hermitian trace-free complex matrices for the unitary algebras, and the anti-Hermitian quaternionic matrices for the symplectic algebra. 
The trace $\tr_{\mathbb{H}}(\cdot)$ here is the ordinary matrix trace, and the \emph{reduced trace} $\Tracered(\cdot)$ for $n\times n$ quaternionic matrices is the composite of the embedding into $2n\times 2n$ complex matrices (thinking of $\mathbb{H}\simeq \mathbb{C}\oplus j\mathbb{C}$), and the ordinary trace of complex matrices.
\end{example}

Recall (see for example \cite{MS03}) that the \emph{basic gerbe} on a compact Lie group $G$ can be modelled as the lifting bundle gerbe associated to the path fibration of $G$ and the level 1 central extension of the loop group $\Omega G$. 

In more detail, define $PG = \{\gamma\in C^\infty([0,2\pi],G)\mid \gamma(0)=1\}$, the path fibration of $G$, and $\Omega G = \ker (\ev_{2\pi})$. Then, as per Example \ref{ex:lifting} we pull back the central extension $\widehat{\Omega G}$ of $\Omega G $ over $PG^{[2]}$. In fact, we will use the isomorphism of simplicial manifolds $PG^{[\bullet]} \simeq PG \times \Omega G^{\bullet - 1}$ mentioned above, so that the pullback is $\id\times \pi\colon PG\times \Omegahat{G} \to PG \times \Omega G$.
Note that we then have that $d_0\colon PG\times \Omega G \to PG$ sends $(p,\gamma)\mapsto p\gamma$, and corresponds to the second projection for $PG^{[2]}$. Likewise, $d_1$ is projection on the first factor.

The central extension $\pi\colon\Omegahat{G}\to \Omega G$, meanwhile, is equipped with a connection $\mu$ and is constructed from a pair of differential forms\footnote{In \cite{MS03} the 1-form $\nu$ is instead denoted $\alpha$.} $(R,\nu) \in \Omega^2(\Omega G) \times \Omega^1(\Omega G \times \Omega G)$ satisfying the equations \cite{MS03}
\[
dR		= 0, \qquad
\d R		= d \nu, \qquad
\d \nu		= 0.
\]
The general procedure for constructing a central extension from such a pair is described in \emph{loc.\ cit.} but the important points here are that the 2-form $R$ is the curvature of the connection $\mu$ and the 1-form $\nu$ satisfies $\nu = s^* \d \mu$, where, as in Example \ref{ex:lifting}, $s\colon  \Omega G \times \Omega G \to \d \widehat{\Omega G}$ is the section defining the multiplication on the central extension and $\d \mu$ is the induced connection on $\d \widehat{\Omega G}$.

\label{form:nu}
In the case of the loop group \label{form:central extension curvature} $R$ is the left-invariant 2-form given by $R_\gamma = \intSone \langle\Theta_\gamma,\partial \Theta_\gamma \rangle$
and $\nu$ is given by $\nu_{(\gamma,\eta)}= 2\intSone\langle\Theta_\gamma,\phihat_\eta\rangle$, for $\gamma, \eta \in \Omega G$ and $\Theta$ is the usual left-invariant Maurer--Cartan form. 
Here and throughout we define $\partial := \tfrac{\partial}{\partial \theta}$, applied pointwise to function-valued differential forms.
The function $\phihat$ is the restriction of the function $PG\to \Omega \g$ given by $p\mapsto \phihatfun{p}$, denoted by the same symbol by a slight abuse of notation. 
It is closely related to the \emph{Higgs field} $\phi_p = p^{-1} \partial p$ from \cite{MS03}, satisfying $\phihat_p = \Ad_p \phi_p$.

In order to give a bundle gerbe connection on $(PG \times\widehat{\Omega G}, PG)$ an obvious suggestion would be to use the pullback of the connection $\mu$ on $\widehat{\Omega G}$ to $PG\times \widehat{\Omega G} \to PG\times \Omega G$. 
However, $\mu$ is not a \emph{bundle gerbe} connection, precisely because\footnote{Recall that we are using principal $U(1)$-bundles, not line bundles, for our bundle gerbes, so that connections are 1-forms, and we can manipulate them as forms. Everything in this paper could be done with line bundles at the cost of reworking statements about connections to their equivalents in that setting.} $s^*\d \mu = \nu \neq 0$ (on $PG^{[3]}\simeq PG\times \Omega G^2$). 
Notice though, that as $\d \nu = 0$ (on $PG^{[4]}\simeq PG\times \Omega G^3$) there is a 1-form $\epsilon$ on $PG\times \Omega G$ such that $\d \epsilon = \nu$. 
Then the connection $\nabla = \pr_2^*\mu - \pi^*\epsilon$ satisfies $s^*\d \nabla = s^* (\d \mu -  \pi^*\d \epsilon) = \nu - \nu = 0$ and so $\nabla$ \emph{is} a bundle gerbe connection.

In \cite{MS03} it is explained how to produce such a 1-form $\epsilon$ for a lifting bundle gerbe in general, given a connection on the principal bundle to which it is associated, and indeed a connection and curving for the basic gerbe is given. 
The 3-curvature is then the standard invariant 3-form $\omega$ on $G$ representing a generator of $H^3(G,2\pi\ZZ)$ inside $H^3_{dR}(G)$. Or, equivalently, $\frac{1}{2\pi}\omega$ is a generator of $H^3(G,\ZZ)\subset H^3_{dR}(G)$.

Here, we shall define a \emph{different} connection and curving on the basic gerbe to what is possible using the general technique of \cite{MS03}, but, as we shall see, this connection and curving will have the same 3-form curvature at the one from \emph{loc.\ cit}.

\label{form:central extension connection}
The connection $\mu$ is given by the descent of the 1-form $\pr_2^*\theta^{-1}d\theta + \pr_1^*\int_{[0,2\pi]} \ev^* R$ along a certain quotient map $P\Omega G \ltimes U(1) \twoheadrightarrow \Omegahat{G}$, where $\ev\colon P\Omega G \times [0,2\pi] \to \Omega G$ is $(f,s) \mapsto f(s)$. 
Here $\int_{[0,2\pi]}$ is integration over the fibre of differential forms along the projection $P\Omega G \times [0,2\pi] \to P\Omega G$. 
For completeness, we recall the definition, chosen to be consistent with the constructions in \cite{MS03}.

\begin{definition}\label{def:fibre_integration}
Let $\xi$ be a $k$-form on $X\times [0,2\pi]$. The \emph{integration over the fibre} of $\xi$ along $X \times [0,2\pi] \to X$ is defined to be the $(k-1)$-form given at a point $x\in X$ by\footnote{Note that the factor of $k$ here is due to how the contraction $\iota_{T_t}\xi$ is evaluated in Kobayashi--Nomizu conventions.}
\[
  \left(\int_{[0,2\pi]} \xi\right)_{\!x}\!(X_1,\ldots,X_{k-1}) := \int_0^{2\pi}k\cdot \xi_{(x,t)}((0,T_t),(X_1,0),\ldots,(X_{k-1},0))\,\mathrm{d}t, 
\]
for all tangent vectors $X_1,\ldots,X_{k-1}\in T_xX$, where also $T_t:=\frac{\partial}{\partial t}$ is the standard basis tangent vector at $t\in [0,2\pi]$.
\end{definition}

We now give a bundle gerbe connection on the basic gerbe, which is different from that in \cite{MS03}. \label{form:basic gerbe connection} \label{form:basic gerbe connection fix-up form}

\begin{lemma}\label{lemma:connection}
The 1-form $\nabla := \pr_2^*\mu - \pi^*\epsilon$ 
where at $(p,\gamma)\in PG\times \Omega G$, we have
\begin{equation}\label{eq:eps_def}
  \epsilon_{(p,\gamma)} := 2\intSone \langle\Theta_p,\phihat_\gamma\rangle
\end{equation}
is a bundle gerbe connection on the basic gerbe.  
\end{lemma}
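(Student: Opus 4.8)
The plan is to verify the single condition defining a bundle gerbe connection, namely that $m^{*}\delta\nabla = 0$, where $m\colon PG^{[3]}\to\delta E$ is the section encoding the multiplication on $E = PG\times\Omegahat{G}$. Since $\nabla = \mu - \pi^{*}\epsilon$ is manifestly a $U(1)$-connection on $E$ (being $\mu$ minus a form pulled back from the base $PG^{[2]}$), only this compatibility is at issue. The first step is the reduction already flagged in the paragraph preceding the statement: because the bundle $E$, the connection $\mu$, and the section $m$ are all obtained by pullback from the central extension $\Omegahat{G}\to\Omega G$ along the simplicial map $PG^{[\bullet]}\simeq PG\times\Omega G^{\bullet-1}\to N\Omega G_{\bullet}$, and because $s^{*}\delta\mu = \nu$, one gets $m^{*}\delta\mu = \nu$, with $\nu$ viewed as pulled back to $PG^{[3]}$ along the projection $(p,\gamma,\eta)\mapsto(\gamma,\eta)$. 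As $\pi^{*}\epsilon$ is pulled back from the base, $\delta$ is additive on $\nabla$ and $m^{*}\delta(\pi^{*}\epsilon) = \delta\epsilon$, so $m^{*}\delta\nabla = \nu - \delta\epsilon$. The lemma therefore reduces to the identity $\delta\epsilon = \nu$ on $PG^{[3]}$. (Note that $\epsilon$ genuinely involves the $PG$-coordinate and so does not descend to $\Omega G$; this is exactly why $\nabla$ differs from the connection produced by the general construction of \cite{MS03}, while --- as verified later --- sharing its 3-curvature.)

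Next I would write out the face maps $d_{i}\colon PG^{[3]}\to PG^{[2]}$ in the coordinates $PG^{[2]}\simeq PG\times\Omega G$ and $PG^{[3]}\simeq PG\times\Omega G^{2}$: writing a point of $PG^{[3]}$ as $(p,\gamma,\eta)$, one has $d_{0}(p,\gamma,\eta) = (p\gamma,\eta)$, $d_{1}(p,\gamma,\eta) = (p,\gamma\eta)$, and $d_{2}(p,\gamma,\eta) = (p,\gamma)$. Pulling back $\epsilon_{(p,\gamma)} = 2\intSone\langle\Theta_{p},\phihat_{\gamma}\rangle$ along these requires only two standard facts. First, since $d_{0}$ sends the $PG$-coordinate to the product $p\gamma$, the left Maurer--Cartan form transforms there as $\Theta_{p}\mapsto\Ad_{\gamma^{-1}}\Theta_{p} + \Theta_{\gamma}$, by left-invariance and the product rule. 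Second, the function $\phihat$ obeys the cocycle identity $\phihat_{\gamma\eta} = \phihat_{\gamma} + \Ad_{\gamma}\phihat_{\eta}$, which is immediate from $\phihat_{p} = \Ad_{p}\phi_{p} = (\partial p)p^{-1}$.

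Feeding these in gives $d_{0}^{*}\epsilon = 2\intSone\langle\Ad_{\gamma^{-1}}\Theta_{p} + \Theta_{\gamma},\phihat_{\eta}\rangle$, $d_{1}^{*}\epsilon = 2\intSone\langle\Theta_{p},\phihat_{\gamma} + \Ad_{\gamma}\phihat_{\eta}\rangle$, and $d_{2}^{*}\epsilon = 2\intSone\langle\Theta_{p},\phihat_{\gamma}\rangle$. In $\delta\epsilon = d_{0}^{*}\epsilon - d_{1}^{*}\epsilon + d_{2}^{*}\epsilon$ the two $\langle\Theta_{p},\phihat_{\gamma}\rangle$ contributions cancel outright, and the contributions $\langle\Ad_{\gamma^{-1}}\Theta_{p},\phihat_{\eta}\rangle$ and $-\langle\Theta_{p},\Ad_{\gamma}\phihat_{\eta}\rangle$ cancel by the $\Ad$-invariance of $\langle-,-\rangle$ (applied pointwise and then integrated over $[0,2\pi]$). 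What survives is precisely $2\intSone\langle\Theta_{\gamma},\phihat_{\eta}\rangle = \nu$, so $m^{*}\delta\nabla = 0$ and $\nabla$ is a bundle gerbe connection.

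The only point needing care is the pullback of $\epsilon$ along the face map $d_{0}$ built from the group product: correctly tracking the $\Ad_{\gamma^{-1}}$-twist and the extra Maurer--Cartan term from the new loop variable, and being consistent about the identification $PG^{[\bullet]}\simeq PG\times\Omega G^{\bullet-1}$ and its induced face maps. Beyond that bookkeeping there is no real obstacle --- after it, the computation collapses immediately by $\Ad$-invariance, which is just the kind of streamlining the rigid framework is intended to provide.
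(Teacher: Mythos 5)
Your proposal is correct and follows essentially the same route as the paper: reduce to the identity $\delta\epsilon=\nu$ via the observation that $\nabla$ differs from $\mu$ by a form pulled back from the base, then compute $\delta\epsilon_{(p,\gamma,\eta)}$ using $\Theta_{p\gamma}=\Ad_{\gamma^{-1}}\Theta_p+\Theta_\gamma$, the cocycle identity $\phihat_{\gamma\eta}=\phihat_\gamma+\Ad_\gamma\phihat_\eta$, and $\Ad$-invariance of $\langle-,-\rangle$. The face-map bookkeeping and the cancellations you describe are exactly those in the paper's calculation.
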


\begin{proof}
According to the discussion above, we need simply check that $\d \epsilon = \nu$. We have
\begin{align*}
\delta \epsilon_{(p,\gamma,\eta)} & = 2\intSone\langle
  \Theta_{p\gamma},\phihat_\eta\rangle 
  - \langle\Theta_p,\phihat_{\gamma\eta} \rangle
  + \langle\Theta_p,\phihat_\gamma
\rangle\\
&=2\intSone\langle 
  \Ad_{\gamma^{-1}}\Theta_p,\phihat_\eta \rangle
  + \langle \Theta_\gamma,\phihat_\eta \rangle
  - \langle \Theta_p,\phihat_\gamma  \rangle
  - \langle \Theta_p,\Ad_\gamma\phihat_\eta \rangle
  + \langle \Theta_p,\phihat_\gamma \rangle
\\
&=2\intSone\langle \Theta_\gamma,\phihat_\eta\rangle \\
& = \nu.
\end{align*}
Thus $\nabla$ is a bundle gerbe connection for the basic gerbe.
\end{proof}

\label{form: basic gerbe curving}
\begin{lemma}\label{lemma:curving}
A curving for the connection in Lemma~\ref{lemma:connection} is given by 
\[
  B_p := \intSone \langle \Theta_p, \partial \Theta_p \rangle .
\]
\end{lemma}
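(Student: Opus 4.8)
The plan is to verify directly the curving condition $\delta B = F_\nabla$, as $2$-forms on $PG^{[2]}\simeq PG\times\Omega G$, where $\nabla$ is the bundle gerbe connection of Lemma~\ref{lemma:connection}; this is precisely what it means for $B$ to be a curving, by the discussion following Lemma~\ref{lemma:key}. Under the identification $PG^{[2]}\simeq PG\times\Omega G$ the two face maps to $PG$ are $(p,\gamma)\mapsto p\gamma$ and $(p,\gamma)\mapsto p$, so $(\delta B)_{(p,\gamma)} = B_{p\gamma}-B_p$. On the other side, since $R$ is the curvature of $\mu$ and $\nabla=\mu-\pi^*\epsilon$, the curvature descends to $F_\nabla = R-d\epsilon$ on $PG^{[2]}$, where $R$ at $(p,\gamma)$ is $R_\gamma=\intSone\langle\Theta_\gamma,\partial\Theta_\gamma\rangle$ and $\epsilon$ is as in \eqref{eq:eps_def}. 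So the lemma reduces to the pointwise identity $B_{p\gamma}-B_p = R_\gamma - d\epsilon_{(p,\gamma)}$ on $PG\times\Omega G$. The tools are: the product rule $\Theta_{p\gamma}=\Ad_{\gamma^{-1}}\Theta_p+\Theta_\gamma$ for the left Maurer--Cartan form; the facts that $\partial$ commutes with $d$ and with pullbacks and that $\partial\bigl(\Ad_{\gamma^{-1}}\Theta_p\bigr)=\Ad_{\gamma^{-1}}\partial\Theta_p-[\,\gamma^{-1}\partial\gamma,\Ad_{\gamma^{-1}}\Theta_p\,]$; the $\Ad$- and $\ad$-invariance of $\langle-,-\rangle$; and a single integration by parts over $[0,2\pi]$.

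For the left-hand side, set $A:=\Ad_{\gamma^{-1}}\Theta_p$. Substituting $\Theta_{p\gamma}=A+\Theta_\gamma$ and the corresponding expansion of $\partial\Theta_{p\gamma}$ into $B_{p\gamma}=\intSone\langle\Theta_{p\gamma},\partial\Theta_{p\gamma}\rangle$ and using $\Ad$-invariance, the $\Theta_p$-diagonal term contributes $\intSone\langle\Theta_p,\partial\Theta_p\rangle=B_p$ and the $\Theta_\gamma$-diagonal term contributes $R_\gamma$, leaving four cross terms. Rewriting $\Ad_{\gamma^{-1}}\partial\Theta_p=\partial A+[\,\gamma^{-1}\partial\gamma,A\,]$ cancels the two cross terms that contain $[\,\gamma^{-1}\partial\gamma,A\,]$, and integrating $\intSone\langle\Theta_\gamma,\partial A\rangle$ by parts replaces it by $\intSone\langle A,\partial\Theta_\gamma\rangle$ --- the boundary term $\bigl[\langle\Theta_\gamma,A\rangle\bigr]_0^{2\pi}$ vanishes because $\gamma$ is a \emph{based} loop, so $\Theta_\gamma$ vanishes at $\theta=0$ and $\theta=2\pi$. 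Finally, the identity $\langle A,[\,\gamma^{-1}\partial\gamma,A\,]\rangle=-\langle\gamma^{-1}\partial\gamma,[A,A]\rangle$ (total antisymmetry of the structure tensor of the invariant form $\langle-,-\rangle$) leaves $B_{p\gamma}-B_p = R_\gamma + \intSone\bigl(\langle\gamma^{-1}\partial\gamma,[A,A]\rangle + 2\langle A,\partial\Theta_\gamma\rangle\bigr)$.

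It remains to identify this last integral with $-d\epsilon_{(p,\gamma)}$. By the Leibniz rule $d\epsilon_{(p,\gamma)}=2\intSone\bigl(\langle d\Theta_p,\phihat_\gamma\rangle-\langle\Theta_p,d\phihat_\gamma\rangle\bigr)$; the Maurer--Cartan equation rewrites $d\Theta_p$ in terms of $[\Theta_p,\Theta_p]$, and a one-line computation from $\phihat_\gamma=(\partial\gamma)\gamma^{-1}$ gives $d\phihat_\gamma=\Ad_\gamma\partial\Theta_\gamma$. Applying $\Ad$-invariance together with $\Ad_{\gamma^{-1}}\phihat_\gamma=\gamma^{-1}\partial\gamma$ and $\Ad_{\gamma^{-1}}[\Theta_p,\Theta_p]=[A,A]$ brings $d\epsilon_{(p,\gamma)}$ to exactly $-\intSone\bigl(\langle\gamma^{-1}\partial\gamma,[A,A]\rangle+2\langle A,\partial\Theta_\gamma\rangle\bigr)$. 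Combining with the previous paragraph, $(\delta B)_{(p,\gamma)}=B_{p\gamma}-B_p=R_\gamma-d\epsilon_{(p,\gamma)}=F_\nabla$, proving that $B$ is a curving for $\nabla$. The main obstacle is the bookkeeping: differentiating $\Ad_{\gamma^{-1}}$ in the loop parameter (the source of the $[\,\cdot,\cdot\,]$ corrections, and ultimately of the Chern--Simons-type cubic term), making sure the single integration-by-parts boundary term genuinely vanishes --- which uses that $\Omega G$ consists of \emph{based} loops, not free loops --- and keeping the signs and factors of $\tfrac12$ consistent with the Kobayashi--Nomizu conventions, especially in the cyclic identity $\langle A,[\,\gamma^{-1}\partial\gamma,A\,]\rangle=-\langle\gamma^{-1}\partial\gamma,[A,A]\rangle$. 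No exactness result or global argument is needed: the whole proof is a chain of pointwise identities on $PG\times\Omega G$.
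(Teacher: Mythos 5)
Your proof is correct and follows essentially the same route as the paper's: a direct pointwise verification of $\delta B_{(p,\gamma)} = B_{p\gamma}-B_p = R_\gamma - d\epsilon_{(p,\gamma)}$ using the product rule for the Maurer--Cartan form, the commutator correction from differentiating $\Ad_{\gamma^{-1}}$ in the loop parameter, $\Ad$-invariance of the pairing, the identity $d\phihat_\gamma = \Ad_\gamma\partial\Theta_\gamma$, and one integration by parts whose boundary term vanishes because the loops are based. The only (cosmetic) difference is where the integration by parts is performed and that you isolate $-d\epsilon$ at the end rather than extracting it midway through the chain of equalities.
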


\begin{proof}
We need to check that $\delta B_{(p,\gamma)} = B_{p\gamma} - B_p= R_{\gamma} - d\epsilon_{(p,\gamma)} $. 

In the calculation below we make use of the identities
\begin{align*}
\partial(\Ad_{\gamma^{-1}}\Theta_p) 
	&= \partial(\gamma^{-1}\Theta_p \gamma)\\
	& = \Ad_{\gamma^{-1}}\partial\Theta_p + \gamma^{-1}(\Theta_p\partial\gamma \gamma^{-1})\gamma- \gamma^{-1}(\partial\gamma \gamma^{-1}\Theta_p)\gamma\\
	& = \Ad_{\gamma^{-1}}(\partial\Theta_p + [\Theta_p,\phihat_\gamma]),\\
\Ad_\gamma\partial\Theta_\gamma 
	&= d\phihat_\gamma,\\ 
\intertext{and} 
\Ad_\gamma\partial\Theta_\gamma -[\Thetahat_\gamma,\phihat_\gamma] &= \partial\Thetahat_\gamma,
\end{align*} 
where $\Thetahat_g = \rMC{g} = \Ad_{g} \Theta_g\in \Omega^1(G,\g)$ denotes the right-invariant Maurer--Cartan form. 
The last identity follows from:
\begin{align*}
\partial(\Ad_\gamma\Theta_\gamma) & = \partial\gamma \gamma^{-1}\gamma\Theta_\gamma\gamma^{-1} + \Ad_\gamma\partial\Theta_\gamma - \gamma\Theta_\gamma\gamma^{-1}\partial\gamma\gamma^{-1}\\
&=\Ad_\gamma\partial\Theta_\gamma + [\phihat_\gamma,\Thetahat_\gamma]\\
& = \Ad_\gamma\partial\Theta_\gamma - [\Thetahat_\gamma,\phihat_\gamma].
\end{align*}

Now we calculate $\d B$ at the point $(p,\gamma)\in PG\times \Omega G$:
\begin{align*}
&B_{p\gamma} - B_p \\
& = \intSone 
\langle \Theta_{p\gamma}, \partial \Theta_{p\gamma}\rangle 
- \langle\Theta_p, \partial \Theta_p \rangle \\
& = \begin{multlined}[t]\intSone \langle \Ad_{\gamma^{-1}}\Theta_p, \partial(\Ad_{\gamma^{-1}}\Theta_p)\rangle 
+ \langle\Ad_{\gamma^{-1}}\Theta_p, \partial\Theta_\gamma\rangle \\
+ \langle\Theta_\gamma,\partial(\Ad_{\gamma^{-1}}\Theta_p)\rangle
+ \langle\Theta_\gamma,\partial \Theta_{\gamma}\rangle 
- \langle\Theta_p, \partial \Theta_p \rangle 
\end{multlined}\\
& = \begin{multlined}[t]
R_\gamma + \intSone \langle \Ad_{\gamma^{-1}}\Theta_p, \partial(\Ad_{\gamma^{-1}}\Theta_p)\rangle 
+ \langle\Ad_{\gamma^{-1}}\Theta_p, \partial\Theta_\gamma\rangle \\
+ \langle\Theta_\gamma,\partial(\Ad_{\gamma^{-1}}\Theta_p)\rangle
- \langle\Theta_p, \partial \Theta_p\rangle
\end{multlined} \\
& =  \begin{multlined}[t]R_\gamma + \intSone 
\langle  \Theta_p,\partial\Theta_p + [\Theta_p,\phihat_\gamma]\rangle 
+\langle\Ad_{\gamma^{-1}}\Theta_p,\partial\Theta_\gamma \rangle \\
+\langle\Theta_\gamma,\Ad_{\gamma^{-1}}(\partial\Theta_p + [\Theta_p,\phihat_\gamma])\rangle 
- \langle\Theta_p,\partial\Theta_p\rangle
\end{multlined} \\
 & = R_\gamma + \intSone 
 \langle \Theta_p,[\Theta_p,\phihat_\gamma] + \Ad_{\gamma^{-1}}(\Theta_p),\partial\Theta_\gamma\rangle
+\langle\Thetahat_\gamma,\partial\Theta_p \rangle
+ \langle\Thetahat_\gamma,[\Theta_p,\phihat_\gamma]\rangle \\
 & = R_\gamma + \intSone 
 \langle [\Theta_p,\Theta_p],\phihat_\gamma\rangle
 + \langle\Theta_p ,\Ad_{\gamma}\partial\Theta_\gamma \rangle
 + \langle\Thetahat_\gamma,[\Theta_p,\phihat_\gamma] \rangle
+ \langle\Thetahat_\gamma, \partial\Theta_p\rangle \\
 & = \begin{multlined}[t]
 R_\gamma + \intSone 
  \left(-2\langle d\Theta_p,\phihat_\gamma \rangle
  + 2\langle\Theta_p,d\phihat_\gamma \rangle \right)\\
   +\intSone
  \langle -\Theta_p,\Ad_\gamma\partial\Theta_\gamma\rangle
 + \langle\Theta_p,[\Thetahat_\gamma,\phihat_\gamma]\rangle
  +\langle \Thetahat_\gamma,\partial\Theta_p\rangle 
 \end{multlined} \\
 & = R_\gamma - d\epsilon_{(p,\gamma)} +\intSone
 \langle \Ad_\gamma\partial\Theta_\gamma-[\Thetahat_\gamma,\phihat_\gamma],\Theta_p\rangle 
  + \langle\Thetahat_\gamma\partial\Theta_p\rangle \\
 &= R_\gamma - d\epsilon_{(p,\gamma)} 
 +\intSone
 \langle \partial\Thetahat_\gamma,\Theta_p\rangle
  + \langle\Thetahat_\gamma,\partial\Theta_p\rangle \\
&= R_\gamma - d\epsilon_{(p,\gamma)} + \Big[\langle\Thetahat_\gamma,\Theta_p\rangle\Big]_0^{2\pi}\\
& = R_\gamma - d\epsilon_{(p,\gamma)}, 
\end{align*}
as needed.
\end{proof}

\begin{lemma}\label{lemma:basic_gerbe_curvature}
The 3-form curvature of $B$ is equal to the standard 3-form on $G$, namely for $\omega = \frac16\langle [\Theta_G,\Theta_G],\Theta_G\rangle $ we have $\ev_{2\pi}^*\omega = dB$.
\end{lemma}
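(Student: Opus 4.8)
The curving is $B_p = \intSone \langle \Theta_p, \partial\Theta_p\rangle$, a 2-form on $PG$, and by Lemma~\ref{lemma:key} its exterior derivative $dB$ descends to a 3-form on $G$ along $\ev_{2\pi}\colon PG \to G$; the claim is that this descended form is $\omega = \tfrac16\langle[\Theta_G,\Theta_G],\Theta_G\rangle$. The plan is to compute $dB$ directly on $PG$ and show that it equals $\ev_{2\pi}^*\omega$ as forms on $PG$, which is stronger than descent and certainly suffices. The heart of the computation is moving $d$ inside the fibre integral $\intSone$ and commuting it past $\partial$ (the derivative in the loop parameter $\theta$), picking up boundary terms over $[0,2\pi]$.

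First I would record the structure equations I need: $d\Theta_p = -\tfrac12[\Theta_p,\Theta_p]$ for the (pointwise-in-$\theta$) left-invariant Maurer--Cartan form on $PG$, the compatibility $d\partial\Theta_p = \partial d\Theta_p$ (since $d$ and $\partial$ are derivatives in independent directions), and invariance of $\langle-,-\rangle$, which gives $d\langle\alpha,\beta\rangle = \langle d\alpha,\beta\rangle \pm \langle\alpha,d\beta\rangle$ and the cyclic identity $\langle[\xi,\eta],\zeta\rangle = \langle\xi,[\eta,\zeta]\rangle$. Then $dB_p = \intSone d\langle\Theta_p,\partial\Theta_p\rangle = \intSone \langle d\Theta_p,\partial\Theta_p\rangle - \langle\Theta_p, d\partial\Theta_p\rangle$. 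Substituting $d\Theta_p = -\tfrac12[\Theta_p,\Theta_p]$ and $d\partial\Theta_p = \partial d\Theta_p = -\tfrac12\partial[\Theta_p,\Theta_p] = -[\partial\Theta_p,\Theta_p]$ yields an integrand of the form $-\tfrac12\langle[\Theta_p,\Theta_p],\partial\Theta_p\rangle + \langle\Theta_p,[\partial\Theta_p,\Theta_p]\rangle$. Using the cyclic identity the second term equals $-\langle[\Theta_p,\Theta_p],\partial\Theta_p\rangle$ (up to the sign bookkeeping of moving a 1-form past a 2-form), so the whole integrand collapses, up to a constant, to a multiple of $\langle[\Theta_p,\Theta_p],\partial\Theta_p\rangle$; this is, fibrewise in $\theta$, exactly $\partial$ of (a constant times) $\langle[\Theta_p,\Theta_p],\Theta_p\rangle$, because $\partial\langle[\Theta_p,\Theta_p],\Theta_p\rangle = 2\langle[\partial\Theta_p,\Theta_p],\Theta_p\rangle + \langle[\Theta_p,\Theta_p],\partial\Theta_p\rangle = 3\langle[\Theta_p,\Theta_p],\partial\Theta_p\rangle$ by the cyclic identity.

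Hence $dB_p = c\intSone \partial\langle[\Theta_p,\Theta_p],\Theta_p\rangle = c\big[\langle[\Theta_p,\Theta_p],\Theta_p\rangle\big]_0^{2\pi}$ for an explicit constant $c$ to be pinned down by the Kobayashi--Nomizu conventions stated in the Convention. At $\theta = 0$ the term vanishes since $p(0) = 1$ and $\Theta_p$ vanishes there; at $\theta = 2\pi$ it is $\ev_{2\pi}^*\langle[\Theta_G,\Theta_G],\Theta_G\rangle$. Collecting constants should give precisely $c = \tfrac16$, matching $\omega = \tfrac16\langle[\Theta_G,\Theta_G],\Theta_G\rangle$; I would double-check this against the factor-of-$\tfrac12$ bookkeeping already visible in the proof of Lemma~\ref{lemma:curving} (e.g.\ the line producing $-2\langle d\Theta_p,\phihat_\gamma\rangle + 2\langle\Theta_p,d\phihat_\gamma\rangle$), so the normalisations are internally consistent.

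The main obstacle is purely the sign-and-constant bookkeeping: keeping track of the Koszul signs when $d$ passes the 1-forms $\Theta_p$ and $\partial\Theta_p$, the factor of $\tfrac12$ in $d\Theta_p$ under the Kobayashi--Nomizu convention, and the relative sign in the Leibniz rule for $\langle-,-\rangle$ on forms of odd degree, so that the three a priori different-looking terms genuinely combine into a single total-$\partial$ expression with coefficient exactly $\tfrac16$. Once the integrand is recognised as a $\theta$-derivative, the fundamental theorem of calculus and the basepoint condition $p(0)=1$ finish the argument immediately.
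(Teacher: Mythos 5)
Your proposal is correct and follows essentially the same route as the paper's proof: compute $dB$ using the Maurer--Cartan equation and $d\partial=\partial d$ to reduce the integrand to $\tfrac12\langle[\Theta_p,\Theta_p],\partial\Theta_p\rangle$, recognise this as $\tfrac16\partial\langle[\Theta_p,\Theta_p],\Theta_p\rangle$ via the identity $\partial\langle[\Theta_p,\Theta_p],\Theta_p\rangle=3\langle[\Theta_p,\Theta_p],\partial\Theta_p\rangle$, and evaluate the boundary term using the basepoint condition at $\theta=0$. The constant bookkeeping you flag works out exactly as you predict, giving $c=\tfrac16$.
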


\begin{proof}
We calculate $d B$:
\begin{align*}
dB_p & = \intSone 
\langle d\Theta_p, \partial \Theta_p\rangle
 -\langle \Theta_p, d\partial\Theta_p \rangle \\
& = \intSone \langle d\Theta_p, \partial \Theta_p\rangle 
- \langle\Theta_p, \partial d\Theta_p\rangle \\
& = \intSone \langle -\frac12[\Theta_p,\Theta_p], \partial \Theta_p\rangle 
+\frac12 \langle\Theta_p, \partial [\Theta_p,\Theta_p]\rangle \\
& = \intSone -\frac12\langle [\Theta_p,\Theta_p], \partial \Theta_p\rangle 
+\frac12 \langle\Theta_p, [\partial \Theta_p,\Theta_p]\rangle 
+ \frac12 \langle\Theta_p, [\Theta_p,\partial\Theta_p]\rangle \\
& = \frac12\intSone \langle [\Theta_p,\Theta_p], \partial \Theta_p\rangle.
\end{align*}
We now calculate this last term, in reverse, using the fact $\Theta$ is $P\mathfrak{g}$-valued so that evaluation at $0$ vanishes:
\begin{align*}
\langle [\Theta_{p(2\pi)},\Theta_{p(2\pi)}],\Theta_{p(2\pi)}\rangle  & = \intSone \partial \langle [\Theta_p,\Theta_p], \Theta_p\rangle \\
& = \intSone\langle [\partial\Theta_p,\Theta_p], \Theta_p\rangle 
+ \langle[\Theta_p,\partial\Theta_p], \Theta_p\rangle 
+ \langle[\Theta_p,\Theta_p], \partial\Theta_p\rangle \\
& = 3\intSone \langle [\Theta_p,\Theta_p], \partial \Theta_p\rangle.
\end{align*}
Thus we have $dB_p = \frac16\langle [\Theta_{p(2\pi)},\Theta_{p(2\pi)}],\Theta_{p(2\pi)}\rangle $ for all $p\in PG$, which gives $dB = \ev_{2\pi}^*\omega$, as needed.
\end{proof}

Now recall that we have an extra factor of $\frac{1}{4\pi}$ built into the normalised Cartan--Killing form $\langle -,-\rangle $ from Definition \ref{def:Killing_normalised}, so in terms of the basic inner product $\langle-,-\rangle_\text{basic} = 4\pi\langle -,-\rangle$ of \cite{PS}, this gives $\omega = \frac{1}{24\pi}\langle[\Theta^G,\Theta^G],\Theta^G\rangle_\text{basic}$.
Further, we know that the de Rham curvature of a bundle gerbe has periods living in $2\pi\mathbb{Z}$ \cite{Mur}, so to get an \emph{integral} form, we need to further divide by a factor of $2\pi$, so that as an integral 3-form, the 3-curvature we find is $\frac{1}{48\pi^2}\langle[\Theta^G,\Theta^G],\Theta^G\rangle_\text{basic}$, which is (the image in de Rham cohomology of) the generator of  $H^3(G,\mathbb{Z})$.

The previous Lemmata together prove:
\begin{proposition}
The pair $(\nabla,B)$ form a bundle gerbe connection and curving on the basic gerbe with identical 3-form curvature as the one from \cite{MS03}.
\end{proposition}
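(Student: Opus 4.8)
The plan is straightforward: this proposition is merely the conjunction of the three preceding lemmata, so the proof amounts to checking that their conclusions assemble into the asserted statement. First I would recall the relevant definitions in the case at hand: a bundle gerbe connection on $(PG\times\widehat{\Omega G},PG)$ is a connection on the $U(1)$-bundle $PG\times\widehat{\Omega G}\to PG\times\Omega G$ satisfying $s^*\delta\nabla=0$, where $s$ is the section defining the multiplication; a curving for it is a $2$-form $B\in\Omega^2(PG)$ with $\delta B=F_\nabla$; and the $3$-curvature is then the unique closed $3$-form on $G$ whose pullback along $\ev_{2\pi}\colon PG\to G$ equals $dB$, its existence and uniqueness being guaranteed by Lemma~\ref{lemma:key} once one knows $\delta(dB)=d(\delta B)=dF_\nabla=0$.

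With these definitions in hand the argument is immediate. Lemma~\ref{lemma:connection} shows that $\nabla=\mu-\pi^*\epsilon$ is a bundle gerbe connection, by verifying $\delta\epsilon=\nu=s^*\delta\mu$ and hence $s^*\delta\nabla=0$; Lemma~\ref{lemma:curving} shows that $B_p=\intSone\langle\Theta_p,\partial\Theta_p\rangle$ satisfies $\delta B=F_\nabla$, so that it is a curving for $\nabla$; and Lemma~\ref{lemma:basic_gerbe_curvature} computes $dB=\ev_{2\pi}^*\omega$ with $\omega=\frac16\langle[\Theta_G,\Theta_G],\Theta_G\rangle$, identifying the $3$-curvature of $(\nabla,B)$ with the standard invariant $3$-form on $G$. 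Since the connection and curving produced by the general procedure of \cite{MS03} also have $3$-curvature equal to this same standard invariant $3$-form (as recalled in the discussion preceding Definition~\ref{def:fibre_integration}), the two $3$-curvatures agree not merely up to an exact form but on the nose, which establishes the comparison claimed in the proposition.

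I do not expect any genuine obstacle here: all the analytic content has already been discharged in Lemmata~\ref{lemma:connection}--\ref{lemma:basic_gerbe_curvature}. The only point worth stating explicitly is that ``identical $3$-form curvature'' is an assertion at the level of differential forms, not of de Rham classes, and this is exactly what Lemma~\ref{lemma:basic_gerbe_curvature} delivers, by pinning down $dB$ as the pullback of the \emph{specific} form $\frac16\langle[\Theta_G,\Theta_G],\Theta_G\rangle$ rather than of some unspecified representative of its class.
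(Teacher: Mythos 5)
Your proposal is correct and matches the paper exactly: the paper offers no separate argument, simply stating that the preceding Lemmata together prove the proposition, and your write-up spells out precisely how Lemmata~\ref{lemma:connection}, \ref{lemma:curving} and \ref{lemma:basic_gerbe_curvature} combine with the earlier remark that the \cite{MS03} construction also has 3-curvature equal to the standard invariant 3-form $\omega$. Your explicit note that the agreement is at the level of forms rather than de~Rham classes is a fair reading of ``identical'' and is exactly what Lemma~\ref{lemma:basic_gerbe_curvature} supplies.
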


\begin{remark}\label{rem:MS_epsilon_correction}
The difference between the connection and curving given here, and the one from \cite{MS03} is that there is a 1-form $\varepsilon_{MS}=2\intSone \tfrac{\theta}{2\pi} \langle\ev^*\Thetahat_G,\phi\rangle$ on $PG$ such that the connection from \emph{loc.\ cit.} is $\nabla+\pi^*\delta(\varepsilon_{MS})$, and the curving is $B+d\varepsilon_{MS}$. Everything in this article works using the connection and curving $(\nabla+\pi^*\delta\varepsilon_{MS},B+d\varepsilon_{MS})$, but the expressions are longer and more complicated.
One conceptual way to view this is that the Murray--Stevenson construction is identical to the one here, up to tensoring with a trivial, flat bundle gerbe with connection $\theta^{-1}d\theta + \delta(\varepsilon_{MS})$ and curving $d\varepsilon_{MS}$.
\end{remark}


\section{Rigid models}

In \cite{Ste} two notions of bundle 2-gerbe are given, namely bundle 2-gerbes, and \emph{weak} (or `stable') bundle 2-gerbes. 
The weaker notion is now the default in the literature, and it is for this version one recovers the classification results and so on. A bundle 2-gerbe  \`a la Stevenson gives rise to a weak bundle 2-gerbe by replacing bundle gerbe morphisms by their associated stable isomorphisms.
However for the purposes of our computations, an even more rigid model is needed, even more so than Stevenson's stricter notion of bundle 2-gerbes.

\subsection{Rigid bundle 2-gerbes}

\begin{definition}
A \emph{bisemisimplicial manifold} $X_{\bullet,\bullet}$ consists of a family of manifolds $X_{n,m}$, $n,m\geq 0$ together with maps
\[
  d_i^h\colon X_{n,m}\to X_{n-1,m},\quad i=0,\ldots,n,\quad d_i^v\colon X_{n,m}\to X_{n,m-1},\quad i=0,\ldots,m,
\]
satisfying the following equations for all $0\leq i< j$:
\begin{align*}
d_i^h d_j^h = d_{j-1}^h d_i^h\\
d_i^v d_j^v = d_{j-1}^v d_i^v
\end{align*}

\end{definition}

Recall that for a bisemisimplicial manifold $Z$ we have two differentials acting on geometric objects, namely $\delta_h$ and $\delta_v$ via the usual alternating sum (or alternating tensor product, as appropriate). 
For the following examples, fix a bisemisimplicial manifold $X_{\bullet,\bullet}$.

\begin{example}
Given a differential form $\omega \in \Omega^k(X_{n,m})$, we have $\delta_h\omega = \sum (-1)^i (d_i^h)^*\omega \in \Omega^k(X_{n+1,m})$ and $\delta_v\omega = \sum (-1)^i (d_i^v)^*\omega \in \Omega^k(X_{n,m+1})$
\end{example}

\begin{example}
Given a principal $U(1)$-bundle $E\to X_{n,m}$, we have $\delta_h E = (d_0^h)^*E\otimes (d_1^h)^*E^* \otimes\ldots$, a bundle on $X_{n+1,m}$, and similarly for $\delta_v E \to X_{n,m+1}$.
Given a section $s$ of $E$, there are induced sections $\delta_h(s)$ and $\delta_v(s)$ of $\delta_h E$ and $\delta_v E$ respectively.
\end{example}

Notice that we have equality $\delta_h\delta_v\omega = \delta_v\delta_h\omega$, and a canonical isomorphism $\delta_h\delta_v E \simeq \delta_v\delta_h E$ induced by the universal property of the pullbacks.

\begin{definition}\label{def:vstrictb2g}
Let $X$ be a manifold. 
A \emph{rigid bundle 2-gerbe} on $X$ consists of the following data
\begin{enumerate}[(1)]

  \item A surjective submersion $Y \to X$.
  
  \item A 3-truncated semisimplicial space $Z_\bullet$ (that is $Z_n$ for $n\leq 3$) and surjective submersion of (truncated) semisimplicial spaces $Z_\bullet \to Y^{[\bullet+1]}$, i.e.\ a (truncated) semisimplicial map that is a surjective submersion $Z_n \to Y^{[n+1]}$ in each degree.

  \item A $U(1)$-bundle $E\to Z_1^{[2]}$ such that $E \rightrightarrows Z_1$ is a bundle gerbe on $Y^{[2]}$, with bundle gerbe multiplication $m\in \Gamma(\delta_vE)$.

  \item\label{b2g:mult}
   A section $M\in \Gamma(\delta_hE)$, called the \emph{bundle 2-gerbe multiplication};
   
   \item The section $M$ satisfies\footnote{The equalities here of spaces of sections are in fact canonical isomorphisms arising from the isomorphisms of bundles.} 
   \begin{enumerate}
      \item $\delta_v(M) = \delta_h(m)\in \Gamma(\delta_v\delta_hE) = \Gamma(\delta_h\delta_vE)$, and
      \item $\delta_h(M) = 1\in \Gamma(\delta_h^2E) = \Gamma(Z_3^{[2]}\times U(1)) = C^\infty(Z_3^{[2]}, U(1))$.
    \end{enumerate} 
\end{enumerate}

\end{definition}

The picture shown in Figure~\ref{fig:verystrictb2g} illustrates Definition~\ref{def:vstrictb2g}; the greyed-out parts are only relevant for Definition~\ref{def:vsb2g_triv} and can be ignored for now.

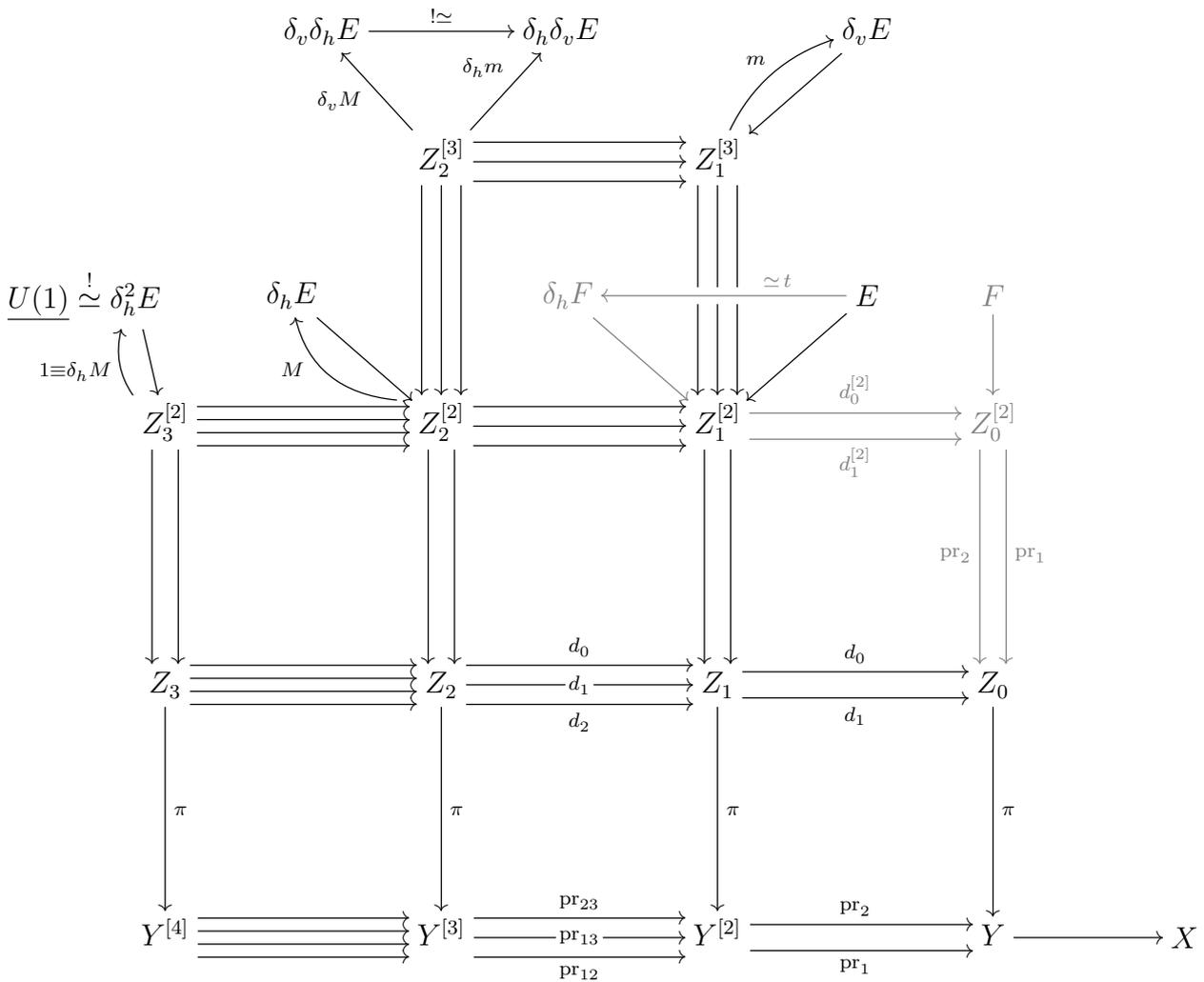
\begin{figure}[b]
\[
 \centerline{
  \xymatrix@=16ex{
    &
    \save 
      []+<-4em,10ex>  
      *+{\delta_v\delta_h E}="dvdh" 
      \ar@{<-}[]_-(.4){\delta_vM} 
    \restore
    \save 
      []+<4em,10ex> 
      *+{ \delta_h\delta_v E}="dhdv" 
      \ar@{<-}[]_-(.4){\delta_hm}
    \restore
    \ar@{->}"dvdh";"dhdv"^{!\simeq} 
    Z_2^{[3]}   
    \ar@<1.5ex>[r] \ar@<-1.5ex>[r] \ar[r] \ar@<1.5ex>[d] \ar@<-1.5ex>[d] \ar[d] 
    &
    Z_1^{[3]} 
    \save 
      []+<5em,10ex> 
      *+{\delta_v E} 
      \ar[] \ar@{<-}@/_1pc/[]_{m} 
    \restore 
    & 
    \\
    Z_3^{[2]} 
    \ar@<1ex>[d] \ar@<-1ex>[d] \ar@<1.5ex>[r] \ar@<-1.5ex>[r] \ar@<-0.5ex>[r]\ar@<0.5ex>[r]
    \save 
      []+<-1em,10ex> 
      *+[l]{\underline{U(1)}\stackrel{!}{\simeq}\delta_h^2 E} 
      \ar[] \ar@{<-}@/_1pc/[]_{1\equiv \delta_hM} 
    \restore 
    &
    Z_2^{[2]} 
    \ar@<1ex>[d] \ar@<-1ex>[d] \ar@<1.5ex>[r] \ar@<-1.5ex>[r] \ar[r] 
    \save
      []+<-5em,10ex> 
      *+{\delta_h E}  
      \ar[]+<-1em,2ex> \ar@{<-}@/_1pc/[]+<-1.5em,2ex>_(0.4){M} 
    \restore 
    &  
    \save 
      []+<5em,10ex> 
      *+{E}="2gerbe"  
      \ar[]+<1em,2ex>
    \restore 
    \save 
      []+<-5em,10ex> 
      *+{{\color{gray}\delta_h F}}="deltatrivbg"  
      \ar@*{[gray]}[]+<-1em,2ex>
    \restore 
    \ar@*{[gray]}@{->}"2gerbe";"deltatrivbg"_(.3){\color{gray}\simeq\,t} 
    \ar@<1.5ex>[u];[]|!{"2gerbe";"deltatrivbg"}{\hole}
    \ar@<-1.5ex>[u];[]|!{"2gerbe";"deltatrivbg"}{\hole} 
    \ar[u];[]|!{"2gerbe";"deltatrivbg"}{\hole}
    Z_1^{[2]}
    \ar@<1ex>[d] \ar@<-1ex>[d] \ar@*{[gray]}@<1ex>[r]^-{\color{gray}d_0^{[2]}} \ar@*{[gray]}@<-1ex>[r]_-{\color{gray}d_1^{[2]}}
    & 
    {\color{gray}Z_0^{[2]} }
    \ar@*{[gray]}@<-1ex>@<1ex>[d]^{\color{gray}\pr_1} \ar@*{[gray]}@<-1ex>@<-1ex>[d]_{\color{gray}\pr_2}
    \save 
      []+<0em,10ex> 
      *+{\color{gray}F}  
      \ar@*{[gray]}[] 
    \restore  
    \\
    Z_3 
    \ar[d]^{\pi} \ar@<1.5ex>[r] \ar@<-1.5ex>[r] \ar@<-0.5ex>[r]\ar@<0.5ex>[r]
    &
    Z_2 
    \ar@<1.5ex>[r]^{d_0} \ar[r]|{\,d_1\,} \ar@<-1.5ex>[r]_{d_2} \ar[d]^{\pi}
    & 
    Z_1 
    \ar@<-1ex>@<1ex>[r]^{d_0} \ar@<-1ex>@<-1ex>[r]_{d_1} \ar[d]^{\pi}
    &
    {Z_0} 
    \ar[d]^{\pi} 
    \\
    Y^{[4]}
    \ar@<1.5ex>[r] \ar@<-1.5ex>[r] \ar@<-0.5ex>[r]\ar@<0.5ex>[r] 
    &
    Y^{[3]}
    \ar@<1.5ex>[r]^{\pr_{23}} \ar[r]|{\,\pr_{13}\,} \ar@<-1.5ex>[r]_{\pr_{12}}
    & 
    Y^{[2]}
    \ar@<1ex>[r]^{\pr_2} \ar@<-1ex>[r]_{\pr_1}
    & 
    Y
    & 
    \save 
      []+<-2em,0pt> 
      *+{X} 
      \ar@{<-}[l]
    \restore
  }
 }
\]
\vspace{1cm}
\caption{A rigid bundle 2-gerbe on $X$. {\color{gray}(A trivialisation)}}\label{fig:verystrictb2g}
\end{figure}

The section $M$ is a \emph{strong} trivialisation of the induced bundle gerbe on $Y^{[3]}$, as opposed to an ordinary trivialisation, as would be expected from \cite[\S 7.1]{StevPhD}, even accounting for the different presentation of the bundle gerbe on $Y^{[3]}$.

\begin{proposition}\label{prop:rb2g_mult_associativity_square}
The condition (5)(b) in Definition \ref{def:vstrictb2g} is equivalent to the isomorphism $M'\colon d_0^* E\otimes d_2^*E \to d_1^* E$ of $U(1)$-bundles on $Z_2^{[2]}$ (corresponding to the section $M$) satisfying:
\begin{enumerate}
\setcounter{enumi}{4}
\item \begin{itemize}
\item[(b')] The diagram
\[
  \xymatrix{
    E_{23}\otimes E_{12} \otimes E_{01} \ar[r] \ar[d] & E_{13} \otimes E_{01} \ar[d] \\
    E_{23} \otimes E_{02}\ar[r] & E_{03}
  }
\]
of $U(1)$-bundles on $Z_3^{[3]}$ commutes, where $E_{ab}$ is the pullback along the map $Z_3^{[2]} \to Z_1^{[2]}$ induced by the map $(a,b)\into (0,1,2,3)$ in the simplex category $\Delta$.
 \end{itemize}
\end{enumerate}
\end{proposition}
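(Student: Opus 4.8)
The plan is to unwind both sides of the asserted equivalence into statements about $U(1)$-valued functions on $Z_3^{[2]}$ and to observe that they coincide. This is exactly the analogue, one categorical level up, of the standard reformulation for bundle gerbes recalled above, where associativity of a multiplication $m$ amounts to the induced section $\delta(m)$ of the canonically trivial bundle $\delta^2 E$ being equal to the canonical section; here the roles of $\delta$, $m$ and $\delta^2 E$ are played by $\delta_h$, $M$ and $\delta_h^2 E\to Z_3^{[2]}$.

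First I would record the dictionary between the section $M\in\Gamma(\delta_h E)$ and the bundle isomorphism $M'$. Over $Z_2^{[2]}$ one has $\delta_h E=(d_0^h)^*E\otimes(d_1^h)^*E^*\otimes(d_2^h)^*E$, which in the notation of the statement (writing $E_{ab}$ for the pullback of $E$ along the coface $(a,b)\into(0,1,2)$) reads $E_{12}\otimes E_{02}^*\otimes E_{01}$; since these are $U(1)$-bundles, a section $M$ of it is precisely the datum of an isomorphism $M'\colon E_{12}\otimes E_{01}\to E_{02}$. Reindexing the three vertices via $(0,1,2)\into(0,1,2,3)$ produces the six bundles $E_{ab}$, $0\le a<b\le 3$, that appear in the square of (b').

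Next I would compute $\delta_h^2 E$ over $Z_3^{[2]}$, together with its canonical trivialisation, and identify the four pullbacks $(d_i^h)^*M'$ with the edges of the square. Pulling $\delta_h E$ back along the four faces $d_i^h\colon Z_3\to Z_2$ and taking the alternating tensor product gives twelve line bundles $E_{ab}^{\pm1}$; the simplicial identities $d_i^hd_j^h=d_{j-1}^hd_i^h$ show that each of the six $E_{ab}$ occurs once with each sign, and pairing them off is the canonical isomorphism $\delta_h^2 E\simeq Z_3^{[2]}\times U(1)$. A brief check of how the faces $d_i^h$ relabel edges then yields $(d_0^h)^*M'\colon E_{23}\otimes E_{12}\to E_{13}$, $(d_3^h)^*M'\colon E_{12}\otimes E_{01}\to E_{02}$, $(d_2^h)^*M'\colon E_{13}\otimes E_{01}\to E_{03}$ and $(d_1^h)^*M'\colon E_{23}\otimes E_{02}\to E_{03}$, i.e.\ (the non-identity tensor factors of) the top, left, right and bottom edges of the diagram in (b').

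Assembling these, under the canonical trivialisation the induced section $\delta_h(M)=(d_0^h)^*M\otimes((d_1^h)^*M)^{-1}\otimes(d_2^h)^*M\otimes((d_3^h)^*M)^{-1}$ of $\delta_h^2 E$ becomes exactly the $U(1)$-valued function on $Z_3^{[2]}$ comparing the two composite isomorphisms $E_{23}\otimes E_{12}\otimes E_{01}\to E_{03}$ obtained by going around the square of (b') the two ways; thus $\delta_h(M)=1$ precisely when these two composites agree, that is, precisely when the square commutes. The step that genuinely needs care is the bookkeeping in this last assembly: one must track which of the twelve copies of $E^{\pm1}$ cancels against which, and verify that the signs $(-1)^i$ arrange $M'$ and $M'^{-1}$ so that $\delta_h(M)$ reads as ``one leg composed with the inverse of the other leg'' (and not as its reciprocal, or a version with source and target interchanged). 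I would present that cancellation in a single display and leave the remaining, entirely routine, verifications to the reader.
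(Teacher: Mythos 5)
Your proposal is correct and is exactly the standard Grothendieck-style argument that the paper invokes: the printed proof simply cites \cite[Theorem 5.2]{BM} rather than spelling out the identification of the four pullbacks $(d_i^h)^*M'$ with the edges of the square and the cancellation in $\delta_h^2E$, which you carry out explicitly (and correctly). So you have supplied the details the paper delegates to a reference, not a different route.
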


\begin{proof}
The proof follows the same reasoning as, for example, \cite[Theorem 5.2]{BM}, following an idea of Grothendieck.
\end{proof}

\begin{remark}\label{rem:compare_to_MRSV}
The definition of bundle 2-gerbe given as \cite[Definition 6.5]{MRSV} is both more and less general than Definition \ref{def:vstrictb2g}: it restricts to a specific choice of semisimplicial surjective submersion $Z_\bullet \to Y^{[\bullet+1]}$ (one induced from the surjective submersion $Z_1\to Y^{[2]}$ in a specific way), but also allows for a trivialisation of the bundle gerbe on $Y^{[3]}$ that is not strong.
However, it is somewhat remarkable that for the main example in the literature, the Chern--Simons 2-gerbe, the rigid definition here is sufficient.
\end{remark}

\begin{remark}
A structure similar to that in Definition~\ref{def:vstrictb2g} is considered by \cite{KM19}, built using a bisimplicial space arising from a suitable square of maps of manifolds (we outline the details in Appendix~\ref{app:bigerbes}). 
In the notation of the current paper, the semisimplicial manifold $Z_\bullet$ would be the \v{C}ech nerve of a `horizontal' submersion parallel to $Y\to X$ in a certain commutative square. 
The key example considered by \cite{KM19} in their framework under the name `Brylinski--McLaughlin bigerbe' fits naturally in our framework, and corresponds to the `Chern--Simons bundle 2-gerbe' in the terminology of \cite{CJMSW}.
However, the point of view in \cite{KM19} is more geared towards \emph{fusion} products on loop spaces, whereas in this article we are concerned with more \emph{pointwise} products on loop groups and explicit computations with connective structures.\footnote{We recall that fusion products amount to composition in the \v{C}ech groupoid $PG^{[2]} \rightrightarrows PG$, namely $(\gamma_1,\gamma_2)(\gamma_2,\gamma_3) = (\gamma_1,\gamma_3)$. 
Often in the literature, especially when dealing with merely continuous paths, the identification is made that $PG^{[2]}$, the space of paths from the identity element that meet at their other endpoints, is `the same' as (a version of) $\Omega G$ the space of loops at the identity.
One makes a loop by following the first path, then the reverse of the other path, and the composition above `fuses' loops along half their length, and removes the detour back to the basepoint along $\gamma_2$.
However, in this paper, we replace this \v{C}ech groupoid by the isomorphic \emph{action} groupoid $PG\times \Omega G \rightrightarrows PG$, and here the composition is implemented by $(\gamma, \eta_1)(\gamma\eta_1,\eta_2) = (\gamma,\eta_1\eta_2)$, where of course $\eta_1\eta_2(t) = \eta_1(t)\eta_2(t)$.}
For the sake of completeness, we give the construction of a rigid bundle 2-gerbe (resp. rigidly trivialised rigid bundle 2-gerbe) from a bigerbe (resp.\ a trivialised bigerbe) in Appendix~\ref{app:bigerbes}.
\end{remark}

\begin{example}\label{example:rb2g_constructions}
There are several ways to get new rigid bundle 2-gerbes from old:
\begin{enumerate}[1.]
\item
  Given a rigid bundle 2-gerbe on $X_1$, and a map $f\colon X_2\to X_1$, we can pull everything along the map $f$ to get a rigid bundle 2-gerbe on $X_2$.
\item
  Given a rigid bundle 2-gerbe on $X$, with a surjective submersion $Y\to X$ and a semisimplicial surjective submersion $Z_\bullet \to Y^{[\bullet+1]}$, a second surjective submersion $Y'\to X$ and a map $Y'\to Y$ over $X$, we can pull back the semisimplicial submersion to form $Z_\bullet\times_{Y^{[\bullet+1]}} Y'^{[\bullet+1]} \to Y^{[\bullet+1]}$, then pull back the $U(1)$-bundle to $Z_1^{[2]}\times_{Y^{[2]}} Y'^{[2]}$. This gives a new bundle gerbe on $Y^{[2]}$, and the data of the section $M$ also pulls back to give a new rigid bundle 2-gerbe.
\item
  Given a rigid bundle 2-gerbe on $X$, with a surjective submersion $Y\to X$ and a semisimplicial surjective submersion $Z_\bullet \to Y^{[\bullet+1]}$, given a map of semisimplicial manifolds $Z'_\bullet \to Z_\bullet$ over $Y^{[\bullet+1]}$, there is a morphism of bundle gerbes involving $Z'_1\to Z_1$ over $Y^{[2]}$, and a rigid 2-gerbe using $Z'_\bullet$.
\item
  Given two rigid bundle 2-gerbes on $X$ involving the same surjective submersion $Y\to X$ and semisimplicial surjective submersion $Z_\bullet \to Y^{[\bullet+1]}$, with $U(1)$-bundles $E\to Z_1^{[2]}$ and $E'\to Z_1^{[2]}$, there is a \emph{product} rigid bundle 2-gerbe with $U(1)$-bundle $E\otimes E'$.
\item
  Given a rigid bundle 2-gerbe on $X$, with $U(1)$-bundle $E$, there is another with $U(1)$-bundle $E^*$, the \emph{dual}.
\end{enumerate}
All of these give examples of morphisms of rigid bundle 2-gerbes in a strict sense, but we will not be needing any general theory about these here, nor any notion of weak morphism analogous to stable isomorphisms.
\end{example}

We can, however, give a somewhat more concrete special example than these generic constructions.
Fix a manifold $X$ with an open cover by two open sets $U_0,U_1$. Two examples to keep in mind are $X=S^4$, or $X = M_0 \# M_1$ where $M_1$ and $M_2$ are compact 4-manifolds, and where $U_i \simeq M_i \setminus D^4$. 
In these latter examples one has $U_0\cap U_1 \simeq S^3 \times (-\varepsilon,\varepsilon)$, which allows for an even more concrete discussion.

\begin{example}
Given $X,U_0,U_1$ as above, define $U_{01} = U_0\cap U_1$.
If we have a bundle gerbe $(E,Z)$ on $U_{01}$ then we can build a rigid bundle 2-gerbe on $X$ in the following way.
Define $Y = U_0\amalg U_1$, and then $Y^{[2]} = U_{01} \amalg U_{10} \amalg U_0 \amalg U_1=U_{01} \amalg U_{10} \amalg Y$, where $U_{10} = U_{01}$, but the two projections $U_{10} \to Y$ are swapped from the projections $U_{01} \to Y$, namely $d_0\colon U_{01}\to U_1 \into Y$, and $d_1\colon U_{01}\to U_0 \into Y$.
Similarly,
\[
  Y^{[3]} = U_{001} \amalg U_{010}\amalg U_{100} \amalg 
  U_{011} \amalg U_{101}\amalg U_{110} \amalg  U_0 \amalg U_1
\]
where each $U_{ijk}\simeq U_{01}$, and the face maps restrict on each $U_{ijk}$ to omit the relevant index, for example $d_0\colon U_{001}\to U_{01}\into Y$, $d_1\colon U_{001}\to U_{01}\into Y$ and $d_2\colon U_{001}\to U_{00}=U_0\into Y$, where we adopt the convention that any $U_{i\cdots i} = U_i$, if only one index is present.
The reader can now safely calculate $Y^{[4]}$ on their own.

To define the semisimplicial space $Z_\bullet$, we take $Z_0=Y$, then $Z_1 = Z\amalg Z \amalg Y$, projecting down to $Y^{[2]}$ as $Z\amalg Z \to U_{01}\amalg U_{10}$ on the first two disjoint summands, and then the identity map on the rest. The two face maps $Z_1 \to Z_0$ are given by composition of the projection $Z_1\to Y^{[2]}$ and the face maps $Y^{[2]} \to Y$.
Then $Z_2$ is defined as 
\[
  Z_2 = Z \amalg Z \amalg Z \amalg Z \amalg Z \amalg Z \amalg U_0 \amalg U_1
\]
again with the summand-wise submersions to suitable copies of $U_{01}$ or $U_{10}$ (as appropriate), or $\id$ on the last two. The face maps $d_i\colon Z_2\to Z_1$ are defined so as to make the projections to $Y^{[n]}$ a semisimplicial map.
Again, a similar construction will work for $Z_3$, if the reader is diligent with the combinatorics.

Now we have a bundle gerbe on $Y^{[2]}$ that on $U_{01}$ is the given bundle gerbe $(E,Z)$, on $U_{10}$ is the dual $(E^*,Z)$, and on $U_i$ is the trivial gerbe associated to the identity submersion\footnote{Recall that for a manifold $N$ such a trivial trivial gerbe is $(N\times U(1),N)$, taking the trivial bundle on $N\times_N N = N$ and the identity map on the trivial bundle as the bundle gerbe multiplication.}. Denote by $\mathcal{E}$ the resulting $U(1)$-bundle on $Z_1^{[2]}$. We need to supply a bundle 2-gerbe multiplication, namely a section $M$ of $\delta_h(\mathcal{E})$.
But $\delta_h(\mathcal{E})$ restricted to one of the copies of $Z \subset Z_2$ is $E\otimes E^*$ and on either $U_i$ is the trivial bundle, so we take the canonical section in all cases.

Lastly, the section $M$ needs to satisfy $\delta(M) = 1$, relative to the canonical trivialisation of $\delta_h\delta_h(\mathcal{E})$. But this canonical trivialisation is the tensor product of copies of the canonical section of $E\otimes E^*$, which is exactly how $M$ is built.
\end{example}

\begin{remark}
In the case that $U_{01}\simeq S^3\times  (-\varepsilon,\varepsilon)$ (e.g. $X$ a connected sum of 4-manifolds, or $X=S^4$), then we know the classification of bundle gerbes tells us that stable equivalence classes of them on $U_{01}$ are in bijection with $H^3(S^3,\ZZ) = \ZZ$.
The above method then permits the construction of infinitely many rigid bundle 2-gerbes on $S^4$ and on a connected sum of 4-manifolds more generally. 
The question of equivalence (or not) of the members of this family of rigid bundle 2-gerbes in some weak sense will be left for future considerations. 
At a cohomological level, and given the classification of bundle 2-gerbes (\`a la Stevenson), the Mayer--Vietoris sequence tells us the construction should recover a rigid model for every integral 4-class on $S^4$, at least.
All the bundle gerbes on $S^3\simeq SU(2)$ can be constructed very explicitly, for instance using the powers of the basic gerbe from Section \ref{sec:new_example}, namely replace the universal central extension of $\Omega SU(2)$ by ${\widehat{\Omega SU(2)}{}^{\otimes k}} \to \Omega SU(2)$ and make the obvious minimal changes to the rest of the structure.
This ``clutching'' process on $S^4$ then would seem to give a rigid model for each bundle 2-gerbe on $S^4$, up to stable isomorphism.
Once the classifying theory is developed in \cite{rb2g_II} it will be possible to return to this question.
\end{remark}

\subsection{Trivialisations of rigid bundle 2-gerbes}

We are of course particularly interested in what it means for a rigid bundle 2-gerbe to be `trivial'. 
The triviality of the Chern--Simons 2-gerbe (described in Section~\ref{sec:CS2-gerbe}) of the frame bundle of a Spin manifold is what ensures the existence of a string structure. 
This idea goes back to the work in the 1980s of Killingback \cite{Killingback_87} and Witten \cite{Witten_85}, and developed more precisely in \cite{McL_92} and the geometric picture here has its roots in the use of 2-gerbes in \cite{BM}. Waldorf \cite{Wal} showed that the definition of \cite{Stolz-Teichner} could be reinterpreted in terms of trivialisations of bundle 2-gerbes, following \cite{CJMSW}. For more discussion, see the introduction of \cite{Wal}.

\begin{definition}\label{def:vsb2g_triv}
In the notation of the definition above, a \emph{trivialisation} is the data of 
\begin{enumerate}[(1)]
\item A $U(1)$-bundle $F \to Z_0^{[2]}$ such that $F\rightrightarrows Z_0$ is a bundle gerbe;
\item An isomorphism $t\colon E \xrightarrow{\simeq} \delta_h(F)$, compatible with the bundle gerbe multiplications of $(E,Z_1)$ and $(\delta_h(F),Z_1)$;
\item The section $\delta_h(t)\circ M$ of $\delta_h^2(F)\to Z_2^{[2]}$ is equal to the canonical section $c_{\delta^2F}$.
\end{enumerate}
\end{definition}

One can see the isomorphism $t$ as arising from a section $\tau$ of $E\otimes \delta_h(F)^*$ that is compatible with the bundle gerbe structure, and hence is a strong trivialisation. 
The last condition on then tells us that up to the canonical isomorphism $\delta_h(E)\simeq \delta_h(E\otimes \delta_h(F)^*)$, the strong trivialisations $M$ and $\delta_h(\tau)$ agree.

A third perspective is suggested by the following result.

\begin{proposition}\label{prop:module_version_of_rigid_triv}
The data  (2) and the condition (3) in Definition~\ref{def:vsb2g_triv} are equivalent to:
\begin{enumerate}[(1')]
 \setcounter{enumi}{1}
 \item An isomorphism $t'\colon E\otimes d_1^*F \to d_0^*F$ of $U(1)$-bundles on $Z_1^{[2]}$, compatible with the bundle gerbe multiplications (as they both are data for bundle gerbes on $Y^{[2]}$);

 \item The diagram
 \[
  \xymatrix{
    d_0^*E\otimes d_2^* E \otimes d_2^*d_1^*F \ar[r]^-{\id\otimes d_2^*t'} \ar[d]_{M'\otimes\id} & d_0^*E \otimes d_2^* d_0^* F \ar[d]^{d_0^*t'} \\
    d_1^* E\otimes d_2^* d_1^* F \ar[r]_-{d_1^*t'} & d_0^* F
  }
 \]
 of $U(1)$-bundle isomorphisms over $Z_2^{[2]}$ commutes (where we have used some simplicial identities implicitly, and $M'$ is the isomorphism corresponding to the section $M$).
\end{enumerate}
\end{proposition}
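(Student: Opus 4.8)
The plan is to prove the equivalence in two steps: first that the \emph{data} in (2) and in (2$'$) are in canonical bijection, and then that, under this bijection, the condition (3) translates exactly into the commutativity of the square in (3$'$). Structurally this is the one-categorical-level-up analogue of the equivalence, recalled in Section 1, between the ``$\delta L$'' and the ``module'' presentations of a strong trivialisation of a bundle gerbe, and it follows the same Grothendieck-style pattern as Proposition~\ref{prop:rb2g_mult_associativity_square} (cf.\ \cite[Theorem 5.2]{BM}).

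For the first step I would use the elementary ``currying'' isomorphism for $U(1)$-bundles: for $U(1)$-bundles $A$, $B$, $E$ over a fixed base, giving an isomorphism $E\xrightarrow{\simeq}A\otimes B^{*}$ is the same as giving an isomorphism $E\otimes B\xrightarrow{\simeq}A$, and this correspondence is natural in $A$, $B$, $E$ and compatible with pullbacks and tensor products. Taking $A=(d_0^h)^{*}F$ and $B=(d_1^h)^{*}F$ over $Z_1^{[2]}$, so that $A\otimes B^{*}=\delta_hF$, converts the isomorphism $t$ of (2) into the isomorphism $t'$ of (2$'$) and conversely. The remaining point is that ``compatible with the bundle gerbe multiplications'' is preserved. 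Here one uses that the multiplication making $(\delta_hF,Z_1)$ a bundle gerbe on $Y^{[2]}$ is, via the canonical isomorphism $\delta_h\delta_v\simeq\delta_v\delta_h$ noted before Definition~\ref{def:vstrictb2g}, nothing but $\delta_h$ applied to the multiplication $m$ of $(F,Z_0)$, and hence splits as the tensor product of $(d_0^h)^{*}$ and $(d_1^h)^{*}$ of $m$; these same pullbacks of $m$ are exactly the multiplications making $(d_0^h)^{*}F$ and $(d_1^h)^{*}F$ bundle gerbes on $Y^{[2]}$, since $d_0^h,d_1^h\colon Z_1\to Z_0$ commute with the vertical face maps. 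Transporting the currying bijection through these identifications, one checks by a routine diagram chase that $t$ intertwines the multiplication of $(E,Z_1)$ with that of $(\delta_hF,Z_1)$ precisely when $t'$ intertwines the corresponding multiplications on $E\otimes(d_1^h)^{*}F$ and $(d_0^h)^{*}F$.

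For the second step I would unwind both sides of condition (3) over $Z_2^{[2]}$. Decomposing $\delta_hE=(d_0)^{*}E\otimes(d_1)^{*}E^{*}\otimes(d_2)^{*}E$ (with $d_i\colon Z_2\to Z_1$; write $E_{12},E_{02},E_{01}$ for these three pullbacks, in the notation of Proposition~\ref{prop:rb2g_mult_associativity_square}), the section $M$ is exactly the isomorphism $M'\colon E_{12}\otimes E_{01}\to E_{02}$, and $\delta_h(t)$ decomposes correspondingly as $(d_0)^{*}t\otimes((d_1)^{*}t)^{*}\otimes(d_2)^{*}t$. Using the simplicial identities for $Z_2\to Z_1\to Z_0$ one gets $\delta_h^{2}F\simeq(F_2\otimes F_2^{*})\otimes(F_1^{*}\otimes F_1)\otimes(F_0\otimes F_0^{*})$, with the canonical section $c_{\delta^{2}F}$ the product of the canonical pairings of the three factors, together with the identifications $(d_2)^{*}(d_1)^{*}F=F_0$, $(d_2)^{*}(d_0)^{*}F=F_1$, $(d_0)^{*}(d_0)^{*}F=F_2$ (these are the ``implicit simplicial identities'' in the statement of (3$'$)). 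Rewriting each copy of $t$ occurring in $\delta_h(t)$ in terms of $t'$ via the currying of the first step, the equation $\delta_h(t)\circ M=c_{\delta^{2}F}$, after cancelling the canonically paired factors, reduces to the equality of the two isomorphisms $E_{12}\otimes E_{01}\otimes F_0\to F_2$ given by $d_0^{*}t'\circ(\id\otimes d_2^{*}t')$ and $d_1^{*}t'\circ(M'\otimes\id)$, which is precisely the commutativity of the square in (3$'$). This completes the proof.

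The work here is entirely bookkeeping, and no idea beyond the commutativity $\delta_h\delta_v\simeq\delta_v\delta_h$ is required. Accordingly, the one thing that needs care---and essentially the only place an error can creep in---is the orientation and sign bookkeeping: one must fix conventions in the currying bijection and keep scrupulous track of the simplicial identities for $Z_2\to Z_1\to Z_0$, so that the canonical section $c_{\delta^{2}F}$ is matched with the identity making the square in (3$'$) commute, rather than with a twisted or inverted version of it.
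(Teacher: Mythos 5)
Your proof is correct and follows essentially the same route as the paper, whose own proof is a one-line deferral to the Grothendieck-style argument of Proposition~\ref{prop:rb2g_mult_associativity_square} (itself citing \cite[Theorem 5.2]{BM}); you have simply written out the currying bijection and the cancellation in $\delta_h^2F$ that that reference leaves implicit. No gaps.
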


\begin{proof}
The argument follows the same line of reasoning as in the proof of Proposition~\ref{prop:rb2g_mult_associativity_square}.
\end{proof}

\begin{example}\label{example:triv_rb2g_constructions}
As in Example \ref{example:rb2g_constructions}, constructions are possible with trivialisations:
\begin{enumerate}[1.]
\item
  Given a rigid bundle 2-gerbe on $X_1$ equipped with a trivialisation, and a map $f\colon X_2\to X_1$, we can pull everything along the map $f$ to get a trivialised rigid bundle 2-gerbe on $X_2$.
\item
  The analogous result of the constructions in \ref{example:rb2g_constructions} 2.\ and 3.\ hold, assuming given a trivialisation.
\item
  Given two rigid bundle 2-gerbes on $X$ involving the same surjective submersion and semisimplicial surjective submersion, given trivialisations of both of them, the product has a trivialisation.
\item
  Given a rigid bundle 2-gerbe with a trivialisation, the dual inherits a trivialisation.

\end{enumerate}
\end{example}

This definition becomes more powerful when we consider connective structures, to be defined in Subsection~\ref{subsec:conn_structure} below, as this allows for the calculation of \emph{geometric} string structures \cite[Definition 2.8]{Wal}.
Such a calculation will be the topic of forthcoming work.

\begin{remark}
It should be noted that the definition of string structure in \cite{Wal} is not pefectly analogous to the one given here, in that there is an extra dualisation at play. The bundle gerbe $(F,Z_0)$ here corresponds to the \emph{dual} of the trivialising bundle gerbe $\mathcal{S}$ in \cite[Definition 3.5]{Wal}. As a result, and comparison between the DD classes of these bundle gerbes needs to take into account an extra minus sign in cohomology.
\end{remark}

\subsection{Associated weak bundle 2-gerbes and rigidification}\label{subsec:associated_b2gs}

A rigid bundle 2-gerbe $\cG$ gives rise to a bundle 2-gerbe $w(\cG)$ in the ordinary, weak sense of \cite{Ste} (`stable' bundle 2-gerbes in \cite[Definition 7.4]{StevPhD}) or \cite{Wal}. 
This definition is recalled in Appendix~\ref{app:weak_2-gerbes} below (Definition~\ref{def:bundle2gerbe}), and a conceptual description is enough for the purposes of understanding the construction sketch we now give.

Recall the intuitive picture of a bundle 2-gerbe (dating back to \cite{Carey-Murray-Wang_97}), which is a surjective submerion $Y\to X$, a bundle gerbe on $Y^{[2]}$, together with extra higher product data over $Y^{[3]}$ and coherence over $Y^{[4]}$. 
The devil in the detail is, of course, pinning down the definition of, and then constructing, the latter two items; the bundle gerbe on $Y^{[2]}$ is already given as part of the data of a rigid bundle 2-gerbe. 
The following definition sketch is very high-level, and can be merely skimmed by those not particularly concerned with how our definition relates to the existing definition in the literature. 
Readers with a keen interest are pointed to Appendix~\ref{app:weak_2-gerbes} for more details.

\begin{definition}\label{def:weakification_sketch}
Given the bundle gerbe $\cE := (E,Z_1)$ on $Y^{[2]}$, and the strong trivialisation $M$ of $(\delta_h(E),Z_2)$, we get an isomorphism of bundle gerbes $(d_0^*E\otimes d_2^*E,Z_2)\simeq (d_1^*E,Z_2)$, which allows us to construct a span of bundle gerbe morphisms $d_0^*\cE\otimes d_2^*\cE \leftarrow (d_1^*E,Z_2) \to d_1^*\cE$ over $Y^{[3]}$. 
Now a bundle gerbe morphism gives rise to a stable isomorphism \cite[Note 3.2.1]{StevPhD}, and in fact there is a pseudofunctor from the category of bundle gerbes and morphisms on a given manifold, to the bigroupoid of bundle gerbes with 1-arrows stable isomorphisms (denoted here $\rightsquigarrow$). 
We can thus map this span into the latter bigroupoid and replace the backwards pointing arrow with its canonically-defined adjoint inverse, giving a stable isomorphism $d_0^*\cE\otimes d_2^*\cE \rightsquigarrow d_1^*\cE$, which is the needed higher product data, the \emph{bundle 2-gerbe product}.

We then need to define an associator for the bundle 2-gerbe product. This requires considering an induced square diagram (in the 1-category of bundle gerbes above) whose boundary consists of pullbacks of the span version of the product, and which is filled with a further two spans. 
All the nodes in this diagram are bundle gerbes on $Y^{[4]}$, and the diagram consists of four commuting squares of the form
\[
  \xymatrix@=1em{
    \bullet & \ar[l] \bullet \ar[r] & \bullet\\
    \ar[u] \bullet \ar[d] & \ar[l] \ar[u] \bullet \ar[r] \ar[d] & \ar[u] \bullet \ar[d] \\
    \bullet & \ar[l] \bullet \ar[r] & \bullet
  }
\] 
Note that these will not paste into a single commuting diagram. 
At the centre of the square is a bundle gerbe with submersion $Z_3\to Y^{[4]}$, and associativity of $M$ is used to ensure that in fact the various constructions of this bundle gerbe from the given data are compatible. 
We use the concept of \emph{mates} in the bigroupoid of bundle gerbes and stable isomorphisms to turn around the various wrong-way edges to give four pasteable squares that may now only commute up to a uniquely-specified isomorphism. 
The composite of these four 2-arrows is the associator for the bundle 2-gerbe.

Then the associator needs to be proved coherent, and there is a cubical diagram built out of spans of morphsms of bundle gerbe morphisms over $Y^{[5]}$, which when mapped into the bigroupoid of bundle gerbes and stable isomorphisms gives the required 2-commuting cube.
 
We call $w(\cG)$ the bundle 2-gerbe \emph{associated to $\cG$}.
\end{definition}

The difference between our definition and the fully weak definition is that in Definition~\ref{def:vsb2g_triv} (2) we are not taking the obvious, default option for $Z_\bullet$, which is constructed by iterated fibre products of pullbacks of $Z_1\to Y^{[2]}$, and in Definition~\ref{def:vsb2g_triv} (4) we use a strong trivialisation instead of a stable trivialisation, and then the coherence condition on the trivialisation is much simpler.
We discuss the construction of $w(\cG)$ in more detail in Construction~\ref{constr:associated_b2g}.

One might wonder the extent to which this process is reversible, namely when a rigid bundle 2-gerbe can be assigned to a given (weak) bundle 2-gerbe.

\begin{definition}\label{def:rigidification}
A \emph{rigidification} of a bundle 2-gerbe $\mathcal{H}$ is a rigid bundle 2-gerbe $\cG$ such that the associated bundle 2-gerbe $w(\cG)$ is isomorphic to $\mathcal{H}$. 
\end{definition}

Isomorphism of bundle 2-gerbes here means isomorphism over the base space $X$ (cf Example \ref{example:rb2g_constructions} 2.), and consists of an isomorphism of the domains of the submersions $Y_i\to X$, $i=1,2$; an isomorphism (relative to this) of the bundle gerbes on $Y_i^{[2]}$; such that the isomorphism is compatible on the nose with the stable isomorphism that is the bundle 2-gerbe multiplication, and also such that the associators are then identified. 
Note that this is much stronger than simply requiring that $w(\cG)$ be stably isomorphic to $\cH$ \emph{as bundle 2-gerbes}, and any two rigidifications are isomorphic in the strongest sense over the base manifold.
We shall see an example in Section \ref{sec:CS2-gerbe} below.

One\footnote{for instance, the referee} might reasonably ask what happens if a rigid bundle 2-gerbe $\mathcal{G}$ is such that the bundle 2-gerbe $w(\mathcal{G})$ has a stable trivialisation. 
In this case the integral 4-class classifying $w(\mathcal{G})$ vanishes, and the natural question is whether there is a \emph{rigid} trivialisation of the original rigid bundle 2-gerbe, or perhaps a related one.
We will return to this question in \cite{rb2g_II}, but the approximate answer is that there is another rigid bundle 2-gerbe $\mathcal{G}'$ with a rigid trivialisation, and a morphism $\mathcal{G}'\to \mathcal{G}$ over the base.

Recall that the ultimate aim behind the current paper is to set up a framework in which it is easier to describe trivialisations of bundle 2-gerbes, for instance string structures, and in particular, geometric string structures.
If we have a rigidification of a bundle 2-gerbe, and a trivialisation of the rigid 2-gerbe in the sense of Definition~\ref{def:vsb2g_triv}, we need to have a trivialisation of the original bundle 2-gerbe in the weak sense of \cite[Definition~11.1]{Ste}, recalled in Definition~\ref{def:bundle2gerbetriv} below. 
The full details of this are given in Construction~\ref{constr:associated_trivialisation}, but the rough idea goes as follows: the data of the rigid trivialisation of a rigid bundle 2-gerbe with bundle gerbe $\cE = (E,Z_1)$ on $Y^{[2]}$, where the rigid trivialisation involves the bundle gerbe $\cF = (F,Z_0)$ on $Y$, gives rise to a bundle gerbe morphism $\cE \to d_0^*\cF \otimes d_1^*\cF$ over $Y^{[2]}$. 
This morphism then gives a stable isomorphism, as required for the definition of trivialisation of a bundle 2-gerbe, and one checks (as in Construction~\ref{constr:associated_trivialisation}), that this satisfies the compatibility requirements according to Definition~\ref{def:bundle2gerbetriv}.

Finally, rigidification is a very strong notion, and bundle 2-gerbes are weak objects, with a weak notion of equivalence. It is not true that every bundle 2-gerbe has a rigidification, but a natural question to ask is whether every bundle 2-gerbe is at least \emph{stably isomorphic} to a rigidified bundle 2-gerbe. 
This turns out to be the case, as there is a model for the universal bundle 2-gerbe, using diffeological spaces, that has a rigidification.
We will return to this construction in a sequel to this paper.

\subsection{Connective structures}\label{subsec:conn_structure}

The data of a rigid bundle 2-gerbe is not perhaps, on its own, that exciting, as there are other models for 2-gerbes of varying level of weakness, not least the one of \cite{KM19}, developed in parallel and independently of our work. 
However, it is the potential for truly \emph{geometric} information that makes the use of bundle gerbes so interesting, and the same is true for our model for 2-gerbes. 
The construction of bundle 2-gerbes introduced in \cite{StevPhD} is geometric, but the technical details are quite daunting. 
Our notion of connective structure is even more constrained than that in \cite{StevPhD}, and the proof of its existence, independent of \emph{loc.\ cit.}, is much shorter, even when giving complete details.

\begin{definition}\label{def:conn_structure}
A \emph{connective structure} on a rigid bundle 2-gerbe consists of the differential form data\footnote{Of course, connections on the $U(1)$-bundle $E$ are only an affine subspace of $\Omega^1(E)$, but the conditions here cut down the space a lot more.} (using the notation as before) $(\nabla_E,\beta,\lambda) \in \Omega^1(E) \oplus \Omega^2(Z_1) \oplus  \Omega^3(Y)$ such that
\begin{enumerate}
\item $\nabla_E$ is a bundle gerbe connection: $0=\delta_v(\nabla_E) \in \Omega^1(\delta_vE)$;
\item $\beta$ is a curving for $\nabla_E$, namely $\delta_v(\beta) = \curv(\nabla_E)$;
\item $\lambda$ is a 2-curving for $\beta$, namely $\pi^*\delta_h(\lambda) = d\beta$.
\end{enumerate}
We say a connective structure is \emph{rigid} if there is a specified 1-form $\alpha$ on $Z_2$ such that
\begin{enumerate}\setcounter{enumi}{3}
\item $\delta_v(\alpha) = M^*\delta_h(\nabla_E)$;
\item $d\alpha = \delta_h\beta$;
\item $\delta_h(\alpha)=0$.
\end{enumerate}
\end{definition}

The intuition behind this definition is that the data of the 2-gerbe multiplication $M$ is a strong trivialisation \emph{with connection} (items (4) and (5)), which is then highly coherent: its image under $\delta_h$ is identically trivial.
The intention here is that the rigid version is the `real' definition, not merely a special case.

It is not a priori obvious that one can find a connective structure, and even less obvious that one can find a rigid connective structure.
We rely heavily on the fundamental Lemma \ref{lemma:fund_lemma} and the fact we have a nice bisimplicial manifold built out of a level-wise surjective submersion of semisimplicial manifolds.

\begin{proposition}\label{prop:conn_exist}
Every rigid bundle 2-gerbe admits a rigid connective structure.
\end{proposition}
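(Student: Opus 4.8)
The plan is to build the connective structure in order of increasing complexity, using Lemma~\ref{lemma:fund_lemma} repeatedly, exploiting that every truncated simplicial space in sight ($Z_\bullet$, $Y^{[\bullet+1]}$, and each $Z_n^{[\bullet+1]}$) is a tower of surjective submersions, so the associated $\delta$-complexes of differential forms are exact. First I would choose any bundle gerbe connection $\nabla_E$ on $E \to Z_1^{[2]}$ (these exist by the standard bundle gerbe theory recalled in Section~1, applied to the gerbe $(E, Z_1)$ on $Y^{[2]}$), and then a curving $\beta \in \Omega^2(Z_1)$ with $\delta_v(\beta) = \curv(\nabla_E)$, again by Lemma~\ref{lemma:fund_lemma} applied to the submersion $Z_1 \to Y^{[2]}$. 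Next, since $\delta_v(d\beta) = d\,\delta_v(\beta) = d\,\curv(\nabla_E) = 0$ (the curvature of a bundle gerbe connection is $\delta_v$-closed), exactness of the $Z_1 \to Y^{[2]}$ complex gives a form descending to $Y^{[2]}$, but in fact I want $d\beta$ to come from $Y$ via $\pi^*\delta_h$. Here one uses that $d\beta$, viewed appropriately, is $\delta_h$-closed on the $Y^{[\bullet]}$ tower: the bisimplicial identity $\delta_h \delta_v = \delta_v \delta_h$ plus the fact that a closed form descending through $Z_1 \to Y^{[2]}$ and lying in the image needs an extra check that it is $\delta_h$-exact, which follows from the exactness of the $Y^{[\bullet]}$ complex once we verify $\delta_h$ of the descended form vanishes --- and that vanishing is exactly what the associativity/coherence of the 2-gerbe data provides. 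So $\lambda \in \Omega^3(Y)$ exists with $\pi^*\delta_h(\lambda) = d\beta$, giving items (1)--(3).

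For the rigid part, the key point is to upgrade the chosen $\beta$ and $\nabla_E$ so that an $\alpha \in \Omega^1(Z_2)$ with the three properties (4)--(6) exists. I would argue as follows. The section $M \in \Gamma(\delta_h E)$ is a strong trivialisation of the gerbe $(\delta_h E, Z_2)$ on $Y^{[3]}$; pulling back $\delta_h(\nabla_E)$ along $M$ gives a 1-form $M^*\delta_h(\nabla_E) \in \Omega^1(Z_2)$, and the bundle-gerbe-connection condition on $\nabla_E$ (property (1), i.e. $\delta_v(\nabla_E) = 0$) combined with $\delta_h\delta_v = \delta_v\delta_h$ forces $\delta_v\big(M^*\delta_h(\nabla_E)\big)$ to be controlled by $\delta_v(M) = \delta_h(m)$, the compatibility in Definition~\ref{def:vstrictb2g}(5)(a). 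One then checks that $M^*\delta_h(\nabla_E)$ is $\delta_v$-exact on the $Z_2 \to Y^{[3]}$ tower precisely because $m$ was used to build $\nabla_E$ into a bundle gerbe connection; this yields a candidate $\alpha$ with $\delta_v(\alpha) = M^*\delta_h(\nabla_E)$, i.e. property (4). Property (5), $d\alpha = \delta_h\beta$, is then arranged by adjusting $\alpha$ by the pullback of a form from $Y^{[3]}$: one verifies $\delta_v(d\alpha - \delta_h\beta) = 0$ using (2) and (4), so $d\alpha - \delta_h\beta = \pi^*(\text{something on } Y^{[3]})$, and that something is closed and can be absorbed --- possibly at the cost of re-choosing $\lambda$ --- by another application of Lemma~\ref{lemma:fund_lemma} to the $Y^{[\bullet]}$ complex. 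Finally property (6), $\delta_h(\alpha) = 0$, is the deepest: $\delta_h(\alpha)$ is a 1-form on $Z_3$, one shows it is $\delta_v$-closed and $d$-closed using Definition~\ref{def:vstrictb2g}(5)(b) ($\delta_h(M) = 1$), so it descends to a closed 1-form on $Y^{[4]}$ that is itself $\delta_h$-closed, hence $\delta_h$-exact, hence can be killed by a final correction of $\alpha$ by a pullback from $Y^{[3]}$ --- and one checks this final correction does not disturb (4) and (5) because it is pulled back from the base of $Z_2 \to Y^{[3]}$ and is closed.

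The main obstacle I expect is bookkeeping the interactions between the two differentials: each time one corrects $\alpha$ (or $\beta$, or $\lambda$) to fix one of properties (4)--(6), one must confirm the previously-established properties survive, and this requires carefully tracking which corrections are pulled back from which base in the bisimplicial diagram and using the commuting relations $\delta_h\delta_v = \delta_v\delta_h$ and the coherence conditions (5)(a), (5)(b) on $M$ at exactly the right moments. The conceptual content is entirely the exactness Lemma~\ref{lemma:fund_lemma} applied levelwise in both simplicial directions, together with the fact that $M$ being a \emph{strong} trivialisation (rather than merely a stable one) and satisfying $\delta_h(M) = 1$ \emph{identically} is what makes the obstruction classes vanish rather than merely be represented by correctable cocycles; the proof should therefore be organised as a sequence of ``choose, then correct'' steps, each justified by one invocation of the fundamental lemma, and the write-up should emphasise that no higher coherence data beyond $(\nabla_E, \beta, \lambda, \alpha)$ is needed --- this is precisely the simplification over \cite[Proposition~10.1]{StevPhD}.
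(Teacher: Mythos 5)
Your toolbox is the right one (Lemma~\ref{lemma:fund_lemma} in both simplicial directions, $\delta_h\delta_v=\delta_v\delta_h$, and the two coherence conditions on $M$), but the order of your ``choose, then correct'' steps is wrong in a way that breaks the argument at three points. First, you cannot produce $\lambda$ with $\pi^*\delta_h(\lambda)=d\beta$ for an \emph{arbitrary} initial curving $\beta$: writing $\pi^*H=d\beta$ for the 3-curvature $H\in\Omega^3(Y^{[2]})$, the coherence of $M$ only makes $\delta_h(H)$ \emph{exact}, not zero, so $H$ need not be $\delta_h$ of anything. Forcing $\delta_h(H)=0$ is exactly the hard part, and it requires first correcting the curving --- which is why the paper's proof builds $\alpha$ before it touches $\lambda$. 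Second, your mechanism for arranging $d\alpha=\delta_h\beta$ by adjusting $\alpha$ cannot work: the discrepancy is $\pi^*\beta_{2,0}$ for a 2-form $\beta_{2,0}$ on $Y^{[3]}$ that is neither closed nor exact in general, whereas adding $\pi^*c$ to $\alpha$ only changes $d\alpha$ by the exact form $\pi^*dc$. The correct move is to show $\delta_h(\beta_{2,0})=0$, write $\beta_{2,0}=\delta_h(\beta_{1,0})$, and correct $\beta$ by $-\pi^*\beta_{1,0}$. Third, your final correction of $\alpha$ by $\pi^*\alpha_{2,0}$ to kill $\delta_h(\alpha)$ would destroy the previously arranged identity $d\alpha=\delta_h\beta$, because $\alpha_{2,0}\in\Omega^1(Y^{[3]})$ is not closed in general; your parenthetical ``and is closed'' is unjustified.

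The fix is purely a reordering, and it is the essential content of the paper's proof: (i) choose $\nabla_E$ and any curving $\widetilde\beta$; (ii) construct $\tilde\alpha$ with $\delta_v(\tilde\alpha)=M^*\delta_h(\nabla_E)$ (your argument for this step is correct); (iii) use $\delta_h(M)=1$ to show $\delta_h(\tilde\alpha)$ descends to a 1-form $-\alpha_{3,0}$ on $Y^{[4]}$ with $\delta_h(\alpha_{3,0})=0$, write $\alpha_{3,0}=\delta_h(\alpha_{2,0})$, and set $\alpha=\tilde\alpha+\pi^*\alpha_{2,0}$, which preserves (4) and achieves (6) \emph{before} (5) is ever imposed; (iv) only now compare the two curvings $d\alpha$ and $\delta_h(\widetilde\beta)$ of $(\delta_hE,Z_2)$ and correct $\widetilde\beta$ (not $\alpha$) to get $\delta_h(\beta)=d\alpha$; (v) then $\delta_h(d\beta)=d(\delta_h\beta)=d^2\alpha=0$, so the 3-curvature is $\delta_h$-closed and $\lambda$ exists by one last application of the fundamental lemma. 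In this order every correction is by a pullback that manifestly preserves the properties already established, and no ``re-choosing $\lambda$'' or unproven closedness is needed.
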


The proof below should be compared with the proof of Proposition 10.1 in \cite{StevPhD}, which is much more involved.

\begin{proof}
To begin, we know that the bundle gerbe $(E,Z_1)$ on $Y^{[2]}$ admits a connection $\nabla_E$ and curving $\widetilde\beta_{1,1}$ \cite{Mur}. 
This curving might not be the one we want, since its curvature 3-form $H$ will probably not satisfy $\delta_h (H) = 0$. 
As a result, we cannot yet guarantee the existence of the 2-curving 3-form. 
We shall chase through a sequence of forms that will give in the end a correction term.

\begin{enumerate}

\item The bundle 2-gerbe multiplication is a section $M$ of $\delta_h E \to Z_2^{[2]}$, so we can pull the connection $\delta_h(\nabla_E)$ on $\delta_hE$ back along $M$ to get $M^*\delta_h(\nabla_E) \in \Omega^1(Z_2^{[2]})$. Then by the compatibility of the the bundle 2-gerbe multiplication (\ref{def:vstrictb2g}\eqref{b2g:mult}) we have
\begin{align*}	
\delta_v(M^*\delta_h(\nabla_E)) & = (\delta_v M)^*\delta_v(\delta_h(\nabla_E))\\
						 & = (\delta_h\sigma)^*\delta_v(\delta_h(\nabla_E))\\
						 & = (\delta_h\sigma)^*\delta_h(\delta_v(\nabla_E))\\
						 & = \delta_h(\sigma^*\delta_v(\nabla_E)) = 0.
\end{align*}
Thus by the the fundamental sequence there (non-uniquely) exists $\tilde\alpha_{2,1}\in \Omega^1(Z_2)$ with $\delta_v(\tilde\alpha_{2,1}) = M^*\delta_h(\nabla_E)$.
This 1-form should be considered as a connection on the trivial bundle on $Z_2$ that is, together with $M$, the trivialisation of the bundle gerbe $(\delta_hE,Z_2)$.

\item Now consider $\delta_v(\delta_h(\tilde\alpha_{2,1}))$:
\begin{align*}
\delta_v(\delta_h(\tilde\alpha_{2,1})) & = \delta_h(\delta_v(\tilde\alpha_{2,1})) \\
& = \delta_h(M^*\delta_h(\nabla_E))\\
& = (\delta_hM)^*\delta_h(\delta_h(\nabla_E)) = 0.
\end{align*}
Then by the fundamental sequence again there exists a (unique) 1-form $\alpha_{3,0} \in \Omega^1(Y^{[4]})$ with $\pi^*(-\alpha_{3,0}) = \delta_h(\tilde\alpha_{2,1})$. Note that the choice of the sign here is to agree with the form $\alpha$ defined by equation (10.12) in \cite{StevPhD}.

\item Now $\pi^*\delta_h(\alpha_{3,0}) = \delta_h(\pi^*\alpha_{3,0}) = \delta_h(-\delta_h(\tilde\alpha_{2,1})) = 0$. Since $\pi^*$ is injective, we thus have $\delta_h(\alpha_{3,0}) = 0$.\footnote{This is the last spelled-out step of the proof of \cite[Proposition 10.1]{StevPhD}.} By the fundamental sequence there exists $\alpha_{2,0}\in \Omega^1(Y^{[3]})$ such that $\delta_h(\alpha_{2,0}) = \alpha_{3,0}$.

\item Define $\alpha_{2,1} = \tilde\alpha_{2,1} + \pi^*\alpha_{2,0}$. 
This new 1-form also satisfies $\delta_v(\alpha_{2,1}) = M^*\delta_h(\nabla_E)$, but now also $\delta_h(\alpha_{2,1}) = 0$.
Note also that the data of the curving $\widetilde\beta_{1,1}$ is not involved in the definition, and the only ambiguity is that $\alpha_{2,0}$ is only unique up to the addition of $\delta_h$ of a 1-form on $Y^{[2]}$.
With this new choice $\alpha_{2,1}$, we would have $\alpha_{3,0}=0$ and $\alpha_{2,0}=0$.

\item Now notice that the bundle gerbe $(\delta_hE,Z_2)$ has two curvings for $\delta(\nabla_E)$: $d\alpha_{2,1}$ and $\delta_h(\widetilde\beta_{1,1})$, so that by general properties of bundle gerbes there exists a (unique) 2-form $\beta_{2,0}\in \Omega^2(Y^{[3]})$ such that $\pi^*\beta_{2,0} = \delta_h(\widetilde\beta_{1,1}) - d\alpha_{2,1}$. This satisfies
\begin{align*}
\pi^*\delta_h(\beta_{2,0}) & =\delta_h(\pi^*\beta_{2,0}) \\
& = \delta_h(\delta_h(\widetilde\beta_{1,1} - d\alpha_{2,1}))\\
&= -\delta_h(\alpha_{2,1}) = 0,
\end{align*}
and so $\delta_h(\beta_{2,0})=0$.
By the fundamental sequence there exists $\beta_{1,0}\in \Omega^2(Y^{[2]})$ such that $\delta_h(\beta_{1,0}) = \beta_{2,0}$.

\item Now define $\beta_{1,1} = \widetilde\beta_{1,1} - \pi^*\beta_{1,0}$. This is also a curving for the bundle gerbe connection $\nabla_E$. This will be the curving we seek (i.e.\ it \emph{does} satisfy $\delta_h(d{\beta_{1,1}}) = 0$).
Now define $\lambda_{1,0}\in \Omega^3(Y^{[2]})$ so that $\pi^*\lambda_{1,0} = d\beta_{1,1}$.
This is the 3-form curvature of the bundle gerbe on $Y^{[2]}$.

\item We record the following calculation relating the bundle 2-gerbe curving to the other data so far:
\begin{align}\label{eq:strict_b2g_curving}
\delta_h(\beta_{1,1}) 
  & = \delta_h\widetilde\beta_{1,1} - \pi^*\delta_h\beta_{1,0} \nonumber\\
  & = \pi^*\beta_{2,0} + d\alpha_{2,1} - \pi^*\beta_{2,0}\\
  & = d\alpha_{2,1}. \nonumber
\end{align}
As noted earlier, $\alpha_{2,1}$ is independent of the starting choice $\widetilde\beta_{1,1}$, so that given a fixed connection $\nabla_E$ and a choice of 1-form $\alpha_{2,1}$ as above, the process always gives curvings $\beta_{1,1}$ satisfying \eqref{eq:strict_b2g_curving}.

\item We can now examine $\delta_h(\lambda_{1,0})$:
\begin{align*}
\pi^*\delta_h(\lambda_{1,0}) 
  & = \delta_h(d\beta_{1,1})\\
  & = d(\delta_h(\beta_{1,1}))\\
  & = d^2\alpha_{2,1} = 0.
\end{align*}
This is what we were after, since it follows that $\delta_h(\lambda_{1,0})=0$. Using the fundamental sequence, there exists $\lambda_{0,0}\in \Omega^3(Y)$ such that $\delta_h(\lambda_{0,0}) = \lambda_{1,0}$.
\end{enumerate}

The triple consisting of $(\nabla_E, \beta_{1,1}, \lambda_{0,0})$ is then a connective structure, and $\alpha = \alpha_{(2,1)}$ makes it a rigid connective structure.
\end{proof}

Given a connective structure, there is a unique $\curv(\nabla_E,\beta,\lambda)\in \Omega^4(X)$ satisfying $\pi^*\curv(\nabla_E,\beta,\lambda) = d\lambda$.
We can use injectivity of $\pi^*$ to conclude that $\curv(\nabla_E,\beta,\lambda)$ is closed, and hence defines a class in $H^4_{dR}(X)$. 

\begin{definition}
The \emph{4-curvature} of a rigid bundle 2-gerbe on $X$ with (rigid) connective structure $(\nabla_E,\beta,\lambda)$ is the closed 4-form $\curv(\nabla_E,\beta,\lambda)$.  
\end{definition}

\begin{example}\label{example:conn_str_constructions}
In parallel with Example \ref{example:rb2g_constructions}, we can build new (rigid) connective structures from old:
\begin{enumerate}
\item
  Given a rigid bundle 2-gerbe on $X_1$, and a (rigid) connective structure and a map $f\colon X_2\to X_1$, we can pulled back rigid bundle 2-gerbe on $X_2$ inherits a (rigid) connective structure, by pulling back all the forms involved.
\item
  Similarly for Example \ref{example:rb2g_constructions} (2).
\item
  Given two rigid bundle 2-gerbes on $X$ involving the same surjective submersion $Y\to X$ and semisimplicial surjective submersion $Z_\bullet \to Y^{[\bullet+1]}$, with $U(1)$-bundles $E\to Z_1^{[2]}$ and $E'\to Z_1^{[2]}$, with connective structures $(\nabla,\beta,\lambda)$ and $(\nabla',\beta',\lambda')$, the rigid bundle 2-gerbe with $U(1)$-bundle $E\otimes E'$ gains a rigid connective structure $(\nabla\otimes \nabla',\beta+\beta',\lambda+\lambda')$. 
  If the connective structures are rigid, with 1-forms $\alpha$ and $\alpha'$ on $Z_2$, then the analogous construction holds, with the corresponding 1-form $\alpha+\alpha'$.
\item
  Given a rigid bundle 2-gerbe on $X$, with $U(1)$-bundle $E$, and (rigid) connective structure, the rigid bundle 2-gerbe with $U(1)$-bundle $E^*$, has a (rigid) connective structure given by the negative of the one on the original 2-gerbe.
\end{enumerate}
\end{example}

Notice that any pair of 2-curvings for a fixed curving differ by a 3-form on $X$, by the fundamental lemma.
This implies the the 4-curvatures of connective structures $(\nabla_E,\beta,\lambda)$ and $(\nabla_E,\beta,\lambda')$ differ by an exact 4-form.
More generally,

\begin{proposition}\label{prop:deRhamClass}
Given a pair of rigid connective structures, $(\nabla_E,\beta,\lambda,\alpha)$ and $(\nabla'_E,\beta',\lambda',\alpha')$, on a rigid bundle 2-gerbe, the 4-curvatures differ by an exact 4-form.
Hence the de~Rham class associated to a rigid bundle 2-gerbe is independent of the choice of rigid connective structure.
\end{proposition}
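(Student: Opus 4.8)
The plan is to compare the two rigid connective structures step by step, showing that at each level their difference is controlled by a lower-level form, and to extract from this an explicit primitive for the difference of the 4-curvatures. First I would record the two bundle gerbe connections $\nabla_E$ and $\nabla'_E$ on $E$; since both are bundle gerbe connections on the same bundle gerbe $(E,Z_1)$ over $Y^{[2]}$, the discussion after Lemma~\ref{lemma:fund_lemma} (applied in the vertical direction to the simplicial space $Z_1^{[\bullet+1]}$) tells us $\nabla'_E = \nabla_E + \pi^*\delta_v(a)$ for some $a\in\Omega^1(Z_1)$, with $a$ unique up to a pullback from $Y^{[2]}$. Then $\curv(\nabla'_E) = \curv(\nabla_E) + \delta_v(da)$, so $\beta' - \beta - da$ is $\delta_v$-closed in $\Omega^2(Z_1)$, hence by Lemma~\ref{lemma:fund_lemma} equals $\pi^*b$ for some $b\in\Omega^2(Y^{[2]})$, again unique up to $\delta_h$ of a form on $Y$.

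Next I would push this comparison up one level. Condition (3) of Definition~\ref{def:conn_structure} gives $\pi^*\delta_h(\lambda) = d\beta$ and $\pi^*\delta_h(\lambda') = d\beta'$; subtracting, and using $\beta' = \beta + da + \pi^*b$, we get $\pi^*\delta_h(\lambda' - \lambda) = d(da + \pi^*b) = \pi^*d b$ (the $d(da)$ term vanishing), so by injectivity of $\pi^*$ we obtain $\delta_h(\lambda' - \lambda) = db$ on $Y^{[2]}$. Thus $\lambda' - \lambda - db$ — wait, the degrees don't match: $\lambda,\lambda'\in\Omega^3(Y)$ while $b\in\Omega^2(Y^{[2]})$, so what we actually learn is $\delta_h(\lambda' - \lambda) = db \in \Omega^3(Y^{[2]})$; since $db$ is $\delta_h$-exact, $\lambda' - \lambda$ need not itself be simple, but $\delta_h(\lambda' - \lambda - \text{(something)})$... more carefully: apply Lemma~\ref{lemma:fund_lemma} in the horizontal direction to $Y^{[\bullet+1]}$. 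We have $\delta_h(\lambda'-\lambda) = db = \delta_h(c)$ is impossible dimensionally unless we recognize $db \in \Omega^3(Y^{[2]}) = \Omega^3(Y^{[1+1]})$ and $\delta_h: \Omega^3(Y) \to \Omega^3(Y^{[2]})$, so $\delta_h(\lambda'-\lambda) = db$ forces, after checking $\delta_h(db) = d(\delta_h b)$ and comparing, the existence of $e \in \Omega^3(Y)$ with... Let me instead simply use: $\lambda'-\lambda$ satisfies $\delta_h(\lambda'-\lambda)=db$, and separately any two 2-curvings for the same curving differ by a 3-form pulled back from $X$ (noted just before the Proposition). Applying this to the connective structure $(\nabla_E,\beta,\lambda)$ and the gauge-transformed $(\nabla'_E,\beta',\lambda')$ after undoing the gauge transformation reduces us to that already-noted case.

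Concretely, the cleanest route is: set $\widetilde\lambda := \lambda + (\text{the canonical primitive built from } a, b)$ so that $(\nabla'_E,\beta',\widetilde\lambda)$ and $(\nabla'_E,\beta',\lambda')$ are two connective structures with the \emph{same} connection and curving, whence $\lambda' - \widetilde\lambda = \pi^*\rho$ for a 3-form $\rho$ on $X$ by the fundamental lemma; then $\curv(\nabla'_E,\beta',\lambda') - \curv(\nabla'_E,\beta',\widetilde\lambda) = d\rho$ is exact, and $\curv(\nabla'_E,\beta',\widetilde\lambda) = \curv(\nabla_E,\beta,\lambda)$ because $\widetilde\lambda$ was defined so that $d\widetilde\lambda = d\lambda$ descends to the same 4-form (the correction terms $da, \pi^*b$ contribute only $d$-exact pieces which die after one more $d$, and descend compatibly). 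The $\alpha$-data plays no role in the 4-curvature, so the rigidity is not an obstruction here. The main obstacle is purely bookkeeping: tracking the chain of primitives $a \in \Omega^1(Z_1)$, $b \in \Omega^2(Y^{[2]})$ and their horizontal/vertical coboundaries through Lemma~\ref{lemma:fund_lemma} carefully enough to see that every correction term introduced into $\lambda$ is exact after applying $d$ and descends along $\pi$, so that the net change in the 4-curvature is exactly $d\rho$. Once this is organized, the final sentence of the Proposition is immediate, since an exact 4-form represents $0$ in $H^4_{dR}(X)$.
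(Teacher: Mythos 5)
Your overall strategy---compare the two structures level by level and reduce to the already-noted case of two 2-curvings for the same curving---is the same as the paper's, but there is a genuine gap exactly where you wave your hands at ``the canonical primitive built from $a,b$''. Having written $\beta'-\beta = da + \pi^*b$ with $b\in\Omega^2(Y^{[2]})$, you obtain $\delta_h(\lambda'-\lambda)=db$. To conclude that $\lambda'-\lambda$ is (exact) $+\,\pi^*(\text{3-form on }X)$ you need $b$ itself to be $\delta_h$-exact, i.e.\ $b=\delta_h(\beta_{0,0})$ for some $\beta_{0,0}\in\Omega^2(Y)$; only then does $\delta_h(\lambda'-\lambda-d\beta_{0,0})=0$ and Lemma~\ref{lemma:fund_lemma} give $\lambda'-\lambda-d\beta_{0,0}=\pi^*\rho$. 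The statement $\delta_h(\lambda'-\lambda)=db$ alone does not produce such a primitive: $db$ being $\delta_h$-exact says nothing about $b$.

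This is precisely where the rigidity enters, so your remark that ``the $\alpha$-data plays no role\ldots so the rigidity is not an obstruction here'' is backwards. The paper first normalises $\alpha'$ relative to $\alpha$ (using $\delta_h\alpha=\delta_h\alpha'=0$ to write $\alpha'-\alpha=\pi^*\delta_h(\alpha_{1,0})$ and absorb this into $\beta$ by an exact term), and then uses condition (5), $\delta_h\beta=d\alpha=\delta_h\beta'$, to deduce $\delta_h(\beta-\beta')=0$; descending and applying the fundamental lemma \emph{horizontally} then yields exactly the $\beta_{0,0}$ above. Without the conditions $\delta_h\beta=d\alpha$ and $\delta_h\alpha=0$ one only knows that $\delta_h b$ descends to some 2-form on $Y^{[3]}$ with no control, and the argument (as you have set it up) does not close. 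To repair your proof you should insert the normalisation of $\alpha$ and the computation $\delta_h(\beta-\beta')=d\alpha-d\alpha=0$ before attempting to build the primitive for $\lambda'-\lambda$; the change of bundle gerbe connection is then handled last, as in the paper, by noting it alters $\beta$ only by an exact form and hence leaves the 3-form curvature on $Y^{[2]}$ unchanged.
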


\begin{proof} 
We will work through changing the connective structure one piece at a time. 

\begin{enumerate}
\item As observed above, given the data $(\nabla_E,\beta,\alpha)$ of a connection and curving fitting into \emph{some} rigid connective structure (hence taking $\nabla_E=\nabla_{E'}$, $\beta=\beta'$ and $\alpha=\alpha'$), different choices of 2-curving $\lambda$ only change the 4-curvature by an exact form.

\item Now fix the bundle gerbe connection $\nabla_E=\nabla_{E'}$, and the 1-form $\alpha=\alpha'$.
Suppose we are given two bundle 2-gerbe curvings (in the rigid sense), satisfying the condition $\delta_h\beta_{1,1}= d\alpha=\delta_h\beta'_{1,1}$.

\item This implies that $\delta_h(\beta_{1,1}-\beta'_{1,1}) = 0$, and further, the 2-form $b\in \Omega^2(Y^{[2]})$ with $\pi^*b = \beta_{1,1}-\beta'_{1,1}$ satisfies $\delta_h(b)=0$, so that $\beta_{1,1}-\beta'_{1,1} = \pi^*\delta_h(\beta_{0,0})$ for some $\beta_{0,0}\in \Omega^2(Y)$.

\item The two 3-form curvatures $\lambda_{1,0}$ and $\lambda'_{1,0}$ now satisfy $\lambda_{1,0} - \lambda'_{1,0} = \delta_h(d\beta_{0,0})$.

\item Since the 2-curvings satisfy $\delta_h(\lambda_{0,0})=\lambda_{1,0}$ (and same with primed terms), we have $\delta_h(\lambda_{0,0} - \lambda'_{0,0}) = \delta_h(d\beta_{0,0})$.

\item Thus $\lambda_{0,0} - \lambda'_{0,0} = d\beta_{0,0} + \pi^*\omega$, where $\omega\in \Omega^3(X)$. 
This allows us to conclude that the 4-form curvatures differ by an exact 4-form.

\item Now take $\nabla_E = \nabla_{E'}$, but consider two compatible choices of 1-form $\alpha$ and $\alpha' = \alpha+\pi^*a$, for $a\in \Omega^1(Y^{[3]})$ (this follows from the requirement that $\delta_h(\nabla_E) = \pi^*\delta_v(\alpha)=\pi^*\delta_v(\alpha')$).
Since we have $\delta_h(\alpha) = 0$, we have $\pi^*\delta_h (a) =\delta_h(\pi^*a) = 0$, and hence $\delta_h a=0$.
This gives $a=\delta_h(\alpha_{1,0})$ for some $\alpha_{1,0}\in \Omega^1(Y^{[2]})$.

\item Assuming we have a curving $\beta$ for $\nabla_E$ that is compatible with $\alpha$, so that $d\alpha = \delta_h(\beta)$, then $\beta + \pi^*d\alpha_{1,0}$ is another curving, this time compatible with $\alpha'$. 
But the 3-form curvatures on $Y^{[2]}$ of these two curvings are identical, so there is no change in the 4-form curvature down on $X$.

\item Now, given two different bundle gerbe connections, the curving $\beta$ for $\nabla_E$ is almost a curving $\beta'$ for $\nabla_{E'}$: one needs to add an exact 2-form that is the derivative of some 1-form $\kappa\in \Omega^1(Z_1)$ such that $\delta_v(\kappa)$ is the difference between the two connections. 
The 3-form curvatures of these are the same. 
So then picking an arbitrary (bundle 2-gerbe) curving for the second connection, we then land back in the previous case, and we are done.\qedhere
\end{enumerate}
\end{proof}

This theorem of course doesn't tell us what is the image of the map 
\begin{equation}\label{eq:char_class}
  \{\text{rigid 2-gerbes with rigid connective structure on } X\} \to H^4_{dR}(X).
\end{equation}
However, in a future paper \cite{rb2g_II}, we will construct a rigid model for the universal bundle 2-gerbe in diffeological spaces over a specific model of $B^3U(1)$ (which is also topologicaly a $K(\ZZ,4)$ space), so that every bundle 2-gerbe (in the weakest sense) will be stably isomorphic to a bundle 2-gerbe with a rigidification, at least as a diffeological rigid bundle 2-gerbe.
Unlike \cite[Theorem 3.15]{KM19}, our model will not use path fibrations, and will in fact build the universal bundle 2-gerbe using similar ideas to those behind the Chern--Simons 2-gerbe.
We are also ensuring that our universal 2-gerbe has the correct topological behaviour under the functor $\mathbf{Diffeol} \to \mathbf{Top}$ assigning a diffeological space the underlying set of points with the D-topology.

It follows from the discussion in Example \ref{example:conn_str_constructions} and Proposition \ref{prop:deRhamClass} that the de~Rham class $[\curv(\nabla,\beta,\lambda)]$ is a characteristic class for a rigid bundle 2-gerbe, as it pulls back appropriately along maps between manifolds.

\begin{remark}
Just as a rigid bundle 2-gerbe gives rise to a weak bundle 2-gerbe, a connective structure on a rigid bundle 2-gerbe gives rise to a connection, curving and 2-curving on a weak (aka `stable') bundle 2-gerbe in the sense of \cite[Definition 7.4]{Ste}. 
It even gives a connection, curving and 2-curving in the sense of \cite[Definition 4.1]{Wal}, which has additional constraints.\footnote{The existence proof for 2-gerbe connections etc in \cite{StevPhD} seems to establish existence of connections etc.\ in the stronger sense of Waldorf's later paper just cited, though the properties are not explicitly claimed to hold by Stevenson.}
A proof of this is given in Appendix \ref{app:weak_2-gerbes} (see Proposition~\ref{prop:associated_Waldorf_connective_structure}). 
The resulting 4-form curvatures are the same in both cases.
\end{remark}

Just as rigid bundle 2-gerbes have connective structure, we can discuss connective structure on a trivialisation.

\begin{definition}\label{def:vsb2g_triv_conn_str}
Given a trivialisation $(F,t)$ of, and a connective structure $(\nabla_E,\beta,\lambda)$ on, a rigid bundle 2-gerbe, a \emph{trivialisation connective structure} consists of the differential form data $(\nabla_F,a,b) \in \Omega^1(F)\oplus\Omega^1(Z_1)\oplus \Omega^2(Z_0)$ satisfying:
\begin{enumerate}[(1)]
\item $\nabla_F$ is a bundle gerbe connection: $0=\delta_v(\nabla_F) \in \Omega^1(\delta_vF)$;
\item $b$ is a curving for $\nabla_F$: $\pi^*\delta_v(b) = d\nabla_F$;
\item $\pi^*\delta_v(a) = \nabla_E - t^*\delta_h(\nabla_F)$;
\item $da = \beta - \delta_h(b) \in \Omega^2(Z_1)$.
\end{enumerate}
If we are given a 1-form $\alpha$ making $(\nabla_E,\beta,\lambda)$ a \emph{rigid} connective structure, the same definition of trivialisation connective structure holds verbatim.
\end{definition}

The third and fourth items make the strong trivialisation of the bundle gerbe $(E\otimes \delta_h(F)^*,Z_1)$ a strong trivialisation \emph{with connection}. 
The 1-form $a$ should be thought of as giving a connection on the trivial bundle on $Z_1$, such that its curvature is the difference between the curving $\beta$ and the induced curving $\delta_h(b)$.

Versions of the constructions in Examples \ref{example:conn_str_constructions} and \ref{example:triv_rb2g_constructions} hold in the case of a trivialisation connective struture.

It follows that given a trivialisation connective structure, the curvature $h\in \Omega^3(Y)$ of the curving $b$ is another 2-curving for the 1-curving $\beta$. 
Thus there is a unique 3-form $R\in \Omega^3(X)$ such that $\pi^*R = \lambda - h$. 
Further, since $dh=0$, we have $\pi^*dR = d\lambda = \pi^*\curv(\nabla_E,\beta,\lambda)$, and the injectivity of $\pi^*$ means that the curvature 4-form of the rigid connective structure is exact, hence trivial in de Rham cohomology, and we have an explicit primitive $R$ for $\curv(\nabla_E,\beta,\lambda)$.
Such primitives will appear in the forthcoming work \cite{rb2g_III} dealing with a wide class of examples of geometric string structures using our framework here.

\begin{remark}\label{rem:Redden_3-form}
This 3-form appears in work of Redden on the refined Stolz--H\"ohn conjecture \cite[\S 3]{Redden}, before arising in a more general setting in Waldorf's treatment of geometric string structures \cite[Lemma 4.6]{Wal}. 
Redden explicitly calculates deformation families of such 3-forms on $S^3$, there denoted $H_{\mathscr{S},g_{\alpha_1,\alpha_2}}$, associated to 2-parameter families of left-invariant metrics and different choices of string structures $\mathscr{S}$ of $S^3$, to the point of being able to give plots of the integral of said 3-form.
One potential application of \cite{rb2g_III} is to replicate Redden's explicit calculations in additional cases.
\end{remark}

\begin{proposition}
Given a trivialisation and a rigid connective structure $(\nabla_E,\beta,\lambda)$ on a rigid bundle 2-gerbe, a trivialisation connective structure exists.
\end{proposition}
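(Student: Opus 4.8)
The plan is to construct the triple $(\nabla_F,a,b)$ one component at a time, exactly in the style of the proof of Proposition~\ref{prop:conn_exist}: at each stage one checks the appropriate closedness and then invokes Lemma~\ref{lemma:fund_lemma} to solve the descent problem that appears.

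First I would fix a bundle gerbe connection $\nabla_F$ on $(F,Z_0)$, which exists by \cite{Mur}; this already supplies condition~(1) and is never modified afterwards. Since $\delta_h$ and $\delta_v$ commute, $\delta_h(\nabla_F)$ is a bundle gerbe connection on $(\delta_h F,Z_1)$, and as $t$ is an isomorphism of bundle gerbes compatible with the multiplications, $t^*\delta_h(\nabla_F)$ is a bundle gerbe connection on $(E,Z_1)$. Hence $\xi:=\nabla_E-t^*\delta_h(\nabla_F)$, viewed as a $1$-form on $Z_1^{[2]}$, is a difference of bundle gerbe connections, so $\delta_v(\xi)=0$, and Lemma~\ref{lemma:fund_lemma} produces $a_0\in\Omega^1(Z_1)$ with $\pi^*\delta_v(a_0)=\xi$; this is condition~(3) for $a_0$. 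Any other solution differs from $a_0$ by $\pi^*$ of a $1$-form on $Y^{[2]}$, and this freedom is exploited next.

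The key step is to refine $a_0$ to a $1$-form $a$ that \emph{also} satisfies $\delta_h(a)=\alpha$ on $Z_2$; granting this, Definition~\ref{def:conn_structure}(5) gives $\delta_h(\beta-da)=\delta_h\beta-d\alpha=0$, which is precisely what lets the remaining descent for $b$ go through. Set $\kappa:=\alpha-\delta_h(a_0)\in\Omega^1(Z_2)$. Then $\delta_h(\kappa)=\delta_h\alpha=0$ by Definition~\ref{def:conn_structure}(6), while $\delta_v(\kappa)=\delta_v\alpha-\delta_h\delta_v(a_0)=M^*\delta_h(\nabla_E)-\delta_h(\xi)$ by Definition~\ref{def:conn_structure}(4). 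Now $\delta_h(\xi)$ is the difference of the induced connections $\delta_h(\nabla_E)$ and $\delta_h\bigl(t^*\delta_h(\nabla_F)\bigr)$ on $\delta_h E$, written in the gauge given by the section $M$, so $\delta_h(\xi)=M^*\delta_h(\nabla_E)-M^*\delta_h\bigl(t^*\delta_h(\nabla_F)\bigr)$; and the coherence condition Definition~\ref{def:vsb2g_triv}(3), namely $\delta_h(t)\circ M=c_{\delta_h^2 F}$, combined with the standard fact that $\delta_h^2(\nabla_F)$ is the trivial connection on the canonically trivial bundle $\delta_h^2 F$, forces $M^*\delta_h\bigl(t^*\delta_h(\nabla_F)\bigr)=c_{\delta_h^2 F}^*\,\delta_h^2(\nabla_F)=0$. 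Hence $\delta_v(\kappa)=0$. Applying Lemma~\ref{lemma:fund_lemma} twice --- once to descend $\kappa$ along $Z_2\to Y^{[3]}$, then to descend the resulting $\delta_h$-closed $1$-form along $Y\to X$ --- yields $c\in\Omega^1(Y^{[2]})$ with $\delta_h(\pi^*c)=\kappa$. Replacing $a_0$ by $a:=a_0+\pi^*c$ keeps condition~(3) (because $\delta_v(\pi^*c)=0$) and now achieves $\delta_h(a)=\delta_h(a_0)+\kappa=\alpha$.

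Finally I would pick any curving $b_1$ for $\nabla_F$, which exists by Lemma~\ref{lemma:fund_lemma} since the curvature of a bundle gerbe connection is $\delta_v$-closed; this is condition~(2) for $b_1$. Writing $\gamma:=\beta-da$, a short computation using $\delta_v\beta=\curv(\nabla_E)$, $\delta_v(a)=\xi$, and the commutation of $d$ with $\delta_v$ shows $\delta_v(\gamma-\delta_h b_1)=0$, so $\gamma-\delta_h(b_1)=\pi^*\rho$ for some $\rho\in\Omega^2(Y^{[2]})$; applying $\delta_h$ and using $\delta_h(\gamma)=\delta_h\beta-d\alpha=0$ from the previous step gives $\pi^*\delta_h(\rho)=0$, hence $\delta_h(\rho)=0$, hence $\rho=\delta_h(e)$ for some $e\in\Omega^2(Y)$ by Lemma~\ref{lemma:fund_lemma}. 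Then $b:=b_1+\pi^*e$ still satisfies condition~(2), and $\delta_h(b)=\delta_h(b_1)+\pi^*\rho=\gamma=\beta-da$, which is condition~(4), completing the construction. The one point I expect to be genuinely delicate is the vanishing $\delta_v(\kappa)=0$: this is where the strong-trivialisation coherence of $M$ and the rigidity of the connective structure ($\delta_h\alpha=0$ and $\delta_v\alpha=M^*\delta_h\nabla_E$) must conspire, and where the bookkeeping of pullback along the section $M$ against descent along the various submersions $\pi$ has to be kept straight; everything else is a routine iteration of the fundamental lemma.
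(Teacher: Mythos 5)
Your proposal is correct and follows essentially the same route as the paper's proof: fix $\nabla_F$, solve $\pi^*\delta_v(a)=\nabla_E-t^*\delta_h(\nabla_F)$, then correct $a$ so that $\delta_h(a)=\alpha$ using the vanishing of $c_{\delta^2F}^*\delta_h^2(\nabla_F)$ (from $\delta_h(t)\circ M=c_{\delta^2F}$) together with $\delta_h(\alpha)=0$, and finally correct an arbitrary curving $b$ using $\delta_h(\beta)=d\alpha$. The step you flagged as delicate, $\delta_v(\kappa)=0$, is exactly the computation the paper performs, resolved by the same flatness of the canonical section of $\delta_h^2F$.
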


\begin{proof}
Take for $\nabla_F$ any bundle gerbe connection on $F\rightrightarrows Z_0$, and $\tilde b_{0,1}$ any curving for it. 
We need to `fix' $\tilde b_{0,1}$ to a curving satisfying the third point in Definition~\ref{def:vsb2g_triv_conn_str}.

\begin{enumerate}
\item Using the isomorphism $t$ of line bundles, we get two connections $t^*\delta_h(\nabla_F)$ and $\nabla_E$ on the bundle $E$, so there is a unique $a_{1,2} \in \Omega^1(Z_1^{[2]})$ such that $\pi^*a_{1,2} = \nabla_E - t^*\delta_h(\nabla_F)$. Or, to put it another way, $\nabla_E = t^*\delta_h(\nabla_F) + \pi^*a_{1,2}$.

\item Since $t$ is also an isomorphism of bundle gerbes, we have $\delta_v(\pi^*a_{1,2}) = \delta_v(\nabla_E) - t^*\delta_h(\delta_v(\nabla_F)) = 0$. Using the fundamental sequence, there exists $\tilde a_{1,1} \in \Omega^1(Z_1)$ such that $\delta_v(\tilde a_{1,1}) = a_{1,2}$. 
Thus we have $\pi^*\delta_v(\tilde a_{1,1}) = \nabla_E - t^*\delta_h(\nabla_F)$, but this doesn't necessarily satisfy (4) with the given curving $\tilde b_{0,1}$.

\item We can now compare $\alpha_{2,1}$ and $\delta_h(\tilde a_{1,1})$:
\begin{align*}
\delta_v(\delta_h (\tilde a_{1,1}) - \alpha_{2,1}) 
& = \delta_h a_{1,2} - \delta_v\alpha_{2,1}\\
& = M^*\pi^*\delta_h a_{1,2} - M^* \delta_h(\nabla_E)\\
& = M^*(\delta_h\pi^*a_{1,2} - \delta_h\nabla_E)\\
& = M^*\delta_h(\nabla_E - t^*\delta_h(\nabla_F)-\nabla_E)\\
& = M^*\delta_h(t)^*\delta_h(\delta_h(\nabla_F))\\
& = c_{\delta^2F}^*\delta_h(\delta_h(\nabla_F)).
\end{align*}
This last 1-form in fact vanishes, as $c_{\delta^2F}$ is flat with respect to $\delta_h(\delta_h(\nabla_F))$. 
This follows from the fact the section of $E\otimes E^*$ is always flat with respect to any connection induced from $E$, since that is how the section of $\delta_h^2F$ is built.
By using the fundamental sequence there exists a (unique) $a_{2,0} \in \Omega^1(Y^{[3]})$ such that $\pi^*a_{2,0} = \delta_h(\tilde a_{1,1}) - \alpha_{2,1}$.

\item Since we have a rigid connective structure, with $\delta_h(\alpha_{2,1})=0$, we find that $\pi^*\delta_h(a_{2,0}) = 0$, and since $\pi^*$ is injectve, $\delta_h(a_{2,0})=0$ so there exists $a_{1,0} \in \Omega^1(Y^{[2]})$ such that $\delta_h(a_{1,0}) = a_{2,0}$.

\item Now define $a_{1,1} := \tilde a_{1,1} - \pi^* a_{1,0}$, which satisfies $\pi^*\delta_v(a_{1,1}) = \nabla_E - t^*\delta_h(\nabla_F)$, as $\delta_v\pi^*a_{1,0}=0$.
This will fix up the connection on the trivialisation of $(E\otimes \delta_h(F)^*,Z_1)$ to make it a strong trivialisation with connection, once we have the correct curving on $(F,Z_0)$.

\item Moreover, 
\begin{align}\label{eq:triv_connet_exist_prf_6}
\delta_h(a_{1,1}) & = \delta_h(\tilde a_{1,1}) - \pi^*\delta_h(a_{1,0})  \\
                  & = \alpha_{2,1} + \pi^*(a_{2,0} - \delta_h(a_{1,0}))  \nonumber\\
                  & = \alpha_{2,1} + \pi^*(a_{2,0} - a_{2,0} )  \nonumber\\
                  & = \alpha_{2,1},  \nonumber
\end{align}
and it follows from the rigidity of the original connective structure that $\delta_h(\beta_{1,1}) = d\alpha_{2,1} = \delta_h(da_{1,1})$.

\item Now we have $\beta_{1,1} - \delta_h(\tilde b_{0,1})$ and $da_{1,1}$ are both curvings for the bundle gerbe connection $\nabla_E - t^*\delta_h(\nabla_F)$. Thus there is a unique $b_{1,0} \in \Omega^2(Y^{[2]})$ such that $\pi^*b_{1,0} = \big(\beta_{1,1} - \delta_h(\tilde b_{0,1})\big) - da_{1,1}$.

\item But now $\pi^*\delta_h b_{1,0} = \delta_h(\beta_{1,1}) - \delta_h(da_{1,1}) = 0$ and hence $\delta_h(b_{1,0})=0$. So there exists $b_{0,0} \in \Omega^2(Y)$ such that $\delta_h(b_{0,0}) = b_{1,0}$. So now define $b_{0,1} := \tilde b_{0,1} + \pi^* b_{0,0} \in \Omega^2(Z_0)$. This is our `fixed' curving for $\nabla_F$.

\item Further, this satisfies $\beta_{1,1} - \delta_h(b_{0,1}) = da_{1,1}$, as needed.
\end{enumerate}

Then $(\nabla_F,a_{1,1},b_{0,1})$ is a trivialisation connective structure for the given data.
\end{proof}


We are not going to develop a complete theory of rigid bundle 2-gerbes and their connective structures, although one could in principle do this. 
It would require discussion of what the analogue of stable isomorphism would look like, the tensor product of a general pair of rigid 2-gerbes with connective structure, descent etc. 
An in-depth treatment of \emph{bigerbes} (without connective structures) is given in \cite{KM19}, but as our motivation is primarily towards finding a convenient and efficient means to construct bundle 2-gerbes, and in particular geometric string structures, we will not be pursuing the analogous study here.


\section{The Chern--Simons bundle 2-gerbe}\label{sec:CS2-gerbe}

A key example in the literature of a bundle 2-gerbe, namely that of a Chern--Simons 2-gerbe, is constructed in \cite{StevPhD}, and in \cite{JohnsonPhD,CJMSW} the authors describe a connective structure.
This bundle 2-gerbe takes as input a principal $G$-bundle $Q\to X$, and has as its characteristic class ($\pm2\pi$ times) the first fractional Pontryagin class $\frac{p_1}{2}(Q)\in H^4(X,\ZZ)$. 
The 4-form curvature of the bundle 2-gerbe---ultimately constructed from a $G$-connection $A$ on $Q$---is, likewise, ($\pm 2\pi$ times) the Chern--Weil representative $\frac{1}{8\pi^2}\tr(F_A^2)\in \Omega^4(X)$ for this cohomology class.

The main result in this section is that the Chern--Simons bundle 2-gerbe in the literature (eg \cite{CJMSW}) is rigidified by the rigid bundle 2-gerbe constructed here.

\subsection{The rigid Chern--Simons bundle 2-gerbe}

Here is a key construction that we will be relying on for many calcuations in the rest of the paper. 
Fix a compact, simple, simply-connected Lie group $G$.

\begin{definition}[{\cite[proof of Proposition 3.1]{BSCS}}]\label{def:string_xmod}
The crossed module $\Omegahat{G} \to PG$ has action $\Adhat\colon PG\times \Omegahat{G} \to \Omegahat{G}$ defined  as the descent of the map
\begin{align}\label{eq:lifted_Ad}
  \widetilde{\Ad}\colon & PG \times P\Omega G \ltimes U(1) \to P\Omega G \ltimes U(1) \nonumber \\
  &(p;f,z) \mapsto \left((t\mapsto \Ad_pf(t)), z\cdot \exp\left(2i\int_0^{2\pi} \intSone \langle f(t)^{-1}\partial_t f(t), \phi_p \rangle\, \mathrm{d}t\right)\right)
\end{align}
along the defining quotient map $P\Omega G \ltimes U(1) \twoheadrightarrow \Omegahat{G}$.
\end{definition}

\begin{remark}\label{rem:BCSS_error}
This is different to the definition implicit in \cite[Proposition 3.1]{BSCS}, because there is a sign mistake on the left side of equation (5) in Definition 2.7 (relative to  \cite[Definition 4.3.4]{HDA6}, which agrees with Definition 5.2 and Remark 5.3 in \cite{Lada-Markl_95}\footnote{The authors thank Kevin van Helden for sharing his detailed unpublished calculations for \cite{vanHelden_21} that confirm that \cite[Definition 4.34]{HDA6} agrees with the definition of weak $L_\infty$-algebra map suggested in \cite{Lada-Markl_95}.}), and this forces a cascade of sign flips leading back to the construction in \cite[Proposition 3.1]{BSCS} that gives this map $\Adhat$. 
We still get the same comparison result between the Lie 2-algebras if we change the sign of one of the brackets of the Lie 2-algebra associated to $\String_G$ and the coherence data for the Lie 2-algebra maps $\phi$ and $\lambda$ in \cite[Theorem 5.1]{BSCS}.
\end{remark}

\begin{lemma}\label{lemma:central_ext_PG_OmegaG}
The lift of the adjoint action as in Definition \ref{def:string_xmod} makes $PG\ltimes \widehat{\Omega G} \to PG \ltimes \Omega G$ a central extension of Lie groups, where the semidirect product on $PG\ltimes \widehat{\Omega G} $ is defined using $\Adhat$.
\end{lemma}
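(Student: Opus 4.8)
The plan is to recognise $PG\ltimes\widehat{\Omega G}\to PG\ltimes\Omega G$ as a genuine extension of groups whose kernel is the central $U(1)$, and then simply verify the three things that need checking: that the semidirect product using $\Adhat$ is a well-defined group, that the projection is a group homomorphism with kernel $U(1)$, and that this $U(1)$ is central. First I would recall that $\widehat{\Omega G}\to\Omega G$ is already a central extension of Lie groups (this is the level-1 central extension of the loop group used to build the basic gerbe in Section~\ref{sec:new_example}), so that $U(1)$ sits centrally inside $\widehat{\Omega G}$. The map $\Adhat\colon PG\times\widehat{\Omega G}\to\widehat{\Omega G}$ from Definition~\ref{def:string_xmod} covers the ordinary adjoint action $PG\times\Omega G\to\Omega G$, $(p,\gamma)\mapsto \Ad_p\gamma$, which is a smooth action of $PG$ on $\Omega G$ by group automorphisms; hence the semidirect product $PG\ltimes\Omega G$ is a Lie group. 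What must be checked is that $\Adhat$ is likewise an action of $PG$ on $\widehat{\Omega G}$ by group automorphisms, i.e.\ that for each $p\in PG$ the map $\Adhat_p\colon\widehat{\Omega G}\to\widehat{\Omega G}$ is a group homomorphism, that $\Adhat_{pp'}=\Adhat_p\circ\Adhat_{p'}$, and that $\Adhat_1=\id$. This is precisely the content guaranteed by the fact that $\Omegahat{G}\to PG$ with the action $\Adhat$ is a crossed module (Definition~\ref{def:string_xmod}, from \cite[proof of Proposition 3.1]{BSCS}): the Peiffer and equivariance axioms of a crossed module say exactly that $\Adhat$ is an action by automorphisms lifting $\Ad$. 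So the bulk of the work is bookkeeping: unwind the crossed-module axioms into the statement that $\Adhat$ makes $PG\ltimes\widehat{\Omega G}$ a group. Concretely, the multiplication on $PG\ltimes\widehat{\Omega G}$ is $(p,\hat\gamma)\cdot(p',\hat\gamma')=(pp',\hat\gamma\cdot\Adhat_p(\hat\gamma'))$, and associativity reduces to $\Adhat$ being a (left) action by homomorphisms.

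Next I would exhibit the projection $q\colon PG\ltimes\widehat{\Omega G}\to PG\ltimes\Omega G$, $(p,\hat\gamma)\mapsto(p,\pi(\hat\gamma))$, and observe it is a smooth surjective homomorphism: it is a homomorphism because $\pi\colon\widehat{\Omega G}\to\Omega G$ is a homomorphism and because $\Adhat$ covers $\Ad$, so $\pi(\hat\gamma\cdot\Adhat_p\hat\gamma')=\pi(\hat\gamma)\cdot\Ad_p\pi(\hat\gamma')$. The kernel is $\{(1,\hat\gamma):\pi(\hat\gamma)=1\}\cong U(1)$, sitting inside the identity component $\{1\}\times\widehat{\Omega G}$. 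It is a normal subgroup because $U(1)\subset\widehat{\Omega G}$ is normal and $\Adhat_p$ restricted to $U(1)\subset\widehat{\Omega G}$ is the identity --- here is where the explicit formula \eqref{eq:lifted_Ad} is used: the exponential correction term involves $\langle f(t)^{-1}\partial_t f(t),\phi_p\rangle$, and when $f$ is the constant loop at a point of $U(1)$ (equivalently, the element of $P\Omega G\ltimes U(1)$ representing $z\in U(1)$), $f(t)^{-1}\partial_t f(t)=0$, so the correction is trivial and $\Adhat_p(z)=z$. This also makes the $U(1)$ central: for $(p,\hat\gamma)$ arbitrary and $z\in U(1)\subset\widehat{\Omega G}$, one computes $(p,\hat\gamma)\cdot(1,z)=(p,\hat\gamma\cdot z)$ and $(1,z)\cdot(p,\hat\gamma)=(p,z\cdot\Adhat_1(\hat\gamma))=(p,z\cdot\hat\gamma)$, which agree since $z$ is central in $\widehat{\Omega G}$.

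Finally I would note the remaining smoothness/manifold points: $PG\ltimes\widehat{\Omega G}$ is a Fr\'echet--Lie group as a product manifold with smooth multiplication (the semidirect product of Fr\'echet--Lie groups with a smooth action is again one, and $\Adhat$ is smooth by construction, being the descent of the manifestly smooth map $\widetilde{\Ad}$), and $q$ is a smooth principal $U(1)$-bundle because $\pi\colon\widehat{\Omega G}\to\Omega G$ already is and the bundle structure is unchanged in the $PG$ direction. The main obstacle, such as it is, is the verification that $\Adhat$ is genuinely a left action by automorphisms --- i.e.\ checking $\Adhat_{pp'}=\Adhat_p\circ\Adhat_{p'}$ at the level of the cocycle correction term in \eqref{eq:lifted_Ad}; this amounts to the cocycle identity $\int_0^{2\pi}\!\!\intSone\langle f(t)^{-1}\partial_t f(t),\phi_{pp'}\rangle\,\mathrm dt = \int_0^{2\pi}\!\!\intSone\langle f(t)^{-1}\partial_t f(t),\phi_p\rangle\,\mathrm dt + \int_0^{2\pi}\!\!\intSone\langle (\Ad_{p^{-1}}f)(t)^{-1}\partial_t(\Ad_{p^{-1}}f)(t),\phi_{p'}\rangle\,\mathrm dt$ modulo $2\pi\ZZ$, using $\phi_{pp'}=\phi_{p'}+\Ad_{p'^{-1}}\phi_p$ and the $\Ad$-invariance of $\langle-,-\rangle$. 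But since \cite{BSCS} already establishes that $\Omegahat{G}\to PG$ with this action is a crossed module --- modulo the sign corrections flagged in Remark~\ref{rem:BCSS_error}, which do not affect the group-theoretic content --- this identity may simply be cited, and the lemma follows.
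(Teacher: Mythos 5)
The paper states this lemma without proof, treating it as immediate from the crossed-module structure on $\widehat{\Omega G}\to PG$ packaged in Definition~\ref{def:string_xmod} (i.e.\ from the proof of \cite[Proposition 3.1]{BSCS}); your write-up supplies exactly the verifications that are left implicit, and it is correct. The decomposition into (i) $\Adhat$ being a smooth action by automorphisms covering $\Ad$, (ii) the projection $(p,\hat\gamma)\mapsto(p,\pi(\hat\gamma))$ being a homomorphism with kernel $\{1\}\times U(1)$, and (iii) centrality of that kernel because $\Adhat_p$ fixes $U(1)$ pointwise (the correction term in \eqref{eq:lifted_Ad} vanishes when $f$ is constant at the trivial loop) while $U(1)$ is already central in $\widehat{\Omega G}$, is the right one, and your cocycle check of $\Adhat_{pp'}=\Adhat_p\circ\Adhat_{p'}$ via $\phi_{pp'}=\phi_{p'}+\Ad_{p'^{-1}}\phi_p$ and $\Ad$-invariance of $\langle-,-\rangle$ is precisely the one identity that genuinely uses the explicit formula (and it holds exactly, not merely modulo $2\pi\ZZ$). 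Two cosmetic quibbles: the paper's semidirect product convention is $(p,\gamma)(q,\eta)=(pq,\Ad_{q^{-1}}(\gamma)\eta)$, as in the formula for $\mult$ accompanying Figure~\ref{fig:CSb2g}, rather than the left-action convention you write, though either yields the same centrality statement; and the fact that each $\Adhat_p$ is a group homomorphism is part of the data of a crossed module (an action by automorphisms) rather than a consequence of the Peiffer and equivariance axioms as such. Neither affects the argument.
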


Now fix a principal $G$-bundle $Q\to X$. In (i)--(viii) below we refer to the items in Definition \ref{def:vstrictb2g}.

\begin{figure}[!t]
  \[
  \centerline{
    \xymatrix@=11ex{
      &
      \save 
        []+<-4em,10ex>  
        *+{\delta_v\delta_h \widehat{\Omega G}}="dvdh" 
        \ar@{<-}[]_-(.4){\delta_vM} 
      \restore
      \save 
        []+<4em,10ex> 
        *+{ \delta_h\delta_v \widehat{\Omega G}}="dhdv" 
        \ar@{<-}[]_-(.4){\delta_hm}
      \restore
      \ar@{->}"dvdh";"dhdv"^{!\simeq} 
      Q\times (PG\ltimes \Omega G^2)^2  
      \ar@<1.5ex>[r]^{\Act_{12}} \ar[r]|{\,(*)\,d_1\,} \ar@<-1.5ex>[r]_{\pr_{12}}  \ar@<1.5ex>[d] \ar@<-1.5ex>[d] \ar[d] 
      &
      Q\times (PG\ltimes \Omega G^2) 
      \save 
        []+<5em,10ex> 
        *+{\delta_v \widehat{\Omega G}} 
        \ar[] \ar@{<-}@/_1pc/[]_{m} 
      \restore 
      & 
      \\
      Q\times (PG\ltimes \Omega G)^3 
      \ar@<1ex>[d] \ar@<-1ex>[d] \ar@<1.5ex>[r] \ar@<-1.5ex>[r] \ar@<-0.5ex>[r]\ar@<0.5ex>[r]
      \save 
        []+<-1em,10ex> 
        *+[l]{\underline{U(1)}\stackrel{!}{\simeq}\delta_h^2 \widehat{\Omega G}} 
        \ar[] \ar@{<-}@/_1pc/[]_{1\equiv\delta_hM} 
      \restore 
      &
      Q\times (PG\ltimes \Omega G)^2 
      \ar@<1ex>[d]^{\id\times \pr_1^2} \ar@<-1ex>[d]_{m_{23}^2} \ar@<1.5ex>[r]^{\Act_{12}} \ar@<-1.5ex>[r]_{\pr_{12}} \ar[r]|{\,\id\times\mult\,} 
      \save
        []+<-5em,10ex> 
        *+{\delta_h\widehat{\Omega G}}  
        \ar[]+<-2em,2ex> \ar@{<-}@/_1pc/[]+<-2.7em,2ex>_(0.4){M} 
      \restore 
      &  
      \save 
        []+<5em,10ex> 
        *+{\widehat{\Omega G}}="2gerbe"  
        \ar[]+<2em,2ex>
      \restore 
      \ar@<1.5ex>[u];[]^(0.3){\pr_{123}}
      \ar@<-1.5ex>[u];[]_(0.65){m_{23}}
      \ar[u];[]|[right]{m_{34}}
      Q\times (PG\ltimes \Omega G)
      \ar@<1ex>[d]^{\id\times \pr_1} \ar@<-1ex>[d]_{m_{23}} 
      & 
      %
      \\
      Q\times PG^3 
      \ar[d]^{\pi} \ar@<1.5ex>[r] \ar@<-1.5ex>[r] \ar@<-0.5ex>[r]\ar@<0.5ex>[r]
      &
      Q\times PG^2 
      \ar@<1.5ex>[r]^{\Act_{12}}\ar[r]|{\,m_{23}\,} \ar@<-1.5ex>[r]_{\pr_{12}} \ar[d]^{\pi}
      & 
      Q\times PG
      \ar@<1ex>[r]^{\Act_{12}} \ar@<-1ex>[r]_{\pr_1} 
      \ar[d]^{\pi = \id\times \ev_{2\pi}}
      &
      Q 
      \ar[d]^{\id} 
      \\
      Q\times G^3 
      \ar@<1.5ex>[r]^{\Act_{12}}  \ar@<0.5ex>[r]|(0.4){\,m_{23}\,}  \ar@<-0.5ex>[r]|(0.6){\,m_{34}\,}  \ar@<-1.5ex>[r]_{\pr_{123}}
      &
      Q\times G^2  
      \ar@<1.5ex>[r]^{\Act_{12}} \ar[r]|{\,\id\times m\,} \ar@<-1.5ex>[r]_{\pr_{12}}
      & 
      Q\times G 
      \ar@<1ex>[r]^{\Act} \ar@<-1ex>[r]_{\pr_1}
      & 
      Q
      & 
      \save 
        []+<-2em,0pt> 
        *+{X} 
        \ar@{<-}[l]
      \restore
    }
  }
  \]
  \vspace{1cm}
  \caption{The rigid Chern--Simons bundle 2-gerbe of $Q\to X$. Extra factors in the $U(1)$-bundles have been suppressed.
  The map $\Act_{12}$ is the action of the second component on the first; $m_{ij}$ is multiplication of the $i$, $j$ components; $\pr_{ij\ldots}$ is projection on the $ij\ldots$ components. Isomorphisms marked $!$ are canonical. The map marked $(*)$ is explained in Definition~\ref{def:CS_b2g}.}
  \label{fig:CSb2g}
  \end{figure}

\begin{definition}\label{def:CS_b2g}
The \emph{rigid Chern--Simons bundle 2-gerbe of $Q$}, $CS(Q)$, is depicted in Figure~\ref{fig:CSb2g}, where:
\begin{enumerate}[(i)]
  \item The surjective submersion $Y\to X$ in (1) is the bundle projection $Q\to X$.
  \item The simplicial space $Q\times G^\bullet$ is isomorphic to the \v{C}ech nerve $Q^{[\bullet+1]}$ over $X$, and is the nerve of the action groupoid associated to $G$ acting on $Q$.
  \item The (truncated) semisimplicial space $Z_\bullet$ in (2) is $Q\times PG^\bullet$.
  \item The component $\pi\colon Q\times PG^n\to Q\times G^n$ of the simplicial map $Q\times PG^\bullet\to Q\times G^\bullet$ is $\id\times \ev^n_{2\pi}$, which is a surjective submersion, as required for (2).
  \item The (vertical) \v{C}ech nerve associated to the principal $\Omega G^n$-bundle $Q\times PG^n\to Q\times G^n$ is isomorphic to $Q\times (PG\ltimes \Omega G^\bullet)^n$ where the semidirect product arises from the action of $PG$ on $\widehat{\Omega G}$ as in Definition~\ref{def:string_xmod}.
  \item The map marked (*) is 
  \[
    d_1(p_1,\gamma_1,\eta_1;p_2,\gamma_2,\eta_2) = (p_1p_2,\Ad_{p_2^{-1}}(\gamma_1)\gamma_2,\gamma_2^{-1}\Ad_{p_2^{-1}}(\eta_1)\gamma_2\eta_2).
  \]
  \item The $U(1)$-bundle $E$ from (3) is $\pr_3^*\widehat{\Omega G} = Q\times PG \ltimes \widehat{\Omega G} \to Q\times PG \ltimes \Omega G$.
  In the figure we omit writing $\pr_3^*$ for brevity. 
  The details of the semidirect product involving $\widehat{\Omega G}$ appear below.
  \item The bundle gerbe multiplication $m$ from (3) is inherited from the basic gerbe on $G$. 
  Note that the bundle gerbe $(\pr_3^*\widehat{\Omega G},Q\times PG)$ is simply the pullback of the basic gerbe along the projection $Q\times G\to G$.
  \item The section $M\colon Q\times (PG\ltimes \Omega G)^2 \to Q\times \delta_h(PG\ltimes \Omegahat{G})$ is defined to be the one guaranteed to exist by discussion in Example \ref{ex:lifting}.

\end{enumerate}

\end{definition}

\begin{remark}
In \cite{StevPhD,JohnsonPhD,CJMSW} a different model for the basic gerbe is used to construct the Chern--Simons 2-gerbe, namely the \emph{tautological bundle gerbe} from \cite{Mur}, associated to the closed integral 3-form on the 2-connected manifold underlying $G$. 
This is \emph{stably} isomorphic to the model for the basic gerbe given here for formal reasons, but it is also actually \emph{isomorphic} to it.
The model of the basic gerbe we used here is inherited from \cite{MS03}.
\end{remark}

To explain the 2-gerbe multiplication $M$ in more detail, recall that the multiplication map $\mult\colon (PG\ltimes \Omega G)^2 \to PG\ltimes \Omega G$ is given by $\mult(p,\gamma,q,\eta)=(pq,\Ad_{q^{-1}}(\gamma)\eta)$.
Then $M$ arises from the isomorphism that on fibres looks like
  \begin{align*}
\delta_h(PG\times \widehat{\Omega G})_{(p,\gamma,q,\eta)} 
& \simeq \widehat{\Omega G}_\gamma\otimes \mult^*(PG\times \widehat{\Omega G})^*_{(p,\gamma,q,\eta)}\otimes \widehat{\Omega G}_\eta\\
&\simeq  \widehat{\Omega G}_\gamma\otimes \widehat{\Omega G}^*_{\Ad_{q^{-1}}(\gamma)\eta}\otimes \widehat{\Omega G}_\eta\\
&\stackrel{(1)}{\simeq} 
\widehat{\Omega G}_\gamma\otimes \widehat{\Omega G}^*_{\Ad_{q^{-1}}(\gamma)}\otimes \widehat{\Omega G}^*_\eta\otimes \widehat{\Omega G}_\eta\\
&\stackrel{(2)}{\simeq} 
\widehat{\Omega G}_\gamma\otimes \widehat{\Omega G}^*_{\Ad_{q^{-1}}(\gamma)}\\
&\stackrel{(3)}{\simeq} 
\widehat{\Omega G}_\gamma\otimes \widehat{\Omega G}^*_\gamma\\
&\stackrel{(4)}{\simeq}  U(1).
\end{align*}
In this chain of isomorphisms (1) is the isomorphism coming from the multiplicative structure on the $U(1)$-bundle, (2) and (4) come from the canonical trivialisation of $\widehat{\Omega G}^*\otimes \widehat{\Omega G} \to \Omega G$, and (3) comes from the lift of the adjoint action in Definition \ref{def:string_xmod} being a map of $U(1)$-bundles.
The section $M$ itself is then the pullback of this section along the projection to $(PG\ltimes \Omega G)^2$

\begin{proposition}
The section $M$ in Definition \ref{def:CS_b2g} (ix) satisfies the condition (4) in Definition \ref{def:vstrictb2g}, the definition of rigid bundle 2-gerbe.
\end{proposition}

\begin{proof}
We note that (4)(b) comes for free from the fact we have a central extension, which means that $\delta_h(M)=1$ \cite[\S 3]{MS03}, which is a manifestation of the multiplication in $PG\ltimes \widehat{\Omega G}$ being associative.
It thus remains to prove (a), namely $\delta_v\delta_h(M)=\delta_h\delta_v(m)$. 

The two trivialisations in question correspond fibrewise to the composite isomorphisms
  \begin{align*}
&\delta_v\delta_h(PG\times \widehat{\Omega G})_{(p,\gamma_1,\gamma_2,q,\eta_1,\eta_2)}\\ 
& \simeq \delta_h(PG\times \widehat{\Omega G})_{(p\gamma_1,\gamma_2,q\eta_1,\eta_2)}\otimes \delta_h(PG\times \widehat{\Omega G})^*_{(p,\gamma_1\gamma_2,q,\eta_1\eta_2)} \otimes \delta_h(PG\times \widehat{\Omega G})_{(p,\gamma_1,q,\eta_1)}  \\
&\simeq U(1)\otimes U(1)^*\otimes U(1)
 \simeq U(1),
\end{align*}
where the second isomorphism uses the appropriate instances of $M$, and
\begin{align*}
&\delta_h\delta_v(PG\times \widehat{\Omega G})_{(p,\gamma_1,\gamma_2,q,\eta_1,\eta_2)}\\
& \simeq \delta_v(PG\times \widehat{\Omega G})_{(q,\eta_1,\eta_2)} \otimes \delta_v(PG\times \widehat{\Omega G})^*_{(p_1p_2,\Ad_{p_2^{-1}}(\gamma_1)\gamma_2,\gamma_2^{-1}\Ad_{p_2^{-1}}(\eta_1)\gamma_2\eta_2)} \otimes \delta_v(PG\times \widehat{\Omega G})_{(p,\gamma_1,\gamma_2)} \\
& \simeq U(1)\otimes U(1)^*\otimes U(1) \simeq U(1),
\end{align*}
using the bundle gerbe multiplication section three times.
That these two isomorphisms are equal follows from \cite[Lemma 6.4]{MRSV}, and the fact they are built up using the structure of the crossed module in Definition \ref{def:string_xmod} (and so are `\textsf{xm}-morphisms' to the trivial bundle, defined in \emph{loc.\ cit.}).
\end{proof}

We end this subsection with the relationship between our rigid model for the Chern--Simons 2-gerbe and the Chern--Simons bundle 2-gerbe appearing in the literature.

\begin{theorem}\label{prop:rigidification}
The rigid bundle 2-gerbe $CS(Q)$ on $X$ is a rigidification of the Chern--Simons bundle 2-gerbe from \cite{CJMSW}.
\end{theorem}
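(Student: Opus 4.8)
The plan is to verify that the weakification functor $w(-)$ of Definition~\ref{def:weakification_sketch}, applied to $CS(Q)$, yields a bundle 2-gerbe that is isomorphic --- in the strong sense of Definition~\ref{def:rigidification} --- to the Chern--Simons bundle 2-gerbe of \cite{CJMSW}. First I would recall what the latter actually is: a surjective submersion $Q\to X$, the simplicial space $Q^{[\bullet]}\cong Q\times G^\bullet$, the basic (bundle) gerbe pulled back along $Q\times G\to G$ as the bundle gerbe on $Q^{[2]}$, together with a bundle 2-gerbe product over $Q^{[3]}$ coming from the multiplicativity of the basic gerbe, and an associator over $Q^{[4]}$. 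Since by Definition~\ref{def:CS_b2g} the rigid data of $CS(Q)$ is built from exactly the same submersion $Q\to X$, the same nerve $Q\times G^\bullet$, and the same (pulled-back basic) bundle gerbe $(\pr_3^*\widehat{\Omega G},Q\times PG)$ on $Q^{[2]}$, the candidate isomorphism on the level of submersions and bundle gerbes is the identity. What remains is to check compatibility of the bundle 2-gerbe multiplications, and then of the associators.

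The main step is to identify $w(CS(Q))$'s bundle 2-gerbe product with the Chern--Simons product. By the construction in Definition~\ref{def:weakification_sketch}, $w(CS(Q))$'s product is the stable isomorphism obtained from the span $d_0^*\cE\otimes d_2^*\cE \leftarrow (d_1^*E,Z_2)\to d_1^*\cE$, where the backwards arrow is the bundle gerbe morphism whose underlying isomorphism of $U(1)$-bundles over $Z_1^{[2]}$ corresponds to the section $M$ of Definition~\ref{def:CS_b2g}(viii), and then one inverts it via an adjoint inverse in the bigroupoid of bundle gerbes and stable isomorphisms. I would show this coincides with the literature's product by unwinding the fibrewise formula for $M$ displayed after Definition~\ref{def:CS_b2g} --- the chain of isomorphisms $(1)$--$(4)$ built from the multiplicativity of $\widehat{\Omega G}$, the canonical trivialisation $\widehat{\Omega G}^*\otimes\widehat{\Omega G}\simeq\underline{U(1)}$, and the fact that $\Adhat$ is a $U(1)$-bundle map --- and observing that, by construction of the basic gerbe as a lifting bundle gerbe (Example~\ref{ex:lifting}, and the discussion in Section~\ref{sec:new_example}), this is precisely the multiplicative structure on the basic gerbe that \cite{CJMSW} (via \cite{StevPhD}) uses. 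Here one must invoke the remark that the Murray--Stevenson model of the basic gerbe is not merely stably isomorphic but genuinely isomorphic to the tautological model used in \emph{loc.\ cit.}, so that the comparison can be made on the nose rather than up to stable isomorphism.

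Next I would check the associators agree. The associator of $w(CS(Q))$ is assembled, as in Definition~\ref{def:weakification_sketch}, from the four commuting squares of bundle gerbe morphisms over $Q^{[4]}$ whose central node is the bundle gerbe with submersion $Z_3=Q\times PG^3\to Q^{[4]}$, using the associativity condition~(5)(b) on $M$ (equivalently the commuting cube of Proposition~\ref{prop:rb2g_mult_associativity_square}) to glue them; the wrong-way edges are turned around by mates. On the Chern--Simons side, the associator comes from the associativity of the basic gerbe's multiplication. Both ultimately reduce to the associativity of the group multiplication in $PG\ltimes\widehat{\Omega G}$ --- which is exactly what condition (4)(b), $\delta_h(M)=1$, encodes, as noted in the proof of the preceding proposition and in \cite[\S 3]{MS03}. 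So I would argue that the two associators are identified by tracing both through the same combinatorial bookkeeping over $Q\times G^4$, appealing again to \cite[Lemma 6.4]{MRSV} for the coherence of the \textsf{xm}-morphisms involved. Finally, one should note coherence of the associator (the cube over $Q^{[5]}$) is automatic once the associators match, since the Chern--Simons 2-gerbe is already known to be coherent.

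The hard part will be the bookkeeping in the second and third steps: matching the abstractly-constructed span-and-mates data of $w(CS(Q))$ with the concretely-given Chern--Simons product and associator requires carefully tracking the canonical isomorphisms (pullback identifications, adjoint inverses, the $\delta_h\delta_v\simeq\delta_v\delta_h$ swap) through the bigroupoid of bundle gerbes and stable isomorphisms, and verifying that no unexpected sign or coherence discrepancy appears --- precisely the kind of calculation that the rigid formalism of this paper is designed to make tractable, and which is carried out in detail in Appendix~\ref{app:weak_2-gerbes}. I would lean on Construction~\ref{constr:associated_b2g} there for the general comparison, so that the proof here reduces to checking that the specific data of Definition~\ref{def:CS_b2g} feeds into that construction to produce the \cite{CJMSW} 2-gerbe.
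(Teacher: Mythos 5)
Your overall strategy---compare $w(CS(Q))$ with the literature's Chern--Simons bundle 2-gerbe by matching the submersion, the nerve, the pulled-back basic gerbe, the product, and the associator---is the same as the paper's, and your identification of the product via the fibrewise formula for $M$ and of the associator via associativity of $PG\ltimes\widehat{\Omega G}$ is correct in spirit. But you leave the central step unresolved: you defer ``the bookkeeping in the second and third steps'' (chasing adjoint inverses and mates through the bigroupoid of bundle gerbes and stable isomorphisms) to an unspecified calculation, and that is precisely where the content of the proof lies. The paper does not do this bookkeeping by brute force; it short-circuits it with Proposition~\ref{prop:general_strictification_setup}.

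The key observation you are missing is that for $CS(Q)$ the simplicial submersion $Z_\bullet = Q\times PG^\bullet$ is the ``default'' one built from iterated fibre products: the induced maps $Z_2\to d_0^*Z_1\times_{Y^{[3]}}d_2^*Z_1$ and $Z_3\to d_1^*d_0^*Z_1\times_{Y^{[4]}}d_3^*Z_2$ are \emph{isomorphisms}. By Proposition~\ref{prop:general_strictification_setup} (resting on Lemmas~\ref{lemma:bundlegerbemorphisms} and~\ref{lemma:mate_using_isos}), the backwards legs of all the spans are then isomorphisms of bundle gerbes, so the bundle 2-gerbe multiplication of $w(CS(Q))$ is the stable isomorphism associated to an honest \emph{morphism} of bundle gerbes, and the associator is the \emph{identity} 2-arrow---no mates need to be computed at all. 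On the other side, the Chern--Simons 2-gerbe of \cite{CJMSW} is the pullback along $Q^{[\bullet]}\to NG_\bullet$ of the multiplicative structure on the basic gerbe coming from the crossed module $\widehat{\Omega G}\to PG$, i.e.\ from a \emph{strict} 2-group, so it too is a strict bundle 2-gerbe with identity associator. The comparison then reduces to matching two honest morphisms of bundle gerbes (modulo the isomorphic models of the basic gerbe), which is immediate. Without invoking this strictification, your plan to ``verify that no unexpected sign or coherence discrepancy appears'' in the mate-pasting is not a proof but a restatement of the problem.
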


We defer the proof of this Theorem to Subsection~\ref{subsec:CS2-gerbe_rigidified}.

\begin{remark}
As a final observation, in the ``double nerve of a commuting square'' picture of bigerbes from \cite{KM19}, the Chern--Simons aka Brylinski--McLaughlin bigerbe of a $G$-bundle $Q\to X$ is described using the double nerve of the square
\[
  \xymatrix{
    Q^I \ar[r] \ar[d] & Q^2 \ar[d] \\
    X^I \ar[r] & X^2
  }
\]
where $Q^I$ and $X^I$ denote the free path spaces. 
Kottke and Melrose remark at the end of their section 5.3 that it is possible (but leave details to the reader) to give a simpler description using the double nerve of the square
\[
    \xymatrix{
    PQ \ar[r] \ar[d] & Q \ar[d] \\
    PX \ar[r] & X
  }
\]
given a choice of compatible basepoints in $Q$ and $X$.
Our approach here is somewhat different, however, in that the bisimplicial space underlying the rigid Chern--Simons bundle 2-gerbe does \emph{not} arise as the double nerve of any commuting square.

Another point worth noting about our rigid model is that in the bisimplicial space arising from $Z_\bullet$, while the manifolds are infinite-dimensional, we have fewer infinite-dimensional factors than in the bigerbe model \cite{KM19}; the submersion $Q\times PG\to Q\times G$ results in spaces of the form $Q\times PG^{[k]}$, rather than using $PQ\times PG \simeq P(Q^{[2]})\to Q^{[2]} \simeq Q\times G$, which leads to spaces of the form $PQ^{[k]}\times PG^{[k]}$.
This parsimony leads to cleaner and easier calculations of the differential forms making up the rigid connective structure.

\end{remark}

\subsection{Geometric crossed module structure}

In the next section we will describe the connective structure on $CS(Q)$. 
First we would like to record some relationships between the geometry of the loop group and the structure of the crossed module $\Omegahat{G} \to PG$, which we turn to now. 
We can give another formula for the $U(1)$-component of $\widetilde{\Ad}$, which highlights its relationship to holonomy of a one-form on $PG \times \Omega G$. 
For this, we need Stokes' theorem for integration along a fibre where the fibre has a boundary, in the special case of the projection map $X \times [0,2\pi] \to X$. 
For our purposes we variously take $X$ to be one of the infinite-dimensional manifolds $P\Omega G$ or $PG\times P\Omega G$.

\begin{proposition}[Stokes' theorem {\cite[Proposition XI (2), \S 4.10]{GHV}}]\label{prop:stokes}
Given a $k$-form $\xi$ on $X\times [a,b]$ we have 
\[
  d\int_{[a,b]}\xi + \int_{[a,b]}d\xi = \xi\big|_{X\times\{b\}} - \xi\big|_{X\times\{a\}}\,.
\]
\end{proposition}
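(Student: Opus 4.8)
The plan is to reduce the identity to the one-variable fundamental theorem of calculus by splitting $\xi$ according to the product structure of $X\times[a,b]$.

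First I would set up the decomposition of $\xi$. At a point $(x,t)$ a tangent vector to $X\times[a,b]$ is a pair $(v,s)$ with $v\in T_xX$ and $s\in\RR$; write $\partial_t=(0,1)$. Contracting $\xi$ with $\partial_t$ and then restricting to ``horizontal'' vectors of the form $(v,0)$ produces a smooth $t$-family $\beta_t\in\Omega^{k-1}(X)$, while restricting $\xi$ itself to horizontal vectors produces a smooth $t$-family $\alpha_t\in\Omega^k(X)$; these two families determine $\xi$ completely, and symbolically $\xi=\alpha_t+dt\wedge\beta_t$. By Definition~\ref{def:fibre_integration} only the $\partial_t$-component survives the fibre integral, so $\int_{[a,b]}\xi=\int_a^b\beta_t\,dt$; and restricting to a slice $X\times\{c\}$ kills $dt$, so $\xi|_{X\times\{c\}}=\alpha_c$ for $c=a,b$.

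Next I would compute the corresponding decomposition of $d\xi$. A short computation with the Cartan calculus --- using that lifts of vector fields from $X$ commute with $\partial_t$, so that no bracket terms arise beyond those already present in the finite-dimensional case --- gives $d\xi=d_X\alpha_t+dt\wedge\bigl(\partial_t\alpha_t-d_X\beta_t\bigr)$, where $d_X$ is the exterior derivative in the $X$-directions with $t$ fixed and $\partial_t\alpha_t$ is the coefficient-wise $t$-derivative. Feeding this into the formula of the previous paragraph, $\int_{[a,b]}d\xi=\int_a^b\bigl(\partial_t\alpha_t-d_X\beta_t\bigr)\,dt=\alpha_b-\alpha_a-\int_a^b d_X\beta_t\,dt$ by the fundamental theorem of calculus, while $d\int_{[a,b]}\xi=d_X\!\int_a^b\beta_t\,dt=\int_a^b d_X\beta_t\,dt$. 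Adding the two expressions cancels the $\int_a^b d_X\beta_t\,dt$ terms and leaves $\alpha_b-\alpha_a=\xi|_{X\times\{b\}}-\xi|_{X\times\{a\}}$, which is the claim.

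The hard part --- really the only point requiring care, since $X$ here is an infinite-dimensional (Fr\'echet) manifold --- is the analytic input: that $\alpha_t$ and $\beta_t$ depend smoothly on $t$, and that $d_X$ may be commuted past $\int_a^b(\cdot)\,dt$, i.e.\ differentiation under the integral sign over the compact interval $[a,b]$. Both hold because $[a,b]$ is compact and $\xi$ is jointly smooth, so the integrands and their $X$-derivatives are continuous and the standard parameter-integral theorems apply in the locally convex setting; the Cartan-calculus step is purely algebraic once vector fields are split into the $\partial_t$-direction and horizontal lifts, hence insensitive to $\dim X$. Alternatively, one may simply invoke \cite{GHV} for the finite-dimensional statement and observe that its proof is local in $X$ and integrates only over the finite-dimensional fibre, so it carries over verbatim.
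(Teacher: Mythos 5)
Your argument is correct and is the standard proof: the paper itself offers no proof of this Proposition, citing only \cite[Proposition XI (2), \S 4.10]{GHV}, so there is nothing to diverge from. Your decomposition $\xi=\alpha_t+dt\wedge\beta_t$, the computation $d\xi=d_X\alpha_t+dt\wedge(\partial_t\alpha_t-d_X\beta_t)$, and the cancellation after applying the fundamental theorem of calculus all check out against the paper's convention of contracting $T_t$ into the \emph{first} slot (Definition~\ref{def:fibre_integration}), which is exactly the convention responsible for the sign pattern the paper flags in the remark following the Proposition. Your closing paragraph also correctly identifies the only genuine issue --- smooth dependence on $t$ and differentiation under the integral sign when $X$ is a Fr\'echet manifold such as $P\Omega G$ --- and disposes of it adequately.
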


\begin{remark}
The signs here differ from the general fibrewise Stokes' theorem given as \cite[Problem VII.4(iii)]{GHV}, due to the latter using a different convention in the definition of fibre integration, namely contraction in the \textbf{last} slot(s) of the argument.
\end{remark}

\label{form:xmod form rho}
\begin{lemma}
Define the 1-form $\rho \in \Omega^1(PG\times \Omega G)$ by
      \[
        \rho_{(p,\gamma)} = 2\intSone\langle\Theta_p,\Ad_\gamma(\phi_p)-\phi_p - \phihat_\gamma\rangle + \langle\Theta_\gamma,\phi_p\rangle\,.
      \]
The $U(1)$-component of the definition of $\widetilde{\Ad}$ in \eqref{eq:lifted_Ad}, evaluated on $(p;f,z)$, is given by
\[
   z\cdot \exp\left(i \left(\int_{[0,2\pi]} (\id\times \ev)^*\rho\right)(p,f)\right),
\]
where $\ev\colon P\Omega G\times [0,2\pi]\to \Omega G$ is evaluation. 
\end{lemma}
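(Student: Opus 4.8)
The plan is to unwind the definition of fibre integration (Definition~\ref{def:fibre_integration}) and to observe that, along the tangent directions which the fibre integral actually sees, the rather intricate $1$-form $\rho$ collapses to something very simple. Since $\rho$, and hence $(\id\times\ev)^*\rho$, is a $1$-form, the fibre integral $\int_{[0,2\pi]}(\id\times\ev)^*\rho$ is a \emph{function} on $PG\times P\Omega G$, and by Definition~\ref{def:fibre_integration} its value at $(p,f)$ is $\int_0^{2\pi}\big((\id\times\ev)^*\rho\big)_{(p,f,t)}(0,0,T_t)\,\mathrm{d}t$, where $(0,0,T_t)$ denotes the basis tangent vector along $[0,2\pi]$ at $t$, with vanishing $PG$- and $P\Omega G$-components. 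So the first step is to compute the pushforward $d(\id\times\ev)_{(p,f,t)}(0,0,T_t)$: differentiating the assignment $(p,f,t)\mapsto(p,f(t))$ in the $t$-direction gives the tangent vector $(0,\partial_t f(t))\in T_pPG\oplus T_{f(t)}\Omega G$, which in particular has vanishing $PG$-component.

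The second step is to evaluate $\rho_{(p,\gamma)}$ on a tangent vector of the shape $(0,v)$ with $v\in T_\gamma\Omega G$. Here the key point is that the entire first summand $2\intSone\langle\Theta_p,\Ad_\gamma(\phi_p)-\phi_p-\phihat_\gamma\rangle$ of $\rho$ carries its $1$-form content only in the factor $\Theta_p$ (a form pulled back from $PG$; the quantities $\Ad_\gamma(\phi_p)$, $\phi_p$ and $\phihat_\gamma$ are $\g$-valued \emph{functions} of $(p,\gamma)$, contributing nothing to the form degree), so it annihilates any tangent vector whose $PG$-component vanishes. Thus only the second summand survives, and using that $\Theta$ is the left Maurer--Cartan form we get $\rho_{(p,\gamma)}(0,v)=2\intSone\langle\Theta_\gamma(v),\phi_p\rangle$; setting $\gamma=f(t)$ and $v=\partial_t f(t)$ gives $\rho_{(p,f(t))}(0,\partial_t f(t))=2\intSone\langle f(t)^{-1}\partial_t f(t),\phi_p\rangle$, the pairing understood pointwise in the loop parameter.

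The final step is bookkeeping: integrating this over $t\in[0,2\pi]$ yields $2\int_0^{2\pi}\intSone\langle f(t)^{-1}\partial_t f(t),\phi_p\rangle\,\mathrm{d}t$, which is precisely the real number that appears (after multiplication by $i$) inside the exponential in the $U(1)$-component of $\widetilde{\Ad}(p;f,z)$ in \eqref{eq:lifted_Ad}. Multiplying by $i$, exponentiating, and multiplying by $z$ then gives the claimed formula. No appeal to Stokes' theorem (Proposition~\ref{prop:stokes}) is needed for this particular statement — it is a direct matching of definitions — and since we only ever evaluate $1$-forms on single tangent vectors there is no Kobayashi--Nomizu factor of $\tfrac12$ to track.

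The step I would scrutinise most carefully is the vanishing of the $\Theta_p$-summand: one must be certain that in each term of $\rho$ the differential degree genuinely sits in $\Theta_p$ (equivalently, that $\phi_p$ and $\phihat_\gamma$, although they depend on $p$ and $\gamma$, enter only as $\mathfrak g$-valued functions and not as part of any differential), and that the orientation/normalisation conventions used to pull $\rho$ back along $\id\times\ev$ and to perform the fibre integration agree with those implicit in the exponent of \eqref{eq:lifted_Ad}. Everything else is routine.
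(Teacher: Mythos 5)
Your proof is correct and follows essentially the same route as the paper's: both unwind Definition~\ref{def:fibre_integration}, push the fibre tangent vector forward along $\id\times\ev$ to $(0,\partial_t f(t))$, observe that the first summand of $\rho$ dies because its form degree sits entirely in $\Theta_p$, and integrate the surviving term $2\intSone\langle f(t)^{-1}\partial_t f(t),\phi_p\rangle$ over $t$. Your closing remarks on where the differential degree lives and on the absence of any Kobayashi--Nomizu factor are accurate; nothing further is needed.
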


Note that since $\rho$ is a 1-form, the expression $\int_{[0,2\pi]} (\id\times \ev)^*\rho$ describes a real-valued function on $PG\times P\Omega G$.

\begin{proof}
It suffices to prove that
\[
  \left(\int_{[0,2\pi]} (\id\times \ev)^*\rho\right)(p,f) = 2\int_0^{2\pi}  \intSone \langle f(t)^{-1}\partial_t f(t), \phi_p \rangle\, \mathrm{d}t
\]
for $(p,f)\in PG\times P\Omega G$. We apply the definition of integration over the fibre:
\begin{align*}
  \left(\int_{[0,2\pi]} (\id\times \ev)^*\rho\right)(p,f) 
  & = \int_0^{2\pi}((\id\times \ev)^*\rho)_{(p,f,t)}((0;0,T_t)) \,\mathrm{d}t\\
  & = \int_0^{2\pi}\rho_{(p,f(t))}(0,\ev_*(0,T_t))\, \mathrm{d}t\\
  & = \int_0^{2\pi} \rho_{(p,f(t))}(0,\partial_t f(t))\, \mathrm{d}t\\
  & = \int_0^{2\pi} 2 \intSone \langle 0,\Ad_\gamma(\phi_p)-\phi_p - \phihat_\gamma\rangle+\langle f(t)^{-1}\partial_t f(t),\phi_p\rangle \, \mathrm{d}t\\
  & = 2\int_0^{2\pi} \intSone\langle f(t)^{-1}\partial_t f(t),\phi_p\rangle \, \mathrm{d}t,
\end{align*}
which gives us the desired result.
\end{proof}

The final result we would like to record here concerns the connection and curvature of the central extension $\widehat{\Omega G}$ and how it interacts with the crossed module structure above. 

\begin{lemma}
The connection $\mu$ on $\Omegahat{G}$ satisfies
\begin{enumerate}
  \item $\widehat{\Ad}^*\mu - \pr_2^*\mu = \rho$.
  \item $\Ad^*R - \pr_2^*R = d\rho$,
\end{enumerate}
where $R= \intSone \langle\Theta_\gamma,\partial \Theta_\gamma \rangle$ is the curvature of $\mu$ as in Section~\ref{sec:new_example}.
\end{lemma}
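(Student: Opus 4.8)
The two identities are naturally proved in order, with the second following from the first by applying the exterior derivative. For part (1), the plan is to work on the quotient presentation $P\Omega G\ltimes U(1)\twoheadrightarrow\widehat{\Omega G}$, where $\mu$ is the descent of $\pr_2^*\theta^{-1}d\theta + \pr_1^*\int_{[0,2\pi]}\ev^*R$ (as recalled on page~\pageref{form:central extension connection}), and $\widehat{\Ad}$ is the descent of $\widetilde{\Ad}$ from Definition~\ref{def:string_xmod}. First I would pull back $\mu$ along $\widetilde{\Ad}$. The map $\widetilde{\Ad}$ sends $(p;f,z)\mapsto((t\mapsto\Ad_pf(t)),z\cdot\exp(i\Phi(p,f)))$ where, by the previous Lemma, $\Phi(p,f)=\big(\int_{[0,2\pi]}(\id\times\ev)^*\rho\big)(p,f)$. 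The $U(1)$-component picks up, under $\theta^{-1}d\theta$, the term $d\Phi = \int_{[0,2\pi]}(\id\times\ev)^*d\rho$ plus a fibre-boundary term $(\id\times\ev)^*\rho\big|_{t=2\pi}-(\id\times\ev)^*\rho\big|_{t=0}$ coming from Stokes' theorem (Proposition~\ref{prop:stokes}); since $\ev$ at $t=0$ and $t=2\pi$ land at the basepoint loop of a based \emph{path} of loops (i.e. $f(0)=f(2\pi)$ is the constant loop on which $\rho$ restricts appropriately), one must track exactly what survives. Meanwhile the $P\Omega G$-component $f\mapsto(t\mapsto\Ad_pf(t))$ feeds into $\pr_1^*\int_{[0,2\pi]}\ev^*R$, so I would use the already-established transformation law $\Ad^*R-\pr_2^*R=d\rho$ fibrewise — wait, that is part (2), so instead I would directly compute $\int_{[0,2\pi]}\ev^*\big((\Ad_p\times\id)^*R\big)$ using the invariance properties of $R$ and the identity $\phihat_p=\Ad_p\phi_p$ relating the two Higgs fields. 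Assembling these pieces and subtracting $\pr_2^*\mu$ should leave exactly $\rho_{(p,\gamma)} = 2\intSone\langle\Theta_p,\Ad_\gamma(\phi_p)-\phi_p-\phihat_\gamma\rangle+\langle\Theta_\gamma,\phi_p\rangle$.

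The cleanest organisation is probably to verify the identity after pulling everything back to $PG\times P\Omega G\times[0,2\pi]$ (or without the interval, comparing the descended one-forms on $PG\times\Omega G$ directly), since all the objects in sight are descents of forms living upstairs, and $\widehat{\Ad}^*\mu$, $\pr_2^*\mu$, $\rho$ all have explicit integral formulas there. I would expand $R_\gamma=\intSone\langle\Theta_\gamma,\partial\Theta_\gamma\rangle$ and use the Maurer--Cartan identities for $\Theta$ under the adjoint action — in particular $\Ad_p(\Theta_{f(t)})$-type computations and the relation $\Theta_{\Ad_p\gamma}=\Ad_p\Theta_\gamma + (\text{terms in }\phi_p)$ — which are exactly the kinds of manipulations already carried out in the proof of Lemma~\ref{lemma:curving}. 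So the algebra is routine but bookkeeping-heavy; the conceptual content is entirely in the bookkeeping of the descent and the Stokes boundary term.

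Part (2) then follows by applying $d$ to part (1): since $d$ commutes with pullback, $\widehat{\Ad}^*(d\mu)-\pr_2^*(d\mu)=d\rho$, and $d\mu$ descends to (the pullback of) the curvature $R$ on $\Omega G$ — more precisely $d\mu=\pi^*R$ and $\widehat{\Ad}$ covers $\Ad\colon PG\times\Omega G\to\Omega G$, so $\widehat{\Ad}^*\pi^*R=\pi^*\Ad^*R$ and $\pr_2^*\pi^*R=\pi^*\pr_2^*R$; cancelling $\pi^*$ by injectivity gives $\Ad^*R-\pr_2^*R=d\rho$. This second half is essentially immediate once (1) is in hand.

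\textbf{Main obstacle.} The hard part is part (1), specifically getting the Stokes boundary contribution and the sign conventions right. Two places invite sign errors: the factor-of-$\tfrac12$ Kobayashi--Nomizu convention for evaluating $d$ and wedges of $1$-forms (flagged in the Convention), and the fibre-integration sign convention fixed in Definition~\ref{def:fibre_integration} and Proposition~\ref{prop:stokes} (which differs from the more common one, as the Remark after Proposition~\ref{prop:stokes} stresses). Given the paper's own history of sign errors inherited from \cite{BSCS} (see Remark~\ref{rem:BCSS_error}), I would be especially careful to check that the boundary term of $\int_{[0,2\pi]}(\id\times\ev)^*d\rho$ under Stokes matches up with the $t=0$ endpoint (where the path of loops is based, so many terms vanish because $\Theta$ and $\phi$ vanish there) versus the $t=2\pi$ endpoint, and that the resulting residual form is precisely $\rho$ and not $-\rho$ or $2\rho$. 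Everything else is a determined computation using the Maurer--Cartan structure equations and the already-proven relations $\phihat_p=\Ad_p\phi_p$, $\Ad_\gamma\partial\Theta_\gamma=d\phihat_\gamma$, and $\Ad_\gamma\partial\Theta_\gamma-[\Thetahat_\gamma,\phihat_\gamma]=\partial\Thetahat_\gamma$ from the proof of Lemma~\ref{lemma:curving}.
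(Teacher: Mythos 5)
Your plan has the right ingredients --- the descent presentation of $\mu$, the identification of the $U(1)$-component of $\widetilde{\Ad}$ as $\exp\bigl(i\int_{[0,2\pi]}(\id\times\ev)^*\rho\bigr)$ from the preceding lemma, and the fibrewise Stokes' theorem --- but the order in which you propose to prove the two parts does not work, and the paper proves them in the opposite order for precisely the reason you stumble on mid-plan. After pulling everything back along $\id\times q$, the difference $\widehat{\Ad}^*\mu-\pr_2^*\mu$ becomes
\[
d\int_{[0,2\pi]}(\id\times\ev)^*\rho \;+\; \int_{[0,2\pi]}(\id\times\ev)^*\bigl(\Ad^*R-\pr_2^*R\bigr),
\]
and the only way to collapse this to the boundary term $(\id\times\ev_{2\pi})^*\rho-(\id\times\ev_0)^*\rho$ via Proposition~\ref{prop:stokes} is to know that the second integrand is exactly $d$ of the first, i.e.\ statement (2). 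Your proposed escape (``directly compute $\int\ev^*(\Ad^*R)$ using invariance properties'') is not a different route: carried out, it \emph{is} the proof of (2), namely the page-long expansion of $\Ad^*R$ starting from $(\Ad^*\Theta)_{(p,\gamma)}=\Ad_{p\gamma^{-1}}\Theta_p+\Ad_p(\Theta_\gamma-\Thetahat_p)$ and the listed $\partial$-identities, organised so that the result visibly assembles into $d\rho$. So the correct structure is the paper's: prove (2) first by that direct computation, then deduce (1) by descent plus Stokes. Your closing claim that (2) ``follows by applying $d$ to (1)'' is a valid implication but useless here, since (1) cannot be established without (2) already in hand.

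A second, smaller error: you assert that $f(0)=f(2\pi)$ is the constant loop. Only $f(0)$ is (elements of $P\Omega G$ are paths based at the constant loop); $f(2\pi)$ is a general loop, and the $t=2\pi$ boundary term is precisely what survives Stokes and produces the answer --- it is $(\id\times\ev_{2\pi})^*\rho=(\id\times q)^*(\id\times\pi)^*\rho$, while $(\id\times\ev_0)^*\rho=0$. If both endpoints landed at the constant loop you would get $0$ rather than $\rho$. Your instinct to worry about the fibre-integration sign convention is well placed, but this slip would derail the computation before any sign issue arises.
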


One might note that since $R$ is the curvature of $\mu$, (2) follows from (1), but recall that the connection $\mu$ on $\Omegahat{G}$ is given by the descent of $\theta^{-1}d\theta + \int_{[0,2\pi]} \ev^* R$ along $q\colon P\Omega G \ltimes U(1) \twoheadrightarrow \Omegahat{G}$; so we will in fact prove them in the reverse direction. Note that (2) implies the result\footnote{The family of 1-forms $\beta_p$, $p\in PG$, is not used elsewhere in this paper, and it is not related to the 2-form denoted $\beta_A$ below.} $\Ad_p^*R - R = d\beta_p$ noted in the proof of \cite[Proposition 3.1]{BSCS}, using the notation of \emph{loc.\ cit.}, and modulo the sign correction noted in Remark \ref{rem:BCSS_error}.

\begin{proof}
First, we need the ingredients to calculate $\Ad^* R$. We have
\[
  (\Ad^*\Theta)_{(p,\gamma)} = \Ad_{p\gamma^{-1}}\Theta_p + \Ad_p(\Theta_\gamma - \Thetahat_p).
\]
The following identities are established using standard manipulations,
\begin{align*}
  \partial \Theta_\gamma & = \Ad_{\gamma^{-1}}d\phihat_\gamma, 
  \\
  \partial\Theta_p+[\phi_p,\Theta_p] &= d\phi_p, 
  \\
  \partial(\Ad_{\gamma^{-1}}\Theta_\gamma) &= \Ad_{\gamma^{-1}}[\Theta_\gamma,\phihat_\gamma] + \Ad_{\gamma^{-1}}\partial\Theta_\gamma, 
  \\
  \partial \Thetahat_p & = \Ad_p[\phi_p,\Theta_p] + \Ad_p \partial\Theta_p,
  \\
  d(\Ad_\gamma\phi_p) & = -\Ad_\gamma[\phi_p,\Theta_\gamma] + \Ad_\gamma \partial\Theta_p,
\end{align*}
and will be applied in the calculations below.
Thus we  can show that
\[
\begin{multlined}
\partial(\Ad^*\Theta)_{(p,\gamma)} = \Ad_p[\phi_p,\Ad_{\gamma^{-1}}\Theta_p] + \Ad_{p\gamma^{-1}}\partial\Theta_p - \Ad_{p\gamma^{-1}}[\phihat_\gamma,\Theta_p] \\+ \Ad_p[\phi_p,\Theta_\gamma-\Theta_p] + \Ad_p\partial(\Theta_\gamma-\Theta_p).
\end{multlined}
\]
Substituting in the definition of $R$ we get
\begin{align*}
&(\Ad^*R)_{(p,\gamma)} \\
&= \begin{multlined}[t]
  \intSone \langle \Ad_p(\Ad_{\gamma^{-1}}\Theta_p + \Theta_\gamma - \Theta_p),\Ad_p\big\{[\phi_p,\Ad_{\gamma^{-1}}\Theta_p] + \Ad_{\gamma^{-1}}\partial\Theta_p - \Ad_{\gamma^{-1}}[\phihat_\gamma,\Theta_p]\\
   + [\phi_p,\Theta_\gamma-\Theta_p] + \partial(\Theta_\gamma-\Theta_p)\big\}\rangle
\end{multlined}
\\
  &= \begin{multlined}[t]
  \intSone \langle \Ad_{\gamma^{-1}}\Theta_p + \Theta_\gamma - \Theta_p,[\phi_p,\Ad_{\gamma^{-1}}\Theta_p] + \Ad_{\gamma^{-1}}\partial\Theta_p - \Ad_{\gamma^{-1}}[\phihat_\gamma,\Theta_p] \\
    + [\phi_p,\Theta_\gamma-\Theta_p] + \partial(\Theta_\gamma-\Theta_p)\rangle\end{multlined}\\
  &=\begin{multlined}[t]
    \intSone \langle -\Ad_{\gamma^{-1}}[\Theta_p,\Theta_p],\phi_p-\Ad_{\gamma^{-1}}\phihat_\gamma\rangle + \langle\Theta_p,\partial\Theta_p\rangle 
    + \langle \Ad_{\gamma^{-1}}\Theta_p,[\phi_p,\Theta_\gamma-\Theta_p]\rangle\\
    +\langle \Ad_{\gamma^{-1}}\Theta_p,\partial(\Theta_\gamma-\Theta_p) \rangle + \langle\Theta_\gamma-\Theta_p,[\phi_p,\Ad_{\gamma^{-1}}\Theta_p]\rangle
    - \langle \Theta_\gamma-\Theta_p,\Ad_{\gamma^{-1}}[\phihat_\gamma,\Theta_p]\rangle\\
    +\langle \Theta_\gamma-\Theta_p,\Ad_{\gamma^{-1}}\partial\Theta_p\rangle - \langle [\Theta_\gamma-\Theta_p,\Theta_\gamma-\Theta_p],\phi_p\rangle + \langle \Theta_\gamma-\Theta_p,\partial(\Theta_\gamma-\Theta_p)\rangle.
  \end{multlined}
\end{align*}
Thus we can calculate the difference we want:
\begin{align*}
&(\Ad^*R - \pr_2^*R)_{(p,\gamma)} \\
& = 
  \begin{multlined}[t]
    \intSone 2\langle d\Theta_p,\Ad_\gamma\phi_p - \phihat_\gamma\rangle + 2\langle \Theta_p,\partial\Theta_p\rangle + 2\langle \Theta_p,\Ad_\gamma[\phi_p,\Theta_\gamma-\Theta_p]\rangle\\
    +2\langle \Theta_p ,\Ad_\gamma\partial(\Theta_\gamma - \Theta_p)\rangle
    - \langle [\Theta_\gamma,\Theta_\gamma],\phi_p\rangle 
    -\langle [\Theta_p,\Theta_p],\phi_p\rangle
    +2\langle [\Theta_\gamma,\Theta_p],\phi_p\rangle \\
    - 2\langle\Theta_\gamma,\partial\Theta_p\rangle
  \end{multlined}\\
  & = \intSone
  \begin{multlined}[t]
    2\langle d\Theta_p,\Ad_\gamma\phi_p - \phihat_\gamma\rangle -
    2\left(\langle-\Theta_p,\Ad_\gamma[\phi_p,\Theta_\gamma-\Theta_p]\rangle + \langle\Theta_p,\Ad_\gamma\partial\Theta_p\rangle\right)\\
    +2\langle\Theta_p,\Ad_\gamma\partial\Theta_\gamma\rangle + 2\langle d\Theta_\gamma ,\phi_p\rangle + 2\langle d\Theta_p,\phi_p\rangle 
    + 2\langle[\Theta_\gamma,\Theta_p],\phi_p\rangle \\
    -2\langle \Theta_\gamma-\Theta_p,\partial\Theta_p\rangle
  \end{multlined}\\
  & = \intSone 
  \begin{multlined}[t]
    2\langle d\Theta_p, \Ad_\gamma\phi_p - \phihat_\gamma\rangle 
    - \left(2\langle \Theta_p,d(\Ad_\gamma\phi_p)\rangle - 2\langle \Theta_p, d\phihat_\gamma\rangle \right)\\
    +2\langle d\Theta_\gamma + d\Theta_p, \phi_p \rangle - 2\langle \Theta_\gamma - \Theta_p,d\phi_p\rangle + 2\langle \Theta_\gamma - \Theta_p,[\phi_p,\Theta_p]\rangle\\
    + 2\langle [\Theta_\gamma,\Theta_p],\phi_p\rangle
  \end{multlined}\\
  & = 2\intSone \begin{multlined}[t]
      \langle d\Theta_p,\Ad_\gamma\phi_p - \phihat_\gamma\rangle - \langle \Theta_p,d(\Ad_\gamma\phi_p - \phihat_\gamma)\rangle + \langle d(\Theta_\gamma-\Theta_p),\phi_p\rangle \\
      - \langle \Theta_\gamma - \Theta_p,d\phi_p\rangle
      \end{multlined}
      \\
  & = d\left( 2\intSone \langle \Theta_p,\Ad_\gamma\phi_p - \phihat_\gamma\rangle
        + \langle \Theta_\gamma - \Theta_p,\phi_p\rangle
        \right)\\
  & = d\rho_{(p,\gamma)},
\end{align*}
as we needed to show.

We now turn to item (1), using the definition of the adjoint action given in Definition \ref{def:string_xmod}.
Since $\mu$ is defined as the descent of a 1-form along the quotient map $q\colon P\Omega G \ltimes U(1) \to \Omegahat{G}$ (hence a submersion), we calculate $\widehat{\Ad}^*\mu - \pr_2^*\mu$ by pulling this back along $\id\times q$, using the following diagram:
\[
  \xymatrix{
    &PG\times P\Omega G \ltimes U(1) \ar[rr]^{\widetilde{\Ad}}\ar[dd]_(0.35){\pr_{12}}|!{[dl];[dr]}\hole \ar[dl]_{\id\times q}&& P\Omega G\ltimes U(1) \ar[dd]^{\pr_1}  \ar[dl]^q\\
    PG\times \Omegahat{G} \ar[rr]^(0.7){\widehat{\Ad}} \ar[dd]_(0.35){\id\times \pi} && \Omegahat{G} \ar[dd]^(0.35){\pi} \\
    & PG\times P\Omega G \ar[rr]_(0.4){\Ad}|!{[ur];[dr]}\hole \ar[dl]^(0.4){\id\times \ev_{2\pi}}&& P\Omega G\ar[dl]^{\ev_{2\pi}}\\
    PG\times \Omega G \ar[rr]_{\Ad} && \Omega G
  }
\]
We have that 
\begin{align*}
  &(\id\times q)^*\left(\widehat{\Ad}^*\mu - \pr_2^*\mu\right) \\
  & = \widetilde{\Ad}^*q^*\mu - \pr_{23}^*q^*\mu\\
  & = \widetilde{\Ad}^*\tilde \mu - \pr_{23}^*\tilde \mu\\
  & = \widetilde{\Ad}^*\left(\pr_2^*\theta^{-1}d\theta + \pr_1^*\int_{[0,2\pi]}\ev^*R\right) - \left(\pr_3^*\theta^{-1}d\theta + \pr_{2}^*\int_{[0,2\pi]}\ev^*R \right)\\
  & = \pr_{12}^*d\int_{[0,2\pi]}(\id\times \ev)^*\rho + \pr_{12}^*\Ad^*\int_{[0,2\pi]}\ev^*R - \pr_{12}^*\int_{[0,2\pi]}\ev^*R\\
  & = \pr_{12}^*\left(d\int_{[0,2\pi]}(\id\times \ev)^*\rho + \int_{[0,2\pi]}(\id\times \ev)^*(\Ad^*R - R)\right)\\
  & = \pr_{12}^*\left(d\int_{[0,2\pi]}(\id\times \ev)^*\rho + \int_{[0,2\pi]}d(\id\times \ev)^*\rho\right)\\
  & = \pr_{12}^*((\id\times\ev_{2\pi})^*\rho - (\id\times\ev_0)^*\rho)\\
  & = (\id\times q)^*(\id\times \pi)^* \rho,
\end{align*}
where we have applied the fibrewise Stokes' theorem (Proposition \ref{prop:stokes}) in the second-to-last line, and then the fact $(\id\times\ev_0)^*\rho=0$. 
This latter point follows from the fact that the function $\ev_0$ is constant at the constant loop $1\in \Omega G$. 
As $\id\times q$ is a submersion, pullback along it is injective, so we have, finally, $\widehat{\Ad}^*\mu - \pr_2^*\mu = \rho$.
\end{proof}

\subsection{The Chern--Simons connective structure}

Suppose that the principal $G$-bundle $Q \to X$ has connection 1-form $A$. In the following Definition we refer to the items (1)--(6) in Definition \ref{def:conn_structure}. \label{form:CS2-gerbe curving} \label{form:CS2-gerbe 2-curving} \label{form:CS2-gerbe alpha}

\begin{definition}\label{def:CS_conn}
The data for the rigid connective structure on $CS(Q)$ is given by 
\begin{enumerate}[(i)]
\item
The bundle gerbe connection in (1) is given by the 1-form $\nabla$ in Lemma \ref{lemma:connection}.
\item
The curving 2-form  $\beta_A$ on $Q \times PG$ in (2) is given by 
\[
  \beta_A = \pr_2^*B - \pi^*\langle\pr_1^*A, \pr_2^* \Thetahat^G\rangle,
\] 
where $B$ is defined in Lemma \ref{lemma:curving}, and $\Thetahat^G$ is the right Maurer--Cartan form on $G$.
\item
The 2-curving in (3) is the (negative of the) Chern--Simons 3-form
\[
  -CS(A) = -\langle A, F_A\rangle + \frac16 \langle A, [A, A]\rangle,
\]
where $F_A$ is the curvature of $A$.
\item 
The 1-form $\a$ on $Z_2 = Q\times PG^2$ for (4)--(6) is given at $(x,p,q)\in Q\times PG^2$  by 
\[
  \alpha_{(x,p,q)}=2\intSone \langle \Theta_p ,\phihat_q \rangle.
\]
(Recall that the 1-form $\alpha$ is denoted $\alpha_{2,1}$ in the course of the proof of Proposition \ref{prop:conn_exist}.)
\end{enumerate}
That this defines a rigid connective structure is proved below.
\end{definition}

\label{form:kappa}
Note that $d\alpha = \pr_{23}^*\pi^*\kappa$, where $\kappa \in \Omega^2(G\times G)$ is given by $\kappa = \langle\pr_1^*\Theta,\pr_2^*\Thetahat\rangle$. This also satisfies $d\kappa = \delta_h(\omega)$, where $\omega$ is the curvature of the basic gerbe, from Lemma \ref{lemma:basic_gerbe_curvature}, and appears in \cite{Wal}.

\begin{proposition}\label{prop:conn_str}
The forms $\nabla$, $\beta_A$ and $CS(A)$ give a connective structure on $CS(Q)$.
\end{proposition}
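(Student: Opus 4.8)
The plan is to verify, one by one, the three conditions of Definition~\ref{def:conn_structure}, working on the bisimplicial manifold built from $Z_\bullet = Q\times PG^\bullet$ over $Q^{[\bullet+1]} \cong Q\times G^\bullet$, with the vertical direction coming from the $\Omega G^\bullet$-bundles. Condition (1), that $\nabla$ is a bundle gerbe connection for the vertical bundle gerbe $(\pr_3^*\widehat{\Omega G},Q\times PG)$, is immediate: by Definition~\ref{def:CS_b2g}(vi)--(vii) this bundle gerbe is the pullback of the basic gerbe on $G$ along $Q\times G\to G$, its multiplication $m$ is pulled back from the basic gerbe, and $\nabla$ is pulled back from the connection of Lemma~\ref{lemma:connection}, which was shown there to be a bundle gerbe connection; pullback preserves this property since $\delta_v$ is natural.

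For condition (2), I must show $\delta_v(\beta_A) = \curv(\nabla)$. Since $\beta_A = \pr_2^*B - \pi^*\langle\pr_1^*A,\pr_2^*\Thetahat^G\rangle$, the first term contributes $\delta_v(\pr_2^*B)$, which by Lemma~\ref{lemma:curving} (applied to the basic gerbe, pulled back) gives $R_\gamma - d\epsilon$ on the relevant fibre products; and $\curv(\nabla) = \curv(\mu) - d(\pi^*\epsilon) = R - d(\pi^*\epsilon)$ by Lemma~\ref{lemma:connection} and the discussion of $\mu$ before it (where $R$ is the curvature of $\mu$). So the task reduces to checking that the correction term $-\pi^*\langle\pr_1^*A,\pr_2^*\Thetahat^G\rangle$ has vanishing $\delta_v$. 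This is a computation on $Q\times (PG\ltimes\Omega G)$: under the two vertical face maps (projection $\id\times\pr_1$ and multiplication $m_{23}$ in Figure~\ref{fig:CSb2g}), the $A$-factor (pulled back from $Q$) is unchanged, while $\pr_2^*\Thetahat^G$ is evaluated at points of $PG$ differing by right multiplication within a loop-group orbit, so the relevant right Maurer--Cartan forms agree and the alternating difference cancels. I would spell out exactly which face maps are in play and confirm the cancellation, being careful with the $\ev_{2\pi}$ composite that defines $\pi$.

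For condition (3), I must show $\pi^*\delta_h(-CS(A)) = d\beta_A$, i.e. $-\pi^*\delta_h(CS(A)) = d\pr_2^*B - d\pi^*\langle\pr_1^*A,\pr_2^*\Thetahat^G\rangle$. Here $d\pr_2^*B = \pr_2^*dB = \pr_2^*\ev_{2\pi}^*\omega$ by Lemma~\ref{lemma:basic_gerbe_curvature}, where $\omega = \tfrac16\langle[\Theta_G,\Theta_G],\Theta_G\rangle$; and the horizontal face maps $d_0^h,d_1^h$ on $Q\times PG$ are (in Figure~\ref{fig:CSb2g}) $\Act_{12}$ (the action $Q\times G$ side composed appropriately) and $\pr_1$, which at the level of $G$ send $(q,g)\mapsto q$ and $(q,g)\mapsto qg$ essentially, so $\delta_h$ of a form pulled back along $Q\times PG\to PG$ becomes the difference of its values at $p$ and at a translate. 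The standard identity $\delta_h(\text{Chern--Simons of a connection}) = $ (Chern--Simons transgression term involving the Maurer--Cartan form and the connection) is exactly the kind of computation that appears in \cite{CJMSW}: one has $CS(A)$ restricted to the two faces differs, modulo an exact term, by $\langle A,\Thetahat^G\rangle$-type terms plus the pullback of $\omega$. Concretely I would use the transgression formula $CS(gA) - CS(A) = d(\text{something with } A \text{ and } \Theta_g) + \tfrac16\langle[\Theta_g,\Theta_g],\Theta_g\rangle$ for the gauge-transformed connection, combined with the definition of $\beta_A$, to land exactly on $d\beta_A$.

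The main obstacle I anticipate is condition (3): getting the signs and the exact form of the Chern--Simons transgression identity right, matching the Kobayashi--Nomizu convention (with its factors of $\tfrac12$) flagged in the Convention, and correctly bookkeeping the $\ev_{2\pi}$-pullbacks and the placement of the right- versus left-invariant Maurer--Cartan forms $\Theta^G$ and $\Thetahat^G$ — this is precisely where the sign subtleties noted in Remark~\ref{rem:BCSS_error} and the parallel paper \cite{RSW_22} live. Conditions (1) and (2) should be essentially formal once the face maps are written out; condition (3) carries the genuine content, tying the Chern--Simons 3-form to the transgressed basic-gerbe curvature, and I would allocate most of the proof's length to it. (The rigidity clauses (4)--(6), namely $\delta_v(\alpha) = M^*\delta_h(\nabla)$, $d\alpha = \delta_h\beta_A$, and $\delta_h(\alpha)=0$, are presumably verified in the separate statement whose proof is deferred; here I only establish the connective structure, so I stop after (3).)
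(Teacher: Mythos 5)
Your proposal is correct and follows essentially the same route as the paper: conditions (1) and (2) are dispatched as formal consequences of the basic-gerbe computations in Lemmata~\ref{lemma:connection} and~\ref{lemma:curving} (together with the observation that the $\langle\pr_1^*A,\pr_2^*\Thetahat^G\rangle$ correction is pulled back along $\pi$ and hence killed by $\delta_v$), and all the real work goes into condition (3), where the paper likewise computes $\delta_h(-CS(A))$ on $Q\times G$ via the gauge-transformation identity $(\Act^*A)_{(y,g)}=\Ad_{g^{-1}}(A_y+dg\,g^{-1})$, obtaining $\omega_g - d\langle\pr_1^*A,\pr_2^*\Thetahat^G\rangle$ and then pulling up to $Q\times PG$ to match $d\beta_A = dB - d\pi^*\langle\pr_1^*A,\pr_2^*\Thetahat^G\rangle$. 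The transgression computation you defer is exactly the one the paper carries out, and your deferral of clauses (4)--(6) to separate lemmata matches the paper's organisation.
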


\begin{proof}
The connection $\nabla$ is a bundle gerbe connection, and there is no further condition on it. 
The 2-form $\beta_A$ is a curving for $\nabla$, which verifies conditions (1) and (2) of Definition \ref{def:conn_structure}; it remains to prove condition (3): $\pi^*\delta_h(CS(A)) = d\beta_A$.

We first calculate $\delta_h(-CS(A))$, using the identity $(\Act^* A)_{(y,g)} =\Ad_{g^{-1}}(A_y + dg\,g^{-1})$, in stages.
First,
\begin{align*}
\left(\Act^* \langle A,F_A \rangle - \pr_1^*\langle A,F_A \rangle\right)_{(y,g)} &= \langle \Ad_{g^{-1}}(A_y + dg\,g^{-1}), \Ad_{g^{-1}} (F_A)_y \rangle - \langle A_y,(F_A)_y \rangle\\
& = \langle dg\,g^{-1},(F_A)_y\rangle,
\end{align*}
and also then
\begin{align*}
&\tfrac16 \left(\Act^*\langle A, [A, A]\rangle -\pr^*\langle A, [A, A]\rangle\right)_{(y,g)}\\
&= \tfrac16 \langle \Ad_{g^{-1}}(A_y + dg\,g^{-1}) , [\Ad_{g^{-1}}(A_y + dg\,g^{-1}),\Ad_{g^{-1}}(A_y + dg\,g^{-1})]\rangle - \tfrac16 \langle A_y,[A_y,A_y]\rangle\\
& = \tfrac16 \langle A_y + dg\,g^{-1} , [A_y + dg\,g^{-1},A_y + dg\,g^{-1}]\rangle - \tfrac16 \langle A_y,[A_y,A_y]\rangle\\
& = \begin{multlined}[t]
\tfrac16 \left(
\langle dg\,g^{-1},[A_y,A_y]\rangle 
+ \langle dg\,g^{-1},[dg\,g^{-1},A_y]\rangle 
+ \langle dg\,g^{-1},[A_y,dg\,g^{-1}]\rangle \right.\\
+ \langle dg\,g^{-1},[dg\,g^{-1},dg\,g^{-1}]\rangle 
+ \langle A_y,[dg\,g^{-1},A_y] \rangle
+ \langle A_y,[A_y,dg\,g^{-1}] \rangle\\
\left.+ \langle A_y,[dg\,g^{-1},dg\,g^{-1}] \rangle
\right)\end{multlined}\\
& = \tfrac12 \left(\langle dg\,g^{-1},[A_y,A_y]\rangle + \langle A_y,[dg\,g^{-1},dg\,g^{-1}]\rangle\right) + \omega_g,
\end{align*}
where $\omega\in \Omega^3(G)$ is defined in Lemma \ref{lemma:basic_gerbe_curvature}. 
So we have
\begin{align*}
\delta_h(-CS(A))_{y,g} & = -\langle dg\,g^{-1},(F_A)_y\rangle
+\tfrac12 \langle dg\,g^{-1},[A_y,A_y]\rangle +\tfrac12 \langle A_y,[dg\,g^{-1},dg\,g^{-1}]\rangle + \omega_g\\
& = \omega_g -\langle dA_y,\Thetahat^G_g\rangle + \langle A_y,d\Thetahat^G_g\rangle
\\
& = \omega_G -d\langle\pr_1^*A, \pr_2^* \Thetahat^G\rangle_{(y,g)}.
\end{align*}
If we then pull this up to $Q\times PG$, we get
\begin{align*}
&\pi^*\omega - \pi^*d\langle\pr_1^*A, \pr_2^* \Thetahat^G\rangle \\
& = dB - d\pi^*\langle\pr_1^*A, \pr_2^* \Thetahat^G\rangle \\
& = d\beta_A,
\end{align*}
as we needed to show.
\end{proof}

However, more is true: the data in Definition \ref{def:CS_conn} will form a \emph{rigid} connective structure. We will next check condititions (4)--(6) in Definition \ref{def:conn_structure}.

\begin{lemma}\label{lemma: delta_h a = delta_v alpha}
Recall from Definition \ref{def:CS_b2g} (ix) the section $M\colon (PG\ltimes \Omega G)^2 \to \delta_h(PG\times \Omegahat{G})$. Then $M$ satisfies
\[
  \delta_v\alpha = M^*(\pr_1^*\nabla - \mult^*\nabla + \pr_2^*\nabla)= M^*\delta_h \nabla,
\]
which is condition (4) in Definition \ref{def:conn_structure} (here $\mult$ is the multiplication map on the semidirect product $PG\ltimes \Omega G$).
\end{lemma}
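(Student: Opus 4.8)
The plan is to unwind both sides as explicit 1-forms on the relevant infinite-dimensional manifold $(PG\ltimes\Omega G)^2$ and show they agree pointwise. First I would note that the second equality, $M^*(\pr_1^*\nabla - \mult^*\nabla + \pr_2^*\nabla) = M^*\delta_h\nabla$, is simply the definition of $\delta_h$ of a connection pulled back along a section: since $\delta_h E = \pr_1^*E\otimes\mult^*E^*\otimes\pr_2^*E$ (with $\mult$ playing the role of the ``middle'' face map $d_1$ in the horizontal direction for the simplicial object $Q\times (PG\ltimes\Omega G)^\bullet$), the induced connection $\delta_h\nabla$ on $\delta_h E$ is $\pr_1^*\nabla - \mult^*\nabla + \pr_2^*\nabla$, so that equality is automatic. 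So the real content is the first equality, $\delta_v\alpha = M^*\delta_h\nabla$.

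Next I would compute the left-hand side. The 1-form $\alpha$ on $Q\times PG^2$ is $\alpha_{(x,p,q)} = 2\intSone\langle\Theta_p,\phihat_q\rangle$. The vertical face maps $d_i^v\colon Q\times(PG\ltimes\Omega G)^2\to Q\times PG^2$ are the projection $\pi = \id\times\ev_{2\pi}$ composed with the vertical simplicial structure; concretely, on a point $(x,p,\gamma,q,\eta)$ lying over $(x,p\gamma(2\pi)\text{ etc.})$... — more carefully, I would use that the vertical Čech nerve of $Q\times PG^n\to Q\times G^n$ is $Q\times(PG\ltimes\Omega G^\bullet)^n$, so $\delta_v\alpha$ is the alternating sum of the three pullbacks along the two projections and the multiplication $\mult$ in the $\Omega G$-factor. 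This gives exactly the computation already carried out in the proof of Lemma \ref{lemma:connection}: there $\delta\epsilon_{(p,\gamma,\eta)} = 2\intSone\langle\Theta_\gamma,\phihat_\eta\rangle = \nu$, and $\alpha$ has the same functional form as $\epsilon$, so the identical cancellation (using $\Theta_{p\gamma} = \Ad_{\gamma^{-1}}\Theta_p + \Theta_\gamma$ and $\phihat_{\gamma\eta} = \Ad_\gamma\phihat_\eta + \phihat_\gamma$) yields $\delta_v\alpha_{(x,p,\gamma,q,\eta)} = 2\intSone\langle\Theta_\gamma,\phihat_\eta\rangle = \nu_{(\gamma,\eta)}$, pulled back appropriately.

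Then I would compute the right-hand side $M^*\delta_h\nabla$. Recall $\nabla = \mu - \pi^*\epsilon$ with $\epsilon_{(p,\gamma)} = 2\intSone\langle\Theta_p,\phihat_\gamma\rangle$, and that $M$ is built from the bundle-gerbe multiplication section $s$ on $\widehat{\Omega G}$ together with the crossed-module structure (the $\widetilde\Ad$-map), composed with canonical trivialisations. Pulling $\delta_h\mu$ back along the multiplicative section $s$ gives $\nu$ (by the defining relation $\nu = s^*\delta\mu$ recalled on page \pageref{form:nu}), while the $\widehat\Ad$-contributions are governed by the identity $\widehat\Ad^*\mu - \pr_2^*\mu = \rho$ from the final Lemma of Subsection on the geometric crossed module structure. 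The $-\pi^*\epsilon$ part of $\nabla$ contributes $-\pi^*\delta_h\epsilon = -\pi^*\nu$ by the same Lemma \ref{lemma:connection} calculation. Assembling: the $\mu$-terms give $\nu$ plus a $\rho$-correction coming from the non-triviality of the adjoint action in $\mult$, the $\epsilon$-terms give $-\pi^*\nu$ plus their own adjoint-twist, and the point of the normalization in the definition of $\epsilon$ (Lemma \ref{lemma:connection}) and of $\rho$ is precisely that these pieces recombine into $2\intSone\langle\Theta_\gamma,\phihat_\eta\rangle$, matching $\delta_v\alpha$.

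The main obstacle will be bookkeeping the $\rho$-correction term: because $\mult$ involves $\Ad_{q^{-1}}$ on the $\gamma$-loop, pulling $\mu$ back along $\mult$ is not just $\pr$-type pullback but picks up the $U(1)$-component of $\widetilde\Ad$, i.e.\ the holonomy-type term $\int_{[0,2\pi]}(\id\times\ev)^*\rho$, and I must check that after applying $\delta_h$ and pulling back along $M$ (which itself is a composite of four canonical isomorphisms, one of which — step (3) in the fibrewise description — is exactly the $\widetilde\Ad$ map of $U(1)$-bundles) this term cancels cleanly against the corresponding term hidden in $M^*\delta_h(\pi^*\epsilon)$. I expect this to reduce, via the identity $\widehat\Ad^*\mu - \pr_2^*\mu = \rho$ and the formula for $\rho$, to an algebraic identity among $\Theta_p,\Theta_\gamma,\phi_p,\phihat_\gamma$ of the same flavour as those in the proof of Lemma \ref{lemma:curving}, and I would present it as such rather than grinding every integration-by-parts. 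Once both sides equal $2\intSone\langle\Theta_\gamma,\phihat_\eta\rangle$ (suitably pulled back), the lemma follows; condition (4) of Definition \ref{def:conn_structure} is then exactly this equality.
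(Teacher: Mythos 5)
Your overall strategy---expand both sides as explicit 1-forms on $Q\times(PG\ltimes\Omega G)^2$ and compare, using $s^*\delta\mu=\nu$, the identity $\widehat{\Ad}^*\mu-\pr_2^*\mu=\rho$, and the fibrewise description of $M$---is the same as the paper's, and your observation that the second equality in the statement is just the definition of $\delta_h\nabla$ is correct. But there is a genuine error in your computation of the left-hand side. You claim $\delta_v\alpha$ is computed by ``the identical cancellation'' as $\delta\epsilon$ in Lemma \ref{lemma:connection} and therefore equals $2\intSone\langle\Theta_\gamma,\phihat_\eta\rangle=\nu_{(\gamma,\eta)}$. That conflates two different differentials. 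In Lemma \ref{lemma:connection}, $\delta\epsilon_{(p,\gamma,\eta)}=\epsilon_{(p\gamma,\eta)}-\epsilon_{(p,\gamma\eta)}+\epsilon_{(p,\gamma)}$ is a \emph{three}-term alternating sum over the Čech nerve $PG^{[3]}\simeq PG\times\Omega G^2$, and the middle term is what produces the cancellations. Here $\delta_v\alpha$ is the \emph{two}-term Čech differential of the submersion $Z_2\to Y^{[3]}$: under $Z_2^{[2]}\simeq Q\times(PG\ltimes\Omega G)^2$ the two face maps send $(p,\gamma,q,\eta)$ to $(p,q)$ and $(p\gamma,q\eta)$, so
\[
  \delta_v\alpha_{(p,\gamma,q,\eta)}=\alpha_{(p\gamma,q\eta)}-\alpha_{(p,q)}
  =2\intSone\langle\Theta_{p\gamma},\phihat_{q\eta}\rangle-\langle\Theta_p,\phihat_q\rangle,
\]
which depends on $p$ and $q$ and is \emph{not} $\nu_{(\gamma,\eta)}$ (e.g.\ at $\gamma=1$ it is $2\intSone\langle\Theta_p,\Ad_q\phihat_\eta\rangle$, generically nonzero, while $\nu_{(1,\eta)}$ involves no $p$ or $q$). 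Since your target value for both sides is wrong, the proposed ``assembly'' of the right-hand side cannot close up as described.

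A related slip on the right-hand side: you assert the $-\pi^*\epsilon$ part of $\nabla$ contributes $-\pi^*\delta_h\epsilon=-\pi^*\nu$ ``by the same Lemma \ref{lemma:connection} calculation.'' The identity $\delta\epsilon=\nu$ is with respect to the vertical simplicial maps of $PG^{[\bullet]}$; the horizontal face maps here involve $\mult$ on the semidirect product, so the middle term is $\epsilon_{(pq,\Ad_{q^{-1}}(\gamma)\eta)}$, not $\epsilon_{(p,\gamma\eta)}$, and $\delta_h\epsilon\ne\nu$. The paper's proof has no shortcut here: the bulk of the work is precisely the explicit expansion of $\epsilon_{(pq,\Ad_{q^{-1}}(\gamma)\eta)}$ and $\nu_{(\Ad_{q^{-1}}(\gamma),\eta)}$ and the term-by-term cancellation (together with the $-\rho_{(q^{-1},\gamma)}$ contribution from $\widehat{\Ad}$) that lands on $\alpha_{(p\gamma,q\eta)}-\alpha_{(p,q)}$. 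To repair your argument you would need to recompute the left-hand side correctly and then carry out that expansion rather than appeal to Lemma \ref{lemma:connection}.
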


\begin{proof}
Recall that $\nu_{(\gamma,\eta)}= 2\intSone\langle\Theta_\gamma,\phihat_\eta\rangle$ satisfies $\pi^*\nu = \delta(\nabla)$, where here $\delta$ is with respect to the three maps $\widehat{\Omega G}^2 \to \widehat{\Omega G}$, namely multiplication and the two projections.
Using the factorisation 
\begin{equation}\label{eq:semidirect_prod_mult}
  \begin{multlined}[t]
  (p,\gamma,q,\eta) \mapsto (pq,q;\gamma,\eta) \xmapsto{\id\times\inv\times\id^2} (pq,q^{-1},\gamma,\eta) \xmapsto{\id\times \Ad\times \id} (pq,\Ad_{q^{-1}}(\gamma),\eta) \\ \xmapsto{\id\times m} (pq,\Ad_{q^{-1}}(\gamma)\eta) \end{multlined}
\end{equation}
of $\mult$, we will now calculate $M^*\delta_h \nabla$ at a point $(p,\gamma,q,\eta)\in (PG\times \Omega G)^2$. 
\begin{align*}
  &(M^*\delta_h \nabla)_{(p,\gamma,q,\eta)} \\
  & = M^*\big[\mu_{\hat\gamma} - \mult^*\mu + \mu_{\hat\eta}\big] - \epsilon_{(p,\gamma)} + \epsilon_{(pq,\Ad_{q^{-1}}(\gamma)\eta)} - \epsilon_{(q,\eta)}\\
  & = \begin{multlined}[t]
    M^*\big[\mu_{\hat\gamma} + \mu_{\hat\eta} - (\id\times \inv\times \id^2)^*(\id\times \Ad \times \id)^*(\pr_1^*\mu + \pr_2^*\mu - \pi^*\nu)\big]\\
    - \epsilon_{(p,\gamma)} 
    + \epsilon_{(pq,\Ad_{q^{-1}}(\gamma)\eta)} - \epsilon_{(q,\eta)}
    \end{multlined}\\
  & = M^*\big[\mu_{\hat\gamma} + \mu_{\hat\eta} - (\inv\times \id)^*\Ad^*\pr_1^*\mu - \mu_{\hat\eta}\big] + \nu_{(\Ad_{q^{-1}}(\gamma),\eta)}- \epsilon_{(p,\gamma)} + \epsilon_{(pq,\Ad_{q^{-1}}(\gamma)\eta)} - \epsilon_{(q,\eta)}\\
  & = M^*\big[\mu_{\hat\gamma} - \mu_{\widehat{\Ad}_{q^{-1}}\hat\gamma}\big]+ \nu_{(\Ad_{q^{-1}}(\gamma),\eta)}  + \epsilon_{(pq,\Ad_{q^{-1}}(\gamma)\eta)} - \epsilon_{(q,\eta)}- \epsilon_{(p,\gamma)}\\
  & = M^*\inv_q^*\big[\mu_{\hat\gamma} - \mu_{\widehat{\Ad}_q\hat\gamma}\big]+ \nu_{(\Ad_{q^{-1}}(\gamma),\eta)}  + \epsilon_{(pq,\Ad_{q^{-1}}(\gamma)\eta)} - \epsilon_{(q,\eta)}- \epsilon_{(p,\gamma)}\\
  & = -\rho_{q^{-1},\gamma} + \nu_{(\Ad_{q^{-1}}(\gamma),\eta)}  + \epsilon_{(pq,\Ad_{q^{-1}}(\gamma)\eta)} - \epsilon_{(q,\eta)}- \epsilon_{(p,\gamma)}.
\end{align*}
Now we can calculate that
\begin{align*}
  \nu_{(\Ad_{q^{-1}}(\gamma),\eta)}
  & = 2 \intSone \langle\Theta_{\Ad_{q^{-1}}(\gamma)}, \phihat_\eta\rangle\\
  & = 2 \intSone \langle \Ad_{q^{-1}}(\Theta_\gamma +\Thetahat_q - \Ad_{\gamma^{-1}}\Thetahat_q),\phihat_\eta\rangle,
\end{align*}
and also
\begin{align*}
  &\epsilon_{(pq,\Ad_{q^{-1}}(\gamma)\eta)} \\
  & = 2 \intSone \langle \Theta_{pq},\phihat_{\Ad_{q^{-1}}(\gamma)\eta}\rangle\\
  & = 2 \intSone \langle \Ad_{q^{-1}}(\Theta_p + \Thetahat_q ),
    \phihat_{\Ad_{q^{-1}}(\gamma)} + \Ad_{\Ad_{q^{-1}}(\gamma)}\phihat_\eta
      \rangle\\
  & = 2 \intSone \langle \Ad_{q^{-1}}(\Theta_p + \Thetahat_q),
    \Ad_{q^{-1}}(\phihat_\gamma + \Ad_\gamma \phihat_q-\phihat_q)
    + \Ad_{\Ad_{q^{-1}}(\gamma)}\phihat_\eta
     \rangle\\
  & = 2 \intSone \langle \Theta_p + \Thetahat_q,
    \phihat_\gamma + \Ad_\gamma \phihat_q-\phihat_q + \Ad_{\gamma q}\phihat_\eta
     \rangle\\
  & = \begin{multlined}[t]
    \epsilon_{(p,\gamma)} + 2 \intSone \Big(\langle
         \Theta_p, \Ad_\gamma\phihat_q\rangle - \langle \Theta_p,\phihat_q\rangle + \langle \Theta_p,\Ad_{\gamma q}\phihat_\eta\rangle \\
        + \langle\Thetahat_q, \phihat_\gamma\rangle + \langle\Thetahat_q ,\Ad_\gamma\phihat_q \rangle - \langle\Thetahat_q,\phihat_q\rangle + \langle\Thetahat_q,\Ad_{\gamma q}\phihat_\eta\rangle\Big).
    \end{multlined}
\end{align*}
Therefore, we have:
\begin{align*}
  &(M^*\delta_h \nabla)_{(p,\gamma,q,\eta)} \\
  & =  \begin{multlined}[t]
       2\intSone \Big\{
        -\langle\Thetahat_q,\Ad_\gamma \phihat_q + \phihat_\gamma\rangle + \langle\Theta_\gamma + \Thetahat_q,\phihat_q\rangle
        + \langle\Theta_\gamma + \Thetahat_q - \Ad_{\gamma^{-1}}\Thetahat_q,\Ad_q \phihat_\eta  \rangle
        \\
        + \langle\Theta_p, \phihat_\gamma\rangle - \langle\Theta_p, \phihat_q\rangle + \langle\Theta_p,\Ad_\gamma \phihat_q\rangle + \langle\Theta_p, \Ad_\gamma\Ad_q \phihat_\omega\rangle -\langle\Ad_q \Theta_q, \phihat_q \rangle\\
        + \langle\Ad_q \Theta_q, \phihat_\gamma\rangle + \langle\Ad_q\Theta_q ,\Ad_\gamma\phihat_q\rangle + \langle\Ad_q\Theta_q,\Ad_{\gamma q}\phihat_\eta\rangle
        -\langle\Theta_q,\phihat_\eta\rangle  - \langle\Theta_p, \phihat_\gamma\rangle
      \Big\}
    \end{multlined}\\
  & = 2\intSone \begin{multlined}[t]
       \Big\{
         \langle\Thetahat_q ,\phihat_q \rangle
        - \langle \Theta_p ,\phihat_q \rangle
        - \langle\Thetahat_q  ,\phihat_q \rangle
        -  \langle\Thetahat_q,\Ad_\gamma \phihat_q + \phihat_\gamma\rangle
        + \langle\Theta_\gamma,\phihat_q \rangle\\
        + \langle\Thetahat_\gamma,\Ad_{\gamma q} \phihat_\eta \rangle
        + \langle \Theta_q ,\phihat_\eta \rangle
        - \langle\Thetahat_q ,\Ad_{\gamma q}\phihat_\eta \rangle
        + \langle\Theta_p ,\Ad_\gamma\phihat_q \rangle\\
        + \langle\Theta_p ,\Ad_{\gamma q}\phihat_\eta \rangle
        + \langle\Thetahat_q,\phihat_\gamma \rangle
        + \langle\Thetahat_q,\Ad_\gamma\phihat_q \rangle
        + \langle\Thetahat_q ,\Ad_{\gamma q}\phihat_\eta \rangle
        - \langle\Theta_q ,\phihat_\eta \rangle
      \Big\}
    \end{multlined}\\
  & = 2\intSone  
        \langle\Thetahat_\gamma 
          + \Theta_p ,\Ad_\gamma \phihat_q\rangle
        + \langle\Thetahat_\gamma + \Theta_p , \Ad_{\gamma q}\phihat_\eta \rangle -\langle\Theta_p ,\phihat_q \rangle
    \\
  & = 2\intSone \langle\Theta_{p\gamma} ,\phihat_{q\eta}\rangle  -\langle \Theta_p ,\phihat_q\rangle
  \\
    & = \alpha_{(p\gamma,q\eta)} - \alpha_{(p,q)},
\end{align*}
as we needed to show.
\end{proof}

\begin{lemma}\label{lemma:rigid_conn_str_2}
The condition (5) in Definition \ref{def:conn_structure} holds for $\beta_A$ and $\alpha$, namely $d\alpha = \delta_h\beta_A$.
\end{lemma}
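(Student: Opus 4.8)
The plan is to verify $d\alpha = \delta_h\beta_A$ as an identity of $2$-forms on $Z_2 = Q\times PG^2$ by a direct computation, reducing it to an already-established identity on $\Omega G \times \Omega G$. First I would recall that $\alpha_{(x,p,q)} = 2\intSone\langle\Theta_p,\phihat_q\rangle$ is pulled back from $PG^2$, so $d\alpha$ is likewise pulled back; moreover, since $\alpha$ factors through $Q\times PG^2 \to PG\times PG \to \ev_{2\pi}\times\ev_{2\pi}$ evaluation at the endpoint only in the $\phihat_q$ slot (note $\phihat_q$ depends on the path $q$, not just its endpoint), I should be careful and work directly on $Q\times PG^2$. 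The key observation already recorded in the text (just after Definition~\ref{def:CS_conn}) is that $d\alpha = \pi^*\kappa$ where $\kappa = \langle\pr_1^*\Theta,\pr_2^*\Thetahat\rangle \in \Omega^2(G\times G)$ and $\pi = \id\times\ev_{2\pi}^2$; so it suffices to show that $\delta_h\beta_A$ is also the pullback of $\kappa$ along $\pi$, i.e.\ that $\delta_h\beta_A = \pi^*\kappa$.

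Next I would compute $\delta_h\beta_A$ using $\beta_A = \pr_2^*B - \pi^*\langle\pr_1^*A,\pr_2^*\Thetahat^G\rangle$ and the two face maps $d_0 = \Act_{12}$ and $d_1 = m_{23}$ and $d_2 = \pr_{12}$ (in the notation of Figure~\ref{fig:CSb2g}) from $Q\times PG^2 \to Q\times PG$. The term $\pr_2^*B$ is the pullback of the basic-gerbe curving along $Q\times PG \to PG$; its alternating sum over the three face maps reduces, via Lemma~\ref{lemma:curving} (which gives $\delta B_{(p,\gamma)} = B_{p\gamma} - B_p = R_\gamma - d\epsilon_{(p,\gamma)}$), to a combination of $R$, $\nu$, and $d\epsilon$-type terms pulled back appropriately; here one must track that on $Q\times PG^2$ the relevant ``$\delta$'' for $B$ is the one coming from the group structure $PG\times PG \to PG$ composed with the action. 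The term $\pi^*\langle\pr_1^*A,\pr_2^*\Thetahat^G\rangle$ contributes, under $\delta_h$, the quantity $\pi^*\delta_h\langle\pr_1^*A,\pr_2^*\Thetahat^G\rangle$ on $Q\times G^2$, which I would evaluate using the cocycle property of the right Maurer--Cartan form, $\Thetahat^G_{gh} = \Thetahat^G_g + \Ad_g\Thetahat^G_h$, together with the formula $(\Act^*A)_{(y,g)} = \Ad_{g^{-1}}(A_y + dg\,g^{-1})$ already used in the proof of Proposition~\ref{prop:conn_str}. Collecting the two contributions, the $A$-dependent pieces should cancel (as they must, since $d\alpha$ has no $A$ in it), leaving precisely $\pi^*\kappa = \pi^*\langle\pr_1^*\Theta,\pr_2^*\Thetahat\rangle$.

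The main obstacle I expect is the bookkeeping of the semidirect-product structure: the face map $d_1$ on $Z_2$ is not plain multiplication in a direct product but involves the conjugation action, and the curving $\beta_A$ mixes a ``loop-group'' piece ($\pr_2^*B$, insensitive to the $Q$-direction) with an ``$A$-correction'' piece that genuinely sees the connection, so the $\delta_h$ of the two pieces interact through the $\Ad$-twists. I would manage this by first reducing everything to the group-theoretic identity on $Q\times G^2$ via $\pi^*$, then using that $\pi^*$ is injective (Lemma~\ref{lemma:fund_lemma}) to conclude; this localises all the Maurer--Cartan manipulations to finite-dimensional $G$, where the identity $\delta_h\big(B + \text{correction}\big) = \langle\pr_1^*\Theta,\pr_2^*\Thetahat\rangle$ follows from the descent of Lemma~\ref{lemma:curving} and the standard Maurer--Cartan cocycle relations. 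Alternatively, and perhaps more cleanly, I would use the previously established relation $\widehat{\Ad}^*\mu - \pr_2^*\mu = \rho$ together with the curving computations of Section~\ref{sec:new_example} to package the loop-group contribution directly, but the finite-dimensional reduction via $\pi^*$ is likely the shortest route.
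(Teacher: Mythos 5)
There is a genuine gap, and it sits at the very first step of your plan. You propose to reduce the lemma to showing $\delta_h\beta_A=\pi^*\kappa$ on the strength of the identity $d\alpha=\pi^*\kappa$. Although a remark to this effect does appear in the text after Definition~\ref{def:CS_conn}, that identity cannot be correct as stated, and the paper's own proof does not use it: a direct computation gives
\[
  d\alpha_{(p,q)}\big((pX_1,qY_1),(pX_2,qY_2)\big)
  = 2\intSone \langle X_2,\Ad_q\partial Y_1\rangle - \langle X_1,\Ad_q\partial Y_2\rangle - \langle [X_1,X_2],\phihat_q\rangle,
\]
which depends on the tangent paths $X_i,Y_j$ and on $q$ over the whole interval (take $X_2$ constant, $Y_1=f\eta$ with $f$ supported away from $2\pi$, and vary $q$ in its interior), whereas $\pi^*\kappa$ sees only the endpoint values under $\ev_{2\pi}$. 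So neither $d\alpha$ nor $\delta_h\beta_A$ lies in the image of $\pi^*$, and your proposed mechanism --- push everything down to $Q\times G^2$ and invoke injectivity of $\pi^*$ from Lemma~\ref{lemma:fund_lemma} --- is unavailable. Only the $A$-correction term descends; your computation of $\delta_h\langle\pr_1^*A,\pr_2^*\Thetahat^G\rangle=\kappa$ via the Maurer--Cartan cocycle and the formula for $\Act^*A$ is correct and is exactly what the paper does, but it is the easy half.

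The hard half, which your plan sidesteps, is the identity $\delta_h(\pr_2^*B)=d\alpha+\pi^*\kappa$ (not $\pi^*\kappa$ alone), and this must be established by evaluating both sides on explicit tangent vectors to $Q\times PG^2$. Your suggestion to import Lemma~\ref{lemma:curving} does not work directly either: that lemma computes $B_{p\gamma}-B_p$ for $\gamma$ a \emph{based loop}, where the boundary term $\big[\Thetahat_\gamma\Theta_p\big]_0^{2\pi}$ of the integration by parts vanishes. Here the relevant combination is $B_p-B_{pq}+B_q$ with $q\in PG$ a \emph{path}, the boundary term survives, and it is precisely this surviving term $\tfrac12\big(\langle X_1(2\pi),\Ad_{q(2\pi)}Y_2(2\pi)\rangle-\langle X_2(2\pi),\Ad_{q(2\pi)}Y_1(2\pi)\rangle\big)=\pi^*\kappa(U,V)$ that cancels against $-\pi^*\delta_h\langle\pr_1^*A,\pr_2^*\Thetahat^G\rangle$ to leave $d\alpha$. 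Your instinct that the $A$-dependent pieces must cancel is right, but the cancellation happens between the boundary term of the $B$-computation and the $A$-term, not by both sides collapsing to a form pulled back from $G\times G$.
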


\begin{proof}
Let us first calculate $d\alpha$, taking into account the fact that $\alpha$ is independent of the $Q$-coordinate, that $PG^2$ is a Lie group, and that $\alpha$ is left invariant in the first coordinate.
Take a pair of tangent vectors $(pX_1,qY_1),(pX_2,qY_2) \in T_{(p,q)}PG^2$, and evaluate, where by an abuse of notation we are suppressing any mention of coordinates on $Q$:
\begin{align*}
&d\alpha_{(p,q)}\left((pX_1,qY_1),(pX_2,qY_2)\right) \\
& = \tfrac12 \Big\{
(pX_1,qY_1)\cdot \alpha(X_2,qY_2) - (pX_2,qY_2)\cdot \alpha(X_1,qY_1) 
- \alpha(p[X_1,X_2],q[Y_1,Y_2])
\Big\}.
\end{align*}
To continue, we need the derivative $(pX,qY)\cdot \alpha(pX',qY')$, which is
\begin{align*}
(pX,qY)\cdot \alpha(pX',qY') &=
\left.\frac{d}{dt} \left\{ 2\intSone \langle X',\partial_\theta[q(1+tY+O(t^2))](1-tY+O(t^2))q^{-1}\rangle\right\}\right|_{t=0}\\
& = \begin{multlined}[t]
2\intSone  \frac{d}{dt}\langle X',(\partial_\theta q)(1+tY+O(t^2))(1-tY+O(t^2))q^{-1}\rangle\big|_{t=0} \\
+
 \frac{d}{dt}\langle X',qt\partial_\theta(Y)(1-tY)q^{-1}+O(t^2)\rangle\big|_{t=0}  
\end{multlined}\\
& = 2\intSone  \frac{d}{dt}\langle X',(\partial_\theta q)(1+O(t^2))q^{-1}\rangle\big|_{t=0} 
+\langle X',q\partial_\theta(Y)q^{-1}\rangle \\
& = 2\intSone  \langle X',q\partial_\theta(Y)q^{-1}\rangle.
\end{align*}
Using this, we find
\[
d\alpha_{(p,q)}\left((pX_1,qY_1),(pX_2,qY_2)\right)=
2\intSone \langle X_2,\Ad_q\partial Y_1\rangle - \langle X_1 ,\Ad_q \partial Y_2\rangle - \langle [X_1,X_2],\phihat_q\rangle.
\]
Let us check some particular evaluations of $B$, noting that it is left-invariant:
\begin{align*}
  B(Y_1,\Ad_q^{-1}X_2) 
  &= \intSone \langle Y_1 ,\Ad_{q^{-1}}[X_2,\phihat_q]\rangle + \langle Y_1 ,\Ad_{q^{-1}}\partial X_2\rangle
\end{align*}
and 
\begin{align*}
B(\Ad_{q^{-1}}X_1,\Ad_{q^{-1}}X_2) 
& = \intSone \langle q^{-1}X_1q,\partial(q^{-1}X_2 q)\rangle\\
& = \intSone \langle X_1,[X_2,\phihat_q]\rangle + \langle X_1,\partial X_2\rangle.
\end{align*}
We also need to calculate $-\delta_h \pi^*\langle\pr_1^*A, \pr_2^* \Thetahat^G\rangle = -\pi^* \delta_h\langle\pr_1^*A, \pr_2^* \Thetahat^G\rangle$, and so we need
\begin{align*}
  \delta_h\langle\pr_1^*A, \pr_2^* \Thetahat^G\rangle_{(y,g,h)}&=\langle A_{yg},\Thetahat_h\rangle - \langle A_y,\Thetahat_{gh}\rangle 
  + \langle A_y,\Thetahat_g\rangle\\
  &=\langle \Ad_{g^{-1}}A_y + \Theta_g,\Thetahat_h \rangle
  - \langle A_y,\Thetahat_g+\Ad_g\Thetahat_{h}\rangle 
  + \langle A_y,\Thetahat_g\rangle\\
  &= \langle A_y,\Ad_g\Thetahat_h\rangle + \langle\Theta_g,\Thetahat_h \rangle
  - \langle A_y,\Thetahat_g+\Ad_g\Thetahat_{h}\rangle 
  + \langle A_y,\Thetahat_g\rangle\\
  & = \langle\Theta_g,\Thetahat_h \rangle = \kappa_{(g,h)}.
\end{align*}
And now we can calculate $\delta_h(B)_{(p,q)}$:
\begin{align*}
&\delta_h(B)_{(p,q)}\left((pX_1,qY_1),(pX_2,qY_2)\right) \\
& = B(X_1,X_2) - B(\Ad_{q^{-1}}X_1+Y_1,\Ad_{q^{-1}}X_2+Y_2) + B(Y_1,Y_2)\\
 & = B(X_1,X_2)- B(\Ad_{q^{-1}}X_1,\Ad_{q^{-1}}X_2)
- B( Y_1,\Ad_{q^{-1}}X_2)
- B(\Ad_{q^{-1}}X_1, Y_2)
\\
& = \begin{multlined}[t]
\frac12 \intSone \langle X_1,\partial X_2\rangle - \langle X_2,\partial X_1\rangle  
-\langle\Ad_{q^{-1}}X_1,\partial(q^{-1}X_2q)\rangle
+\langle\Ad_{q^{-1}}X_2,\partial(q^{-1}X_1q)\rangle\\
- \langle Y_1, \partial(q^{-1}X_2q)\rangle + 
\langle \Ad_{q^{-1}}X_2 , \partial Y_1\rangle 
-\langle \Ad_{q^{-1}}X_1 , \partial Y_2\rangle
+ \langle Y_1, \partial(q^{-1}X_2q)\rangle 
\end{multlined}\\
\\
&=\begin{multlined}[t]
\frac12 \intSone \langle X_1,\partial X_2\rangle - \langle X_2,\partial X_1\rangle - \langle X_1,\partial X_2\rangle - \langle X_1,[X_2,\phihat_q] \rangle\\
-\langle Y_1,\partial(\Ad_{q^{-1}}X_2)\rangle 
+ \langle \Ad_{q^{-1}}, \partial Y_1\rangle
+ \langle X_2,\partial X_1\rangle + \langle X_2,[X_1,\phihat_q]\rangle\\
-\langle\Ad_{q^{-1}}X_1,\partial Y_2\rangle
+ \langle Y_2,\partial(\Ad_{q^{-1}}X_1)\rangle
\end{multlined}
\\
&=\begin{multlined}[t]
\frac12 \intSone \langle X_2,\Ad_q\partial Y_1\rangle
- \langle X_1,\Ad_q Y_2\rangle
-2\langle [X_1,X_2,\phihat_q\rangle\\
-\langle Y_1,\partial(\Ad_{q^{-1}}X_2)\rangle
+ \langle Y_2,\partial(\Ad_{q^{-1}}X_1)\rangle
\end{multlined}
\\
&=\begin{multlined}[t]
\frac12\intSone \Big\{\langle X_2,\Ad_q\partial Y_1\rangle 
+ \langle \partial Y_1, \Ad_{q^{-1}} X_2\rangle
- \langle X_1,\Ad_q\partial Y_2\rangle
- \langle \partial Y_2,\Ad_{q^{-1}}X_1 \rangle \\
- 2\langle [X_1,X_2,\phihat_q\rangle\Big\} + \tfrac12 \left(-\langle X_2(2\pi),\Ad_{q(2\pi)}Y_1(2\pi)\rangle + \langle X_1(2\pi),\Ad_{q(2\pi)}Y_2(2\pi)\rangle \right)
\end{multlined}\\
&=\begin{multlined}[t]
\intSone \Big\{\langle X_2,\Ad_q\partial Y_1\rangle
- \langle X_1,\Ad_q\partial Y_2\rangle
- \langle [X_1,X_2,\phihat_q\rangle\Big\} + \pi^*\kappa_{(p,q)}\left((pX_1,qY_1),(pX_2,qY_2)\right)
\end{multlined}\\
&= d\alpha_{(p,q)}\left((pX_1,qY_1),(pX_2,qY_2)\right) + \pi^*\kappa_{(p,q)}\left((pX_1,qY_1),(pX_2,qY_2)\right).
\end{align*}
Thus
\[
\delta_h\beta_A= \delta_h B - \delta_h\pi^*\langle\pr_1^*A, \pr_2^* \Thetahat^G\rangle = d\alpha+ \pi^*\kappa -\pi^*\kappa =d\alpha,
\]
as we needed to show.
\end{proof}

\begin{lemma}\label{lemma:rigid_conn_str_3}
We have $\delta_h\alpha = 0$, which is Definition \ref{def:conn_structure} (6).
\end{lemma}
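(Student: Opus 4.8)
The plan is to compute $\delta_h\alpha$ directly from the explicit formula $\alpha_{(x,p,q)} = 2\intSone\langle\Theta_p,\phihat_q\rangle$ on $Z_2 = Q\times PG^2$, using the description of the face maps $d_i \colon Z_3 = Q\times PG^3 \to Z_2 = Q\times PG^2$ that make the projection to $Q\times G^\bullet$ a simplicial map. Since $\alpha$ is independent of the $Q$-coordinate, we only need the $PG^3 \to PG^2$ components: writing a point of $PG^3$ as $(p,q,r)$, the outer faces $d_0$ and $d_3$ are the projections $(p,q,r)\mapsto (q,r)$ and $(p,q,r)\mapsto (p,q)$ (these are honest projections because they correspond to $d_0,d_2$ on $Z_1\to Z_0$ composed appropriately), while the inner faces $d_1$ and $d_2$ involve the multiplication in $PG^3$ and the twisting by the $PG$-action on $\Omega G$ exactly as in the map marked $(*)$ in Figure \ref{fig:CSb2g}. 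One must be careful to extract precisely which multiplications appear: $d_1$ acts on the first two $PG$-factors and $d_2$ on the last two, with the relevant $\Ad$-twists dictated by the semidirect product structure from Definition \ref{def:string_xmod}.

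First I would write out $\delta_h\alpha_{(p,q,r)} = d_0^*\alpha - d_1^*\alpha + d_2^*\alpha - d_3^*\alpha$, obtaining a sum of four terms of the form $2\intSone\langle\Theta_{(\cdot)},\phihat_{(\cdot)}\rangle$. Then I would expand each $\Theta_{p_1p_2}$ using the cocycle identity $\Theta_{p_1p_2} = \Ad_{p_2^{-1}}\Theta_{p_1} + \Theta_{p_2}$ and each $\phihat_{\gamma_1\gamma_2}$ using the analogous identity $\phihat_{\gamma_1\gamma_2} = \phihat_{\gamma_1} + \Ad_{\gamma_1}\phihat_{\gamma_2}$ (both already used in the proof of Lemma \ref{lemma:connection}), together with the $\Ad$-equivariance of $\langle-,-\rangle$ and the fact that $\phihat_{\Ad_p\gamma}$ relates to $\Ad_p(\phihat_\gamma + \text{correction})$ as in the calculations in Lemma \ref{lemma: delta_h a = delta_v alpha}. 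The goal is to see all terms cancel in pairs. This is a finite bookkeeping computation of exactly the same flavour as the $\delta\epsilon = \nu$ calculation in Lemma \ref{lemma:connection} — in fact, $\delta_h\alpha = 0$ is morally the statement that $\alpha$ is a $\delta_h$-cocycle of the same type, and one expects the cancellation pattern to mirror that proof closely.

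Alternatively, and perhaps more cleanly, I would invoke the conceptual interpretation already present in the text: $\alpha = \alpha_{2,1}$ is a connection on the trivial bundle which together with $M$ trivialises the bundle gerbe $(\delta_h E, Z_2)$, and $\delta_h M = 1$ (proved in the Proposition preceding this lemma, using that $PG\ltimes\widehat{\Omega G}$ is a central extension). Since $\delta_h M$ is the \emph{canonical} (constant) section of $\delta_h^2 E = \underline{U(1)}$, the induced connection data $\delta_h\alpha$ must be the flat/trivial connection on the trivial bundle $Z_3^{[2]}\times U(1)$, i.e.\ $\delta_h\alpha = 0$; this is precisely the compatibility between the strong-trivialisation-with-connection picture (Definition \ref{def:conn_structure}(6)) and the identity $\delta_h M = 1$. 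The main obstacle is making sure the sign and $\Ad$-twist conventions in the inner face maps $d_1, d_2$ are tracked correctly — this is where the Figure \ref{fig:CSb2g} map $(*)$ and the corrected Definition \ref{def:string_xmod} (cf.\ Remark \ref{rem:BCSS_error}) must be used with care, and where an error would be easy to make; everything else is routine manipulation of Maurer--Cartan forms.
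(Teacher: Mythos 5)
Your primary route is exactly the paper's proof: evaluate $\delta_h\langle\Theta_p,\phihat_q\rangle$ at $(p_1,p_2,p_3)\in PG^3$ and use the cocycle identities $\Theta_{p_1p_2}=\Ad_{p_2^{-1}}\Theta_{p_1}+\Theta_{p_2}$ and $\phihat_{p_2p_3}=\phihat_{p_2}+\Ad_{p_2}\phihat_{p_3}$ together with $\Ad$-invariance of $\langle-,-\rangle$; the four terms then cancel in one line, just as you anticipate. Two corrections, though. First, the face maps $Z_3=Q\times PG^3\to Z_2=Q\times PG^2$ are simply $\Act_{12}$, $m_{23}$, $m_{34}$, $\pr_{123}$: the inner faces are the plain multiplications $(p_1,p_2,p_3)\mapsto(p_1p_2,p_3)$ and $(p_1,p_2p_3)$ with \emph{no} $\Ad$-twisting. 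The twisted map marked $(*)$ in Figure \ref{fig:CSb2g} lives on the \v{C}ech-nerve level $Z_2^{[2]}\to Z_1^{[2]}$, where the $\Omega G$-coordinates appear, and plays no role here; the $\Ad$'s in the computation arise only from the Maurer--Cartan cocycle identities, not from the face maps. Second, your ``cleaner alternative'' does not actually prove the lemma: the identities $\delta_v\alpha=M^*\delta_h(\nabla_E)$ and $\delta_h M=1$ only yield $\delta_v(\delta_h\alpha)=0$, i.e.\ that $\delta_h\alpha$ descends to a 1-form $\alpha_{3,0}$ on $Y^{[4]}$, and that form need not vanish --- this is precisely the situation in steps (2)--(4) of the proof of Proposition \ref{prop:conn_exist}, where a correction term $\alpha_{2,0}$ is generally required. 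The content of Lemma \ref{lemma:rigid_conn_str_3} is that for this particular explicit $\alpha$ no correction is needed, and that has to be verified by the direct computation.
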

\begin{proof}
Since integration is linear, it is sufficient to show that
\[
  \delta_h\Big( \langle \Theta_p ,\phihat_q\rangle\Big)  = 0.
\]
We shall first evaluate this form at $(p_1,p_2,p_3)\in PG^3$. 
\begin{align*}
   \delta_h \langle \Theta_p,\phihat_q\rangle_{(p_1,p_2,p_3)} 
   &=  
      \langle \Theta_{p_2} - \Theta_{p_1p_2},\phihat_{p_3}\rangle + \langle \Theta_{p_1},\phihat_{p_2p_3}-\phihat_{p_2}\rangle 
    \\
   &=  \langle \Theta_{p_2} - \Ad_{p_2^{-1}}(\Theta_{p_1} + \Thetahat_{p_2} 
      ) ,\phihat_{p_3}\rangle
      + \langle \Theta_{p_1},\Ad_{p_2}\phihat_{p_3}\rangle
    \\
    &=  \langle\Ad_{p_2^{-1}}( {\Thetahat_{p_2}}  
          - { \Theta_{p_1}} - {\Thetahat_{p_2}} 
          ),\phihat_{p_3}\rangle
           + {\langle\Ad_{p_2^{-1}}\Theta_{p_1},\phihat_{p_3}\rangle}
    \\
    &=  0.
\end{align*}
Hence $\delta_h\alpha = 0$, as we needed to show.
\end{proof}

We now arrive at last at the main result of this paper, the raison d'etre of the theory developed so far. Denote by $p_1(Q,A)$ the standard Chern--Weil representative for the first Pontryagin class associated to the $G$-bundle $Q$ equipped with connection $A$. The following theorem is our version of \cite[Theorem~2.7]{Wal}, modulo choices of signs of generators of cohomology.

\begin{theorem}\label{thm:CSrb2g_conn_str}
The data in Definition \ref{def:CS_conn} gives a rigid connective structure on $CS(Q)$, with curvature 4-form
\[
  \curv(\nabla,\beta_A,-CS(A)) = -\langle F_A,F_A\rangle,
\]
 which in the case of $G=\Spin(n)$ ($n\geq5$), satisfies $\curv(\nabla,\beta_A,-CS(A))/2\pi = -\frac12 p_1(Q,A)$.
\end{theorem}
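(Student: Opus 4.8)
The plan is to assemble the rigid connective structure from the pieces already established and then compute the curvature 4-form directly. The verification that $(\nabla,\beta_A,-CS(A))$ together with $\alpha$ satisfies Definition~\ref{def:conn_structure} is essentially complete once we invoke the preceding lemmas: Proposition~\ref{prop:conn_str} gives conditions (1)--(3), Lemma~\ref{lemma: delta_h a = delta_v alpha} gives (4), Lemma~\ref{lemma:rigid_conn_str_2} gives (5), and Lemma~\ref{lemma:rigid_conn_str_3} gives (6). So the only genuine content left in the theorem is identifying the curvature 4-form, and then matching it to $\tfrac12 p_1(Q,A)$ up to normalisation.

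For the curvature, recall that $\curv(\nabla,\beta_A,-CS(A))$ is the unique 4-form on $X$ with $\pi^*\curv = d\lambda_{0,0}$ where $\lambda_{0,0}$ is the 2-curving, and in the chain of the proof of Proposition~\ref{prop:conn_exist} the 2-curving on $Y=Q$ is pinned down by $\delta_h(\lambda_{0,0}) = \lambda_{1,0}$ with $\pi^*\lambda_{1,0} = d\beta_{1,1}$. Here the 2-curving is $-CS(A)\in\Omega^3(Q)$, so I would simply differentiate: $d(-CS(A)) = -d\langle A,F_A\rangle + \tfrac16 d\langle A,[A,A]\rangle$. Using the Bianchi identity $dF_A = -[A,F_A]$ and the structure equation $dA = F_A - \tfrac12[A,A]$, the standard computation gives $d(-CS(A)) = -\langle F_A,F_A\rangle$, which is already basic (pulled back from $X$), so $\curv(\nabla,\beta_A,-CS(A)) = -\langle F_A,F_A\rangle$ by injectivity of $\pi^*$. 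I should double-check the sign bookkeeping here against the Kobayashi--Nomizu conventions flagged in the Convention, since the factors of $\tfrac12$ in evaluating $\langle A,[A,A]\rangle$ and $dA$ are exactly where an error would creep in; this is the one place warranting care rather than a one-line citation.

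For the normalisation statement, I would use that for $G=\Spin(n)$, $n\geq 5$, the normalised Cartan--Killing form from Definition~\ref{def:Killing_normalised} and Example~\ref{eg:normalised_Killing_forms} is $\langle X,Y\rangle = -\tr(XY)/8\pi$. The standard Chern--Weil representative of the first Pontryagin class is $p_1(Q,A) = -\tfrac{1}{8\pi^2}\tr(F_A\wedge F_A)$ (or $\tfrac{1}{8\pi^2}\tr(F_A^2)$ depending on sign convention, as noted in the Introduction where the characteristic class is described as $\tfrac{1}{8\pi^2}\tr(F_A^2)$). Then $-\langle F_A,F_A\rangle = \tfrac{1}{8\pi}\tr(F_A\wedge F_A)$, so dividing by $2\pi$ yields $\tfrac{1}{16\pi^2}\tr(F_A\wedge F_A) = \tfrac12\cdot\tfrac{1}{8\pi^2}\tr(F_A\wedge F_A) = \tfrac12 p_1(Q,A)$, as claimed. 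The main obstacle is purely a matter of getting all the signs and $2\pi$-factors consistent across the three conventions in play (Kobayashi--Nomizu form evaluation, the normalisation of $\langle-,-\rangle$, and the de~Rham-integrality rescaling by $2\pi$ recorded after Lemma~\ref{lemma:basic_gerbe_curvature}); once those are fixed, each step is a short calculation, and I would present the curvature computation in full and the normalisation as a brief arithmetic remark citing Example~\ref{eg:normalised_Killing_forms} and the discussion following Lemma~\ref{lemma:basic_gerbe_curvature}.
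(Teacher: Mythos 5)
Your proposal is correct and follows essentially the same route as the paper: the first claim is obtained by combining Proposition~\ref{prop:conn_str} with Lemmata~\ref{lemma: delta_h a = delta_v alpha}, \ref{lemma:rigid_conn_str_2} and \ref{lemma:rigid_conn_str_3}, the identity $d(-CS(A))=-\langle F_A,F_A\rangle$ is the standard Chern--Simons computation (the paper simply cites Freed where you sketch it by hand), and your normalisation arithmetic using $\langle X,Y\rangle=-\tr(XY)/8\pi$ reproduces the paper's line-for-line, landing on the same convention $p_1(Q,A)=\tr(F_A^2)/8\pi^2$.
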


\begin{proof}
The first claim follows from adding Lemmata \ref{lemma:rigid_conn_str_2}, \ref{lemma:rigid_conn_str_3} and \ref{lemma: delta_h a = delta_v alpha} to Proposition \ref{prop:conn_str}.
That the 4-form curvature, the descent along $Q\to X$ of $-d\,CS(A)$, is $-\langle F_A,F_A\rangle$ is a standard result (see e.g.\ \cite[Proposition 1.27]{Freed_95}). When $G=\Spin(n)$, the first Pontryagin form is defined to be (a certain multiple of) the \emph{trace} of $F_A^2$, hence a \emph{negative} multiple of the inner product---we just have to check the coefficient is correct. From Example \ref{eg:normalised_Killing_forms} we have that $\langle X,Y\rangle = -\tr(XY)/8\pi$ for $\fg = \mathfrak{spin(n)} \simeq \mathfrak{so}(n)$, $n\geq 5$, so:
\begin{align*}
\curv(\nabla,\beta_A,-CS(A))/2\pi &= -\langle F_A,F_A\rangle/2\pi \\
&= \big(\tr(F_A^2)/8\pi\big)/2\pi \\
& = \tfrac{1}{2}\tr(F_A^2)/8\pi^2\\
& = -\tfrac{1}{2}p_1(Q,A). \qedhere
\end{align*}
\end{proof}

\begin{remark}
If one takes the connection and curving on the basic gerbe on $G$ from \cite{MS03}, then using the 1-form $\varepsilon_{MS}$ from Remark \ref{rem:MS_epsilon_correction}, the only other necessary change to the rigid connective structure in Theorem \ref{thm:CSrb2g_conn_str} is that one replaces $\alpha$ by $\alpha + \delta_h(\varepsilon_{MS})$.
The 2-curving (the negative of the Chern--Simons form) and the 4-form curvature (the first fractional Pontryagin form) are unchanged.
\end{remark}

\begin{corollary}\label{cor:CSrb2g_rigidification_with_geom}
Modulo the previous Remark, the geometric content of the rigid bundle 2-gerbe given here---the \emph{actual differential forms} $\nabla$, $\beta_A$ and $-CS(A)$ making up the connective structure---is the same as that in \cite{CJMSW}\footnote{The paper's \S6, which deals with the Chern--Simons 2-gerbe, works with a general invariant symmetric bilinear form $\Phi$, but on p.607 give the formula for the Chern--Simons action using the trace, with a positive sign, as the calculation in the proof of \ref{thm:CSrb2g_conn_str}.}. 
Thus we have described a rigidification of the Chern--Simons 2-gerbe of \emph{loc.\ cit.}, including with connection, curving and 2-curving. 
Moreover, as we have a \emph{rigid} connective structure, this gives bundle 2-gerbe connection in the stronger sense of \cite[Definition 4.1]{Wal} (using the background definitions in \S 6.2 of the same).
\end{corollary}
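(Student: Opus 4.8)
The plan is to assemble the pieces already in place and transport them through the weakification construction $w(-)$ of Subsection~\ref{subsec:associated_b2gs}. By Theorem~\ref{prop:rigidification}, the associated bundle 2-gerbe $w(CS(Q))$ is isomorphic in the strongest sense to the Chern--Simons bundle 2-gerbe of \cite{CJMSW}, which already settles the statement at the level of bare bundle 2-gerbes; by Theorem~\ref{thm:CSrb2g_conn_str}, the triple $(\nabla,\beta_A,-CS(A))$ together with $\alpha$ is a rigid connective structure on $CS(Q)$. So what remains is to identify the image of this rigid connective structure under the construction of Appendix~\ref{app:weak_2-gerbes} (Proposition~\ref{prop:associated_Waldorf_connective_structure}) with the connective structure put on the Chern--Simons bundle 2-gerbe in \cite{CJMSW}, and to record that this image meets the extra requirements of \cite[Definition~4.1]{Wal}.

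First I would unwind what Proposition~\ref{prop:associated_Waldorf_connective_structure} yields: the bundle gerbe connection $\nabla$ and curving $\beta_A$ on $(E,Z_1)$ descend to a connection and curving on the bundle gerbe on $Y^{[2]}$ of $w(CS(Q))$, the 2-curving $-CS(A)$ on $Y=Q$ is the 2-curving of $w(CS(Q))$, and the rigid datum $\alpha$ supplies precisely the connection-on-a-trivial-bundle that upgrades the bundle 2-gerbe product to one compatible with connections. Under the canonical isomorphism of Theorem~\ref{prop:rigidification} these forms are carried to the corresponding forms on the Chern--Simons 2-gerbe of \cite{CJMSW}, and the first sentence of the Corollary is the assertion that they agree with the ones written down in \emph{loc.\ cit.}

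The comparison of the \emph{actual} forms is the only place where genuine work is needed, and it is the step I expect to be the main obstacle, because \cite{CJMSW} build their 2-gerbe from the tautological model of the basic gerbe while $CS(Q)$ uses the loop-group model of \cite{MS03}; one transports the connective data along the (canonical) isomorphism between the two models noted after Definition~\ref{def:CS_b2g}. The connection and curving on the basic gerbe used in \cite{MS03} differ from $(\nabla,B)$ of Lemmata~\ref{lemma:connection} and~\ref{lemma:curving} only by the 1-form $\varepsilon_{MS}$ of Remark~\ref{rem:MS_epsilon_correction}; by the preceding Remark this changes nothing in the rigid connective structure of Theorem~\ref{thm:CSrb2g_conn_str} except for replacing $\alpha$ by $\alpha+\delta_h(\varepsilon_{MS})$, leaving the 2-curving $-CS(A)$ and the 4-curvature $-\langle F_A,F_A\rangle$ untouched. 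Carrying out this substitution, together with the sign bookkeeping flagged in Remark~\ref{rem:BCSS_error} and the dualisation remark after Definition~\ref{def:vsb2g_triv}, is exactly what ``modulo the previous Remark'' (and ``modulo choices of signs of generators of cohomology'' in Theorem~\ref{thm:CSrb2g_conn_str}) absorbs; once it is done, $\nabla$, $\beta_A$ and $-CS(A)$ match those of \cite{CJMSW} on the nose, and the middle sentence of the Corollary follows.

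For the last clause I would invoke Proposition~\ref{prop:associated_Waldorf_connective_structure} directly: a \emph{rigid} connective structure carries, by Definition~\ref{def:conn_structure}, the additional 1-form $\alpha$ with $\delta_h(\alpha)=0$, $d\alpha=\delta_h\beta$ and $\delta_v(\alpha)=M^*\delta_h(\nabla_E)$, and it is precisely this flat-enough connection on the trivial bundle over $Z_2$ trivialising the gerbe on $Y^{[3]}$ that the extra constraints of \cite[Definition~4.1]{Wal} (via the background material of \S 6.2 there) demand. Hence the associated connective structure on $w(CS(Q))$, and so on the Chern--Simons 2-gerbe of \cite{CJMSW}, is a bundle 2-gerbe connection in Waldorf's stronger sense, with the same 4-form curvature as before.
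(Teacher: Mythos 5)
Your proposal is correct and follows essentially the same route as the paper's own (very terse) proof, which simply cites Theorem~\ref{prop:rigidification} together with Propositions~\ref{prop:associated_connective_structure} and~\ref{prop:associated_Waldorf_connective_structure}; you have merely unpacked the same ingredients, including the role of Remark~\ref{rem:MS_epsilon_correction} in reconciling the two models of the basic gerbe. The only quibble is wording: by Proposition~\ref{prop:associated_connective_structure} the forms $\nabla$, $\beta_A$, $-CS(A)$ are carried over to the associated bundle 2-gerbe as literally the same data (the bundle gerbe on $Y^{[2]}$ is $(E,Z_1)$ itself), so nothing ``descends''.
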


\begin{proof}
This follows after one has the construction in Appendix \ref{app:weak_2-gerbes} of the associated bundle 2-gerbe with connective structure (Propositions \ref{prop:associated_connective_structure} and \ref{prop:associated_Waldorf_connective_structure}) and that the underlying rigid 2-gerbe here rigidifies the Chern--Simons bundle 2-gerbe.
\end{proof}

The point of Theorem \ref{thm:CSrb2g_conn_str}, then, is that it allows easier calculations with the geometry of the Chern--Simons gerbe, in particular in the presence of trivialisations, which we will treat in a forthcoming paper \cite{rb2g_III}.

\appendix


\section{Associated bundle 2-gerbes and trivialisations}
\label{app:weak_2-gerbes}

Here we give the details of what is only outlined above in Subsection~\ref{subsec:associated_b2gs}.
The economy of our definitions will be seen in comparison to the amount of work that needs to be done to verify the weaker general bundle 2-gerbe definitions from our rigid data.

\subsection{Additional technical requirements}

Before we start, we need to build up some technical material related to constructions with bundle gerbes, as well as the bigroupoid of bundle gerbes.

Recall the definition of a \emph{morphism} of bundle gerbes $(\varphi,f)\colon (E,Y)\to (F,Z)$ over a manifold $X$: it is a map $f\colon Y\to Z$ over the manifold $X$, together with an isomorphism $\varphi\colon E\simeq (f^{[2]})^* F$ respecting the bundle gerbe product. 
Viewing bundle gerbes as certain sort of Lie groupoids over $X$, a morphism is equivalent data to a functor over $X$. 
However, recall the definition of a \emph{stable isomorphism} of bundle gerbes from $(E,Y)$ to $(F,Z)$, both on $X$ (of which there are several equivalent versions), which is a trivialisation of the bundle gerbe $(E^*,Y)\otimes (F,Z) := \left(\pr_{12}^*E^*\otimes \pr_{34}^*F,Y\times_X Z\right)$.
Such a trivialisation consists of a principal $U(1)$-bundle $L\to Y\times_XZ$ plus a bundle isomorphism $\tau\colon \delta(L)\xrightarrow{\simeq} \pr_{12}^*E^*\otimes \pr_{34}^* F$ on $(Y\times_X Z)^{[2]}\simeq Y^{[2]}\times_X Z^{[2]}$ preserving bundle gerbe multiplication. 
We shall denote a stable isomorphism with this data by $(L,\tau)\colon (E,Y) \rightsquigarrow (F,Z)$.
If there is a morphism between a pair of bundle gerbes, then the bundle gerbes are also stably isomorphic, and we can construct a canonical such.

\begin{construction}\label{constr:morphism_to_stable_iso}
Given a morphism of bundle gerbes $(\varphi,f)\colon (E,Y)\to (F,Z)$ on $X$, consider the pullback $Y\times_X Z$, and recall that there is a map $Y\to Y\times_X Z$, $y\mapsto (y,f(y))$. 
Then the projection $Y\times_X Z\to Z$ is equal to the composite $Y\times_X Z \to Y \to Z$, so that the bundle on $Y^{[2]}\times_X Z^{[2]}$ gotten by pulling back $F$ along $\pr_{34}$ is isomorphic to the pullback of $E$ along $\pr_{12}$, and this isomorphism is the pullback along $\pr_{12}$ of $\varphi$. 
Thus, taking the bundle $(f\times \id)^*F$ on $Y\times_X Z$, we get a stable isomorphism $\Sigma(\varphi,f) = ((f\times \id)^*F,\tau_f)\colon (E,Y) \rightsquigarrow (F,Z)$ by forming the multiplication-preserving isomorphism $\tau_f \colon \delta((f\times \id)^*F) \xrightarrow{\simeq} \pr_{12}^*E^*\otimes \pr_{34}^*F$, given fibrewise by
\begin{align*}
	F^*_{(f(y_1),x_1)}\otimes F_{(f(y_2),x_2)} & \simeq F^*_{(f(y_1),x_1)}\otimes F^*_{(x_1,f(y_2))}\otimes F_{(x_1,f(y_2))}\otimes F_{(f(y_2),x_2)} \\
	& \simeq F^*_{(f(y_1),f(y_2))}\otimes F_{(x_1,x_2)}\\
	& \simeq  E^*_{(y_1,y_2)}\otimes F_{(x_1,x_2)}.
\end{align*}

Further, this construction is appropriately pseudofunctorial for the composition of bundle gerbe morphisms. In particular given morphisms $(\varphi,f)\colon (E,Y) \to (F,Z)$ and $(\psi,g)\colon (F,Z)\to (G,W)$ over $X$, there is an isomorphism 
\[
	\Sigma^2_{(\psi,g),(\varphi,f)}\colon \Sigma(\psi,g) \circ\Sigma(\varphi,f) \Rightarrow \Sigma((\psi,g)\circ (\varphi,f)
\]
between the associated stable isomorphisms.\footnote{The other coherence map for $\Sigma$, comparing the identity morphism of bundle gerbes and the identity stable isomorphism, is an equality, essentially by definition}

If we compute the composite $\Sigma(\varphi,f) \circ\Sigma(\psi,g)$ according to \cite[pp 31--32]{StevPhD}, we can use the fact there is a section of $Y\times_X Z\times_X W \to Y\times_X W$, so that instead of constructing a bundle on $Y\times_X W$ via descent, we can just pull back the constructed bundle on $Y\times_X Z\times_X W$ by the section.
The result is that the trivialising $U(1)$-bundle on $Y\times_X W$ for the composite stable isomorphism is $\mathbf{1}\otimes\mathbf{1} \otimes ((g\circ f)\times\id)^*G$ (where, recall, $\mathbf{1}$ denotes the trivial bundle). 
The canonical isomorphism to $((g\circ f)\times\id)^*G$ is then the data of a 2-arrow between stable isomorphisms, where the latter bundle is the trivialising bundle for the stable isomorphism $\Sigma((\psi,g)\circ (\varphi,f))$. 
In particular, the data of this 2-arrow is independent of $(\psi,g)$ and $(\varphi,f)$.
\end{construction}

Recall that bundle gerbes on a given manifold $X$ form a bicategory  with 1-arrows stable isomorphisms \cite[Proposition~3.9]{StevPhD}.
In a bicategory, we can talk about what it means for a 1-arrow to be an \emph{adjoint} inverse to another \cite[Proposition~6.2.1]{Johnson-Yau}.
This is a stronger and better-behaved notion than just being an inverse up to isomorphism, and it is the case that any pseudo-inverse can be strengthened to be an adjoint inverse (e.g.\ \cite[Proposition~6.2.4]{Johnson-Yau}).

\begin{lemma}\label{lemma:bundlegerbemorphisms}
Let $(E,Y)$ and $(F,Z)$ be bundle gerbes on $X$. The adjoint inverse of a stable isomorphism $(L,\tau)\colon (E,Y) \rightsquigarrow (F,Z)$ is $(\sigma^*L^*,\sigma^*(\tau)^*)$, where $\sigma\colon Z\times_X Y\to Y\times_X Z$ is the swap map and $(\tau)^*$ denotes the isomorphism between the dual bundles arising from $\tau$.
Thus if we start with a \emph{morphism} $(\varphi,f)\colon (E,Y)\to (F,Z)$ of bundle gerbes, the adjoint inverse of $\Sigma(\varphi,f)$ is $((\id\times f)^*F^*,\sigma^* (\tau_f)^*)\colon (F,Z) \rightsquigarrow (E,Y)$.
If the morphism $(\varphi,f)$ is an \emph{isomorphism} of bundle gerbes, so that $f$ is invertible, the adjoint inverse of $\Sigma(\varphi,f)$ is $\Sigma((f^{[2]})^*\varphi^{-1},f^{-1})$ of $(\varphi,f)$ (here $((f^{[2]})^*\varphi^{-1},f^{-1}) = (\varphi,f)^{-1}$).
\end{lemma}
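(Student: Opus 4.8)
The plan is to verify each of the three claimed descriptions of the adjoint inverse directly, working in the bicategory of bundle gerbes and stable isomorphisms on $M$, and using the explicit formula for composition of stable isomorphisms from \cite[pp.\ 31--32]{StevPhD} together with the characterisation of an adjoint inverse: a 1-arrow $g$ is adjoint inverse to $f$ if there are 2-isomorphisms $\eta\colon \id \Rightarrow g\circ f$ and $\varepsilon\colon f\circ g\Rightarrow \id$ satisfying the two triangle identities. Since in this bicategory every stable isomorphism is already invertible up to 2-isomorphism (bundle gerbes with stable isomorphisms form a bigroupoid), we know an adjoint inverse exists and is unique up to unique 2-isomorphism; so it suffices to exhibit a candidate together with unit and counit, and the triangle identities will then be checkable by a bundle-level computation, or alternatively invoked via \cite[Proposition~6.2.4]{Johnson-Yau} after checking the candidate is \emph{some} pseudo-inverse.

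First I would treat the general statement: given $(L,\tau)\colon (E,Y)\rightsquigarrow (F,X)$, so $L\to Y\times_M X$ trivialises $\pr_{12}^*E^*\otimes \pr_{34}^*F$, I would check that $\sigma^*L^*\to X\times_M Y$ trivialises $\pr_{12}^*F^*\otimes\pr_{34}^*E$ compatibly with bundle gerbe multiplications --- this is immediate since dualising and swapping the two factors interchanges the roles of $E$ and $F$, and $(\tau)^*$ is exactly the induced trivialising isomorphism. The composite $(\sigma^*L^*,\sigma^*(\tau)^*)\circ (L,\tau)$ is, by the Stevenson composition formula, a stable self-isomorphism of $(E,Y)$ whose trivialising bundle is built by descent from $L\otimes \sigma^*L^*$ pulled back appropriately over $Y\times_M X\times_M Y$; using the section $Y\to Y\times_M X\times_M Y$ (diagonal in the outer factors) as in Construction~\ref{constr:morphism_to_stable_iso}, this bundle is canonically the trivial bundle, giving the unit $\eta$, and symmetrically the counit $\varepsilon$. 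The triangle identities then reduce to the statement that the two ways of cancelling $L^*\otimes L$ against $L\otimes L^*$ agree, which is a diagram chase with the canonical pairing and copairing of a $U(1)$-bundle with its dual.

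Next I would specialise to a morphism $(\varphi,f)\colon(E,Y)\to(F,X)$: here $\Sigma(\varphi,f)$ has trivialising bundle $(f\times\id)^*F$ on $Y\times_M X$, so applying the general formula the adjoint inverse has trivialising bundle $\sigma^*((f\times\id)^*F)^* = (\id\times f)^*F^*$ on $X\times_M Y$, with trivialising isomorphism $\sigma^*(\tau_f)^*$; this is just unwinding definitions. Finally, when $f$ is invertible, I would identify $((\id\times f)^*F^*,\sigma^*(\tau_f)^*)$ with $\Sigma((f^{[2]})^*\varphi^{-1},f^{-1})$ directly: the latter has trivialising bundle $(f^{-1}\times\id)^*E$ on $X\times_M Y$, and under the graph embedding $X\to X\times_M Y$, $x\mapsto (x,f^{-1}(x))$, the isomorphism $\varphi\colon E\simeq (f^{[2]})^*F$ gives a canonical identification $(f^{-1}\times\id)^*E \simeq (\id\times f)^*F^*{}^*= (\id\times f)^*F^{**}$ up to the canonical $F\simeq F^{**}$; checking this identification intertwines the two trivialising isomorphisms is routine. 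The main obstacle will be the triangle identities in the general case --- keeping track of the descent data in the Stevenson composition and the various canonical isomorphisms $L\otimes L^*\simeq \mathbf 1$ without sign or coherence slips; everything else is bookkeeping, and indeed one could shortcut the triangle identities entirely by checking the candidate is a pseudo-inverse and then citing \cite[Proposition~6.2.4]{Johnson-Yau}, at the cost of not getting the adjoint inverse literally \emph{on the nose} but only up to canonical 2-isomorphism, which is in any case all that is needed downstream.
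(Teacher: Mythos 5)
Your plan is correct and matches the paper's approach: the paper's own proof of this lemma is literally the single sentence ``The proof is a routine calculation using the definition,'' and your proposal is a faithful, more detailed expansion of exactly that calculation (exhibiting the unit and counit via the Stevenson composition formula and the graph sections, then either checking the triangle identities directly or promoting a pseudo-inverse to an adjoint inverse via the cited result of Johnson--Yau). Nothing in your outline conflicts with what the paper does.
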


\begin{proof}
The proof is a routine calculation using the definition.
\end{proof}

We need to use the concept of a \emph{mate} of a 2-commutative square in a bicategory, where two parallel edges are equipped with adjoints (on the same side), see for example \cite[Definition~6.1.12]{Johnson-Yau}. 
We also need the fact this is functorial with respect to pasting of squares, cf \cite[Proposition 2.2]{Kelly-Steet_74} (this follows in more general bicategories by strictification/coherence results, and the result in 2-categories from \emph{loc.\ cit.}).

Since every stable isomorphism has an adjoint inverse, we can take the mate in the bicategory of bundle gerbes of any (2-)commuting square of stable isomorphisms in either or both directions. 
In particular given a commuting square of \emph{morphisms}, there is a commuting square of stable isomorphisms, and hence we can take mates. 
The following lemma deals with a special case that we need to consider later.

\begin{lemma}\label{lemma:mate_using_isos}
Given a commutative square of bundle gerbe morphisms
\[
	\xymatrix{
		(E_1,Y_1) \ar[r]^{(\alpha,g)} \ar[d]_{(\phi_1,f_1)}^\simeq & (E_2,Y_2) \ar[d]^{(\phi_2,f_2)}_\simeq \\
		(F_1,Z_1) \ar[r]_{(\beta,h)}  & (F_2,Z_2) 
	}
\]
over $X$, where the vertical morphisms are bundle gerbe \emph{isomorphisms}, consider its mate
\[
	\xymatrix{
		(E_1,Y_1) \ar@{~>}[r]  & (E_2,Y_2) \\
		(F_1,Z_1) \ar@{~>}[r]^{\ }="t" \ar@{~>}[u] & (F_2,Z_2) \ar@{~>}[u]^{\ }="s"
		\ar@{=>}"s";"t"
	}
\]
in the bicategory of bundle gerbes and stable isomorphisms. Here the horizontal stable isomorphisms are those associated to the given horizontal morphisms, and the vertical stable isomorphisms are those adjoint inverses from Lemma~\ref{lemma:bundlegerbemorphisms}. 
Then the 2-arrow filling this square is an \emph{identity} 2-arrow and the square of stable isomorphisms is that associated to the commutative square 
\[
	\xymatrix{
		(E_1,Y_1) \ar[r]^{(\alpha,g)}  & (E_2,Y_2) \\
		(F_1,Z_1) \ar[r]_{(\beta,h)} \ar[u]_\simeq^{((f_1^{[2]})^*\phi_1^{-1},f_1^{-1})} & (F_2,Z_2) \ar[u]^\simeq_{((f_1^{[2]})^*\phi_2^{-1},f_2^{-1})}
	}
\]
of morphisms, using the inverses of the original vertical morphisms.
\end{lemma}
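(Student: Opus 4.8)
\textbf{Proof proposal for Lemma~\ref{lemma:mate_using_isos}.}

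The plan is to reduce the statement to an explicit identification of the trivialising $U(1)$-bundles and isomorphisms involved, relying on the computation already carried out in Construction~\ref{constr:morphism_to_stable_iso} and the description of adjoint inverses in Lemma~\ref{lemma:bundlegerbemorphisms}. First I would unwind the mate: by definition, the 2-arrow filling the mate square is obtained from the 2-arrow witnessing commutativity of the original square of morphisms (which is an \emph{identity}, since the square commutes on the nose) by whiskering with the units/counits of the chosen adjunctions and with the horizontal stable isomorphisms. So the content to check is that, after performing these whiskerings, the resulting data reduces to an identity 2-arrow, and that the pasted square coincides with the square of stable isomorphisms associated to the commutative square of morphisms obtained by inverting the vertical arrows.

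The key steps, in order, would be: (1) Record, using Construction~\ref{constr:morphism_to_stable_iso}, that for a morphism $(\phi,f)\colon(E,Y)\to(F,X)$ with $f$ invertible, the associated stable isomorphism $\Sigma(\phi,f)$ has trivialising bundle $(f\times\id)^*F$ on $Y\times_M X$, and that its adjoint inverse, by Lemma~\ref{lemma:bundlegerbemorphisms}, is $\Sigma((f^{[2]})^*\phi^{-1},f^{-1})$, with trivialising bundle $(f^{-1}\times\id)^*F$ on $X\times_M Y$ — i.e.\ the adjoint inverse of the stable isomorphism associated to an isomorphism of bundle gerbes is the stable isomorphism associated to the inverse isomorphism, with the standard adjunction data. (2) Observe that the horizontal composite down-then-across, namely $\Sigma(\beta,h)\circ\Sigma((f_1^{[2]})^*\phi_1^{-1},f_1^{-1})$, and the composite across-then-up, $\Sigma((f_2^{[2]})^*\phi_2^{-1},f_2^{-1})^{\text{(as adj.\ inv.)}}\circ\Sigma(\alpha,g)$ — wait, more precisely one composes along the boundary of the mate square — are both governed by the pseudofunctoriality 2-cell $\Sigma^2$ of Construction~\ref{constr:morphism_to_stable_iso}, whose key feature, highlighted in that Construction, is that \emph{the data of this $2$-arrow is independent of the two morphisms being composed}. (3) Use the commutativity of the original square, $(\phi_2,f_2)\circ(\alpha,g) = (\beta,h)\circ(\phi_1,f_1)$, hence also $((f_2^{[2]})^*\phi_2^{-1},f_2^{-1})\circ(\beta,h) = (\alpha,g)\circ((f_1^{[2]})^*\phi_1^{-1},f_1^{-1})$ as morphisms, to conclude that the two boundary composites of stable isomorphisms agree with $\Sigma$ applied to this common composite, via the respective $\Sigma^2$ cells. (4) Feed the naturality/independence from step (2) into the definition of the mate to see that all the whiskering data cancels and the mate 2-arrow is the identity; simultaneously, the pasted mate square is, by the same $\Sigma^2$ identifications, exactly $\Sigma$ applied to the commutative square of morphisms with inverted verticals — which is the last assertion of the Lemma.

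The main obstacle I anticipate is bookkeeping of the coherence isomorphisms: one must carefully track the adjunction units and counits from Lemma~\ref{lemma:bundlegerbemorphisms} together with the $\Sigma^2$ cells through the definition of the mate (as in \cite[Definition~6.1.12]{Johnson-Yau}), and verify that every non-identity 2-cell that appears cancels against its partner. The essential input that makes this work — and which I would isolate as a preliminary observation before the main argument — is the remark at the end of Construction~\ref{constr:morphism_to_stable_iso} that the comparison 2-arrow relating $\Sigma(\psi,g)\circ\Sigma(\varphi,f)$ to $\Sigma((\psi,g)\circ(\varphi,f))$ has underlying data independent of $(\psi,g)$ and $(\varphi,f)$, so that the two ways of composing around the square produce \emph{the same} trivialising bundle with \emph{the same} identification, forcing the mate to be trivial. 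Since all the bundles in sight are pullbacks of a single bundle gerbe bundle along canonical maps and the adjunction data is the ``obvious'' one, the remaining verification is a routine, if slightly tedious, diagram chase, and I would state it as such rather than writing it out in full.
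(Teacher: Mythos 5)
Your proposal is correct and follows essentially the same route as the paper's proof: identify the vertical stable isomorphisms in the mate via the last point of Lemma~\ref{lemma:bundlegerbemorphisms}, then reduce the triviality of the mate 2-arrow to the observation that the coherence cell $\Sigma^2$ of Construction~\ref{constr:morphism_to_stable_iso} depends only on the composite morphism, so the two $\Sigma^2$ cells arising from the two ways around the commuting square cancel. The remaining whiskering/pasting bookkeeping is exactly what the paper also dismisses as a routine calculation.
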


\begin{proof}
We use the last point in Lemma~\ref{lemma:bundlegerbemorphisms}, so that the upward-pointing vertical stable isomorphisms in the mate are those associated to the upward-pointing isomorphisms in the last diagram. Thus we only need to know that the 2-arrow in the mate diagram is the identity 2-arrow.
This is a routine calculation with pasting diagrams for bicategories, together with the following fact: the data of the coherence isomorphism $\Sigma^2_{(\varphi,f),(\psi,g)}$ only depends on the composite $(\varphi,f)\circ(\psi,g)$, and not on the individual factors, and hence $\Sigma^2_{(\phi_2,f_2),(\alpha,g)} (\Sigma^2_{(\beta,h),(\phi_1,f_1)})^{-1}$ is an identity 2-arrow.
\end{proof}

We also need to consider an analogue of Construction \ref{constr:morphism_to_stable_iso} when one has bundle gerbes with connection and curving. Following \cite[Definition 6.6]{Wal}, a stable isomorphism $(L,\tau)$ between bundle gerbes $(E,Y;\nabla_E,b_E)$ and $(F,Z;\nabla_F,b_F)$ (on $X$) with connections and curvings has a \emph{compatible connection} if there is a connection $\nabla_L$ on $L\to Y\times_X Z$, the isomorphism $\tau$ is connection-preserving for the induced connection on $\pr_{12}^*E^*\otimes\pr_{34}^*F$, and the curvature of $\nabla_L$ is \emph{equal} to $\pr_2^*b_F - \pr_1^*b_E \in \Omega^2(Y\times_X Z)$.
Here we will just say that the stable isomorphism \emph{has a connection}, since we have no need to discuss non-compatible connections.

\begin{construction}\label{constr:morphism_to_stable_iso_w_conn_curv}
We start with the basic details as in Construction \ref{constr:morphism_to_stable_iso}, in particular the stable isomorphism $\Sigma(\varphi,f)$.
Suppose given bundle gerbe connections $\nabla_E$ and $\nabla_F$ on $(E,Y)$ and $(F,Z)$ respectively, and a 1-form $a$ on $Y$ such that $\pi^*\delta(a) = \nabla_E - \varphi^*(f^{[2]})^*\nabla_F$.
We need to give a connection on $(f\times\id)^*F$ and show that the isomorphism $\tau_f$ is connection-preserving.

However, defining the connection on $(f\times\id)^*F$ to be $(f\times\id)^*\nabla_F - \pi^* \pr_1^* a$, the composite isomorphism in Construction \ref{constr:morphism_to_stable_iso} is connection-preserving, by the assumption on $a$ and $\varphi$, and because bundle gerbe multiplication preserves connections.

Now assume that there are also curvings $b_E$ and $b_F$ on $(E,Y)$ and $(F,Z)$ respectively, satisfying $b_E - f^*b_F = da$.
Then the curvature of $(f\times\id)^*\nabla_F - \pi^* \pr_1^* a$ can be easily checked to be equal to $\pr_2^*b_F - \pr_1^*b_E$, so we have a compatible connection on $\Sigma(L,\tau)$.
\end{construction}

Further, we will need the analogue of Lemma~\ref{lemma:bundlegerbemorphisms} with connections and curvings.
Following \cite[Proposition 1.13]{Waldorf_07}, the adjoint inverse of what amounts to a stable isomorphism with connection is constructed---for Waldorf's definition of bundle gerbe morphism. 
That this should work for the definition of stable morphism here won't be an issue, we just need to record the construction of a special case.
Attentive readers might wonder what the definition of the bicategory of bundle gerbes with connections and curvings, stable isomorphisms with (compatible) connections and 2-isomorphisms is. We take it to be the analogue of Stevenson's bicategory of bundle gerbes, only now equipped with connections and curvings as outlined here. 
That this should form a bicategory (with a forgetful 2-functor to Stevenson's bicategory) will follow by considering the descent problems in \cite[Chapter 3]{StevPhD} that arise when forming composites as consisting of $U(1)$-bundles equipped with connections.

We can now give the version of Lemma~\ref{lemma:bundlegerbemorphisms} with connections and curvings, where a morphism of bundle gerbes with connections and curvings means a morphism of the underlying bundle gerbes such that that the connection and curving on the domain are given by pullback from the codomain. We will only use the part of Lemma~\ref{lemma:bundlegerbemorphisms} for stable morphisms arising from plain morphisms.

\begin{lemma}\label{lemma:bundlegerbemorphisms_w_conn_curv}
Let $(E,Y)$ and $(F,Z)$ be bundle gerbes on $X$, with connections $\nabla_E$ and $\nabla_F$ and curvings $b_E$ and $b_F$. 
Suppose we have a morphism of bundle gerbes $(\varphi,f)\colon (E,Y)\to (F,Z)$, such that $\nabla_E=\varphi^*(f^{[2]})^*\nabla_F$ (this means that the connection on the stable isomorphism is just $(f\times \id)^*\nabla_F$), and that $f^*b_F = b_E$.
Then the adjoint inverse of the stable isomorphism $\Sigma(\varphi,f)$ with its connection is given by the adjoint inverse of the underlying stable isomorphism (from Lemma \ref{lemma:bundlegerbemorphisms}, together with the connection\footnote{recall that the underlying space of the dual bundle $F^*$ is the same as that of $F$, only equipped with the action via inversion of $U(1)$. The `dual connection' on $F^*$ is therefore just $-\nabla_F$.} $-(\id\times f)^*\nabla_F$.
\end{lemma}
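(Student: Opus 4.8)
The plan is to reduce the statement to the underlying (connection-free) Lemma \ref{lemma:bundlegerbemorphisms} together with a verification that the candidate connection on the dual trivialising bundle is compatible in the sense of \cite[Definition 6.6]{Wal}. First I would recall the setup: by Construction \ref{constr:morphism_to_stable_iso_w_conn_curv}, under the hypothesis $\nabla_E = \varphi^*(f^{[2]})^*\nabla_F$ the trivialising bundle for $\Sigma(\varphi,f)$ is $(f\times\id)^*F \to Y\times_M X$ and its connection is simply $(f\times\id)^*\nabla_F$ (the correction term $\pi^*\pr_1^*a$ vanishes since $a$ may be taken to be $0$), and its curvature is $\pr_2^*b_F - \pr_1^*b_E$, which by $f^*b_F = b_E$ equals $\pr_2^*b_F - \pr_1^*f^*b_F$. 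Likewise the curving datum $da$ is $0$, consistent with $f^*b_F = b_E$. So the stable isomorphism with connection is entirely pinned down by pullback data from $(F,X)$.

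Next I would invoke Lemma \ref{lemma:bundlegerbemorphisms}: the adjoint inverse of $\Sigma(\varphi,f)$ as a plain stable isomorphism is $((\id\times f)^*F^*, \sigma^*(\tau_f)^*)\colon (F,X)\rightsquigarrow (E,Y)$, where $\sigma\colon X\times_M Y\to Y\times_M X$ is the swap. The claim is then that equipping this with the connection $-(\id\times f)^*\nabla_F$ (the dual connection on $F^*$, pulled back along $\id\times f$) makes it an adjoint inverse \emph{in the bicategory of bundle gerbes with connections and curvings}. To check this, one verifies three things, all routine: (i) the dual connection is well-defined and the isomorphism $\sigma^*(\tau_f)^*$ is connection-preserving — this follows by dualising the fact that $\tau_f$ is connection-preserving, since dualisation is a connection-preserving operation and $\sigma^*$ is a pullback; (ii) the curvature of $-(\id\times f)^*\nabla_F$ on $(\id\times f)^*F^*$ equals $\pr_2^*(\text{curving on }(E,Y)) - \pr_1^*(\text{curving on }(F,X))$, i.e.\ $\pr_2^*b_E - \pr_1^*b_F$ read over $X\times_M Y$; since the curvature of $(\id\times f)^*\nabla_F$ is $\pr_2^*f^*b_F - \pr_1^*b_F = \pr_2^*b_E - \pr_1^*b_F$ (using $f^*b_F = b_E$), negating gives $-\pr_2^*b_E + \pr_1^*b_F$, and one must be careful that the roles of the two factors swap under duality of the trivialised gerbe $(F^*,X)\otimes(E,Y)$, so the sign works out to the required compatibility condition; (iii) the unit and counit 2-isomorphisms exhibiting the adjunction, which exist at the level of plain stable isomorphisms by Lemma \ref{lemma:bundlegerbemorphisms}, are in fact connection-preserving 2-isomorphisms, hence valid 2-arrows in the enriched bicategory — this holds because the forgetful 2-functor to Stevenson's bicategory is faithful on 2-cells and the relevant 2-cells are built from canonical (connection-preserving) isomorphisms of $U(1)$-bundles with connection.

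The main obstacle — really the only thing requiring care rather than bookkeeping — is the sign and variance check in point (ii): keeping straight that the adjoint inverse lives over $X\times_M Y$ rather than $Y\times_M X$, that the trivialised gerbe whose section it is has $E$ and $F$ in swapped tensor positions relative to $\Sigma(\varphi,f)$, and that the dual connection contributes an overall minus sign, so that the net effect is precisely the curvature condition $\pr_2^*b_E - \pr_1^*b_F$ demanded by \cite[Definition 6.6]{Wal}. Once the conventions from Construction \ref{constr:morphism_to_stable_iso_w_conn_curv} and Lemma \ref{lemma:bundlegerbemorphisms} are fixed, this is a short computation. Accordingly the proof is:

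\begin{proof}
By Lemma \ref{lemma:bundlegerbemorphisms} the adjoint inverse of the underlying stable isomorphism of $\Sigma(\varphi,f)$ is $((\id\times f)^*F^*,\sigma^*(\tau_f)^*)$, and the unit and counit exhibiting the adjunction are canonical isomorphisms of $U(1)$-bundles. By Construction \ref{constr:morphism_to_stable_iso_w_conn_curv}, under the hypotheses $\nabla_E=\varphi^*(f^{[2]})^*\nabla_F$ and $f^*b_F=b_E$ we may take the $1$-form $a$ to be identically zero, so the connection on the trivialising bundle $(f\times\id)^*F$ of $\Sigma(\varphi,f)$ is $(f\times\id)^*\nabla_F$ and the isomorphism $\tau_f$ is connection-preserving; its curvature is $\pr_2^*b_F-\pr_1^*b_E$. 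Equip $(\id\times f)^*F^*$ with the connection $-(\id\times f)^*\nabla_F$ (the dual connection pulled back along $\id\times f$). Dualising the fact that $\tau_f$ preserves connections, and pulling back along $\sigma$, shows that $\sigma^*(\tau_f)^*$ preserves connections. The curvature of $-(\id\times f)^*\nabla_F$ on $(\id\times f)^*F^*\to X\times_M Y$ is $-\big(\pr_2^*f^*b_F-\pr_1^*b_F\big)=\pr_1^*b_F-\pr_2^*b_E$, which, since $(\id\times f)^*F^*$ trivialises $(F^*,X)\otimes(E,Y)$ with the two factors in swapped order relative to $\Sigma(\varphi,f)$, is exactly the compatibility condition of \cite[Definition 6.6]{Wal}. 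Finally, the unit and counit $2$-isomorphisms from Lemma \ref{lemma:bundlegerbemorphisms} are built from canonical isomorphisms of $U(1)$-bundles and are therefore connection-preserving, hence are $2$-cells in the bicategory of bundle gerbes with connections and curvings. Thus $\big((\id\times f)^*F^*,\sigma^*(\tau_f)^*;-(\id\times f)^*\nabla_F\big)$ is an adjoint inverse of $\Sigma(\varphi,f)$ with its connection.
\end{proof}
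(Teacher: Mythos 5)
Your overall route is the same as the paper's: reduce to the statement without connections (Lemma \ref{lemma:bundlegerbemorphisms}), observe that the triangle identities are a property rather than extra structure, and check that the unit and counit 2-cells are connection-preserving. The paper in fact dispenses with your step (ii) entirely by noting that the compatibility condition for 2-arrows does not involve the curvings at all. The problem is that the one step carrying real content --- your (iii) --- is justified by a non sequitur: ``built from canonical isomorphisms of $U(1)$-bundles and are therefore connection-preserving.'' A canonical isomorphism of $U(1)$-bundles is not automatically connection-preserving for a given pair of connections; something must be said about \emph{which} isomorphisms these are and \emph{why} each respects the connections at hand. The correct reason (and the entire content of the paper's proof) is that the unit/counit isomorphisms are assembled from the bundle gerbe multiplication of $(F,X)$ (together with the canonical flat pairing $L\otimes L^*\simeq \underline{U(1)}$), and the multiplication isomorphism $d_0^*F\otimes d_2^*F\to d_1^*F$ preserves the induced connections precisely because $\nabla_F$ is a \emph{bundle gerbe} connection, i.e.\ $m^*\delta\nabla_F=0$. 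Without invoking that defining property your argument does not close.

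Secondly, in your step (ii) the sign does not work out as written: you compute the curvature of $-(\id\times f)^*\nabla_F$ to be $\pr_1^*b_F-\pr_2^*b_E$, whereas the compatibility condition for a stable isomorphism $(F,X)\rightsquigarrow(E,Y)$ with bundle over $X\times_M Y$ demands $\pr_2^*b_E-\pr_1^*b_F$; appealing to ``the two factors in swapped order'' does not change the sign of a 2-form. To make this check honest one should compute the curvature of the connection on $\sigma^*L^*$, i.e.\ pull back along $(f\times\id)\circ\sigma\colon (x,y)\mapsto (f(y),x)$ rather than along $(\id\times f)\colon (x,y)\mapsto (x,f(y))$ --- the two differ by the swap on $X^{[2]}$, which exchanges the roles of $\pr_1$ and $\pr_2$ and produces exactly the missing sign once the canonical identification $F^*_{(x_1,x_2)}\simeq F_{(x_2,x_1)}$ is taken into account. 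Alternatively, follow the paper and observe that this verification is not needed for the adjunction 2-cells in the first place.
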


\begin{proof}
Note that the compatibility condition of 2-arrows does not involve the curvings at all.
This proof only needs to check that the unit (and hence counit) of the adjunction consists of a connection-preserving isomorphism of bundles. 
But then the triangle identities hold automatically as this is just a property, not structure.
However, the isomorphism of $U(1)$-bundles underlying the 2-arrow, given on \cite[page 33]{StevPhD}, is built using bundle gerbe multiplication on $(F,Z)$ and so preserves connections.
\end{proof}

As a corollary, the analogue of Lemma~\ref{lemma:mate_using_isos} will hold using isomorphisms of bundle gerbes \emph{with connection and curving}.
In particular, it will hold in the case where the square of bundle gerbes is induced from a square of maps between surjective submersions $Y_i\to X$,
\[
	\xymatrix{
		&Y_3 \ar[r] \ar[dl] \ar[drr]|!{[r];[d]}{\hole} & Y_1 \ar[dl] \ar[dr]\\
		Y_2 \ar[r] & Y_0 \ar[rr]&& X\,,\\
	}
\]
and a bundle gerbe $\cE = (E,Y_0)$, and the connections and curvings on them all are given by pullback from a connection and curving on $\cE$.
This means that although we start from a strictly commuting square, the $U(1)$-bundle isomorphism underlying the nontrivial 2-arrow that is the mate of the identity 2-arrow will be a connection-preserving isomorphism.

\subsection{Associated bundle 2-gerbes}

The following definition is adapted from Definitions~7.1 and 7.3 in \cite{StevPhD}. 
Note that here we say `bundle 2-gerbe' for what \emph{loc.\ cit.} calls a `stable' bundle 2-gerbe.

\begin{definition}\label{def:bundle2gerbe}
A \emph{simplicial bundle gerbe} on a semisimplicial manifold $B_\bullet$ consists of the following:
\begin{enumerate}
\item A bundle gerbe $\mathcal{E}$ on $B_1$;
\item A bundle gerbe stable isomorphism $\overline{m}\colon d_0^*\cE \otimes d_2^*\cE \to d_1^*\cE$ over $B_2$;
\item A 2-arrow $\tilde \alpha$, the \emph{associator}, between stable isomorphisms over $B_3$ as in the following diagram: 
\[
	\begin{gathered}
	\xymatrix{
	\cE_{23}\otimes \cE_{12}\otimes \cE_{01} \ar[d]_{1\otimes d_3^*\overline{m}} \ar[r]^-{d_0^*\overline{m}\otimes 1} & 
	\cE_{13}\otimes \cE_{01} \ar[d]_{\ }="s"^-{d_2^*\overline{m}}
	\\
	\cE_{23}\otimes \cE_{02} \ar[r]^{\ }="t"_-{d_1^*\overline{m}}
	 & \cE_{03}
	\ar@{=>}"s";"t"_{\tilde \alpha}
	}
	\end{gathered}
	\quad :=
	\begin{gathered}
\xymatrix@C=1em{
			d_1^*d_0^*\cE\otimes d_3^*(d_0^*\cE\otimes d_2^*\cE) 
			\ar[d]^{1\otimes d_3^*\overline{m}}
			\ar@{=}[r] 
			& 
			d_0^*(d_0^*\cE\otimes d_2^*\cE)\otimes d_2^*d_2^*\cE 
			\ar[d]_{d_0^*\overline{m}\otimes 1} 
			\\
			d_1^*d_0^*\cE\otimes d_3^* d_1^*\cE 
			\ar@{=}[d]
			& 
			d_0^*d_1^*\cE\otimes d_2^*d_2^*\cE
			\ar@{=}[d]
			\ar@{=>}[]!DL;[dl]!UR_{\tilde \alpha}
			\\
			d_1^*(d_0^*\cE\otimes d_2^*\cE)
			\ar[d]_{d_1^*\overline{m}}
			&
			d_2^*(d_0^*\cE \otimes d_2^*\cE)
			\ar[d]_{d_2^*\overline{m}}
			\\
	d_1^*d_1^*\cE \ar@{=}[r] &  d_2^*d_1^*\cE
	}
		\end{gathered}
\]
where $\cE_{ab}$ denotes the pullback of $\cE$ by the composite $B_3\to B_1$ of face maps induced by the map $(a,b)\to (0,1,2,3)$ in the simplex category (thinking of a semisimplicial manifold as a presheaf $\Delta^{op}_{\mathrm{inj}} \to \Mfld$).

\item Such that the following pair of 2-arrows over $B_4$ are equal,  where by $\cE_{ab}$ we mean the pullback of $\cE$ by the composite $B_4\to B_1$ of face maps induced by the inclusion $(a,b) \to (0,1,2,3,4)$ in the simplex category $\Delta_{\mathrm{inj}}$: 
\begin{align*}
	\xymatrix@-1em{
		& \cE_{34}\otimes \cE_{23}\otimes \cE_{12}\otimes \cE_{01} \ar[r] \ar[ddl]^(0.6){\ }="t1" & \cE_{24}\otimes \cE_{12}\otimes \cE_{01}  \ar[ddl]_{\ }="s1" \ar[ddr] \\
		\\
		\cE_{34}\otimes \cE_{23}\otimes \cE_{02}  \ar[r] \ar[ddr]^(0.6){\ }="t3" & \cE_{24}\otimes \cE_{02} \ar[ddr]^(0.55){\ }="t2"_(0.4){\ }="s3" && \cE_{14}\otimes \cE_{01} \ar[ddl]_(0.35){\ }="s2"
		\\
		\\
		& \cE_{34}\otimes \cE_{03} \ar[r] & \cE_{04}
		\ar@{=>}"s1";"t1"_{!\simeq}
		\ar@{=>}"s2";"t2"_{d_3^*\tilde \alpha}
		\ar@{=>}"s3";"t3"_{d_1^*\tilde \alpha}
	}
	\\
	\\
	\xymatrix@-1em{
		& \cE_{34}\otimes \cE_{23}\otimes \cE_{12}\otimes \cE_{01}  \ar[r] \ar[ddl] \ar[ddr]^(0.6){\ }="t2" & \cE_{24}\otimes \cE_{12}\otimes \cE_{01}   \ar[ddr]_(0.4){\ }="s2" \\
		\\
		\cE_{34}\otimes \cE_{23}\otimes \cE_{02}  \ar[ddr]^(0.4){\ }="t1" && 
		\cE_{34}\otimes \cE_{13}\otimes \cE_{01}  \ar[r] \ar[ddl]_(0.35){\ }="s1"^{\ }="t3" & \cE_{14}\otimes \cE_{01} \ar[ddl]_{\ }="s3"\\
		\\
		& \cE_{34}\otimes \cE_{03} \ar[r] & \cE_{04}
		\ar@{=>}"s1";"t1"_{\id\otimes d_4^*\tilde \alpha}
		\ar@{=>}"s2";"t2"_{d_0^*\tilde \alpha\otimes\id}
		\ar@{=>}"s3";"t3"_{d_2^*\tilde \alpha}
	}
\end{align*}
\end{enumerate}
A \emph{bundle 2-gerbe} is the special case where the semisimplicial manifold is that associated to a surjective submersion $Y\to X$, namely $Y^{[\bullet+1]}$.
A \emph{strict} bundle 2-gerbe is a bundle 2-gerbe where the stable isomorphism $\overline{m}$ arises from a plain morphism of bundle gerbes, and $\tilde \alpha$ is the identity 2-arrow.
\end{definition}

Our framework is defined specifically to avoid having to deal with the complexity of this definition, which, even as written, is hiding a lot of detail.

We note that given a map of semisimplicial manifolds $B'_\bullet \to B_\bullet$, and a simplicial bundle gerbe on $B_\bullet$, one can pull back to get a simplicial bundle gerbe on $B'_\bullet$.
It is by this process that the Chern--Simons bundle 2-gerbe is constructed (\cite[Lemma 5.4]{CJMSW}), whereby one considers the (strictly) multiplicative bundle gerbe arising from the \emph{basic gerbe} on $G$, and then pulls it back along the simplicial map $Q^{[\bullet+1]} \to NG_\bullet$.

The following construction is given at a reasonably high level, since it is intended for those who wish to see how a rigid bundle 2-gerbe gives rise to a bundle 2-gerbe, which we presume might be a minority of readers, namely those already familiar with bundle 2-gerbes.

\begin{construction}\label{constr:associated_b2g}
Consider a rigid bundle 2-gerbe on the manifold $X$, with surjective submersion $Y\to X$, semisimplicial submersion $Z_\bullet\to Y^{[\bullet+1]}$, bundle gerbe $(E,Z_1)$ on $Y^{[2]}$, and 2-gerbe multiplication $M$. 
The \emph{associated bundle 2-gerbe} is given by the following data

\begin{enumerate}

\item The bundle gerbe $\cE$ on $Y^{[2]}$ is just $(E,Z_1)$ as given;

\item The strong trivialisation of $(\delta_h(E),Z_2)$ gives an isomorphism of bundle gerbes
\[
	(d_0^*E\otimes d_2^*E,Z_2) \simeq (d_1^*E,Z_2)
\] 
which, if we denote the left bundle  gerbe by $\cE_{012}$, gives a span of morphisms of bundle gerbes 
\[
	\cE_{12}\otimes \cE_{01} \leftarrow \cE_{012} \to \cE_{02}
\] 
where $\cE_{12} = d_0^*\cE$, and so on, a notation which is consistent with the notation in Definition~\ref{def:bundle2gerbe}. 
The left-pointing morphism $\cE_{012}\to \cE_{12}\otimes \cE_{01}$ has adjoint inverse stable isomorphism $\cE_{12}\otimes \cE_{01} \rightsquigarrow \cE_{012} $ as in Lemma~\ref{lemma:bundlegerbemorphisms}. 
The bundle 2-gerbe multiplication $\overline{m}\colon \cE_{12}\otimes \cE_{01} \rightsquigarrow\cE_{02}$ is the composition of this adjoint inverse with the stable isomorphism associated to $\cE_{012} \to \cE_{02}$.

\item We need to define the associator, and here is where we will use mates. 
First, we consider the induced maps filling the diagram\footnote{You should think of this diagram as being indexed by the face lattice of a square.} of (domains of) submersions to $Y^{[4]}$:
\[
	\xymatrix{
		d_0^*d_0^*Z_1 \times_{Y^{[4]}} d_3^*d_0^*Z_1\times_{Y^{[4]}} d_3^*d_2^*Z_1 & \ar[l]d_0^*Z_2\times_{Y^{[4]}} d_3^*d_2^*Z_1 \ar[r]& d_0^*d_1^* Z_1\times_{Y^{[4]}}d_3^*d_2^*Z_1 \\
		\ar[u] d_0^* d_0^* Z_1 \times_{Y^{[4]}} d_3^*Z_2 \ar[d] & \ar[l] \ar[u] Z_3 \ar[r] \ar[d] & \ar[u] d_2^*Z_2 \ar[d] \\
		d_0^* d_0^* Z_1 \times_{Y^{[4]}} d_3^*d_1^*Z_1& \ar[l] d_1^*Z_2 \ar[r] & d_1^*d_1^* Z_1
		}
\]
where we have used some simplicial identities to make sure the domains and codomains match, and some squares commute.
The associativity condition $\delta_h(M)=1$ then allows us to define four commuting squares of morphisms of bundle gerbes on $Y^{[4]}$:
\begin{equation}\label{eq:pre_associator}
	\begin{gathered}
		\xymatrix{
				\cE_{23}\otimes \cE_{12}\otimes \cE_{01} & \ar@{~>}[l] \cE_{123}\otimes \cE_{01} \ar[r] & \cE_{13}\otimes \cE_{01}\\
				\ar@{~>}[u] \cE_{23}\otimes \cE_{012} \ar[d] & \ar@{~>}[l] \ar@{~>}[u]  \cE_{0123} \ar[r] \ar[d]& \ar@{~>}[u] \cE_{013}\ar[d] \\
				\cE_{23}\otimes \cE_{02} & \ar@{~>}[l] \cE_{023} \ar[r] & \cE_{03}
		}
		\end{gathered}
\end{equation}
where the bundle gerbe $\cE_{0123}$ has $U(1)$-bundle induced by pulling back the $U(1)$-bundle of $\cE_{23}\otimes \cE_{12}\otimes \cE_{01} $ along the composite $Z_3 \to d_0^*d_0^*Z_1 \times_{Y^{[4]}} d_3^*d_0^*Z_1\times_{Y^{[4]}} d_3^*d_2^*Z_1$ from the previous diagram.
In fact we can define everything in this diagram without using associativity of $M'$, except the commutativity of the lower right square, which follows from condition (5)(b') in Proposition~\ref{prop:rb2g_mult_associativity_square} on $M'$ and the universal property of pullback of $U(1)$-bundles.
Now to get the associator, we consider the mates of the top right, bottom left, and top left squares using the adjoint inverses for the arrows denoted $\rightsquigarrow$, and then paste the resulting diagram of 2-arrows between stable isomorphisms.

\item Now we need to prove that this associator is coherent. The strategy is to build up a commuting diagram of bundle gerbes and morphisms on $Y^{[5]}$, of shape the face lattice of a cube, as displayed in Figure~\ref{fig:big_cube}. 
The arrows denoted $\rightsquigarrow$ are again considered as being equipped with their adjoint inverses as stable isomorphisms. 
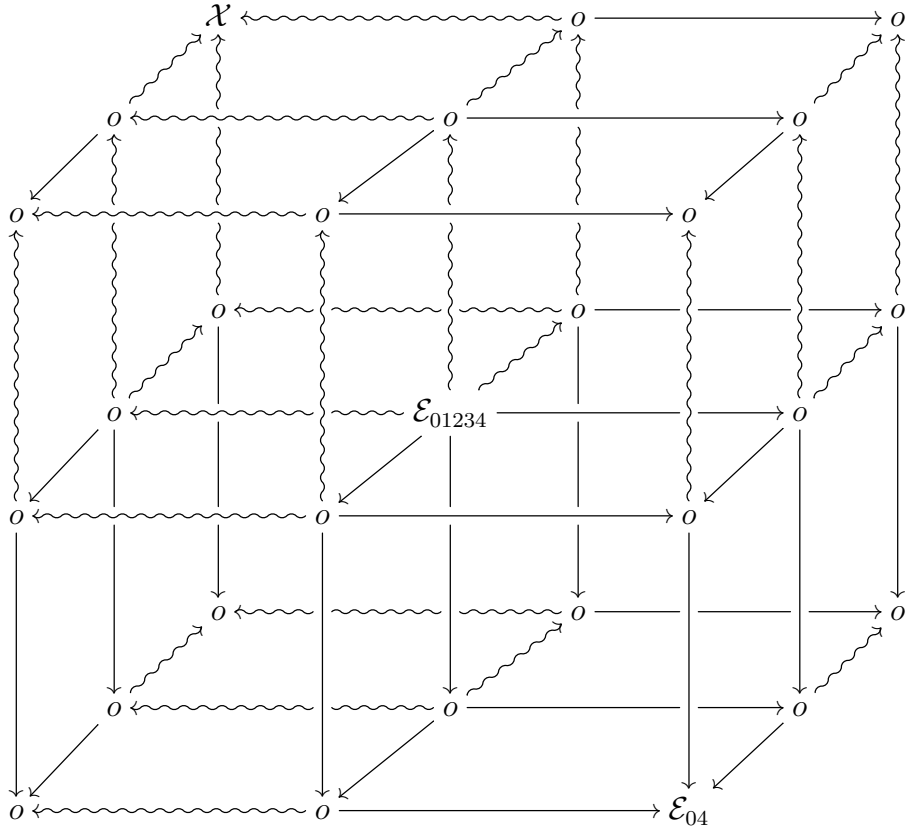
\begin{figure}
\[
	\xymatrix{   
		&& \mathcal{X} &&& \ar@{~>}[lll]_{} o\ar[rrr] &&& o\\
		& o \ar@{~>}[ur]^{} \ar[dl] &&& \ar@{~>}[lll]_{} o\ar@{~>}[ur]^{} \ar[dl] \ar[rrr] &&& 			o \ar@{~>}[ur]^{} \ar[dl]\\
		o &&& \ar@{~>}[lll]^{} o \ar[rrr] &&& o
		\\
		&& o \ar@{~>}[uuu]^{}|!{[ur];[ull]}{\hole}|!{[uurr];[uul]}{\hole} \ar[ddd]|!{[drr];[dl]}{\hole}|!{[ddr];[ddll]}{\hole}&&& 
			\ar@{~>}[lll]^{}|!{[ull];[ul]}{\hole}|!{[ddll];[ull]}{\hole} o \ar@{~>}^{}|!{[ull];[ul]}{\hole}|!{[uurr];[uul]}{\hole}[uuu]   \ar[rrr]|!{[ddr];[ur]}{\hole}|!{[drr];[uurr]}{\hole} \ar[ddd]|!{[dl];[drr]}{\hole}|!{[ddll];[ddr]}{\hole}	&&& 
			o \ar@{~>}[uuu]^{} \ar[ddd]\\
		& o \ar@{~>}[uuu]^{}|!{[uurr];[uul]}{\hole} \ar@{~>}[ur]^{} \ar[dl] \ar[ddd]|!{[drr];[dl]}{\hole}&&& 
			\ar@{~>}[lll]^{}|!{[dl];[uul]}{\hole} \cE_{01234}				\ar@{~>}[ur]^{} \ar@{~>}[uuu]^{}|!{[uul];[uurr]}{\hole} \ar[dl] \ar[rrr]|!{[drr];[uurr]}{\hole} \ar[ddd]|!{[dl];[drr]}{\hole} &&& 
			o \ar@{~>}[ur]^{}\ar@{~>}[uuu]^{} \ar[dl] \ar[ddd]\\
		o \ar@{~>}[uuu]^{} \ar[ddd] &&& \ar@{~>}[lll]^{}o \ar@{~>}[uuu]^{}\ar[ddd] \ar[rrr] &&			& o \ar@{~>}[uuu]^{} \ar[ddd]
		\\
		&& o &&& \ar@{~>}[lll]^{}|!{[uul];[dl]}{\hole}|!{[ull];[ddll]}{\hole} o\ar[rrr]|!{[ur];[ddr]}{\hole}|!{[uurr];[drr]}{\hole} &&& o\\
		& o \ar@{~>}[ur]^{} \ar[dl] &&& \ar@{~>}[lll]^{}|!{[uul];[dl]}{\hole} o\ar@{~>}[ur]^{} \ar[dl] \ar[rrr]|!{[uurr];[drr]}{\hole} &&& o \ar@{~>}[ur]_{} \ar[dl]\\
		o &&& \ar@{~>}[lll]_{} o \ar[rrr] &&& \cE_{04}\\
	}
\]
\caption{Diagram of bundle gerbes and morphisms over $Y^{[5]}$. The objects with placeholders marked $o$ are specified in Figure~\ref{fig:cut open associator cube}.}
\label{fig:big_cube}
\end{figure}

Then taking mates involving all such adjoint inverses, pasting, and using the result that pasting preserves mates, the resulting boundary cube will 2-commute as needed.
Two specific objects have been named, to help orient the reader, and the object labelled $\mathcal{X}$ is in fact $\cE_{34}\otimes\cE_{23}\otimes\cE_{12}\otimes \cE_{01}$. The cube should be considered as encoding associator coherence for the various ways of performing iterated bundle 2-gerbe multiplication to get a stable isomorphism $\mathcal{X} = \cE_{34}\otimes\cE_{23}\otimes\cE_{12}\otimes \cE_{01} \rightsquigarrow \cE_{04}$.
The bundle gerbe $\cE_{1234}$ uses the submersion $Z_4\to Y^{[5]}$, and its $U(1)$-bundle is defined to make the top-left-back cube commute.

In Figure~\ref{fig:cut open associator cube} we display the outer faces in two pieces, as in the definition of coherence for the associator in Definition~\ref{def:bundle2gerbe}, and describe the inner structure seperately. 

Since the cubical diagram in the 1-category of bundle gerbes and morphisms commutes on the nose, we have that the image of this diagram, up to insertion of associativity isomorphisms, commutes in the bigroupoid of bundle gerbes and stable isomorphisms. Coherence for bicategories ensures that the squares 2-commute when associator isomorphisms are added back in.
Then by the compatibility of pasting and mates, the eight 2-commuting small cubes still 2-commute when passing to the corresponding diagram with the mates.
Then the outer cube 2-commutes, which means that the associator satisfies the required coherence condition.

\begin{figure}
\noindent Back-right-bottom faces:
{\scriptsize
\[
	\centerline{
	\xymatrix@C=1em{
		&& \cE_{34}\otimes\cE_{23}\otimes\cE_{12}\otimes \cE_{01}  & \ar@{~>}[l] \cE_{234}\otimes\cE_{12}\otimes \cE_{01} \ar[r] & \cE_{24}\otimes\cE_{12}\otimes \cE_{01} \\
		& \ar[dl] \cE_{34}\otimes \cE_{23}\otimes \cE_{012} \ar@{~>}[ur] & \ar@{~>}[l] \ar[dl] \ar@{~>}[ur] \ar[r] \cE_{234}\otimes \cE_{012} & \ar[dl] \ar@{~>}[ur] \cE_{24}\otimes \cE_{012}  && \ar@{~>}[ul] \cE_{124}\otimes \cE_{01} \ar[dr]\\
		\cE_{34}\otimes\cE_{23}\otimes \cE_{02} & \ar@{~>}[l]  \cE_{234}\otimes\cE_{02} \ar[r]  & \cE_{24}\otimes\cE_{02} && \ar@{~>}[ul] \ar[dl] \cE_{0124} \ar@{~>}[ur] \ar[dr] && \cE_{14}\otimes \cE_{01}\\
		& \ar@{~>}[ul] \cE_{34}\otimes\cE_{023} \ar[dr] &\ar@{~>}[l] \ar@{~>}[ul] \cE_{0234} \ar[r] \ar[dr] & \ar@{~>}[ul] \cE_{024} \ar[dr] && \ar[dl] \cE_{014} \ar@{~>}[ur]\\
		&& \cE_{34}\otimes\cE_{03} & \ar@{~>}[l] \cE_{034} \ar[r] & \cE_{04} 
	}
	}
\]}

\noindent Front-left-top faces:
{\scriptsize
\[
	\centerline{
	\xymatrix@C=1em{
		&& \cE_{34}\otimes\cE_{23}\otimes\cE_{12}\otimes \cE_{01} & \ar@{~>}[l] \cE_{234}\otimes\cE_{12}\otimes \cE_{01} \ar[r] & \cE_{24}\otimes\cE_{12}\otimes \cE_{01} \\
		& \ar[dl] \cE_{34}\otimes \cE_{23}\otimes \cE_{012} \ar@{~>}[ur] &&  \ar@{~>}[ul] \cE_{34}\otimes \cE_{123}\otimes \cE_{01} \ar[dr] & \ar@{~>}[l] \ar@{~>}[ul] \cE_{1234}\otimes \cE_{01} \ar[r] \ar[dr] & \ar@{~>}[ul] \cE_{124}\otimes \cE_{01} \ar[dr]\\
		\cE_{34}\otimes\cE_{23}\otimes \cE_{02} && \ar@{~>}[ul] \ar[dl] \cE_{34}\otimes \cE_{0123} \ar@{~>}[ur] \ar[dr] && \cE_{34}\otimes \cE_{13}\otimes \cE_{01} & \ar@{~>}[l] \cE_{134}\otimes \cE_{01} \ar[r] & \cE_{14}\otimes \cE_{01}\\
		& \ar@{~>}[ul] \cE_{34}\otimes\cE_{023} \ar[dr] && \ar[dl] \cE_{34}\otimes \cE_{013} \ar@{~>}[ur] & \ar@{~>}[l] \ar[dl] \cE_{0134} \ar@{~>}[ur] \ar[r] & \ar[dl] \cE_{014} \ar@{~>}[ur]\\
		&& \cE_{34}\otimes\cE_{03} & \ar@{~>}[l] \cE_{034} \ar[r] & \cE_{04} 
	}
	}
\]}\medskip

\caption{Encoding of associator coherence from rigid 2-gerbe data, giving the outer faces of Figure \ref{fig:big_cube}}\label{fig:cut open associator cube}
\end{figure}

\end{enumerate}

\noindent
This completes the construction of the associated bundle 2-gerbe.

\end{construction}

Now on top of this, we can also consider what happens to the connective structure on a rigid bundle 2-gerbe, once we form the associated bundle 2-gerbe. Here, in fact, things are relatively much easier, if we only work with \cite[Definition 8.1]{Ste}.
This is because the connection data (connection, curving, 2-curving) on a bundle 2-gerbe is actually insensitive to the specifics of the bundle 2-gerbe multiplication and its coherence; all that is needed is that the 3-form curvature $H$ of the bundle gerbe on $Y^{[2]}$ satisfies\footnote{The existence of such 3-form curvature such that $\delta(H)$ is \emph{exact} is immediate after we know that \emph{any} bundle 2-gerbe multiplication exists, coherent or not.
That one can improve the curving on the bundle gerbe to get $\delta(H)=0$ on the nose is the hard part of \cite[Chapter 10]{StevPhD}.} $\delta(H)=0$.
The proof of the following proposition is immediate.

\begin{proposition}\label{prop:associated_connective_structure}
Consider a rigid bundle 2-gerbe on the manifold $X$, with surjective submersion $Y\to X$, semisimplicial submersion $Z_\bullet\to Y^{[\bullet+1]}$, and bundle gerbe $(E,Z_1)$ on $Y^{[2]}$. 
Now assume also that the rigid bundle 2-gerbe has a connective structure $(\nabla_E,\beta,\lambda)$. 
The associated bundle 2-gerbe then has a connection $\nabla_E$, curving $\beta$, and 2-curving $\lambda$ in the sense of \cite[Definition 8.1]{Ste}, all of which are exactly the same pieces of data as for the rigid bundle 2-gerbe.
\end{proposition}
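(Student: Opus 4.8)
The plan is to unwind the definitions and observe that the conditions (1)--(3) of a rigid connective structure (Definition~\ref{def:conn_structure}) are \emph{verbatim} the conditions (i)--(iii) imposed on a connection, curving and 2-curving of a bundle 2-gerbe in the sense of \cite[Definition 8.1]{Ste}, once one strips away the coherence data of the bundle 2-gerbe multiplication. Concretely, I would first recall that the associated bundle 2-gerbe $w(\cG)$ (Construction~\ref{constr:associated_b2g}) has as its underlying bundle gerbe on $Y^{[2]}$ the very same $(E,Z_1)$ that appears in the rigid data, so a bundle gerbe connection $\nabla_E$ on it is literally the same object in both settings. The condition $\delta_v(\nabla_E)=0$ is precisely the statement that $\nabla_E$ is a bundle gerbe connection, and this is condition (1) of Definition~\ref{def:conn_structure}.

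Next I would check the curving. A curving for the bundle 2-gerbe in the sense of \cite[Definition 8.1]{Ste} is a 2-form on $Z_1$ (or more precisely on the total space of the submersion defining the bundle gerbe on $Y^{[2]}$) whose $\delta_v$ equals the curvature $\curv(\nabla_E)$; that is exactly condition (2), $\delta_v(\beta)=\curv(\nabla_E)$. Likewise, the 2-curving in \emph{loc.\ cit.} is a 3-form $\lambda$ on $Y$ such that $d\beta$ descends to $\delta_h(\lambda)$ pulled back appropriately, which is condition (3), $\pi^*\delta_h(\lambda)=d\beta$. The only genuine content to verify is that these conditions make sense in the weak bundle 2-gerbe, i.e.\ that the 3-form curvature $H$ (here $=d\beta$ descended, i.e.\ $\lambda_{1,0}$ in the notation of the proof of Proposition~\ref{prop:conn_exist}) automatically satisfies $\delta_h(H)=0$. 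But this was already established in step (8) of that proof: $\pi^*\delta_h(\lambda_{1,0})=d(\delta_h\beta_{1,1})=d(d\alpha_{2,1})=0$, and injectivity of $\pi^*$ gives $\delta_h(H)=0$. So the existence of the rigid connective structure hands us, for free, a triple satisfying the hypotheses needed for the weak definition.

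The last point to make explicit is that the connective structure data on the weak bundle 2-gerbe does not interact with the bundle 2-gerbe multiplication $\overline m$ or the associator $\tilde\alpha$ of $w(\cG)$: as noted in the paragraph preceding the proposition, the connection/curving/2-curving in the sense of \cite[Definition 8.1]{Ste} is insensitive to the specifics of the multiplication and its coherence, requiring only $\delta_h(H)=0$. Hence no compatibility with the (weak) multiplication stable isomorphism needs to be checked, and the triple $(\nabla_E,\beta,\lambda)$ \emph{is} a connective structure on $w(\cG)$.

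I do not expect a real obstacle here; the statement is essentially a matter of matching definitions, and the one substantive fact ($\delta_h(H)=0$) is borrowed wholesale from the proof of Proposition~\ref{prop:conn_exist}. The only mild subtlety, worth a sentence in the write-up, is bookkeeping the submersions: in \cite{Ste} the bundle gerbe on $Y^{[2]}$ comes with \emph{its own} surjective submersion, which here is $Z_1\to Y^{[2]}$, so that the curving lives on $Z_1$ (not on $Y^{[2]}$) and the $\pi^*$ in condition (3) is exactly the pullback along $Z_1\to Y^{[2]}$ composed with the face maps; once this identification is made, conditions (1)--(3) and (i)--(iii) coincide on the nose, and the proof is, as stated in the excerpt, immediate.
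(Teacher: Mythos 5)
Your proposal is correct and matches the paper's approach: the paper simply declares the proof immediate, on the grounds you spell out, namely that the connection, curving and 2-curving in the sense of \cite[Definition 8.1]{Ste} are insensitive to the bundle 2-gerbe multiplication and its coherence, and the rigid connective structure hands over exactly the required data on the unchanged bundle gerbe $(E,Z_1)$. One small simplification: you do not need to borrow $\delta_h(H)=0$ from step (8) of the proof of Proposition~\ref{prop:conn_exist} (which uses the \emph{rigid} data $\alpha_{2,1}$); condition (3) of Definition~\ref{def:conn_structure} already gives $H=\delta_h(\lambda)$, so $\delta_h(H)=\delta_h^2(\lambda)=0$ for any connective structure, rigid or not.
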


Note that here we are not saying what happens to the data $\alpha\in \Omega^1(Z_2)$ in a \emph{rigid} connective structure.
One could in fact include this information, if presenting a bundle 2-gerbe with connective structure as descent data valued in the bigroupoid of bundle gerbes equipped with connections and curvings (cf \cite{Nikolaus-Schweigert_11}), as is done in \cite[Definition 4.1]{Wal}.
In fact, it seems\footnote{Konrad Waldorf, personal communication} that the definition of connection and curving on a bundle 2-gerbe in \cite{Ste} is not rigid enough, and that the proof that bundle 2-gerbes with connection, curving and 2-curving are classified by an appropriate Deligne cohomology group may fail. 
So we shall here also prove that the connective structure on the associated bundle 2-gerbe satisfies Waldorf's stronger definition, which we now give. 

\begin{definition}[\cite{Wal}]\label{def:Waldorf_connective_structure}
Assume given a bundle 2-gerbe, with connection $\nabla$, curving $\beta$ and 2-curving $\lambda$ as in \cite[Definition 8.1]{Ste}. Then we require the following conditions on $\nabla$ and $\beta$
\begin{enumerate}
\item There is a connection $\nabla_{\overline{m}}$ on the $U(1)$-bundle encoding the stable isomorphism $\overline{m}$ on $Y^{[3]}$
\item The associator isomorphism $\alpha$ in Definition \ref{def:bundle2gerbe}, which is an isomorphism between certain $U(1)$-bundles, preserves the induced connections.
\end{enumerate}
\end{definition}

\begin{proposition}\label{prop:associated_Waldorf_connective_structure}
If we assume the connection $\nabla_E$ and curving $\beta$ in \ref{prop:associated_connective_structure} are part of a \emph{rigid} connective structure, with 1-form $\alpha$, then they satisfy the additional conditions in \ref{def:Waldorf_connective_structure}.
\end{proposition}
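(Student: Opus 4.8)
The plan is to re-run the relevant parts of Construction~\ref{constr:associated_b2g}, now carrying connections and curvings along at every stage; the point is that the $1$-form $\alpha$ of the rigid connective structure supplies exactly the data needed to promote each stable isomorphism (and each adjoint inverse) occurring there to one equipped with a compatible connection in the sense recalled before Construction~\ref{constr:morphism_to_stable_iso_w_conn_curv}, while the rigidity condition $\delta_h(\alpha)=0$ of Definition~\ref{def:conn_structure}(6) ensures that the mate-and-paste procedure producing the associator stays within connection-preserving $2$-arrows. Throughout, every bundle gerbe appearing in Construction~\ref{constr:associated_b2g} is given the connection and curving pulled back from $\nabla_E$ and $\beta$ along the relevant face maps, so that all the ``identity-type'' bundle gerbe morphisms there (those not built from $M$) are connection- and curving-preserving tautologically.

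\emph{Condition (1).} Recall that $\overline{m}$ is the composite of the adjoint inverse of the stable isomorphism associated to the morphism $\cE_{012}\to\cE_{12}\otimes\cE_{01}$ with the stable isomorphism associated to the morphism $(\varphi,f)\colon\cE_{012}\to\cE_{02}$, where $\cE_{012}=(d_0^*E\otimes d_2^*E,Z_2)$ and $\varphi$ is the $U(1)$-bundle isomorphism $M'$ of Proposition~\ref{prop:rb2g_mult_associativity_square}. The first morphism is an identity on $U(1)$-bundles over the map $Z_2\to d_0^*Z_1\times_{Y^{[3]}}d_2^*Z_1$, so Construction~\ref{constr:morphism_to_stable_iso_w_conn_curv} with $a=0$ equips its associated stable isomorphism with a connection, and Lemma~\ref{lemma:bundlegerbemorphisms_w_conn_curv} then equips the adjoint inverse with a connection. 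For the second morphism I would feed Construction~\ref{constr:morphism_to_stable_iso_w_conn_curv} the $1$-form $a=\alpha\in\Omega^1(Z_2)$: the identity $\delta_v(\alpha)=M^*\delta_h(\nabla_E)$ is, up to the sign convention fixed in the proof of Proposition~\ref{prop:conn_exist}, precisely the requirement $\delta(a)=(d_0^*\nabla_E\otimes d_2^*\nabla_E)-\varphi^*(f^{[2]})^*(d_1^*\nabla_E)$ relating the two connections on $d_0^*E\otimes d_2^*E$, and $d\alpha=\delta_h(\beta)=d_0^*\beta+d_2^*\beta-d_1^*\beta$ is precisely the curving compatibility $(d_0^*\beta+d_2^*\beta)-f^*(d_1^*\beta)=da$. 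Hence that stable isomorphism acquires a compatible connection. Connections on stable isomorphisms compose, so composing the two yields the required connection $\nabla_{\overline{m}}$ on the $U(1)$-bundle encoding $\overline{m}$, which is Definition~\ref{def:Waldorf_connective_structure}(1).

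\emph{Condition (2).} The associator of $w(\cG)$ was obtained in Construction~\ref{constr:associated_b2g}(3) by installing the adjoint-inverse stable isomorphisms on the edges of the four commuting squares~\eqref{eq:pre_associator} of bundle gerbe morphisms over $Y^{[4]}$ marked $\rightsquigarrow$, taking mates, and pasting. To run this with connections, give each node of~\eqref{eq:pre_associator}---a tensor product of face-map pullbacks of $E$---the corresponding pullback of $\nabla_E$, and similarly with $\beta$; the identity-type edges then preserve connections and curvings tautologically, while the $M$-type edges (face-map pullbacks of $(M',f)$) become connection-preserving stable isomorphisms with compatible connection on feeding Construction~\ref{constr:morphism_to_stable_iso_w_conn_curv} the matching face-map pullbacks of $\alpha$ as the $1$-form $a$. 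Because $\delta_h(\alpha)=0$, the signed sum of these pullbacks of $\alpha$ around each of the four squares vanishes, so each square commutes as a square of bundle-gerbes-with-connection-and-curving. The adjoint inverses carry connections by Lemma~\ref{lemma:bundlegerbemorphisms_w_conn_curv}, and the connection-and-curving analogue of Lemma~\ref{lemma:mate_using_isos} stated as a corollary of Lemma~\ref{lemma:bundlegerbemorphisms_w_conn_curv} shows that the mate $2$-arrow of each such commuting square is a \emph{connection-preserving} $U(1)$-bundle isomorphism. Since pasting is compatible with mates and a composite of connection-preserving isomorphisms is connection-preserving, the associator $\tilde\alpha$ of $w(\cG)$ preserves the induced connections, which is Definition~\ref{def:Waldorf_connective_structure}(2).

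The hard part is organisational rather than conceptual: one must match, square by square and node by node in~\eqref{eq:pre_associator}, which face-map pullback of $\alpha$ (and of $\beta$) plays the role of the $1$-form $a$ (and of the curving difference) in each application of Construction~\ref{constr:morphism_to_stable_iso_w_conn_curv}, and then verify, using the pulled-back identities $\delta_v(\alpha)=M^*\delta_h(\nabla_E)$, $d\alpha=\delta_h(\beta)$ and $\delta_h(\alpha)=0$, that the squares commute with connections and curvings. This is exactly where rigidity of the connective structure is needed: without $\delta_h(\alpha)=0$ the squares would commute only up to a connection, and the mates would then fail to be connection-preserving.
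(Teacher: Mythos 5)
Your proposal is correct and follows essentially the same route as the paper's proof: use $\delta_v(\alpha)=M^*\delta_h(\nabla_E)$ and $d\alpha=\delta_h(\beta)$ to feed $\alpha$ into Construction~\ref{constr:morphism_to_stable_iso_w_conn_curv} (and Lemma~\ref{lemma:bundlegerbemorphisms_w_conn_curv} for the adjoint inverse) to get the compatible connection on $\overline{m}$, and use $\delta_h(\alpha)=0$ to make~\eqref{eq:pre_associator} a diagram of bundle gerbes with connection and curving so that the mates, and hence the pasted associator, are connection-preserving. The paper states this much more tersely, but the content is the same; your added node-by-node bookkeeping is a faithful expansion rather than a different argument.
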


\begin{proof}
Lemma~\ref{lemma:bundlegerbemorphisms_w_conn_curv} guarantees that the stable isomorphism $\overline{m}$ has a compatible connection, built using $\alpha$.
The identity $\delta_h(\alpha)=0$ ensures that the diagram (\ref{eq:pre_associator}) becomes a diagram of bundle gerbes with connection and curving, and morphisms compatible with these.
Further, since the associator is built from mates, and these mates are 2-arrows with data connection-preserving bundle isomorphisms, as per the discussion after Lemma~\ref{lemma:bundlegerbemorphisms_w_conn_curv}, then the associator preserves connections.
The coherence equation holds for free, since it is an equality of 2-arrows, which neither places or required extra constraints on the differential form data.
\end{proof}

\subsection{The proof that the Chern--Simons bundle 2-gerbe is rigidified}\label{subsec:CS2-gerbe_rigidified}

The following Proposition captures a feature that holds for the rigid Chern--Simons 2-gerbe, in a more abstract way.

\begin{proposition}\label{prop:general_strictification_setup}
Suppose we are given a rigid bundle 2-gerbe on $X$ with submersion $Y\to X$ and semisimplicial submersion $Z_\bullet\to Y^{[\bullet+1]}$.
\begin{enumerate}
 \item If the map $Z_2\to d_0^* Z_1\times_{Y^{[3]}} d_2^*Z_1$ over $Y^{[3]}$ induced by the universal properties of the pullbacks is an isomorphism, then in the span of morphisms of bundle gerbes giving rise to the associated bundle 2-gerbe multiplication, the left-pointing morphism is an \emph{isomorphism} of bundle gerbes.

 \item Suppose additionally that either the indued map $Z_3 \to d_1^*d_0^*Z_1 \times_{Y^{[4]}}d_3^*Z_2$, or the induced map $Z_3 \to d_0^*Z_2 \times_{Y^{[4]}} d_2^*d_2^* Z_1$, is an isomorphisms. Then the associator 2-arrow is an identity.
 \end{enumerate} 

\end{proposition}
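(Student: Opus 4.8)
The plan is to trace carefully through Construction~\ref{constr:associated_b2g}, exploiting the hypotheses to simplify at each stage. For part (1), recall that the bundle 2-gerbe multiplication $\overline{m}$ is built as the composite of the stable isomorphism associated to the right-pointing morphism $\cE_{012}\to\cE_{02}$ with the adjoint inverse of the stable isomorphism associated to the left-pointing morphism $\cE_{012}\to\cE_{12}\otimes\cE_{01}$. The left-pointing morphism has underlying map of submersions $Z_2\to d_0^*Z_1\times_{Y^{[3]}}d_2^*Z_1$ (this is the map whose target presents the bundle gerbe $\cE_{12}\otimes\cE_{01}=(d_0^*E\otimes d_2^*E, d_0^*Z_1\times_{Y^{[3]}}d_2^*Z_1)$), together with the isomorphism of $U(1)$-bundles $M'\colon d_0^*E\otimes d_2^*E\to d_1^*E$ on $Z_2^{[2]}$. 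Since $M'$ is already an isomorphism of $U(1)$-bundles by hypothesis (it is the isomorphism corresponding to the section $M$, which trivialises $\delta_h E$), the morphism of bundle gerbes is an isomorphism precisely when its underlying map of submersion domains is an isomorphism; this is exactly the hypothesis that $Z_2\to d_0^*Z_1\times_{Y^{[3]}}d_2^*Z_1$ is an isomorphism. So the first step is simply to unwind the definition of ``morphism of bundle gerbes'' and observe that ``isomorphism of bundle gerbes'' requires exactly the diffeomorphism of domains plus the (already-present) bundle isomorphism.

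For part (2), I would invoke Lemma~\ref{lemma:mate_using_isos}. The associator is constructed by taking the mates of the top-right, bottom-left, and top-left squares in diagram~\eqref{eq:pre_associator}, using the adjoint inverses of the stable isomorphisms marked $\rightsquigarrow$. Under the hypothesis of part (1) (which we may assume, since it is the first hypothesis; strictly, part (2) says ``suppose additionally'', so (1)'s isomorphism hypothesis is in force), the left-pointing morphism in the span is already an isomorphism of bundle gerbes, so \emph{every} $\rightsquigarrow$ arrow in \eqref{eq:pre_associator} that is a pullback of that span's wrong-way leg is actually (the stable isomorphism associated to) an isomorphism of bundle gerbes. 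The extra hypothesis in (2) — that one of the two maps $Z_3\to d_1^*d_0^*Z_1\times_{Y^{[4]}}d_3^*Z_2$ or $Z_3\to d_0^*Z_2\times_{Y^{[4]}}d_2^*d_2^*Z_1$ is an isomorphism — is what makes the corresponding composite morphism along one side of the central square of \eqref{eq:pre_associator} an isomorphism too, so that the square whose mate we are computing is a square of the kind covered by Lemma~\ref{lemma:mate_using_isos}: a commuting square of bundle gerbe morphisms two (adjacent) of whose edges are \emph{isomorphisms}. That lemma then tells us the 2-arrow filling the mate square is an identity, and the pasting of identity 2-arrows is an identity, so the associator is the identity 2-arrow.

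More concretely, the key step is to identify $\cE_{0123}$ (whose submersion is $Z_3\to Y^{[4]}$) with one of the corner bundle gerbes of \eqref{eq:pre_associator} via the assumed isomorphism, collapsing the central node onto an edge; then three of the four squares of \eqref{eq:pre_associator} degenerate to squares with a pair of isomorphism-edges, and applying Lemma~\ref{lemma:mate_using_isos} to each shows each contributes an identity 2-arrow to the pasted associator. One has to be slightly careful that the hypothesis picks out the correct edge — the two alternatives ($Z_3\to d_1^*d_0^*Z_1\times d_3^*Z_2$ versus $Z_3\to d_0^*Z_2\times d_2^*d_2^*Z_1$) correspond to collapsing toward the top-left square along its top edge versus its left edge respectively — but in either case the same conclusion follows by the symmetry of Lemma~\ref{lemma:mate_using_isos} in the two isomorphism-edges.

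\textbf{Main obstacle.} The genuine content is bookkeeping: matching the abstract maps of submersions in Construction~\ref{constr:associated_b2g} (the nine-node diagram over $Y^{[4]}$) against the two concrete hypothesized isomorphisms, and checking that ``the induced map is an isomorphism'' really does upgrade the relevant span-leg from a mere bundle gerbe morphism to a bundle gerbe isomorphism in the precise sense needed to invoke Lemma~\ref{lemma:mate_using_isos} (which is stated for squares with isomorphism \emph{vertical} edges). The subtlety is that Lemma~\ref{lemma:mate_using_isos} as stated has the isomorphisms on the two vertical sides, whereas in \eqref{eq:pre_associator} the $\rightsquigarrow$ arrows sit on various sides of various squares; one must either re-orient each square or note that the lemma's proof (which only uses that the coherence data $\Sigma^2$ depends solely on composites) applies verbatim with isomorphisms on any adjacent pair of edges. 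I expect a referee would want this orientation issue addressed explicitly, but it is routine given the tools already assembled in the appendix.
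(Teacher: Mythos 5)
Your proposal is correct and follows essentially the same route as the paper: part (1) is the observation that an isomorphism on submersion domains together with an isomorphism of $U(1)$-bundles is an isomorphism of bundle gerbes, and part (2) applies Lemma~\ref{lemma:mate_using_isos} (twice for the all-wrong-way top-left square of \eqref{eq:pre_associator}, whose four edges all become isomorphisms by combining hypothesis (1), the extra hypothesis, and commutativity of the square of submersions) and then pastes identity 2-arrows. Two small slips to fix: the $U(1)$-bundle map of the \emph{left}-pointing leg $\cE_{012}\to\cE_{12}\otimes\cE_{01}$ is the canonical pullback identification, not $M'$ (which is the bundle map of the right leg $\cE_{012}\to\cE_{02}$) --- though either way it is an isomorphism, so the conclusion stands --- and Lemma~\ref{lemma:mate_using_isos} inverts a \emph{parallel} pair of edges rather than an adjacent pair, which is exactly why the top-left square requires the lemma to be applied once for each of its two parallel pairs.
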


\begin{proof}
\begin{enumerate}
\item This follows from Lemma~\ref{lemma:bundlegerbemorphisms}, so that the inverse of the bundle gerbe isomorphism involving $Z_2\to d_0^* Z_1\times_{Y^{[3]}} d_2^*Z_1$ gives the adjoint inverse as required.

\item Here we apply Lemma~\ref{lemma:mate_using_isos} twice, since the relevant square of bundle gerbe morphisms involves the commutative square
\[
	\xymatrix@C=1em{
		 d_1^*d_0^*Z_1\times_{Y^{[4]}} d_3^*(d_0^*Z_1\times_{Y^{[3]}}d_2^*Z_1)
		 \ar@{=}[r] & d_0^*(d_0^*Z_1 \times_{Y^{[3]}} d_2^*Z_1)\times_{Y^{[4]}} d_2^*d_2^*Z_1
		  & \ar[l] d_0^*Z_2\times_{Y^{[4]}} d_2^*d_2^*Z_1 \\
		\ar[u] d_1^* d_0^*Z_1\times_{Y^{[4]}} d_3^*Z_2 && \ar[ll] \ar[u] Z_3
	}
\]
The hypothesis from part (1) implies that the left and top arrows are invertible (hence giving rise to isomorphisms of bundle gerbes), and the extra hypothesis from part (2) implies that either the bottom or the right arrow is invertible, implying that the whole square consists of isomorphisms, and hence the top left square in \eqref{eq:pre_associator} consists of isomorphisms of bundle gerbes. Thus the mate of this square (using both adjoint inverse pairs) is the on-the-nose commutative square corresponding to the inverses of the bundle gerbe isomorphisms.
A similar argument works for the bottom left and top right squares of \eqref{eq:pre_associator}, except in this case we only need to check that the morphisms of bundle gerbes induced by 
\[
	d_1^*Z_2 \to d_1^*(d_0^*Z_1 \times_{Y^{[3]}}d_2^* Z_1) = d_0^* d_0^* Z_1 \times_{Y^{[4]}} d_3^*d_1^*Z_1  
\] 
and
\[
	d_2^* Z_2 \to d_2^*(d_0^*Z_1 \times_{Y^{[3]}}d_2^* Z_1) = d_0^*d_1^*Z_1\times_{Y^{[4]}}d_3^*d_2^*Z_1 
\]
are isomorphisms, but this holds by assumption from (1).
Thus the associator, which is the pasting of the four squares, all of which commute on the nose, is an identity 2-arrow. \qedhere
\end{enumerate}
\end{proof}

\begin{remark}

 \begin{enumerate}
 \item In Proposition~\ref{prop:general_strictification_setup} (1), the bundle 2-gerbe multiplication is a stable isomorphism arising from a \emph{morphism} of bundle gerbes, namely the composite of the inverse isomorphism of bundle gerbes with the forward-pointing morphism of bundle gerbes.
 
 Thus in this instance the associated bundle 2-gerbe is a bundle 2-gerbe (in the sense of Definition~\ref{def:bundle2gerbe}) arising from the slightly stricter notion of bundle 2-gerbe given as \cite[Definition~7.2]{StevPhD}, via \cite[Note 7.1.1]{StevPhD} (recall that the objects given by Definition~\ref{def:bundle2gerbe} are called `stable bundle 2-gerbes' in \cite{StevPhD}).

 \item In the situation of Proposition~\ref{prop:general_strictification_setup} (2) the associated bundle 2-gerbe is the bundle 2-gerbe arising from a \emph{strict} bundle 2-gerbe.
 \end{enumerate}

 Thus our notion of rigid bundle 2-gerbe is a different way of encoding a 2-gerbe-like object in a strict fashion, only sometimes corresponding to the more obvious strict notion coming from general higher categorical considerations (e.g.\ honest morphisms, trivial associators etc).
 \end{remark}

Here we give the postponed proof of Theorem~\ref{prop:rigidification}, that our rigid Chern--Simons 2-gerbe really is a rigidification of the Chern--Simons bundle 2-gerbe appearing in the literature. It is now just a corollary of the previous Proposition.

\begin{proof}{(Of Theorem \ref{prop:rigidification})}
Recall that the definition in the literature of the Chern--Simons bundle 2-gerbe associated to the $G$-bundle $Q\to X$ \cite[\S 9.3]{StevPhD}, \cite[Definition 6.5]{CJMSW} is as the pullback along the simplicial map $Q^{[\bullet+1]} \to NG_\bullet$ of the multiplicative bundle gerbe on $G$, namely the basic gerbe. 
The multiplicative structure on the latter, defined using the crossed module structure on $\widehat{\Omega G} \to PG$, is a strict 2-group, and hence the Chern--Simons bundle 2-gerbe is a strict bundle 2-gerbe. 
The only caveat is that in \emph{loc.\ cit.} a different, but isomorphic, model for the basic gerbe is given. 
One can in fact easily check that the conditions of Proposition~\ref{prop:general_strictification_setup} hold, and that the resulting bundle 2-gerbe multplication \emph{morphism} (not stable isomorphism) is exactly analogous to that from the literature, modulo the different model of the basic gerbe. 
Thus the associated bundle 2-gerbe of our rigid Chern--Simons 2-gerbe is isomorphic to the existing bundle 2-gerbe model, and hence we have a rigidification.
\end{proof}

That the data of the connective structure works correctly for the Chern--Simons 2-gerbe has been mentioned in Corollary \ref{cor:CSrb2g_rigidification_with_geom} discussed above. 
Thus the only other piece of data to look at is the 1-form $\alpha$ in the \emph{rigid} connective structure, which is not even considered in \cite{CJMSW}.
But then Proposition \ref{prop:associated_Waldorf_connective_structure} takes care of this last piece of data, which has never before been given explicitly.

\subsection{Trivialisations}

Now, of course the other construction we need to wrap up is that of a bundle 2-gerbe trivialisation from a rigid trivialisation, and also the version with connections. 
We note that the definition we are using here is adapted from \cite[Definition~12.2]{StevPhD} (and \cite[Definition~4.6]{CJMSW}), which is different to \cite[Definition~12.1]{StevPhD}, \cite[Definition~11.1]{Ste}, and \cite[Definition~3.5]{Wal} in that the latter essentially use the dual bundle gerbe $\cF^*$ in place of $\cF$ in the below definition (which leads to what can be considered a \emph{left} action of $\cE$ on $\cF$, rather than a right action, as in \cite[Definition~12.1]{StevPhD}). 
The choice we are making as to what gets dualised is made so as to be compatible with the notion of coboundary, and to be consistent with how duals are taken when pulling back by face maps in a semisimplicial space.
However, the definition we adopt is \emph{in the style of} \cite[Definition~3.5]{Wal}, and does not lead to any substantial difference in any results here (or elsewhere).

\begin{definition}\label{def:bundle2gerbetriv}
A \emph{trivialisation} of a bundle 2-gerbe on $X$, with surjective submersion $Y\to X$ and data $(\cE,\overline{m},\alpha)$ is given by the following data:
\begin{enumerate}
\item A bundle gerbe $\cF$ on $Y$;
\item A stable isomorphism $\eta\colon \cE\otimes d_1^*\cF\to d_0^*\cF$ of bundle gerbes on $Y^{[2]}$;
\item A 2-arrow $\theta$ (the `actionator') between stable isomorphisms over $Y^{[3]}$ as in the following diagram:
\[
	\begin{gathered}
	\xymatrix{
		\cE_{12}\otimes \cE_{01}\otimes \cF_0 \ar[dd]_{1\otimes d_2^*\eta} \ar[r]^-{\overline{m}\otimes 1} & 
		\cE_{02}\otimes \cF_0 \ar[dd]_{\ }="s"^{d_1^*\eta}
		\\
		&\ \\
		\cE_{12}\otimes \cF_1 \ar[r]^{\ }="t"_-{d_0^*\eta} & \cF_2
		\ar@{=>}"s";"t"_\theta
		}
		\end{gathered}
		:=
	\begin{gathered}
	\xymatrix@C=0.3em{
			d_0^*\cE\otimes d_2^*\cE \otimes d_2^*d_1^*\cF  
			\ar[d]_{1\otimes d_2^*\eta} \ar[rr]^-{\overline{m}\otimes 1} &&
			d_1^* \cE \otimes d_2^*d_1^*\cF
			 \ar@{=}[d]
			 \\
			 d_0^*\cE \otimes d_2^*d_0^*\cF 
			  \ar@{=}[d] &&  d_1^* \cE \otimes d_1^*d_1^*\cF 
			 \ar[d]^{d_1^*\eta} \ar@{=>}[]!DL;[ld]!U_{\theta}
			\\
d_0^*\cE  \otimes d_0^*d_1^*\cF
			 \ar[r]_-{d_0^*\eta} 
			 & d_0^*d_0^*\cF
			 \ar@{=}[r] 
			 & 
			 d_1^* d_0^*\cF 
		}
		\end{gathered}
\]
where here $\cE_{ab}$ denotes the pullback along $Y^{[3]}\to Y^{[2]}$ induced by $(a,b)\into (0,1,2)$ in the simplex category $\Delta$, and $\cF_c$ is the pullback along the map $Y^{[3]}\to Y$ induced by $(c)\into (0,1,2)$.
\item Such that the following pair of 2-arrows over $Y^{[4]}$ are equal:
\begin{align*}
	\xymatrix@-1em{
		&  \cE_{23}\otimes \cE_{12}\otimes \cE_{01} \otimes\cF_0 \ar[r] \ar[ddl]^(0.6){\ }="t1" & \cE_{13}\otimes \cE_{01}\otimes \cF_0   \ar[ddl]_{\ }="s1" \ar[ddr] \\
		\\
		\cE_{23}\otimes \cE_{12}  \otimes \cF_1 
		 \ar[r] \ar[ddr]^(0.6){\ }="t3" & \cE_{13}\otimes \cF_1  \ar[ddr]^(0.55){\ }="t2"_(0.4){\ }="s3" && \cE_{03}\otimes \cF_0
		  \ar[ddl]_(0.35){\ }="s2"
		\\
		\\
		& \cE_{23} \otimes \cF_2  \ar[r] & \cF_3
		\ar@{=>}"s1";"t1"_{!\simeq}
		\ar@{=>}"s2";"t2"_{d_2^*\theta}
		\ar@{=>}"s3";"t3"_{d_0^*\theta}
	}
	\\
	\\
	\xymatrix@-1em{
		& \cE_{23}\otimes \cE_{12}\otimes \cE_{01} \otimes\cF_0  \ar[r] \ar[ddl] \ar[ddr]^(0.6){\ }="t2" &  \cE_{13}\otimes \cE_{01}\otimes \cF_0   \ar[ddr]_(0.4){\ }="s2" \\
		\\
		 \cE_{23}\otimes \cE_{12}  \otimes \cF_1 \ar[ddr]^(0.4){\ }="t1" && 
		\cE_{23}\otimes \cE_{02} \otimes \cF_0  \ar[r] \ar[ddl]_(0.35){\ }="s1"^{\ }="t3" &  \cE_{03}\otimes \cF_0 \ar[ddl]_{\ }="s3"\\
		\\
		& \cE_{23} \otimes \cF_2 \ar[r] & \cF_3
		\ar@{=>}"s1";"t1"_{1\otimes d_3^*\theta}
		\ar@{=>}"s2";"t2"_{\tilde \alpha\otimes 1}
		\ar@{=>}"s3";"t3"_{d_1^*\theta}
	}
\end{align*}
where again we have used the notation $\cE_{ab}$ from Definition~\ref{def:bundle2gerbe}, and now also use the notation $\cF_a$ to denote the pullback along the map $Y^{[4]}\to Y$ induced by $(a)\into (0,1,2,3)$ in the simplex category $\Delta$.
\end{enumerate}
\end{definition}

We will use the equivalent definition of a rigid trivialisation from Proposition~\ref{prop:module_version_of_rigid_triv} in the following construction, since it matches the definition of bundle 2-gerbe trivialisation more closely.

\begin{construction}\label{constr:associated_trivialisation}
Assume we are given a rigid bundle 2-gerbe with data $Y\to X$, $Z_\bullet\to Y^{[\bullet+1]}$, $(E,Z_1)$ and $M$, and a rigid trivialisation with data $(F,Z_0)$ and $t'$ (as in Proposition~\ref{prop:module_version_of_rigid_triv}), let us construct a trivialisation in the sense of Definition~\ref{def:bundle2gerbetriv} of the associated bundle 2-gerbe. 
\begin{enumerate}
\item We already have the bundle gerbe on $Y$ as needed. 
\item We construct the stable isomorphism $\eta$ via the span of morphisms of bundle gerbes involving the span of covers
\[
	 Z_1 \times_{Y^{[2]}} d_1^*Z_0 \xleftarrow{(\id,d_1)} Z_1 \xrightarrow{d_0} d_0^*Z_0
\]
of $Y^{[2]}$, and the isomorphism $(t',\id)\colon (E\otimes d_1^*F,Z_1)\to (d_0^*F,Z_1)$ of bundle gerbes.
As in Construction~\ref{constr:associated_b2g}, we get a span
\[
	\cE\otimes d_1^*\cF \leftsquigarrow (\cE\cF_1) \to d_0^*\cF
\]
where $(\cE\cF_1) := ( E\otimes d_1^*F,Z_1)$. 
Passing to the associated stable isomorphisms, then taking the adjoint inverse to the arrow denoted $\rightsquigarrow$ and composing, this will then be the stable isomorphism $\eta$.

\item We then need to build the 2-arrow $\theta$. Consider the diagram of (domains of) submersions to $Y^{[3]}$
\[
	\xymatrix{
		 d_0^*Z_1\times_{Y^{[3]}} d_2^* Z_1 \times_{Y^{[3]}} d_2^*d_1^* Z_0 
		 & 
		\ar[l] Z_2 \times_{Y^{[3]}} d_2^*d_1^* Z_0 \ar[r] 
		 & 
		 d_1^*Z_1 \times_{Y^{[3]}} d_2^*d_1^*Z_0
		 \\
		 \ar[u] d_0^* Z_1 \times_{Y^{[3]}} d_2^* Z_1 \ar[d] 
		& 
		\ar[l] \ar[u] Z_2 \ar[r] \ar[d] 
		&
		 \ar[u] d_1^*Z_1  \ar[d]
		\\
		d_0^* Z_1 \times_{Y^{[3]}}  d_2^* d_0^*Z_0 
		&
		\ar[l]d_0^* Z_1 \ar[r] 
		& 
		d_0^*d_0^*Z_0
	}	
\]
where we have used simplicial identities implicitly to define maps using the universal property of the pullback. This gives rise to a diagram of bundle gerbe morphisms
\begin{equation}\label{eq:pre-actionator}
	\xymatrix{
		\cE_{12}\otimes \cE_{01}\otimes \cF_0 
		& 
		\ar@{~>}[l]\cE_{012}\otimes \cF_0 \ar[r]
		&  
		\cE_{02} \otimes \cF_0 
		\\
		\ar@{~>}[u] \cE_{12} \otimes (\cE_{01}\cF_0) \ar[d] 
		& 
		\ar@{~>}[l] \ar@{~>}[u] (\cE_{012}\cF_0) \ar[r] \ar[d] 
		& 
		\ar@{~>}[u] (\cE_{02}\cF_0) \ar[d]
		\\
		\cE_{12}\otimes \cF_1 
		& 
		\ar@{~>}[l](\cE_{12}\cF_1)  \ar[r] 
		& 
		\cF_2
	}
\end{equation}
where the $U(1)$-bundle part of the bundle gerbe $(\cE_{012}\cF_0)$ is given by pullback along $Z_2^{[2]}\to (d_0^*Z_1\times_{Y^{[3]}} d_2^* Z_1 \times_{Y^{[3]}} d_2^*d_1^* Z_0)^{[2]}$ of the $U(1)$-bundle from $\cE_{12}\otimes \cE_{01}\otimes \cF_0$.
The commutativity of the bottom right square follows from the condition (3') on $t'$ in Proposition~\ref{prop:module_version_of_rigid_triv}.
The coherence morphism $\theta$ is defined to be the pasting of the mates (in the bicategory of bundle gerbes and stable isomorphisms) of the top left, top right and bottom left squares, taken with respect to the adjoint equivalences denoted $\rightsquigarrow$.

\item We now need to establish the coherence of $\theta$. We have a cube of the same form as before, shown in Figure~\ref{fig:big_cube_triv}, which is a diagram of bundle gerbes and morphisms on $Y^{[4]}$, and where the arrows of the form $\rightsquigarrow$  are intended to be considered as being equipped with their adjoint inverses as stable isomorphisms.
The object denoted here $\mathcal{X}'$ is the bundle gerbe $\cE_{23}\otimes\cE_{12}\otimes \cE_{01}\otimes \cF_0$.

\begin{figure}
\[
	\xymatrix{   
		&& \mathcal{X}' &&& \ar@{~>}[lll]_{} o\ar[rrr] &&& o\\
		& o \ar@{~>}[ur]^{} \ar[dl] &&& \ar@{~>}[lll]_{} o\ar@{~>}[ur]^{} \ar[dl] \ar[rrr] &&& 			o \ar@{~>}[ur]^{} \ar[dl]\\
		o &&& \ar@{~>}[lll]^{} o \ar[rrr] &&& o
		\\
		&& o \ar@{~>}[uuu]^{}|!{[ur];[ull]}{\hole}|!{[uurr];[uul]}{\hole} \ar[ddd]|!{[drr];[dl]}{\hole}|!{[ddr];[ddll]}{\hole}&&& 
			\ar@{~>}[lll]^{}|!{[ull];[ul]}{\hole}|!{[ddll];[ull]}{\hole} o \ar@{~>}^{}|!{[ull];[ul]}{\hole}|!{[uurr];[uul]}{\hole}[uuu]   \ar[rrr]|!{[ddr];[ur]}{\hole}|!{[drr];[uurr]}{\hole} \ar[ddd]|!{[dl];[drr]}{\hole}|!{[ddll];[ddr]}{\hole}	&&& 
			o \ar@{~>}[uuu]^{} \ar[ddd]\\
		& o \ar@{~>}[uuu]^{}|!{[uurr];[uul]}{\hole} \ar@{~>}[ur]^{} \ar[dl] \ar[ddd]|!{[drr];[dl]}{\hole}&&& 
			\ar@{~>}[lll]^{}|!{[dl];[uul]}{\hole} (\cE_{0123}\cF_0)				\ar@{~>}[ur]^{} \ar@{~>}[uuu]^{}|!{[uul];[uurr]}{\hole} \ar[dl] \ar[rrr]|!{[drr];[uurr]}{\hole} \ar[ddd]|!{[dl];[drr]}{\hole} &&& 
			o \ar@{~>}[ur]^{}\ar@{~>}[uuu]^{} \ar[dl] \ar[ddd]\\
		o \ar@{~>}[uuu]^{} \ar[ddd] &&& \ar@{~>}[lll]^{}o \ar@{~>}[uuu]^{}\ar[ddd] \ar[rrr] &&			& o \ar@{~>}[uuu]^{} \ar[ddd]
		\\
		&& o &&& \ar@{~>}[lll]^{}|!{[uul];[dl]}{\hole}|!{[ull];[ddll]}{\hole} o\ar[rrr]|!{[ur];[ddr]}{\hole}|!{[uurr];[drr]}{\hole} &&& o\\
		& o \ar@{~>}[ur]^{} \ar[dl] &&& \ar@{~>}[lll]^{}|!{[uul];[dl]}{\hole} o\ar@{~>}[ur]^{} \ar[dl] \ar[rrr]|!{[uurr];[drr]}{\hole} &&& o \ar@{~>}[ur]_{} \ar[dl]\\
		o &&& \ar@{~>}[lll]_{} o \ar[rrr] &&& \cF_3\\
	}
\]
\caption{Diagram of bundle gerbes and morphisms over $Y^{[4]}$ to derive the coherence for $\theta$. The objects with placeholders marked $o$ are specified in Figure~\ref{fig:cut open actionator cube}.}
\label{fig:big_cube_triv}
\end{figure}
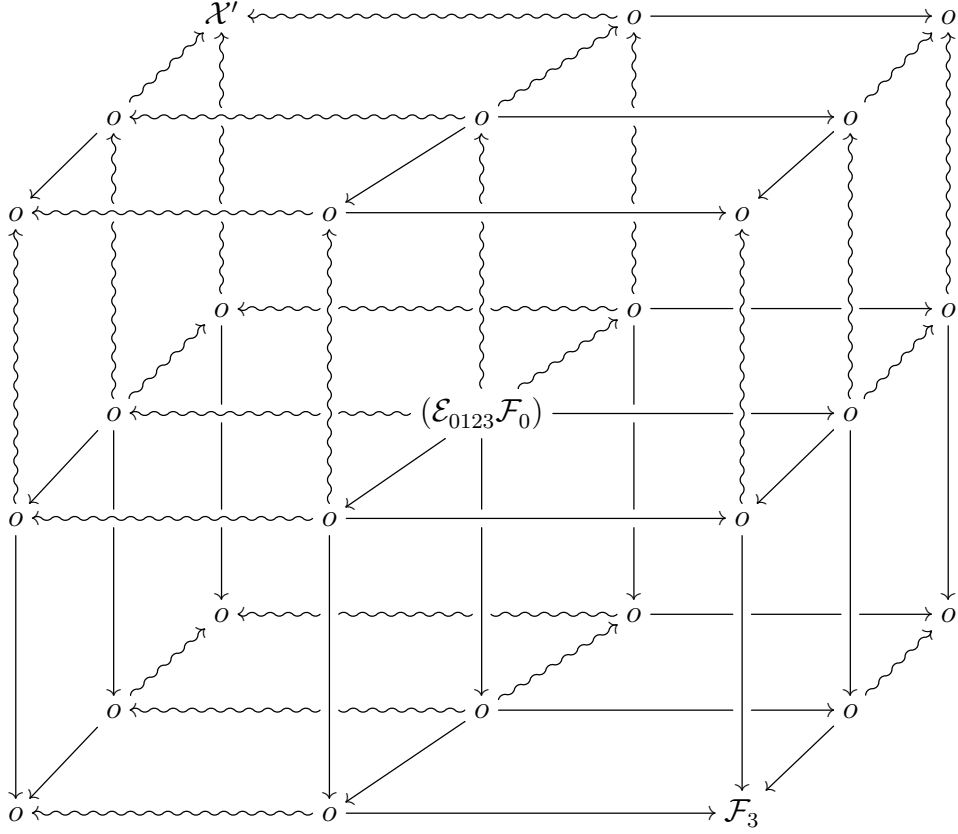

Cutting open this cube to reveal the faces and the specific bundle gerbes at the nodes, we get Figure~\ref{fig:cut open actionator cube}. Note that the bundle gerbe $(\cE_{0123}\cF_0)$ in the cubical diagram has submersion $Z_3\to Y^{[4]}$, and has $U(1)$-bundle defined to be the pullback along $Z_3\to Z_3\times_{Y^{[4]}}d_1^*d_1^*d_1^*Z_0$ of the $U(1)$-bundle of the bundle gerbe $\cE_{0123}\otimes d_1^*d_1^*d_1^*\cF$.

\begin{figure}
\noindent Back-right-bottom faces:
{\scriptsize
\[
	\centerline{
	\xymatrix@C=1em{
		&& \cE_{23}\otimes\cE_{12}\otimes\cE_{01}\otimes \cF_0  & \ar@{~>}[l] 
		\cE_{123}\otimes \cE_{01}\otimes \cF_0 \ar[r] & \cE_{13}\otimes\cE_{01}\otimes \cF_0 \\
		& \ar[dl] \cE_{23}\otimes\cE_{12}\otimes(\cE_{01}\cF_0) \ar@{~>}[ur] & \ar@{~>}[l] \ar[dl] \ar@{~>}[ur] \ar[r] \cE_{123}\otimes (\cE_{01}\cF_0) & \ar[dl] \ar@{~>}[ur] \cE_{13}\otimes(\cE_{01}\cF_0)  && \ar@{~>}[ul] \cE_{013}\otimes\cF_0  \ar[dr]
		\\
		\cE_{23}\otimes\cE_{12}\otimes \cF_1 & \ar@{~>}[l]   \cE_{123}\otimes \cF_1 \ar[r]  & \cE_{13}\otimes\cF_1 && \ar@{~>}[ul] \ar[dl] (\cE_{013}\cF_0)  \ar@{~>}[ur] \ar[dr] && \cE_{03}\otimes\cF_0 
		\\
		& \ar@{~>}[ul] \cE_{23}\otimes (\cE_{12}\cF_1) \ar[dr] &\ar@{~>}[l] \ar@{~>}[ul] (\cE_{123}\cF_1)\ar[r] \ar[dr] & \ar@{~>}[ul] (\cE_{13}\cF_1) \ar[dr] && \ar[dl] (\cE_{03}\cF_0) \ar@{~>}[ur]\\
		&& \cE_{23}\otimes \cF_2 & \ar@{~>}[l] (\cE_{23}\cF_2) \ar[r] & \cF_3
	}
	}
\]}
\medskip

\noindent Front-left-top faces:
{\scriptsize
\[
	\centerline{
	\xymatrix@C=1em{
		&& \cE_{23}\otimes\cE_{12}\otimes\cE_{01}\otimes \cF_0 & \ar@{~>}[l] \cE_{123}\otimes \cE_{01}\otimes \cF_0  \ar[r] &  \cE_{13}\otimes\cE_{01}\otimes \cF_0\\
		& \ar[dl]  \cE_{23}\otimes\cE_{12}\otimes(\cE_{01}\cF_0) \ar@{~>}[ur] &&  \ar@{~>}[ul] \cE_{23}\otimes \cE_{012}\otimes \cF_0 \ar[dr] & \ar@{~>}[l] \ar@{~>}[ul] \cE_{0123}\otimes \cF_0 \ar[r] \ar[dr] & \ar@{~>}[ul]  \cE_{013}\otimes\cF_0  \ar[dr]
		\\
		\cE_{23}\otimes\cE_{12}\otimes \cF_1 && \ar@{~>}[ul] \ar[dl] \cE_{1234}\otimes \cE_{01} \ar@{~>}[ur] \ar[dr] && \cE_{23}\otimes \cE_{02}\otimes \cF_0 & \ar@{~>}[l]  \cE_{023}\otimes \cF_0\ar[r] & \cE_{03}\otimes\cF_0
		\\
		& \ar@{~>}[ul] \cE_{23}\otimes (\cE_{12}\cF_1) \ar[dr] && \ar[dl] \cE_{23}\otimes (\cE_{02}\cF_0) \ar@{~>}[ur] & \ar@{~>}[l] \ar[dl] (\cE_{023}\cF_0) \ar@{~>}[ur] \ar[r] & \ar[dl]  (\cE_{03}\cF_0)\ar@{~>}[ur]
		\\
		&& \cE_{23}\otimes \cF_2 & \ar@{~>}[l](\cE_{23}\cF_2)  \ar[r] & \cF_3
	}}
\]}
\caption{Encoding of action coherence from rigid trivialisation data, giving the outer faces of Figure \ref{fig:big_cube_triv}.}\label{fig:cut open actionator cube}
\end{figure}

The same pasting argument with mates for the arrows indicated $\rightsquigarrow$ as in Construction~\ref{constr:associated_b2g} establishes the required coherence axiom for $\theta$.
\end{enumerate}

This completes the construction of the associated bundle 2-gerbe trivialisation.
\end{construction}

Now consider the case where the rigid 2-gerbe is equipped with a rigid connective structure, and the rigid trivialisation has a trivialisation connection. 
Then, as before in Proposition~\ref{prop:associated_connective_structure}, we get the data of a connection, curving and 2-curving on the associated bundle 2-gerbe, and a trivialisation in the sense of Definition~\ref{def:bundle2gerbetriv}.
We need to prove that the trivialisation connection gives rise to a compatible connection in the sense of \cite[Definition~4.3]{Wal}.
We outline Waldorf's definition here in more elementary terms.

\begin{definition}[Waldorf]
\label{def:weak_trivialisation_connection}
Consider a bundle 2-gerbe with submersion $Y\to X$, bundle gerbe $\cE=(E,Z_1)$ on $Y^{[2]}$, multiplication stable morphism $\overline{m}$ and associator $\tilde \alpha$, and also connection, curving and 2-curving.
The data of a compatible connection on a trivialisation with data $\cF=(F,Z_0)$, $\eta$ and $\theta$, is a connection and curving on $\cF$, together with a connection $\nabla_L$ on the $U(1)$-bundle $L\to (Z_1\times_{Y^{[2]}}d_1^*Z_0) \times_{Y^{[2]}}d_0^*Z_0$ underlying the stable isomorphism $\eta$ such that the curvature $F_{\nabla_L}$ is the difference between the (pullback of the) curving $\beta$ of the bundle 2-gerbe and the (pullback of) the curving of $\delta(\cF)$. 
Further, the 2-arrow $\theta$ in Definition~\ref{def:bundle2gerbetriv}, the data of which is an isomorphism of trivialising $U(1)$-bundles (satisfying a compatibility condition), needs to be an isomorphism of bundles \emph{with connection}. 
\end{definition}

The following construction is in some sense the keystone of this paper, linking our notion of rigid trivialisation to the construction of geometric string structures \`a la Waldorf \cite{Wal}.
Here we only give it in outline, as it repeats a lot of steps shown previously.

\begin{construction}
Start with a rigid bundle 2-gerbe with rigid trivialisation, rigid connective structure, and trivialisation connection. 
Build the associated bundle 2-gerbe from Construction~\ref{constr:associated_b2g}, the associated trivialisation from Construction~\ref{constr:associated_trivialisation}, and the associated connection, curving and 2-curving from Proposition~\ref{prop:associated_connective_structure}. 
The 1-form $a$ in the trivialisation connection allows us to define a connection on the stable isomorphism $\eta$, as in Lemma~\ref{lemma:bundlegerbemorphisms_w_conn_curv}.
That this is compatible follows from the assumptions on $a$.
Then following the same argument as in the proof of Proposition~\ref{prop:associated_Waldorf_connective_structure}, we get that the diagram (\ref{eq:pre-actionator}) is a diagram of bundle gerbes with connection and curving, and morphisms between these, and that the 2-arrow $\theta$ preserves the required connection.
Again, the coherence equation for $\theta$ holds for free.
\end{construction}

We will rely on this construction in a future paper to give a large family of examples of geometric string structures.


\section{From bigerbes to rigid bundle 2-gerbes}\label{app:bigerbes}

In this appendix we will work in the topological setting, as in \cite{KM19}\footnote{More specifically, the category of compactly-generated Hausdorff spaces}. 
We thus require rigid bundle 2-gerbes that use purely topological notions, so that surjective submersions are replaced by continuous maps that admit local sections around every point in the codomain, and all bundles and isomorphisms of them are now merely topological and continuous, rather than smooth.
We will not need anything to do with connective structures. 

Conversely, one could upgrade the definition in \cite{KM19} to a manifold category (which may include infinite-dimensional manifolds), where now instead of locally split squares in a topological sense, one replaces all the locally split maps by surjective submersions.
We will just refer to maps being `covers', when they are either a locally split continuous surjection, or a surjective submersion, in the respective categories.
Further, we will write everything in terms of $U(1)$-bundles, but one can write everything in terms of line bundles if so desired, as in \cite{KM19}. 
In the definition of bigerbe and bigerbe trivialisation, the line bundles could be replaced by the corresponding $U(1)$-bundles with minimal adjustments, in a completely reversible way---this we will do, referring simply to `bundles' in a mood of deliberate ambiguity. 

Any terms used here that are not defined can be found in \cite{KM19}. 
We will also use the following construction from \cite[Example 2.7]{MRSV} of a degreewise cover $\mu^{-1}(U)_\bullet \to Y^{[\bullet + 1]}$ from a given cover $U\to Y^{[2]}$.

\begin{definition}
Consider the projection maps $\mu_{ij} \colon Y^{[k+1]} \to Y^{[2]}$ for $0\leq i<j \leq k$ giving the $i$ and $j$ entries.
We can assemble these into maps $Y^{[k+1]} \to \left(Y^{[2]}\right)^{k(k+1)/2}$, which we also call $\mu$. 
If $U \to Y^{[2]}$ is a cover then we can pull back $U^{k(k+1)/2}$ to give a cover of $Y^{[k+1]}$, for $k \geq 1$. 
So we have a collection of spaces (or manifolds)
\[
\mu^{-1}(U)_k = 
            \begin{cases} 
                Y, & k = 0\\ 
                U, & k=1\\ 
                \mu^{-1}(U^{k(k+1)/2}), & k >1 
            \end{cases}
\]
and maps $\mu^{-1}(U)_k \to \mu^{-1}(U)_{k-1}$ satisfying the simplicial identities for face maps. 
Note that in general $\mu^{-1}(U)_\bullet$ is only a {\em semi}-simplicial space (or manifold), since we may not have degeneracy maps $\mu^{-1}(U)_0 = Y \to \mu^{-1}(Y)_1 = U$.  
The canonical map $\mu^{-1}(U)_\bullet \to Y^{[\bullet+1]}$ is then a degreewise semi-simplicial cover.  
\end{definition}

\subsection{Bigerbes}

Recall the definition of bigerbe from \cite[Definition 4.5]{KM19}, using the most of the notation introduced there, with the exception of using our $\delta_h$ and $\delta_v$ in place of Kottke and Melrose's $d_1$ and $d_2$ respectively (from \cite[start of \S 4.2]{KM19}).

A bigerbe consists of a \emph{locally split square}
\[
    \xymatrix{
        W \ar[r] \ar[d] & Y_2 \ar[d] \\
        Y_1 \ar[r] & X
    }
\]
---that is, all the maps are covers (and so is $W\to Y_1\times_X Y_2$)---together with a bundle $E\to W^{[2,2]} = (W\times_{Y_2}W)^{[2]}$ (the outer fibre product taken over $Y_1\times_X Y_1$) and trivialisations $M$ of $\delta_h E$ and $s$ of $\delta_v E$, satisfying $\delta_h M=0$ and $\delta_v s = 0$, and also that $\delta_v M = \delta_h s$ as trivialisations of (the canonically isomorphic) $\delta_h \delta_v E \stackrel{!}{\simeq} \delta_v \delta_h E$.

Note that  we are aiming to consider the degreewise cover $Z_\bullet = \mu^{-1}_\bullet(W) \to Y_1^{[\bullet + 1]}$ sitting over the simplicial space arising from the cover $Y_1\to X$. 
There is an inherent asymmetry here, and choosing instead to start from the cover $Y_2\to X$ would give a different rigid bundle 2-gerbe, which could be denoted $r_2(B)$.

\begin{proposition}\label{prop:bigerbe_to_rgb2g}
    Given a bigerbe $B = (E,W,Y_1,Y_2)$ on $X$, there is a rigid bundle 2-gerbe $r_1(B)$ using the cover $Y:= Y_1\to X$, and the semisimplicial space $Z_\bullet :=W^{[\bullet+1,1]}$ with $Z_k = W\times_{Y_2} \cdots \times_{Y_2} W$ ($k+1$ factors).
    The bundle $E\to Z_1 \times_{Y_1^{[2]}} Z_1 = W^{[2,2]}$ gives a bundle gerbe on $Y_1^{[2]}$ as needed, using the trivialisation $s$ as bundle gerbe multiplication. 
    Then the bundle 2-gerbe multiplication is given by the trivialisation $M$.
\end{proposition}

Proposition \ref{prop:bigerbe_to_rgb2g} follows from simply writing down the relevant bigerbe data and noticing that it supplies exactly the relevant data for a rigid bundle 2-gerbe, and the axioms for a bigerbe directly imply the axioms for a rigid bundle 2-gerbe.

\subsection{Trivisalisations}

In the case that the bigerbe $B$ is equipped with a trivialisation (which we will denote $t$), it is \emph{not} automatically the case that the associated rigid bundle 2-gerbe has a rigid trivialisation.
However, from the data of the pair $(B,t)$, we can make a different rigid bundle 2-gerbe, here called $r_1(B,t)$, which \emph{does} have a canonical rigid trivialisation.
These two rigid bundle 2-gerbes are related in a very nice way: there is a morphism  $r_1(B,t)\to r_1(B)$, in the sense arising from Example~\ref{example:rb2g_constructions}.3.
Note that as a result, the associated bundle 2-gerbes (as per Appendix~\ref{app:weak_2-gerbes}) of $r_1(B,t)$ and $r_1(B)$ are stably isomorphic, and hence the bundle 2-gerbe associated to $r_1(B)$ has a stable trivialisation, though we do not give details here.

We note that while $r_1(B)$ is a rigid bundle 2-gerbe where the semisimplicial space $Z_\bullet$ is the \v{C}ech nerve of a cover (the map $W\to Y_2$ in the locally split square), the rigid bundle 2-gerbe $r_1(B,t)$ is \emph{not} of this form, and so is a more general object.
In fact, the corresponding semisimplicial space will not, in general, be a simplicial space at all.

\begin{lemma}\label{lemma:upgradeStableTriv}
let $(E_1,Y)$ and $(E_2,Y)$ be bundle gerbes on $X$, and suppose they are stably isomorphic in the sense that there is a bundle $T\to Y$ such that $E_1\otimes \delta(T) \simeq E_2$ is a bundle gerbe isomorphism. 
Then there is an isomorphism of bundle gerbes $(E_1\times_{Y^{[2]}}T^{[2]},T) \simeq (E_2\times_{Y^{[2]}}T^{[2]},T)$ on $X$.
\end{lemma}
\begin{proof}[Proof (sketch)]
    Consider the composite $T\to Y \to X$ and the map of surjective submersions $\pi\colon T \to Y$ over $X$.
    Pull $T$ back along this map, whereby it trivialises canonically, and also create the two bundle gerbes by pulling back $E_1$ and $E_2$ along $\pi^{[2]}\colon T^{[2]} \to Y^{[2]}$.
    The isomorphism $E_1\otimes \delta(T) \simeq E_2$ then pulls back to $E_1\times_{Y^{[2]}}T^{[2]} \simeq E_2\times_{Y^{[2]}}T^{[2]}$ over $T^{[2]}$.
    It is easily checked this is a bundle gerbe isomorphism.
\end{proof}

We first remind the reader of the definition of a trivialisation of a bigerbe, to fix notation.
Given a bigerbe $B = (E,W,Y_1,Y_2)$, a trivialisation consists of a pair of bundles $Q_1\to W\times_{Y_1}W$ and $Q_2\to W\times_{Y_2}W$, and an isomorphism $E \otimes \delta_v(Q_2)\simeq \delta_h(Q_1)$, that is compatible with the bigerbe structure in a way we will not specify here (see \cite[Definition~4.6]{KM19} for details).

Lemma~\ref{lemma:upgradeStableTriv} is needed because the notion of rigid trivialisation involves an isomorphism that looks like $E\simeq \delta_h(Q_1)$, namely an \emph{isomorphism} of bundle gerbes, whereas $E \otimes \delta_v(Q_2)\simeq \delta_h(Q_1)$ is closer to a \emph{stable} isomorphism between the bundle gerbes involving $E$ and $\delta_h(Q_1)$ (in the restricted sense stated in Lemma~\ref{lemma:upgradeStableTriv}).

Thus a trivialisation $t$ of a bigerbe $B$ can be thought of as giving rise to a `semi-rigid' trivialisation of $r_1(B)$; instead of an isomorphism of the appropriate bundle gerbes on $Y^{[2]}$, it is a stable isomorphism in the sense of Lemma~\ref{lemma:upgradeStableTriv} (compatible with all data, as needed).
Moreover this bundle $Q_2$ in the stable isomorphism is equipped with the data of a simplicial bundle over $Z_\bullet$.
Even if $Q_2$ were trivial, this data can still involve a non-trivial trivialisation of $\delta_h(Q_2)\to Z_2$ (in our notation).
Thus we need to build not just a `new $Z_1$' (which will be via the submersion $Q_2 \to W\times_{Y_2}W = Z_1$), but a whole new simplicial space $Z_\bullet$, in such a way that we do not just canonically trivialise $Q_2$, but also ensure that the induced trivialisation of the trivial bundle $\delta_h(Q_2)$ is the \emph{canonical} trivialisation.

\begin{lemma}\label{lemma:trivialiseSimplicialBundle}
    Let $Z_\bullet$ be a semisimplicial space, and $L\to Z_1$  a \emph{simplicial $U(1)$-bundle}, namely there is a trivialisation $s$ of $\delta L$, such that $\delta s$ is equal to the canonical trivialisation of $\delta^2 L$.
    Then there is a semisimplicial space $N(L)_\bullet$ that in lowest degrees looks like 
    \[
       \cdots d_0^*L\times_{Z_2}d_2^*L \rightthreearrow L \rightrightarrows Z_0
    \]
    with a level-wise cover $\pi_\bullet\colon N(L)_\bullet \to Z_\bullet$.
    This semisimplicial space has the following properties: (a) the simplicial bundle $\pi_1^*L\to L$ has a canonical trivialisation; (b) the trivialisation $s$ pulls up to a trivialisation $\pi_2^*s$ of $\pi_2^*(\delta L)$; and (c) $\pi_2^*s = 1$, the canonical trivialisation of $\delta \pi_1^*L$ coming from the trivialisation of $\pi_1^* L$.
\end{lemma}

The proof of this lemma is a careful but routine calculation.

This result is a strengthening of the fact that the pullback of a principal bundle along its own projection map canonically trivialises. 
In this construction we \emph{also} get that the simplicial bundle structure is trivial, because in principle one could have $L$ already trivial but $s$ a different trivialisation to the given one on $\delta L$ (i.e.\ a trivialisation that is not identically $1$).

The rigid bundle 2-gerbe $r_1(B,t)$ uses the cover $Y_1\to X$, the semisimplicial space $Z_\bullet = N(Q_2)_\bullet \to W^{[\bullet+1,1]} \to Y_1^{[\bullet+1]}$ from Lemma~\ref{lemma:trivialiseSimplicialBundle}, using the simplicial bundle structure of $Q_2$ over $W^{[\bullet+1,1]}$. 
We have to pull back $E$ along $Z_1^{[2]} = Q_2\times_{Y_1^{[2]}} Q_2\to W^{[2,2]}$ to get the bundle $E'$ on $Z_1^{[2]}$ for the rigid bundle 2-gerbe, but then the bundle $Q_1\to Z_0^{[2]} = W^{[1,2]}$ serves as the data for a rigid trivialisation. 
The required isomorphism of bundles $E'\simeq \delta_h(Q_1)$ for the rigid trivialisation uses the fact that the pullback of $\delta_v(Q_2)$ to $Q_2\times_{Y_1^{[2]}} Q_2$ trivialises canonically in a compatible way so that the pullback of $E\otimes \delta_v(Q_2) \simeq \delta_h(Q_1)$ to $Z_1^{[2]}$ is exactly what we need.
This data then satisfies condition (3) in Definition~\ref{def:vsb2g_triv}, because of the property (c) in Lemma~\ref{lemma:trivialiseSimplicialBundle}.

Thus we have:

\begin{proposition}
    Given a bigerbe $B$ on $X$ equipped with a trivialisation $t$ there is a rigidly trivialised rigid bundle 2-gerbe $r_1(B,t)$ as just described, and a morphism $r_1(B,t)\to r_1(B)$ of rigid bundle 2-gerbes over $X$ that is the identity on $Y_1^{[\bullet+1]}$.
\end{proposition}

Note that if we had a trivialisation of a bigerbe as in example (i) in the discussion after \cite[Defintion~4.6]{KM19}, where \emph{only} the data of $Q_1$ is given, then this is the same as $Q_2$ being trivial, and also the trivialisation of $\delta_h(Q_2)$ being the canonical one.
In this case, the rigid bundle 2-gerbe $r_1(B)$ would already admit a rigid trivialisation using the data of $Q_1$ (and the attendant isomorphisms) only.

\begin{remark}
The above process works to make a rigidly trivialised rigid bundle 2-gerbe analogous to $r_2(B)$, say $r_2(B,t)$, keeping the data of the bundle $Q_2$ and instead passing to a cover that trivialises $Q_1$ etc.
\end{remark}


\section{Table of specific differential forms}\label{app:data}

\renewcommand{\arraystretch}{1.4}
\begin{tabular}{p{0.3\linewidth} p{0.53\linewidth} p{0.1\linewidth}}
      Form  & Description & Page \\ 
      \hline
      $\Theta_g = \lMC{g} \in \Omega^1(G,\fg)$ & left-invariant Maurer--Cartan form &\\
      $\Thetahat_g = \rMC{g} \in \Omega^1(G,\fg)$ & right-invariant Maurer--Cartan form & \\
      $\mu\in \Omega^1(\widehat{\Omega G})$ & connection on the central extension of $\Omega G$ & \pageref{form:central extension connection}\\
      $R\in \Omega^2(\Omega G)$ & curvature of $\mu$, left-invariant 2-form  & \pageref{form:central extension curvature}\\
      $\nu \in \Omega^1(\Omega G\times \Omega G)$ & measures the failure of the connection $\mu$ to be multiplicative on the nose &\pageref{form:nu}\\
      $\phi\colon PG\to \Omega \fg$&the function $p\mapsto \phifun{p}$ (a `Higgs field')&\\
      $\phihat\colon PG\to \Omega \fg$ & the function $p\mapsto \phihatfun{p} = \Ad_p(\phifun{p})$ &\\
      $B \in \Omega^2(PG)$ & curving on the basic gerbe on $G$ & \pageref{form: basic gerbe curving}\\
      $\epsilon \in \Omega^1(PG\times \Omega G)$& 1-form used to fix the connection $\mu$ into a \emph{bundle gerbe} connection,& \pageref{form:basic gerbe connection fix-up form}\\
      $\nabla = \mu - \pi^*\epsilon$ \newline\phantom{xxxx} $\in \Omega^1(PG\times \Omegahat{G})$ &bundle gerbe connection on the basic gerbe &  \pageref{form:basic gerbe connection}\\
      $\kappa \in \Omega^2(G\times G)$ & measures the failure of the curvature of the basic gerbe to be a multiplicative 3-form& \pageref{form:kappa}\\
      $\rho \in \Omega^1(PG\times \Omega G)$ & measures the failure of the crossed module action to preserve the connection $\mu$ & \pageref{form:xmod form rho} \\
      $A\in \Omega^1(Q,\g)$ & connection on the principal $G$-bundle $Q$ &\\
      $\beta_A \in \Omega^2(Q\times PG)$ &  curving on the rigid Chern--Simons bundle 2-gerbe of $Q$& \pageref{form:CS2-gerbe curving}\\
      $\alpha \in \Omega^1(Q\times PG\times PG)$ &strong trivialisation connection witnessing Chern--Simons bundle 2-gerbe product&\pageref{form:CS2-gerbe alpha}\\
      $-CS(A)\in \Omega^3(Q)$ &2-curving on the rigid Chern--Simons bundle 2-gerbe of $Q$ & \pageref{form:CS2-gerbe 2-curving}\\
      \hline
    \end{tabular}
    \label{tab:my_label}

\bibliographystyle{amsalpha}
\bibliography{rb2g}

\end{document}